\DeclareMathAlphabet{\mathpzc}{OT1}{pzc}{m}{it}
\newtheorem{theorem}{Theorem}[section]
\newtheorem*{claim*}{Claim}
\newtheorem{claim}{Claim}
\newtheorem{lemma}[theorem]{Lemma}
\newtheorem{lem}[theorem]{Lemma}
\newtheorem{corollary}[theorem]{Corollary}
\newtheorem{cor}[theorem]{Corollary}
\newtheorem{proposition}[theorem]{Proposition}
\newtheorem{prop}[theorem]{Proposition}
\theoremstyle{definition}
\newtheorem{Def}[theorem]{Definition}
\theoremstyle{remark}
\newtheorem{Rmk}[theorem]{Remark}
\numberwithin{equation}{section}
\def\geo{\partial_\infty}
\newcommand{\op}{\operatorname}
\newcommand{\Ad}{\op{Ad}}
\newcommand{\be}{\begin{equation}}
\newcommand{\ee}{\end{equation}}
\newcommand{\Ga}{\Gamma}
\newcommand{\R}{\mathbb R}
\renewcommand{\H}{\mathbb H}
\newcommand{\Z}{\mathbb Z}
\newcommand{\N}{\mathbb N}
\newcommand{\ga}{\gamma}
\newcommand{\La}{\Lambda}
\newcommand{\inte}{\op{int}}
\newcommand{\ba}{\backslash}
\newcommand{\cal}{\mathcal}
\newcommand{\br}{\mathbb R}
\newcommand{\SO}{\op{SO}}
\newcommand{\PSL}{\op{PSL}}
\newcommand{\core}{\op{core}}
\newcommand{\bH}{\mathbb H}
\newcommand{\G}{\Gamma}
\newcommand{\m}{\mathsf{m}}
\newcommand{\T}{\mathscr T}
\renewcommand{\frak}{\mathfrak}
\newcommand{\e}{\varepsilon}
\newcommand{\fa}{\mathfrak a}
\newcommand{\Lie}{\op{Lie}}
\newcommand{\supp}{\op{supp}}
\newcommand{\diag}{\op{diag}}
\renewcommand{\epsilon}{\e}
\newcommand{\SL}{\op{SL}}
\def\g{\gamma}
\newcommand{\fh}{\mathfrak{h}}
\newcommand{\Hom}{\op{Hom}}
\renewcommand{\b}{\mathsf b}
\renewcommand{\T}{\op{T}}
\newcommand{\mc}{\mathsf c}
\newcommand{\ms}{\mathsf s}
\DeclareMathOperator{\wdt}{wd}
\DeclareMathOperator{\ldim}{\underline{dim}}
\title[Fractal closures of geodesic planes]{Fractal closures of geodesic planes \\ in Hitchin manifolds}
\author{Subhadip Dey}
\address{School of Mathematics, Tata Institute of Fundamental Research, Mumbai
}
\email{subhadip@math.tifr.res.in}
\author{Hee Oh}
\address{Department of Mathematics, Yale University, New Haven, CT}
\email{hee.oh@yale.edu}
\thanks{
 Oh is partially supported by the NSF grant No. DMS-2450703.}
\begin{document}
\begin{abstract} 
Ratner’s theorem implies topological rigidity of immersed totally geodesic subspaces of noncompact type in finite-volume locally symmetric spaces. In higher rank and infinite volume, however, counter-examples to this rigidity have remained elusive.

We construct the first such examples using \emph{floating geodesic planes}. Specifically, we exhibit a Zariski-dense Hitchin surface group
$\Gamma < \mathrm{SL}_3(\mathbb{R})$ such that the Hitchin manifold
$\Gamma \backslash \mathrm{SL}_3(\mathbb{R}) / \mathrm{SO}(3)$
contains immersed floating geodesic planes whose closures are fractal, with non-integer Hausdorff dimensions accumulating at $2$. Moreover, $\Gamma$ can be chosen inside $\mathrm{SL}_3(\mathbb{Z})$.

\end{abstract}

\maketitle

\tableofcontents

\section{Introduction}
The study of orbit closures for actions of subgroups generated by unipotent elements has been one of the central themes in homogeneous dynamics.
A landmark result is Ratner’s 1991 theorem \cite{Ra}, which resolved the conjecture of Raghunathan. It says the following: if $G$ is
a connected semisimple Lie group $G$ and $\Gamma < G$ is a lattice (a discrete subgroup of finite covolume), then for any connected subgroup $H<G$ generated by unipotent elements,
the closure of every $H$-orbit in $\Ga\ba G$ is itself a homogeneous subspace, namely, a subspace of the form $xL$
where $L < G$ is a Lie subgroup containing $H$ and $x$ is a point in $\Ga\ba G$. 
This result implies the following topological rigidity of geodesic planes: in any locally symmetric space of noncompact type and finite volume, the closure of an immersed totally geodesic subspace of noncompact type\footnote{this means the image of a totally geodesic immersion of a locally symmetric space of noncompact type} is an immersed submanifold. 

An important special case was obtained earlier by  Margulis and Dani-Margulis (1989): in $\SL_3(\mathbb{Z}) \backslash \SL_3(\mathbb{R})$, any orbit of $\SO(2,1)$ is either closed or dense (\cite{Ma}, \cite{DM}). This implies that in the associated locally symmetric space
$\SL_3(\Z)\ba \SL_3(\br)/\SO(3)$,
 every {\em irreducible} totally geodesic plane is either properly immersed or dense. The closed or dense dichotomy has a far-reaching consequence. In fact, Margulis's proof of the Oppenheim conjecture that for any irrational indefinite quadratic form $Q(x_1, \dots, x_n)$ with $n \ge 3$, the set of values $Q(\mathbb{Z}^n)$ is dense in $\mathbb{R}$ was his famous application of this result \cite{Ma}.

\medskip 
In the infinite-volume setting, the geometry of the ambient space plays a decisive role. For
convex cocompact acylindrical hyperbolic $3$-manifolds,
 McMullen–Mohammadi–Oh  proved that geodesic planes inside the interior of the convex core is either properly immersed or dense (\cite{MMO}, \cite{MMO2}), and Benoist–Oh  extended this to geometrically finite acylindrical manifolds \cite{BO}. In higher dimensions, Lee–Oh gave a complete classification of geodesic plane closures for convex cocompact real hyperbolic manifolds with Fuchsian ends (\cite{LeO}; see also the survey paper \cite{Oh}).

The picture changes dramatically once we leave  the acylindrical setting. Using bending deformations, McMullen–Mohammadi–Oh \cite{MMO} constructed quasi-Fuchsian hyperbolic $3$-manifolds that contain chaotic geodesic planes, arising from planes orthogonal to a chaotic geodesic of a closed hyperbolic surface. Here ``chaotic'' means that the closures of these planes have non-integer Hausdorff dimension. This stands in sharp contrast with the acylindrical case: acylindrical hyperbolic manifolds exhibit strong geometric constraints that enforce a certain $k$-thickness property for every circular slice of the limit set, whereas quasi-Fuchsian manifolds may support much thinner circular slices.  Geometrically, thin circular slices of the limit set translate into
scant recurrence for unipotent flows, obstructing the standard
homogeneous dynamics approach to orbit closures.

 In higher rank and infinite volume, examples demonstrating the failure of topological rigidity of orbit closures have remained elusive. In this paper, we provide the first such examples.

 \begin{figure} \begin{center}
  \includegraphics [height=6cm]{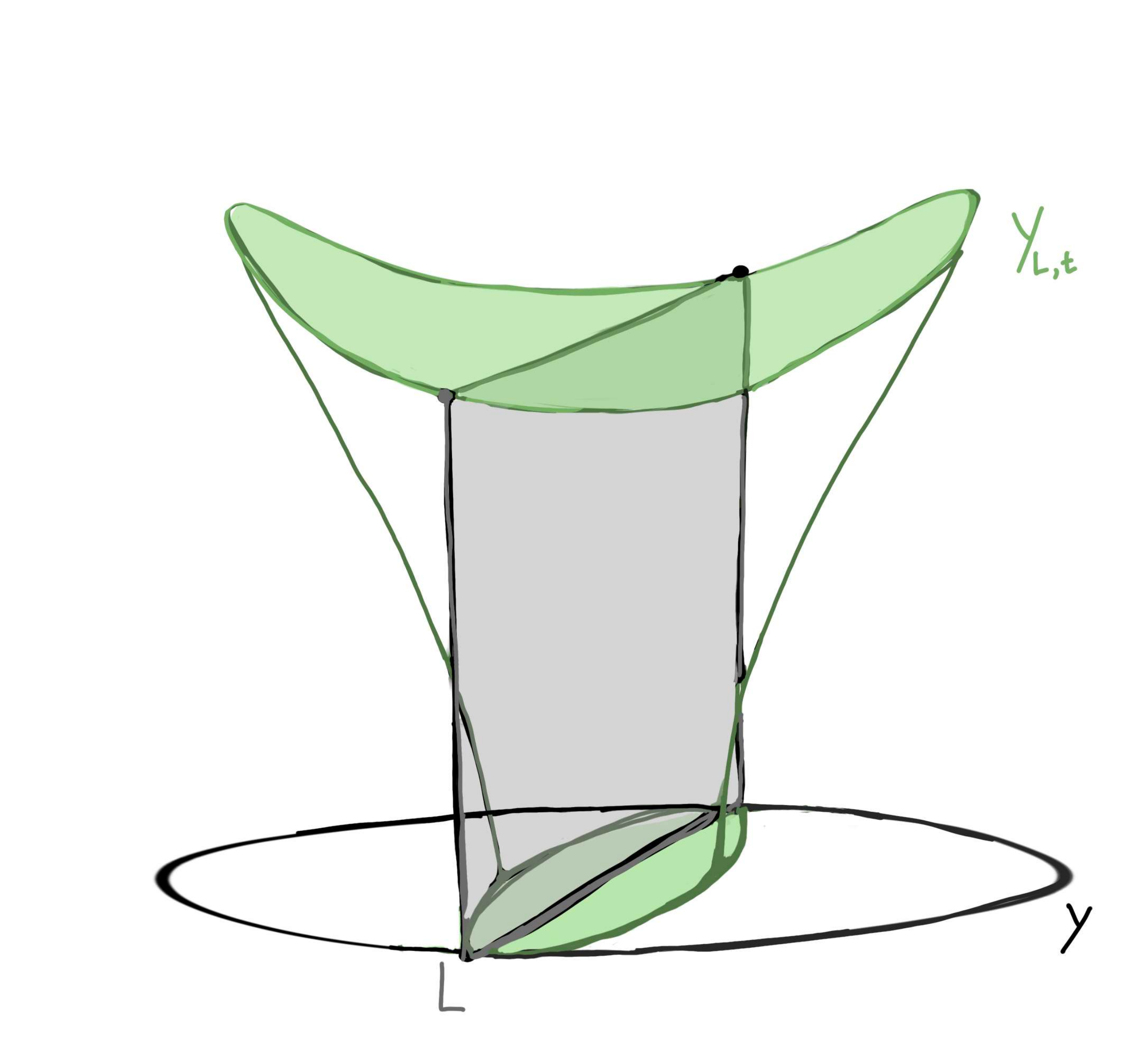}  \end{center}\caption{Floating geodesic plane}
  \label{fig:floatingplane}
  \end{figure} 
 
 \bigskip 
Let
$$G=\SL_3(\br), \; \; K=\SO(3), \; \; X=G/K,   $$
so that $X$ is the Riemmanian symmetric space of unimodular positive-definite symmetric matrices.
We consider 
$$H=\SO(2,1)^\circ ,\; \;\; Y= H\cdot o \subset X, \quad{\text{where } o=[K]\in X} ;$$
$Y$  is an irreducible totally geodesic plane passing through the basepoint $o$.
Our primary objects are what we call 
$$\text{{\it floating geodesic planes}}$$
defined as follows:
Given a complete geodesic $L\subset Y$, there exists a unique maximal flat $\cal F\subset X$ that intersects $Y$ orthogonally along $L$. The plane $Y$ can then be shifted away from itself along $\cal F$, producing ultra-parallel copies that ``float'' in the ambient space. More precisely, for any $t>0$, the floating geodesic plane $Y_{L, t}$ is defined as the translate of $Y$
along the orthogonal direction in $\cal F$ at distance $t$.  Equivalently, if $\xi_t=a_t o$ is the unit speed geodesic in $\cal F$ orthogonal to $L$ at $o$ for a one-parameter diagonalizable subgroup $\{a_t:t\in \br\}$, then $$Y_{L, t}=a_t Y;
$$ see \Cref{fig:floatingplane}.  We also refer to the image of $Y_{L,t}$ in any quotient manifold $\Gamma\ba X$ as a floating geodesic plane.

In hyperbolic spaces, there are no flats of dimension larger than one, so a geodesic plane cannot be displaced in this way. The existence of floating geodesic planes is therefore a phenomenon that only appears in higher rank.

The main result of this paper is as follows:

\begin{theorem}\label{m1}\label{max}
    There exists a Zariski dense Hitchin surface subgroup $\Gamma<\SL_3(\br)$ such that the locally symmetric space $\Gamma\ba X$ contains a sequence
    of floating geodesic planes  whose closures have Hausdorff dimensions strictly bigger than $2$ and accumulating at $2$. Moreover, $\Gamma$ can be chosen inside $\SL_3(\Z)$.
\end{theorem}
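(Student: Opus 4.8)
The plan is to deform a cocompact arithmetic Fuchsian surface group by a bulging supported along a single simple closed curve, and then to analyze the closure of a floating plane $Y_{L,t}$ by tracking how the fixed maximal flat $\mathcal F\supset L$, together with the entire one‑parameter family of geodesics parallel to $L$ inside it, is distorted by the bulging holonomy as $t\to\infty$.

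\emph{Step 1: arithmetic base and the floating planes.} Let $\Gamma_0<H=\SO(2,1)^\circ$ be a torsion‑free cocompact lattice arising from an integral ternary quadratic form of signature $(2,1)$, so $\Gamma_0<\SL_3(\Z)\cap H$ and $S:=\Gamma_0\backslash Y$ is a closed hyperbolic surface properly immersed in $\Gamma_0\backslash X$. Fix two disjoint essential simple closed geodesics $c,c'\subset S$, arranging (using density of axes of hyperbolic $\Z$‑points) that $c'$ is the image of the axis $L$ of some $\gamma_0\in\Gamma_0$, with endpoints $\gamma_0^{\pm}$ on the conic $C\subset\P(\R^3)$ which is the image of $\partial_\infty Y$. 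Since $L$ is disjoint from every lift of $c$ and $\gamma_0$ is a pure hyperbolic translation along $L\subset\mathcal F$, it commutes with the singular one‑parameter group $\{a_t\}$ (conjugate to $\{\diag(e^{t},e^{-2t},e^{t})\}$) that generates the geodesic of $\mathcal F$ orthogonal to $Y$ along $L$; hence $\gamma_0$ stabilizes $Y$, and therefore every floating plane $Y_{L,t}=a_tY$.

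\emph{Step 2: bulging, Zariski density and integrality.} Apply Goldman's bulging deformation along $c$: this is a real‑analytic path $\rho_u\colon\pi_1(S)\to\SL_3(\R)$ of Hitchin representations with $\rho_0$ the inclusion of $\Gamma_0$, in which the holonomy of every curve disjoint from $c$ is left unchanged — so $\rho_u(\gamma_0)=\gamma_0$ for all $u$ — while the gluing across $c$ is regauged by an element of the maximal torus centralizing $\rho_0(\delta_c)$, where $\delta_c\in\pi_1(S)$ represents $c$. For $u\neq 0$ the representation leaves the Fuchsian locus, and since the Zariski closure of an $\SL_3$‑Hitchin representation is either $\SO(2,1)$ or $\SL_3$, the group $\Gamma_u:=\rho_u(\pi_1 S)$ is then Zariski dense; being Hitchin it is Anosov, hence convex cocompact with a $C^1$ limit curve and no parabolics. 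To place $\Gamma_u$ inside $\SL_3(\Z)$ one performs the bulging compatibly with an integral structure — concretely, gluing $\Gamma_0$ across $c$ to a second integral Fuchsian group preserving a different signature‑$(2,1)$ form drawn from the rational pencil of forms invariant under $\rho_0(\delta_c)$, along the common cyclic subgroup $\langle\rho_0(\delta_c)\rangle$ — so that $\Gamma_u<\SL_3(\Z)$ for suitable $u$. Since every Hausdorff‑dimension statement below is invariant under conjugation in $\SL_3(\R)$, fix one such $u=u_0\neq 0$ and set $\Gamma:=\Gamma_{u_0}<\SL_3(\Z)$, Zariski dense. As $\gamma_0\in\Gamma$ still stabilizes $Y$ and $Y_{L,t}$, each floating plane immerses in $\Gamma\backslash X$ as a totally geodesic annulus $\langle\gamma_0\rangle\backslash Y_{L,t}$.

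\emph{Step 3: analyzing the flat collectively.} The closure of $Y_{L,t}$ in $\Gamma\backslash X$ is governed by the $\Gamma$‑orbit closure, in the flag variety, of the boundary datum of $Y_{L,t}$: the projective conic $C_t=a_tC$ together with its dual conic. As $t\to\infty$, $C_t$ degenerates with multiplicity onto the chord $\ell_L$ of $C$ joining $\gamma_0^{+}$ and $\gamma_0^{-}$. This is precisely where higher rank obstructs the rank‑one method. For a hyperbolic $3$‑manifold the displacement of a plane off itself is effected by a \emph{single} geodesic orthogonal to it, which one tracks as the deformation kinks it and it diverges; here the displacement is recorded by the whole $2$‑dimensional flat $\mathcal F$, and the bulging cocycle must be controlled on the \emph{entire family} of geodesics parallel to $L$ inside $\mathcal F$ up to height $t$ — equivalently on the degenerating conics $C_t$ — simultaneously and uniformly in $t$. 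Carrying this out, one shows the returns of the $\Gamma$‑translates of $\mathcal F$ near a lift of $c$ are uniformly hyperbolic, with transverse contraction ratios controlled by the bulge datum and by the floating height $t$, so that the recurrent part of $\overline{Y_{L,t}}$ is, for large $t$, locally a product of the plane with a graph‑directed self‑conformal Cantor set; hence
\[
\dim_H\overline{Y_{L,t}}\;=\;2+s(t),
\]
where $s(t)$ is the root of the associated Bowen pressure equation.

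\emph{Step 4: strict positivity, the limit and the main obstacle.} It remains to see that $s(t)\in(0,1)$ with $s(t)\to 0$ as $t\to\infty$. Strict positivity is a rigidity statement: if $s(t)=0$ then $\overline{Y_{L,t}}$ would be a $C^1$ submanifold, hence — by a Ratner/homogeneity dichotomy for these deformed geometries — a homogeneous subspace, forcing $\stab_\Gamma(Y_{L,t})$ to be a lattice in $a_tHa_{-t}$, and so (cocompact, since $\Gamma$ has no parabolics, hence) a finite‑index Fuchsian subgroup of $\Gamma$, contradicting the Zariski density of $\Gamma$ (inherited by finite‑index subgroups as $\SL_3$ is connected); equivalently, the transverse recurrence set of a floating plane of a Zariski‑dense group cannot lie in a proper subvariety. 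The decay $s(t)\to 0$ expresses that a plane floating far from the bulging locus feels it only faintly: quantitatively, the singular direction $\{a_t\}$ suppresses the transverse spread of the returns, blowing up the potential in the Bowen equation and driving its root to the left endpoint. Choosing $t_n\to\infty$ then produces floating planes $Y_{L,t_n}\subset\Gamma\backslash X$ with fractal closures of Hausdorff dimension $2+s(t_n)\in(2,3)$, with $2+s(t_n)\to 2$, which — together with $\Gamma<\SL_3(\Z)$ being Zariski dense and Hitchin — proves the theorem. The main obstacle is the uniform‑in‑$t$ analysis of Step 3: unlike the single diverging orthogonal geodesic of rank one, one must establish uniform hyperbolicity and bounded‑distortion estimates for the bulging holonomy acting on an entire family of parallel geodesics inside a flat that is itself being deformed — the ``collective'' phenomenon intrinsic to higher rank. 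Granting this, strict positivity (via Zariski density) and the decay $s(t)\to0$ (via monotonicity of pressure and the geometry of the singular direction) are comparatively soft, and the arithmetic refinement reduces to a conjugation‑invariant choice of base data.
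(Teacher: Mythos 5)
Your proposal has the right scaffolding (arithmetic base, Goldman bulging, Long--Thistlethwaite integrality, and the observation that the intrinsic higher-rank difficulty is the collective behavior of the parallel family inside the flat $\mathcal F$), but it is built around a mechanism for the ``accumulation at $2$'' that is not how the phenomenon actually works, and I do not see how to make it work.

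\textbf{The central gap.} You posit that for a \emph{fixed} reference geodesic $L$ the dimension is $2+s(t)$ with $s(t)\to 0$ as $t\to\infty$, i.e.\ that the dimension decays in the floating height. This is false (or at least unsupported, and contradicted by the paper's computation). What the paper proves is that for all $t>t_0(L,r)$ the dimension is \emph{independent of $t$}: one has $\dim \overline{\Gamma_\b\backslash\Gamma_\b h a_t H}=2+\dim\overline{\Gamma_0 h A_0}$ (and a corresponding bracket in the $X$-setting), so that once the floating plane clears the bulging locus the dimension depends only on the geodesic-flow closure of $L$ in the base surface. The accumulation at $2$ is then produced by varying the reference geodesic: one constructs a sequence of admissible geodesics $L_i$ lying in a fixed pair of pants away from the bulging curve with $\dim\overline{\ell_i}\downarrow 1$ (via Sullivan's BMS ergodicity on convex cocompact covers with small critical exponent), not by sending $t\to\infty$ for a single $L$. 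Your heuristic about the singular direction ``suppressing transverse spread'' and driving the Bowen root to the left endpoint is precisely the kind of intuition one would have from rank-one quasi-Fuchsian examples, and it does not survive here: the explicit Fuchsian computation ($\overline{\Gamma_0 ha_tH}=\overline{\Gamma_0 hA_0}\,a_tk_0A_0K_0$) already shows no $t$-dependence.

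\textbf{The strict positivity argument is also not viable as written.} You invoke ``a Ratner/homogeneity dichotomy for these deformed geometries'' to rule out $s(t)=0$. But the whole point of the theorem is that no such dichotomy holds for infinite-covolume Hitchin quotients, so this is circular; and $\dim=2$ does not in any case imply $C^1$-submanifoldness. The paper avoids this entirely: strict positivity is automatic because $\dim\overline{\ell}$ is explicitly chosen in $(1,2)$.

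\textbf{What the actual proof hinges on.} Step 3 of your sketch gestures at uniform hyperbolicity and a pressure equation, but the decisive technical inputs in the paper are more geometric and quite different: (i) the nearest-point projection $\pi(Y_{L,t})$ is within bounded Hausdorff distance of $L$ and this distance tends to $0$ as $t\to\infty$ (obtained by controlling Busemann functions at all possible limit points of $a_tk_0h_s o$); (ii) the Fuchsian closure is explicitly $\overline{\Gamma_0 hA_0}\,a_tk_0A_0K_0$, which is converted into a Hausdorff-dimension count via local bi-Lipschitz structure of certain multiplication maps; and (iii) the bulging map $F_\b$ restricted to $\pi^{-1}$ of the complement of a neighborhood of $\Gamma_0\tilde\beta$ is a \emph{proper} local isometric embedding into $\Gamma_\b\backslash X$ (the genuine higher-rank difficulty you correctly identify, handled by showing fibers $\pi^{-1}(y)$ and $\b\pi^{-1}(y')$ remain disjoint). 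None of these steps is captured by your sketch, and without (iii) in particular, ``transfer from the Fuchsian case'' is unjustified. I would rework the argument around the projection $\pi$ and the explicit Fuchsian closure formula, and replace ``$s(t)\to 0$'' with ``a sequence of admissible reference geodesics with closure dimension $\to 1$.''
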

The proof of Theorem~\ref{m1} requires a combination of geometric and
dynamical arguments that go well beyond existing rank-one techniques. Substantial difficulties already arise in the \emph{Fuchsian case}: even for a Fuchsian surface group,
relating the Hausdorff dimension of floating geodesic plane closures in the locally symmetric space $\Gamma \backslash X$ to the dynamics of the geodesic flow on the underlying hyperbolic surface requires delicate analysis beyond homogeneous space considerations.

To transport these fractal phenomena from the Fuchsian locus to Zariski-dense Hitchin subgroups, we combine Goldman’s bulging deformations
with a central geometric ingredient:
a detailed analysis of the nearest-point projection to the reference plane.
In higher rank, the fibers of this projection contain entire maximal
flats, creating parallel families of geodesics and making stability under
deformation substantially more subtle than in rank one.

We now explain the bulging deformations.
We begin with a torsion-free cocompact Fuchsian group $\Gamma_0 < H$ and choose a simple closed geodesic $\beta $ in the hyperbolic surface $S=\Gamma_0 \backslash Y$.
This curve $\beta$, called the bulging locus, is represented by some hyperbolic element $\delta\in \Ga_0$. Goldman introduced the notion of bulging deformation along  such a curve $\beta$ (\cite{Go-bulging}; see also a recent work \cite{BHM} where they use the terminology {\it grafting} instead of bulging). Roughly speaking, bulging is an analogue of Thurston's earthquake deformation, but in the setting of convex projective structures: one ``bends'' the geometry of the surface along $\beta$ by a parameter from the  identity component centralizer $C_G(\delta)$ of $\delta$.

Formally, the deformation yields a representation $\rho_{\beta,\b} : \Gamma_0 \to G$ for any $\b\in C_G(\delta)^\circ$, which lies in the {\em Hitchin component} of the character variety of representations of $\Gamma_0$ into $G$. Choi–Goldman identified this Hitchin component with the space of marked convex $\mathbb{R}P^2$-structures on the surface $\Gamma_0\backslash Y$ \cite{CG}: the bulging deformation then corresponds to varying the convex projective structure by stretching along $\beta$.  In particular, each representation $\rho_{\beta, \b}$ is discrete and faithful, 

 We therefore obtain a discrete subgroup of $G$:
\[
 \Gamma_{\beta, \b} \coloneqq \rho_{\beta,\b}(\Gamma_0);
\] see \eqref{hit} for further details. Moreover $\Ga_{\beta,\b}$ is Zariski dense whenever the width $\wdt(\b)$ of $\mathsf b$ is nonzero (see \eqref{bs} for the definition). Subsequent developments have provided broader frameworks for understanding these groups: Labourie \cite{La} introduced the notion of {\em Anosov representations}, while Fock–Goncharov \cite{FG} developed the theory of {\em positive representations}.
Both have become central tools in the study of Hitchin representations of surface groups into split semisimple real Lie groups.

\begin{figure} 
  \includegraphics [height=6cm]{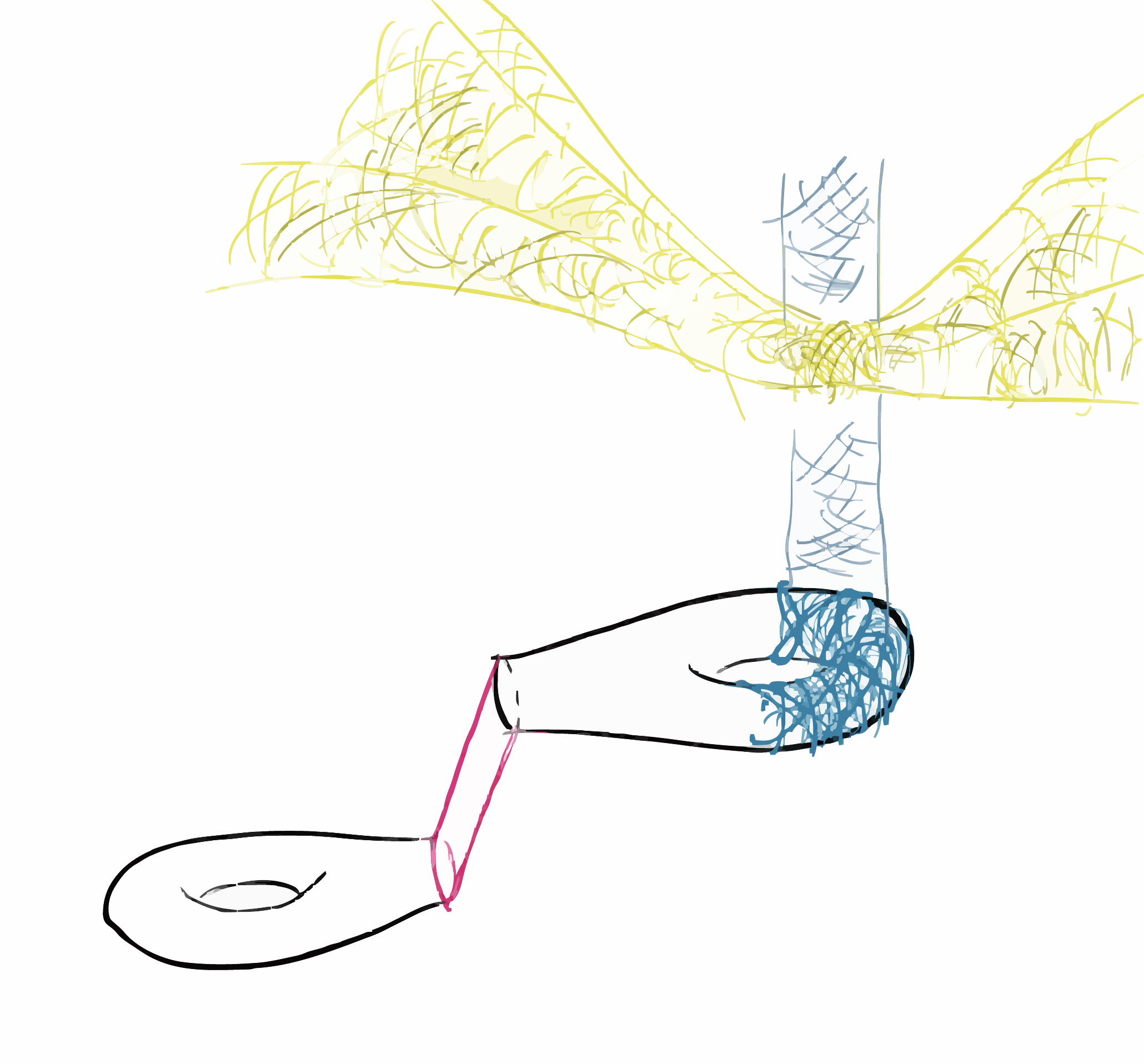}  
\caption{Fractal closures of floating geodesic planes}
\end{figure}

\medskip 
Theorem \ref{m1} is deduced from the following result,
which shows that floating planes inherit their Hausdorff dimensions from the geodesic dynamics on the underlying hyperbolic surface $S=\Ga_0\ba Y$. In this paper,
the notation $\dim $ always refers to the Hausdorff dimension.
\begin{theorem} \label{ma}
Let $\G_0<H$ be a torsion-free cocompact Fuchsian subgroup, and let $S=\Ga_0\ba Y$.
Let $\beta\subset S$ be a simple closed geodesic represented by some $\delta\in \Ga_0$.

Let $ L\subset  Y$ be an admissible geodesic (Def. \ref{adm}), and set $\ell =\Ga_0\ba \Ga_0 L\subset S$. 
Suppose that $1<\dim (\overline{\ell}) <2$ and let $r:=d(\ell , \beta)>0$.
Then there exists $t_0>0$, depending only on $L$ and $r$, such that for all $t>t_0$ and any $\b\in C_G(\delta)^\circ$ with width $\wdt(\b)$ smaller than $r/2$, the Hausdorff dimension of the closure of the floating geodesic plane in $\Ga_{\beta, \b } \ba X$ satisfies  
 $$\tfrac{1}{2}\left( \dim \overline{\ell} +3 \right) \le  \dim \left( \overline{ \Ga_{\beta,\b } \ba \Ga_{\beta, \b } Y_{L,t}} \right) \le \dim \overline{\ell} +1 .$$

Moreover, there exists a sequence of admissible geodesics $L_i \subset Y$ such that 
\begin{itemize}
    \item  $r = \inf_i d(\ell_i,\beta) >0$, 
    \item $\dim \bar\ell_i >1$ for all $i\in\N$, and
\item $\dim \bar\ell_i \to 1$ as $i\to \infty$. 
\end{itemize}

Hence for all sufficiently large $t$,
$$ \dim \left( \overline{ \Ga_{\beta,\b } \ba \Ga_{\beta, \b } Y_{L_i,t}} \right) \to 2 \quad\text{as $i\to \infty$} .$$
\end{theorem}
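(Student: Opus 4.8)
\emph{Overview of the strategy.} I would deduce the theorem from two essentially independent ingredients: (I) a geometric description of the closure of a single floating plane $Y_{L,t}$ in $\Ga_{\beta,\b}\ba X$, together with the resulting Hausdorff–dimension sandwich; and (II) a construction of a sequence of admissible geodesics $L_i\subset Y$ whose surface traces $\ell_i$ satisfy $1<\dim\overline{\ell_i}<2$ with $\dim\overline{\ell_i}\to 1$, all staying uniformly bounded away from $\beta$. Granting (I) and (II), the final assertion is immediate: for every fixed large $t$ the sandwich of (I) confines $\dim\!\big(\overline{\Ga_{\beta,\b}\ba\Ga_{\beta,\b}Y_{L_i,t}}\big)$ between $\tfrac12(\dim\overline{\ell_i}+3)$ and $\dim\overline{\ell_i}+1$, and both endpoints of this interval converge to $2$ as $\dim\overline{\ell_i}\to 1$.

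\emph{Part (I): locality of bulging and the closure.} The key point is that the bulging deformation alters the geometry only within distance $O(\wdt(\b))$ of the lifts of $\beta$: there is a $(\rho_0,\rho_{\beta,\b})$-equivariant comparison map $X\to X$ which restricts to an isometry on the complement of the $\wdt(\b)$–neighborhood of those lifts, where $\rho_0$ is the undeformed inclusion $\Ga_0\hookrightarrow H\hookrightarrow G$. Since $d(\ell,\beta)=r>2\,\wdt(\b)$, a uniform neighborhood of $L$ and of the maximal flat $\cal F$ along $L$ lies in the undeformed region; this produces the threshold $t_0=t_0(L,r)$, in the sense that for $t>t_0$ the plane $Y_{L,t}=a_tY$ sits deep enough in a complementary region that neither it nor any $\Ga_{\beta,\b}$–translate accumulating on it ever meets $\beta$, so the closure is assembled from genuinely undeformed symmetric-space pieces, glued according to the bulging cocycle — it is the combinatorics of the gluing, not the local geometry of the pieces, that the bulging changes. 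As in the rank-one bending constructions of McMullen, Mohammadi and Oh, translates of $Y_{L,t}$ accumulate on floating planes $Y_{L',t'}$; but here the higher-rank difficulty flagged in the abstract must be confronted, since when a translate returns near $Y$ one must follow not a single returning geodesic but the entire one-parameter family of parallel geodesics filling $\cal F$, and show that the bulging cocycle moves this family inside $\cal F$ by an amount lying in a fixed interval $J$ — with nonempty interior because $\wdt(\b)\neq 0$, and of length $O(\wdt(\b))$ because $\wdt(\b)<r/2$. The outcome should be a ``fractal bundle'' picture
\[
\overline{\Ga_{\beta,\b}\ba\Ga_{\beta,\b}Y_{L,t}}=\big\{\,Y_{L',t'}\ :\ L'\ \text{a lift of a geodesic in}\ \overline{\ell},\ t'\in J(L')\,\big\},
\]
with base the geodesic closure $\overline\ell$ of Hausdorff dimension $\dim\overline\ell$, bounded fibers, and each fiber plane of dimension $2$.

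\emph{Part (I): the dimension estimates.} Both bounds are organized around the geodesics occurring as axes $L'$ of the planes in the closure; by the previous step this is a bi-Lipschitz copy of the set of geodesics in $\overline\ell$, of Hausdorff dimension $\dim\overline\ell-1$ after quotienting out the flow direction. For the upper bound, a covering of this set of geodesics at scale $\rho$ lifts to a covering of the closure by $\approx\rho^{-(\dim\overline\ell-1)}$ bounded slabs of floating planes, whose heights stay in the bounded set $J$; keeping track of the heavy overlap among floating planes through nearby geodesics is exactly what turns the naive bound $\dim\overline\ell+2$ into the sharp $\dim\overline\ell+1$. For the lower bound one must instead produce a large set inside the closure — a bi-Lipschitz copy of a product of a $2$-dimensional piece of one floating plane with a fractal set $E$ extracted from the transverse structure of $\overline\ell$, of dimension $\dim E\ge\tfrac12(\dim\overline\ell-1)$, which gives $2+\tfrac12(\dim\overline\ell-1)=\tfrac12(\dim\overline\ell+3)$. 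The exponent $\tfrac12$ is the crux: heuristically only a ``one-sided'' trace of $\overline\ell$ — the set of forward endpoints of its geodesics on $\partial_\infty Y$ — moves the floating plane transversally to itself, and bounding that trace below by half of the transverse dimension $\dim\overline\ell-1$ of $\overline\ell$ should use the reversibility symmetry of the geodesic flow together with the coincidence of Hausdorff and box dimensions for the (Markov-type) endpoint set. A generic Marstrand-type projection or slicing theorem does not apply here, because the relevant directions are constrained by the rigid geometry of the flat $\cal F$; I expect this, together with the closure description, to be the principal obstacle.

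\emph{Part (II) and conclusion.} For the sequence, fix a compact subsurface $S'\subset S\setminus\beta$ with $r_0:=d(S',\beta)>0$ and a train track carried in $S'$; thinning its branches yields minimal geodesic laminations $\Lambda_i\subset S'$ with $\dim\Lambda_i\in(1,2)$ and $\dim\Lambda_i\downarrow 1$. Taking $L_i\subset Y$ to be a lift of a leaf $\ell_i$ dense in $\Lambda_i$ gives $\overline{\ell_i}=\Lambda_i$, hence $\dim\overline{\ell_i}=\dim\Lambda_i$; admissibility (Def.~\ref{adm}) is arranged by choosing the endpoints of $L_i$ generic, and $d(\ell_i,\beta)\ge r_0$ for all $i$, so $r:=\inf_i d(\ell_i,\beta)>0$. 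Since all $\ell_i$ lie in the fixed compact $S'$, the thresholds $t_0(L_i,r)$ from Part (I) can be taken uniformly bounded, say by $T_0$. Then for every fixed $t>T_0$, every $i$, and every $\b\in C_G(\delta)$ with $0\neq\wdt(\b)<r/2$, Part (I) yields
\[
\tfrac12\big(\dim\overline{\ell_i}+3\big)\ \le\ \dim\!\Big(\overline{\Ga_{\beta,\b}\ba\Ga_{\beta,\b}Y_{L_i,t}}\Big)\ \le\ \dim\overline{\ell_i}+1 ,
\]
and letting $i\to\infty$, so that $\dim\overline{\ell_i}\to 1$, squeezes the middle term to $2$. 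In summary, the heart of the matter is Part (I): proving the closure is exactly the fractal bundle above — no spurious accumulation, genuinely nondegenerate interval of heights — which rests on the higher-rank analysis of how bulging moves whole families of parallel geodesics inside a flat, and establishing the lower dimension bound with its factor $\tfrac12$.
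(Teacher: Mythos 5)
Your high-level plan — establish a closure description and dimension sandwich for a single floating plane (Part~I), then feed in a sequence of geodesics with fractal dimensions accumulating at $1$ (Part~II) — matches the paper's overall architecture. However, both parts contain genuine gaps, and one of them is a fatal error.

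\textbf{Part (I): the closure is not a ``fractal bundle'' with a nondegenerate interval of heights.} You posit a closure of the shape $\{Y_{L',t'} : t' \in J(L')\}$ with $J$ of nonempty interior ``because $\wdt(\b)\neq 0$,'' and you then wrestle with this extra continuous height parameter to push the naive covering bound $\dim\overline\ell+2$ down to $\dim\overline\ell+1$. This picture is wrong, and the wrestling is unnecessary. The paper's route is cleaner and exact: one first proves the closure in the \emph{undeformed} Fuchsian picture (Prop.~\ref{fuchsian}) using the generalized Cartan decomposition $G=HBK$ together with the boundedness of $\pi(a_t Y)$ near $L$ (Theorem~\ref{bp}):
\[
\overline{\Gamma_0 h a_t H} = \overline{\Gamma_0 h A_0}\, a_t k_0 A_0 K_0 ,
\]
a rigid product with no height parameter at all, of dimension exactly $\dim\overline{\Gamma_0 h A_0}+2$ by the local-diffeomorphism Lemma~\ref{gd}. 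One then shows (Theorem~\ref{thm:pi}, Corollary~\ref{piG}) that the bulging map $\bar f_{\b,r}$ is a \emph{proper locally isometric embedding} on $\Gamma_0\backslash G_r$; since $\overline{\Gamma_0 h a_t H}\subset \Gamma_0\backslash G_r$ once $t>t_0$ (this is where Theorem~\ref{narrow} enters: for large $t$, $\pi(Y_{L,t})$ collapses onto $L$, so the entire orbit closure avoids the bulging locus), the deformed closure is \emph{isometrically} the pushforward of the Fuchsian one, with Hausdorff dimension preserved on the nose. In particular, no interval $J$ of heights appears; the bulging only shifts individual pieces by a discrete (countable) family of isometries, contributing nothing to the dimension. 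Your upper bound is thus immediate from a product structure rather than requiring an overlap argument, and your lower bound — which you describe as the crux and hope to attack with Marstrand-type slicing and a ``reversibility symmetry'' — is instead handled by a pigeonhole on the two Bruhat projections $\pi_\pm$ (Lemma~\ref{larger}), using that admissibility (Def.~\ref{adm}) forces box and Hausdorff dimensions to coincide, so that $\dim(\overline{\mathcal G}\cap\mathcal O)\le \dim\pi_+ + \dim\pi_- + 1$ and hence $\max\{\dim\pi_+,\dim\pi_-\}\ge(\dim\overline{\mathcal G}-1)/2$, combined with the local diffeomorphism Lemma~\ref{mul2} on one Bruhat factor. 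Your proposal never actually uses admissibility, even though it is indispensable here.

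\textbf{Part (II): the train-track construction cannot produce what you need.} You propose taking minimal geodesic laminations $\Lambda_i$ carried by a train track in a subsurface and thinning branches so that $\dim\Lambda_i\in(1,2)$ with $\dim\Lambda_i\downarrow 1$. But a geodesic lamination in a compact hyperbolic surface has Hausdorff dimension exactly $1$ — this is the theorem of Birman and Series on the sparseness of simple geodesics. The closures $\overline{\ell_i}$ in the statement must have dimension strictly greater than $1$, so they cannot be laminations: you need genuinely \emph{non-simple} geodesics whose closures are fat fractal sets. The paper obtains these (Theorem~\ref{thm:chaotic}, Prop.~\ref{bms}) by passing to a convex cocompact cover $\Sigma_0$ of a pair of pants $S'\subset S\setminus\beta$ generated by high powers $\langle g_1^n, g_2^n\rangle$ of Schottky generators (driving the critical exponent $\delta_{\Gamma_n}\to 0$), and invoking Sullivan's ergodicity of the Bowen--Margulis--Sullivan measure to produce a dense geodesic-flow orbit in the nonwandering set $\Omega_{\Gamma_n}$, whose projection to $\Sigma_n$ is a geodesic with closure of dimension $1+2\delta_{\Gamma_n}\in(1,2)$. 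Sullivan's Ahlfors-regularity of the limit set simultaneously yields admissibility. This machinery is not optional decoration — without it one does not know that the required geodesics exist, and your substitute construction provably does not supply them.

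In summary, your strategic outline is sound, but (a) your model of the closure imports a spurious height parameter that the paper's bulging-as-local-isometry argument removes entirely, (b) your dimension bounds are heuristic and bypass the admissibility hypothesis which the actual lower bound rests on, and (c) the train-track/lamination construction of the $\ell_i$ is contradicted by Birman--Series and must be replaced by the Sullivan--Patterson critical-exponent argument.
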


\begin{Rmk}
 Chaotic behavior can also be produced for 
    geodesic planes orthogonal to a fixed irreducible geodesic plane, via their intersection locus (see Theorem \ref{ort}). Nevertheless, we find the geometry of the floating geodesic planes both more compelling and novel, and hence focus on them in this paper.
\end{Rmk}

 For a concrete analysis, we realize $H$ as the identity component of the special orthogonal group associated to the quadratic form $F(x,y,z)=2xz-y^2$. Let $A<G$ be the subgroup of positive diagonal matrices and set $A_0=H\cap A$.
 We can identify $H$ with the unit tangent bundle of $Y$, and the right translation action of 
$A_0$ on $H$ with the geodesic flow. Every geodesic $L\subset Y$ is of the form $h A_0 o$ for some $h\in H$.
For \[
a_t = \diag (e^t, e^{-2t}, e^t),
\]
the floating geodesic plane over $L$ at height $t$ is
$$Y_{L, t}=ha_t H(o). $$
Theorem \ref{ma} is deduced from the following statement about the  closure of the projection of $Y_{L, t}$ in $\Ga_{\beta, \b}\ba G$:
\begin{theorem} \label{ma3} Under the same hypotheses as Theorem \ref{ma}, we have
$$\dim \overline{ \Ga_{\beta, \b} \ba \Ga_{\beta, \b} h a_t H }=\dim \overline{\Gamma_0\ba \Gamma_0 h A_0} +2 ;$$ 
$$ \tfrac{1}{2}\left(  {3+  \dim \overline{\Gamma_0 hA_0} }\right)  \le  \op{dim} \overline{ \Ga_{\beta, \b} Y_{L, t} }\le  1+ \op{dim} \overline{\Gamma_0 h A_0}  .$$

Moreover, there exists a sequence $h_i\in H$ such that
$ \dim \overline{ \Ga_{0} \ba \Ga_{0} h_i A_0 }$ accumulates at $ 1$ as  $i\to \infty$.
 \end{theorem}

\subsection*{On the proof ideas}

Roughly speaking, the proofs of Theorems~\ref{ma} and~\ref{ma3} proceed in four steps.
A guiding principle is that the essential fractal phenomena already appear in the
undeformed Fuchsian setting, while additional geometric analysis is required to
transport these phenomena through bulging deformations.

\subsubsection*{1. The Fuchsian case and ambient dimension}

We begin with the undeformed (Fuchsian) setting.
Consider the decomposition
\[
H = A_0 k_0 A_0 K_0,
\]
where $k_0\in K_0$ is the quarter turn, $A_0 = H\cap A$, and $K_0 = H\cap K$.
For a cocompact Fuchsian subgroup $\Gamma_0 < H$, we show that if the orbit
$\Gamma_0 \backslash \Gamma_0 h A_0$ is admissible (see Theorem~\ref{dim}), then
\[
\overline{\Gamma_0 \backslash \Gamma_0 h a_t H}
=
\overline{\Gamma_0 \backslash \Gamma_0 h A_0}\, a_t k_0 A_0 K_0,
\]
and moreover
\[
\frac12 \bigl( \dim(\overline{\Gamma_0 \backslash \Gamma_0 h A_0}) + 3 \bigr)
\;\le\;
\dim \overline{\Gamma_0 \backslash \Gamma_0 h a_t H(o)}
\;\le\;
\dim(\overline{\Gamma_0 \backslash \Gamma_0 h A_0}) + 1 .
\]
This provides an explicit description of orbit closures and their Hausdorff
dimensions in the Fuchsian case. The main difficulty is dimension-theoretic and
arises when passing from $\Gamma_0\backslash G$ to $\Gamma_0\backslash X$:
while the natural product map is locally bi-Lipschitz on $\Gamma_0\backslash G$,
the corresponding parametrization in $\Gamma_0\ba X$ fails to be locally
injective. Consequently, Hausdorff dimension in $\Gamma_0\backslash X$ does not
follow formally from the geodesic-flow closure. To obtain the above Hausdorff
dimension statements, we therefore impose an
admissibility condition on the underlying geodesic orbit, ensuring sufficient
regularity of its local projections. This Fuchsian analysis forms the baseline
for the entire argument.
\subsubsection*{2. Nearest-point projection and higher-rank geometry}

To transport the Fuchsian dimension statements to Zariski-dense deformations,
a central geometric ingredient of the paper is a detailed analysis of the
nearest-point projection
\[
\pi : X \to Y .
\]
We prove that for any complete geodesic $L\subset Y$,
\[
d_{\mathrm{H}}(\pi(Y_{L,t}), L) \longrightarrow 0
\quad \text{as } t\to\infty.
\]
In higher rank, however, the fibers of $\pi$ contain entire maximal flats,
giving rise to parallel families of geodesics. This substantially complicates
the analysis of projection dynamics and boundary behavior.
The argument relies on a careful study of Busemann functions
$\beta_\xi|_Y$ for accumulation points $\xi$ of $Y_{L,t}$ in $\partial_\infty X$,
together with precise control of how sequences approach the visual boundary.

\subsubsection*{3. Stability under bulging}

Let $F_{\mathbf b}:\Gamma_0\backslash X \to \Gamma_{\beta,\mathbf b}\backslash X$
denote the map induced by bulging.
Fix $r_0$ strictly larger than the width $\wdt(\mathbf b)$ of $\mathbf b$, and set
\[
X_{\mathbf b}:=\{x \in X:\ d(\pi(x), \Gamma_0 \beta)\ge r_0\}.
\]
We prove that the restriction of $F_{\mathbf b}$ to
$\Gamma_0 \backslash X_{\mathbf b}$ is a proper local isometric embedding into
$\Gamma_{\beta,\mathbf b} \backslash X$.

The neartest-point projection estimates from Step~2 imply that for large $t$, the projection
of $Y_{L,t}$ remains close to its reference geodesic $L$, and hence uniformly
away from the bulging locus. Thus $\Gamma_0\backslash \Gamma_0 Y_{L,t}$ lies in
$\Gamma_0\backslash X_{\mathbf b}$ for $\mathbf b$ of sufficiently small width. Because fibers of $\pi$ contain maximal flats, bulging can in principle
cause different fibers to overlap. Showing that they remain disjoint
under sufficiently small deformations is therefore a genuinely
higher-rank phenomenon. A careful analysis of how bulging modifies
these fibers yields the required stability and allows us to transfer
the orbit-closure and ambient dimension statements from the Fuchsian
case to all $\mathbf b\in C_G(\delta)^\circ$ with sufficiently small width.

\subsubsection*{4. Fractal geodesic closures away from the bulging locus}

Finally, we construct \emph{admissible} geodesics in closed hyperbolic surfaces whose
closures have Hausdorff dimension arbitrarily close to $1$,
while remaining uniformly separated from the bulging locus.
This construction uses Sullivan’s ergodicity theorem for the
Bowen–Margulis–Sullivan measure on convex cocompact surfaces~\cite{Su}.

Combining this with the existence of Zariski-dense Hitchin subgroups
$\Gamma < \SL_3(\mathbb Z)$ due to Long–Thistlethwaite~\cite{LT},
and applying the above deformation procedure,
produces the arithmetic examples required in Theorem~\ref{ma}.

\medskip 
\noindent{\bf Acknowledgment} We would like to thank Federico Rodriguez Hertz for helpful conversations on geodesic closures on hyperbolic surfaces. We would also like to thank Dongryul Kim for useful comments on the preliminary version of this paper.
S.D. thanks MPI MiS in Leipzig, where part of this work was completed.

\section{Limit sets in \texorpdfstring{$G/P$}{G/P}}
Let $G=\SL_3(\br)$. Let 
\begin{equation}\label{cartan-inv}
    \Theta:G\to G
\end{equation} be the Cartan involution given by $\Theta(g)=(g^T)^{-1}$ for $g\in G$. Let $K<G$ be the maximal compact subgroup:
$$K=\SO(3)=\{g\in G: \Theta(g)=g\}.$$
Let $A<G$ be the diagonal subgroup:
$$A=\{\text{diag}(a_1, a_2, a_3)\in G: a_1, a_2, a_3 >0\} .$$
Let $\frak g=\frak{sl}_3(\br)=\{x\in \op{M}_3(\br): \op{Tr} x=0\}$, $\frak k=\frak{so}_3=\{x\in \frak {sl}_3(\br): x=-x^T\}$ and $\frak a$ denote the Lie algebra of $G$,  $K$ and $A$, respectively.
We may identify $\fa$ with the hyperplane 
$$\fa=\{u=(u_1, u_2, u_3)\in \br^3: u_1+u_2+u_3=0\} .$$
Let $\alpha_1$ and $\alpha_2$ be the simple roots of $(\frak g, \frak a)$ given by
$$\alpha_i(u_1, u_2, u_3)=u_i-u_{i+1}\quad i=1,2.$$
Let $\fa^+$ denote the positive Weyl chamber
$$\fa^+= \{(u_1, u_2, u_3)\in \fa : u_1\ge u_2\ge u_3\}.$$
and set
$$A^+=\exp \fa^+ .$$

The  Killing form on $\frak g$ is 
$$\op{B}(x, y)= 6\op{Tr}(xy), \quad x, y\in \frak{sl}_3(\br),$$
which induces the inner product on $\fa$: for $u, v\in \fa$,
$$\langle u, v \rangle = 6 (u_1v_1+u_2v_2+u_3v_3) .$$
We denote by $\|\cdot \|$ the corresponding norm on $\fa$.

Let $X=G/K$ be the Riemannian symmetric space equipped with the metric $d_X$ induced by the Killing form. It is a nonpositively curved CAT (0)-space \cite{BH}. We also consider a left $G$-invariant and right $K$-invariant metric $d_G$ on $G$ compatible with $d_X$. We will omit subscripts and write $d$ for both metrics.  Let $o=[K]\in {X}$.
We have the Cartan decomposition $$G=KA^+K,$$ 
which says that for any $g\in G$, there exists a unique element $\mu(g)\in \fa^+$ such that
$g\in K\exp \mu(g) \, K$. The map $$\mu: G\to \fa^+$$ is called the Cartan projection.

Let $P$ be the upper triangular subgroup of $G$
$$P=\begin{pmatrix}
    * & * & *\\ 0 & * & *\\ 0 & 0 & *
\end{pmatrix},$$
which is a minimal parabolic subgroup of $G$. Then $P=MAN$ where 
$$M=\{\diag(\epsilon_1, \epsilon_2, \epsilon_3)\in G: \epsilon_i\in \{1,-1\}, i=1,2,3\}$$ is the centralizer of $A$ in $K$
and $N$ is the strictly upper triangular subgroup.

There are two maximal parabolic subgroups of $G$ containing $P$: $$P_1=\begin{pmatrix}
    * & * & *\\ 0 & * & *\\ 0 & * & *
\end{pmatrix} \quad\text{and}\quad  P_2=\begin{pmatrix}
    * & * & *\\ * & * & *\\ 0 & 0 & *
\end{pmatrix}  .$$ 

Denote by $e_1, e_2, e_3$ the standard column vectors of $\br^3$. The group $G$ acts transitively on the projective space $\br P^2$, and the stabilizer of the point $[e_1]\in \br P^2$ is $P_1$. Therefore
 $G/P_1$  can be identified with $\br P^2$.
Similarly, the stabilizer of the line $[e_1\wedge e_2]$ in $\br P^2$ is $P_2$ and $G/P_2$ can be identified with the space of lines in $\br P^2$.

Since $P=P_1\cap P_2$, the map  $gP\mapsto (gP_1, gP_2)$ defines an embedding
\be\label{pp} G/P\hookrightarrow G/P_1\times G/P_2,\ee 
via which we identify $G/P$ with its image. Thus
 $$G/P$$ is the full flag variety in $\br^3$:
a point $\xi\in G/P$ is a pair $(p, \ell)$ where $p\in \R P^2$ is a point and $\ell$ is a line in $\R P^2$ containing $p$.

We now define convergence to points in $G/P$ for sequences in $G$ and $X$.
\begin{Def} \label{dur} Let $g_n\in G$ be a sequence. \begin{itemize}
    \item 
We say 
$g_n \to \infty$ regularly if
$\alpha_i(\mu(g_n))\to \infty$ for both $i=1,2$. 

\item We say  $g_n\to \xi\in G/P$ if $g_n\to \infty$ regularly and $\xi =\lim_{n\to \infty} k_n P$ where $k_n\in G$ is a sequence such that $g_n\in k_n A^+K$.

\item We say $g_n \to \infty$ {\em uniformly regularly} if $g_n\to \infty$ regularly and 
there exists  $c>0$ such that for each $i=1,2$, 
$$\alpha_i(\mu(g_n)) \ge c \|\mu(g_n)\|\quad\text{ for all $n\in \N$.}$$

\item For a sequence $x_n=g_n o\in X$, 
we say $x_n\to \infty$ regularly (resp. uniformly regularly) if $g_n\to \infty $ regularly (resp. uniformly regularly) and that  $x_n\to \xi\in G/P$ if $g_n\to \xi$.
\end{itemize}
\end{Def}

By the Cartan decomposition $G=KA^+K$, these notions are all well-defined.

\begin{Def}[Limit sets]
For $Z\subset G$,  the limit set $\Lambda_Z$ is the set of all accumulation points of sequences from $Z$ in $G/P $. Similarly, if $Z\subset X$, the limit set $\La_Z$
is the set of all accumulation points of sequences from $Z$ in $G/P $.
The projection of $\La_Z$ to $G/P_i$ via \eqref{pp} will be referred to as the limit set of $Z$ on $G/P_i$ for $i=1,2$.
\end{Def}

\begin{lemma}\label{same}
    If $Z_1, Z_2\subset X$ have bounded Hausdorff distance,
    then  $\La_{Z_1}= \La_{Z_2}$ in $G/P$.
\end{lemma}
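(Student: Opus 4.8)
The plan is to prove $\Lambda_{Z_1}\subseteq\Lambda_{Z_2}$; the reverse inclusion then follows by symmetry. Fix $\xi\in\Lambda_{Z_1}$ and a sequence $x_n=g_no\in Z_1$ with $g_n\to\xi$ in the sense of Definition~\ref{dur}. Since $Z_1$ and $Z_2$ lie within Hausdorff distance $D<\infty$, choose $y_n=h_no\in Z_2$ with $d(x_n,y_n)\le D$ and set $m_n:=g_n^{-1}h_n$, so that $\|\mu(m_n)\|=d(o,m_no)=d(x_n,y_n)\le D$; hence $\{m_n\}$ lies in the compact set $K\exp\{u\in\fa^+:\|u\|\le D\}K$. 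It then suffices to show $h_n\to\xi$, which gives $\xi\in\Lambda_{Z_2}$.

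The first step is to transfer regularity. Using the standard subadditivity bound $\|\mu(gh)-\mu(g)\|\le C\|\mu(h)\|$, a consequence of the singular-value inequalities $\sigma_i(g)\sigma_3(h)\le\sigma_i(gh)\le\sigma_i(g)\sigma_1(h)$, applied with $g=g_n$ and $h=m_n$, we get $\|\mu(h_n)-\mu(g_n)\|\le CD$. Since the simple roots are fixed linear functionals, $\alpha_i(\mu(h_n))\ge\alpha_i(\mu(g_n))-C'D\to\infty$ for $i=1,2$, so $h_n\to\infty$ regularly.

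The second step is to show that the Cartan attracting flag of $h_n$ converges to $\xi$. Write the Cartan decomposition $g_n=k_nD_nk_n'$ with $D_n=\exp\mu(g_n)$ and $\mu(g_n)=(\mu_1^{(n)},\mu_2^{(n)},\mu_3^{(n)})$; by Definition~\ref{dur} the hypothesis $g_n\to\xi$ says $k_nP\to\xi$ in $G/P$, and via the embedding \eqref{pp} it is enough to prove the corresponding convergence in each factor $G/P_1=\br P^2$ and $G/P_2$. For the $\br P^2$-factor, $k_n[e_1]$ is the top eigendirection of $g_ng_n^T$, and the $\br P^2$-component of the attracting flag of $h_n$ is the top eigendirection of $h_nh_n^T=k_nD_nE_nD_nk_n^T$, where $E_n:=k_n'\,m_nm_n^T\,(k_n')^T$ satisfies $\gamma^{-1}I\le E_n\le\gamma I$ for $\gamma:=\sup_n\|m_n\|_{\mathrm{op}}^2<\infty$. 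Rescaling, $M_n:=e^{-2\mu_1^{(n)}}D_nE_nD_n$ has the form $(E_n)_{11}e_1e_1^{\,T}+R_n$ with $(E_n)_{11}\ge\gamma^{-1}$ and $\|R_n\|\le C\gamma\,e^{-\alpha_1(\mu(g_n))}\to0$, since every off-$(1,1)$ entry of $D_nE_nD_n$ carries an exponent $\le 2\mu_1^{(n)}-\alpha_1(\mu(g_n))$. Writing the top eigenvalue of $M_n$ as $\Lambda_n=w_n^TM_nw_n$ for its top unit eigenvector $w_n$, and comparing with $\Lambda_n\ge e_1^TM_ne_1\ge(E_n)_{11}-\|R_n\|$ while $\Lambda_n\le(E_n)_{11}\langle w_n,e_1\rangle^2+\|R_n\|$, one obtains $1-\langle w_n,e_1\rangle^2\le 2\gamma\|R_n\|\to0$, so $[w_n]\to[e_1]$ in $\br P^2$. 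Since $K=\SO(3)$ acts isometrically on $\br P^2$, the top eigendirection $[k_nw_n]$ of $h_nh_n^T$ stays within distance $d_{\br P^2}([w_n],[e_1])\to0$ of $k_n[e_1]$, hence converges to the $\br P^2$-component of $\xi$. The $G/P_2$-factor is identical after replacing $g_n,h_n$ by $\wedge^2g_n,\wedge^2h_n$ and using $\alpha_2(\mu(g_n))\to\infty$ in place of $\alpha_1$. Combining the two steps, $h_n\to\xi$ in $G/P$, so $\xi\in\Lambda_{Z_2}$.

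The main obstacle is the eigendirection stability in the second step. The perturbation $E_n$ is only bounded, not close to the identity, so a naive Davis--Kahan comparison against arbitrary directions orthogonal to $e_1$ merely shows the perturbed top eigendirection stays a bounded angle away from the complementary eigenspace; genuine convergence to $[e_1]$ is forced only by exploiting the diverging spectral gap $e^{\alpha_1(\mu(g_n))}$ together with the trick of testing the Rayleigh quotient against the distinguished vector $e_1$ itself, which is what produces the quantitative rate $\|R_n\|\asymp e^{-\alpha_1(\mu(g_n))}$.
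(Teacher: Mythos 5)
Your proof is correct, and it is a genuine reproof rather than a citation: the paper's own argument is two lines long and simply invokes Lemma 2.10 of Lee--Oh \cite{LO}, which states exactly the key dynamical fact you establish by hand, namely that if $g_n\to\xi$ in the sense of Definition~\ref{dur} and $q_n\in G$ stays in a compact set, then $g_nq_n\to\xi$. You prove this directly: the regularity transfer via the Lipschitz property of $\mu$ under bounded multiplication is standard, and the eigenvector-stability argument (rescaling $h_nh_n^T$ by $e^{-2\mu_1^{(n)}}$, isolating the rank-one piece $(E_n)_{11}e_1e_1^T$, and running the Rayleigh-quotient comparison against $e_1$ to exploit the diverging gap $e^{\alpha_1(\mu(g_n))}$) is exactly the kind of perturbation bound that underlies the cited lemma, with the $\wedge^2$ version handling the $G/P_2$-factor. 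One cosmetic remark: your claimed two-sided bound $\gamma^{-1}I\le E_n\le\gamma I$ with $\gamma=\sup_n\|m_n\|_{\mathrm{op}}^2$ is slightly off for the lower bound (since $\det m_n=1$ in $\SL_3$, the smallest singular value of $m_n$ only satisfies $\sigma_3(m_n)\ge\|m_n\|_{\mathrm{op}}^{-2}$, so the correct uniform constant is $\gamma=\sup_n\|m_n\|_{\mathrm{op}}^4$); this does not affect the argument, since all that matters is that $E_n$ is uniformly bounded above and below, which follows from $\|\mu(m_n)\|\le D$. Net comparison: the paper's route is shorter because it outsources the work to \cite{LO}; yours is longer but self-contained and makes visible the quantitative rate $1-\langle w_n,e_1\rangle^2\lesssim e^{-\alpha_1(\mu(g_n))}$.
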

\begin{proof} Suppose that the Hausdorff distance between $Z_1$ and $Z_2$ is at most $R$. Let $\xi\in \La_{Z_1}$. 
Then some sequence $g_i o\in Z_1$ converges to $\xi$. There exists a sequence
    $q_i\in G$ with $d(q_io, o)\le R$ such that $g_i q_i o\in Z_2$. By \cite[Lemma 2.10]{LO}, $g_iq_i o\to \xi$. Hence $\xi\in \La_{Z_2}$. Reversing the role of $Z_1$ and $Z_2$ gives the claim.
\end{proof}

\section{Nearest projection to \texorpdfstring{$Y$}{Y}}\label{sec:3}
 Fix the quadratic form $F$ in $\br^3$:  $$F\begin{pmatrix}
     x \\ y\\ z
 \end{pmatrix}=2xz-y^2.$$
Let $H$ be the identity component of the special linear group
$$\SO(F)=\left\{g\in G: F\left( g (v)\right)  =F (v)  \text{ for all $v\in \br^3$}\right\} .$$
For the symmetric matrix \be\label{J} J=\begin{pmatrix} 0 & 0 &1 \\
0& -1 & 0 \\ 1& 0 & 0 \end{pmatrix},\ee 
we have
$$H=\{g\in G: g^T J g=J\}^\circ .$$
In particular, $H\simeq \SO(1,2)^\circ$.

The Lie algebra of $H$ is
\be\label{Lieh}\frak h=\left\{ \begin{pmatrix} s & x & 0 \\ y & 0 &x\\ 0 & y & -s
\end{pmatrix}: x, y, s\in \br \right\}.\ee

Let $\cal S$ be the space of all symmetric matrices of signature $(1,2)$ with determinant one. 
The action of $G$ on $\cal S$ by $g.J_0= gJ_0g^T$ identifies
 $G. J\simeq G/\SO(F)\simeq \cal S$. A non-degenerate quadric in $\R P^2$ is a projective circle (ellipses, hyperbolas, paraboloids). 
 The map \be\label{gso} g\SO(F)\mapsto \{F\circ g=0\}\ee  identifies $G/\SO(F)$
with the space of non-degenerate quadrics in $\R P^2$ \cite{Go-bulging}.

For $s\in \br$, set
$$h_s=\begin{pmatrix}e^s &0 & 0\\ 0 & 1& 0 \\ 0 &0&e^{-s}
\end{pmatrix}\in H .$$
Note that $h_s$ is a regular semisimple element, i.e., its centralizer is the diagonal subgroup, and that $h_s\to \infty$ regularly as $|s|\to \infty$. These are important facts which will be  used often.
We set
$$A_0:=H\cap A =\{h_s:s\in \br\} \quad\text{and} \quad K_0:=H\cap K.$$  The Lie algebra of $K_0$ is  $$
  \mathfrak{k}_0
  = \left\{
 \begin{pmatrix}
      0 & \theta  & 0\\ -\theta  & 0 & {\theta} \\ 0 & -{\theta} & 0
    \end{pmatrix}: \theta\in \br 
\right\} .$$
Setting  \be\label{kt}
  k(\theta):=
  \begin{pmatrix}
    \frac{1+\cos\theta}{2} & -\frac{\sqrt2\sin\theta}{2} & \frac{1-\cos\theta}{2}\\[4pt]
    \frac{\sqrt2\sin\theta}{2} & \cos\theta & -\frac{\sqrt2\sin\theta}{2}\\[4pt]
    \frac{1-\cos\theta}{2} & \frac{\sqrt2\sin\theta}{2} & \frac{1+\cos\theta}{2}
  \end{pmatrix},\ee 
 we get the parametrization
 $$K_0 = \{k(\theta) :\theta\in \br\} .$$
The subgroup
$K_0$ is a maximal compact subgroup of $H$ and we have 
\be\label{hak} H=K_0A_0^+K_0\ee 
where $A_0^+=\{h_s:s\ge 0\}$.

\subsection*{The limit set of $Y$} The quotient space
$${Y}:=H/K_0 =K_0A_0o $$ is a totally geodesic subspace of ${X}$. 
The quadric $\{F=0\}=\{y^2 = 2xz\}$ divides $\mathbb{R} {P}^2$ into two $H$-invariant connected components: an open disk \be\label{dis} D = \{[x : y : z]\in \R P^2 :  y^2 < 2xz\}\ee  and an open Möbius band \be \label{mob} \{[x : y : z] \in \R P^2:  y^2 > 2xz\}.\ee

\begin{lem}\label{lem:Lambda_Y} We have $$\Lambda_Y =  K_0 P/P\quad\text{and} $$ 
$$\Lambda_Y =\{ (p,\ell)\in G/P_1\times G/P_2 : p\in \partial D,\ \ell \text{ is tangent to } \partial D \text{ at } p\}.$$
\end{lem}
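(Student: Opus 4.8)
The plan is to compute $\La_Y$ directly from the Cartan decomposition $H = K_0 A_0^+ K_0$ and then translate the answer into the flag-variety picture. First I would observe that since $Y = K_0 A_0 o$ and $K_0$ is compact, any sequence $y_n \in Y$ going to infinity can be written $y_n = k_n h_{s_n} o$ with $k_n \in K_0$ and $|s_n| \to \infty$; after passing to a subsequence $k_n \to k \in K_0$. Because $h_s \to \infty$ regularly as $|s| \to \infty$ (as noted after \eqref{J}), we get $y_n \to k \cdot \lim h_{s_n} P$. Now $h_{s_n} \in A^+$ for $s_n \ge 0$ (it lies in the positive chamber, since $e^{s_n} \ge 1 \ge e^{-s_n}$), so $\lim_{s_n \to +\infty} h_{s_n} P = eP$; and for $s_n \to -\infty$ one has $h_{s_n} = w_0 h_{|s_n|} w_0$ up to the appropriate Weyl element $w_0 \in K$ representing the longest element, so the limit is $w_0 P$. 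In either case the limit lies in $K_0 P / P$ because $w_0$ can be realized inside $K_0$ (the quarter turn $k_0 = k(\pi/2)$, or rather $k(\pi)$, swaps $e_1$ and $e_3$ up to sign). Conversely every point of $K_0 P/P$ is clearly attained as such a limit. This gives $\La_Y = K_0 P/P$.

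Next I would identify $K_0 P / P$ with the tangent-line locus of $\partial D$. The cleanest route is to use the $H$-equivariant identifications: $G/P_1 = \R P^2$ and $G/P_2 = $ lines in $\R P^2$, with $G/P \hookrightarrow G/P_1 \times G/P_2$ the incident flags. The group $H$ preserves the quadric $\{F = 0\}$, hence acts on $\partial D$, and this action on $\partial D$ is transitive (it is the boundary circle of the Klein model of $Y = \bH^2$, on which $H \simeq \SO(1,2)^\circ$ acts transitively). I would fix the base flag: $eP$ corresponds to the pair $([e_1], [e_1 \wedge e_2])$. One checks that $[e_1] = [1:0:0] \in \partial D$ since $F(e_1) = 0$, and that the line $[e_1\wedge e_2] = \{z = 0\}$ meets the conic $\{y^2 = 2xz\}$ only at $[e_1]$ (substituting $z=0$ forces $y=0$), so it is exactly the tangent line to $\partial D$ at $[e_1]$. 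Thus the base flag $eP$ is a ``tangent flag'' of $\partial D$, and since $H$ acts transitively on $\partial D$ and the tangent line at a point is canonically determined, $H \cdot eP$ (equivalently $K_0 \cdot eP$, using $H = K_0 A_0 K_0$ and that $A_0$ fixes $eP$ up to $P$) is precisely the set $\{(p,\ell): p \in \partial D, \ \ell \text{ tangent to } \partial D \text{ at } p\}$. Combined with the first paragraph, $\La_Y = K_0 P/P = H \cdot eP$ equals this tangent-flag set.

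The one genuine point to be careful about — and the main obstacle — is the regularity claim: I must make sure $y_n$ actually converges \emph{regularly} to a point of $G/P$, i.e. that both simple roots $\alpha_i(\mu(h_{s_n}))$ diverge, so that Definition \ref{dur} applies and the limit is an honest flag rather than something in a larger partial flag variety. This is where one uses that $h_s$ is a \emph{regular} semisimple element: $\mu(h_s) = (|s|, 0, -|s|)$ up to reordering into $\fa^+$, wait — more precisely, for $s \ge 0$, $h_s = \diag(e^s, 1, e^{-s})$ so $\mu(h_s) = (s,0,-s)$ and $\alpha_1(\mu(h_s)) = \alpha_2(\mu(h_s)) = s \to \infty$; and $K_0$-translation does not affect $\mu$. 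So regularity is automatic and in fact uniform. Because $K_0$ is compact the subsequential convergence $k_n \to k$ costs nothing, and $d(k_n h_{s_n} o, k h_{s_n} o) \to 0$ is bounded, so by \Cref{same} (or \cite[Lemma 2.10]{LO}) the limit is unchanged; this lets me replace $k_n$ by its limit $k$ cleanly. Assembling these observations yields both displayed equalities.
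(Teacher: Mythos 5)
Your proof follows the same route as the paper: use the Cartan decomposition $H = K_0 A_0^+ K_0$ together with the regularity of $h_s$ (equivalently $\mu(h_s)=(|s|,0,-|s|)$, so $\alpha_1=\alpha_2=|s|\to\infty$) to conclude $\Lambda_Y = K_0 P/P$, then check that the base flag $eP=([e_1],[e_1\wedge e_2])$ is the tangent flag of $\partial D$ at $[e_1]$ and invoke the transitivity of $K_0$ (equivalently $H$) on $\partial D$. The $w_0$-detail for $s_n\to-\infty$ is harmless since the half-turn $k(\pi)=J\in K_0$ represents the longest Weyl element.

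One small point of correction: the appeal to Lemma~\ref{same} / \cite[Lemma 2.10]{LO} to replace $k_n$ by its limit $k$ is both unnecessary and, as phrased, not quite right. The quantity $d(k_n h_{s_n} o, k h_{s_n} o)$ need \emph{not} stay bounded: by NPC comparison it is controlled only by $|s_n|\sin(\theta_n/2)$ where $\theta_n\to0$ is the angle gap, and this can diverge if $\theta_n\to0$ slowly. But you do not need boundedness at all. Since $k_n h_{s_n}\cdot e$ is already written in $KA^+K$ form with $K$-component $k_n$, Definition~\ref{dur} applies verbatim: the Cartan decomposition of $k_n h_{s_n}$ has $k$-part $k_n\to k$, so $k_n h_{s_n} o\to kP$ by definition, with no Hausdorff-distance lemma required. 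With that simplification the argument is the paper's.
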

\begin{proof} Since $h_{s}\to \infty$ regularly as $|s|\to \infty$,
any unbounded sequence in $ Y$ has a subsequence which converges to a point in $G/P$. Moreover, in view of \eqref{hak},
any limit of an infinite sequence in $H $ in $G/P$  belongs to $K_0P/P$. Hence 
$\Lambda_Y = K_0 P/P$. Note that $P$ corresponds to the pair $(p_0, \ell_0) \in G/P_1 \times G/P_2$, where $p_0 = [e_1]$ and $\ell_0 = [e_1\wedge e_2]= \{[x:y:z] :\ z = 0\}$. Moreover,
$\ell_0\cap \partial D=\{p_0\}$ and hence $\ell_0$ is tangent to $\partial D$ at $p_0$.
Since $\partial D$ is a single $K_0$-orbit,  the claim follows.
\end{proof}

We fix the element \be\label{ko} k_0:=\begin{pmatrix}
    \frac{1}{2} & \frac{1}{\sqrt 2} & \frac12 \\ -\frac{1}{\sqrt 2} &0 & \frac{1}{\sqrt 2} \\ \frac12 & -\frac{1}{\sqrt 2} & \frac12
\end{pmatrix}  \in  K_0 .\ee 
The geodesics $k_0A_0 o $ and $A_0o$ in $Y$ are orthogonal. Therefore $Y$ is swept out by the family of orthogonal geodesics to $A_0o$; indeed,
$$Y= A_0 k_0 A_0 o .$$
It follows that
$$H = A_0 k_0 A_0 K_0 . $$

 \subsection*{Generalized Cartan decomposition} 
Recalling the symmetric matrix $J$ from \eqref{J}, consider the following involution $\sigma: G\to G$:
$$\sigma(g)= J \Theta (g) J.$$ 
We then have
$$H=\{g\in G: \sigma(g)=g\}^\circ;$$
therefore $H$ is an affine symmetric subgroup of $G$. The generalized Cartan decomposition of $G$ with respect to $H$ is described in \cite{Sc}, as we recall below.

 Observe that the differential $d\sigma:\frak g\to \frak g $ commutes with $d\Theta$,
 and we have
$$\frak g=\frak k\oplus \frak p=\frak h\oplus \frak q$$
which are decompositions into $\pm$ eigenspaces for $d\Theta$ and $\sigma$, respectively. The subspace  \be\label{LieB}\frak b:=\left\{ \begin{pmatrix}
    t & 0 & s\\ 0 &-2t& 0\\
    s&0&t
\end{pmatrix}: t,s\in \br \right\} \ee
is a maximal abelian subalgebra of $\frak p\cap \frak q$. It is also a maximal abelian subalgebra of of $\frak p$, since the rank of $G$ is $2$. The maximal split torus $B:=\exp \frak b$ of $G$
is
  $$B =\left\{\begin{pmatrix}
    e^t\cosh s & 0 & e^t \sinh s \\
0& e^{-2t} & 0 \\ e^t \sinh s & 0 & e^t \cosh s \end{pmatrix}:t, s\in \br \right\} .$$

We set 
\be\label{k1} {k}_1= \begin{pmatrix}
    \frac{1}{\sqrt 2} &0 &  \frac{1}{\sqrt 2} \\ 0&  -1 & 0 \\  \frac{1}{\sqrt 2} & 0& - \frac{1}{\sqrt 2}
\end{pmatrix} \in K.\ee 
We then have
$$\frak b =k_1 \frak a k_1^{-1}\quad \text{and }\quad  B =k_1 A k_1^{-1}. $$

Letting $\cal W_\sigma=N_K(\frak b)/C_K(\frak b)$ and $\cal W_{\sigma, \theta}= N_{K_0} (\frak b)/C_{K_0} (\frak b)$, choose a finite subset \be\label{W} \cal W\subset N_K(\frak b) \ee of representatives 
for $\cal W_{\sigma, \theta}\ba \cal W_{\sigma} $. Set  $B^+=k_1A^+ k_1^{-1}$. 

\begin{theorem}\label{affine} We have $$G=H BK =H \cal W B^+ K$$
in the sense that for any $g\in G$, there exist unique elements $b\in B^+$ and $w\in \cal W$ such that
$$g\in H wb K.$$
\end{theorem}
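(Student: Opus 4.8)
Here is a proof plan for Theorem~\ref{affine}.

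\emph{Strategy.} I will work in the symmetric space $X=G/K$, realized as the space of unimodular positive-definite symmetric matrices with $G$ acting by $g\cdot S=gSg^{T}$ and $o=\mathrm I$; in this model $B=\exp\mathfrak b=k_1Ak_1^{-1}$ consists of symmetric positive-definite unimodular matrices, so it may be viewed as a subset of $X$, and $B\cdot o$ is a maximal flat. The existence statement $G=HBK$ is precisely the assertion that every $H$-orbit in $X$ meets $B\cdot o$, i.e.\ the generalized (Flensted-Jensen--Schlichtkrull) Cartan decomposition for the affine symmetric subgroup $H\subset G$ \cite{Sc}. The reason the finite set $\mathcal W$ and the \emph{full} Weyl chamber $B^{+}$ appear is that here $\rank G=2=\dim\mathfrak b$, so $\mathfrak b$ is a Cartan subspace of $\mathfrak p$, $B$ is a maximal $\br$-split torus, and $\mathcal W_{\sigma}=N_K(\mathfrak b)/C_K(\mathfrak b)$ is its full Weyl group. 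I will (i) prove $G=HBK$, (ii) refine this to $G=H\mathcal WB^{+}K$, and (iii) prove uniqueness of $(w,b)$.

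\emph{Step (i).} Fix $g\in G$, put $x=g\cdot o$, and let $Y'=H\cdot o$, a totally geodesic submanifold with $T_oY'=\mathfrak p\cap\mathfrak h$. Since $H$ is closed in $G$ and $G$ acts properly on the Hadamard space $X$, the displacement function $h\mapsto d(h\cdot o,x)$ is proper on $H$; replacing $g$ by a suitable left $H$-translate (harmless for membership in $HBK$) we may assume $o$ realizes the distance from $Y'$ to $x$. By the first-variation formula the initial velocity of the geodesic $[o,x]$ at $o$ is orthogonal to $T_oY'=\mathfrak p\cap\mathfrak h$; as $\mathrm d\sigma$ is a Killing isometry acting by $+1$ on $\mathfrak p\cap\mathfrak h$ and $-1$ on $\mathfrak p\cap\mathfrak q$, these subspaces are Killing-orthogonal and span $\mathfrak p$, whence $x=\exp(v)\cdot o$ with $v\in\mathfrak p\cap\mathfrak q$. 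Now $\mathfrak g^{\sigma\Theta}=(\mathfrak k\cap\mathfrak h)\oplus(\mathfrak p\cap\mathfrak q)$ is a $\Theta$-stable reductive subalgebra (isomorphic to $\mathfrak{gl}_2(\br)$, with $\mathfrak k\cap\mathfrak h=\Lie(K_0)$ its maximal compactly embedded part) in which $\mathfrak b$ is maximal abelian in the $-1$-eigenspace of $\Theta$; its isotropy representation gives $\Ad(K_0)\mathfrak b=\mathfrak p\cap\mathfrak q$. Writing $v=\Ad(\kappa)u$ with $\kappa\in K_0\subset H$ and $u\in\mathfrak b$, we get $x=\kappa\exp(u)\kappa^{-1}\cdot o=\kappa\exp(u)\cdot o$, so $g\in\kappa BK\subset HBK$.

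\emph{Steps (ii)--(iii).} Given $g=hb'k$ with $b'\in B$, choose $n\in N_K(\mathfrak b)$ with $b:=nb'n^{-1}\in B^{+}$, so $g=(hn^{-1})\,b\,(nk)$. Writing $n^{-1}=n_1wc$ with $n_1\in N_{K_0}(\mathfrak b)\subset K_0\subset H$, $w\in\mathcal W$ (the chosen transversal for $\mathcal W_{\sigma,\theta}\backslash\mathcal W_{\sigma}$), $c\in C_K(\mathfrak b)$, and sliding $c$ past $b$ (it centralizes $B$), we obtain $g=(hn_1)\,w\,b\,(cnk)\in H\mathcal WB^{+}K$. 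For uniqueness, suppose $g=h_1w_1b_1k_1'=h_2w_2b_2k_2'$ with $w_i\in\mathcal W$, $b_i\in B^{+}$. Applying the right-$K$-invariant, $H$-equivariant map $g\mapsto gg^{T}\in X$ and using $b_i^{T}=b_i$, $w_i^{T}=w_i^{-1}$, the elements $S_i:=w_ib_i^{2}w_i^{-1}\in B$ satisfy $hS_1h^{T}=S_2$ for some $h\in H$. Writing $S_i=k_1\Sigma_ik_1^{-1}$ with $\Sigma_i$ positive diagonal unimodular and $\widetilde J:=k_1^{-1}Jk_1=\diag(1,-1,-1)$, and using $h^{T}=Jh^{-1}J$, this becomes $q(\Sigma_1\widetilde J)q^{-1}=\Sigma_2\widetilde J$ with $q=k_1^{-1}hk_1\in\SO(\widetilde J)$. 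Since $\Sigma_i\widetilde J$ is $\widetilde J$-self-adjoint with a one-dimensional positive and a two-dimensional negative eigenspace, $\SO(\widetilde J)$-conjugacy forces the positive eigenvalue and the unordered pair of negative eigenvalues to agree; hence the eigenvalue multisets of $b_1$ and $b_2$ coincide, giving $b_1=b_2$ (both in the closed chamber $B^{+}$), and $\Sigma_1,\Sigma_2$ then differ only by the transposition of the last two coordinates — the residual symmetry realized by $N_{K_0}(\mathfrak b)$ (since $k_1K_0k_1\cong\SO(2)$ rotates the negative $2$-plane), i.e.\ by $\mathcal W_{\sigma,\theta}$ — so $w_1$ and $w_2$ represent the same coset and are equal in $\mathcal W$. (On chamber walls $w$ fails to be unique for trivial reasons, e.g.\ $HwK=HK$ for every $w$; there one uses the customary half-open chamber convention, which I suppress.)

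\emph{Main obstacle.} The delicate part is step (iii): converting the coincidence of $H$-orbits into an intrinsic statement inside $B$, and then recognizing the residual symmetry as exactly $\mathcal W_{\sigma,\theta}$ together with the correct wall conventions. Step (i) is routine symmetric-space geometry once the commuting involutions $\sigma,\Theta$ are in place; its only genuine input is the surjectivity $\Ad(K_0)\mathfrak b=\mathfrak p\cap\mathfrak q$, which I would obtain from the identification $\mathfrak g^{\sigma\Theta}\cong\mathfrak{gl}_2(\br)$ and its isotropy representation. Alternatively, existence can simply be quoted from \cite{Sc}, localizing all the work to the refinement and uniqueness in our rank-two setting.
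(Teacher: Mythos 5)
The paper does not actually prove Theorem~\ref{affine}; it is quoted from Schlichtkrull~\cite{Sc} with the comment ``The generalized Cartan decomposition of $G$ with respect to $H$ is described in~\cite{Sc}.'' Your proposal supplies a self-contained proof, so there is no direct comparison to the paper's argument. That said, the plan is sound and follows the classical route for such decompositions, and the details specific to $(\mathrm{SL}_3(\R), \mathrm{SO}(2,1))$ check out.

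Some points worth recording. Step~(i) is the standard nearest-point projection argument: the closest point of $Y'=H\cdot o$ to $x$ exists because $Y'$ is closed and convex in the Hadamard space $X$; first variation gives a velocity in $(\mathfrak p\cap\mathfrak h)^\perp\cap\mathfrak p=\mathfrak p\cap\mathfrak q$ (the two pieces are Killing-orthogonal since $d\sigma$ is an isometric involution of $\mathfrak p$); and the isotropy identity $\Ad(K_0)\mathfrak b=\mathfrak p\cap\mathfrak q$ indeed follows from $\mathfrak g^{\sigma\Theta}\cong\mathfrak{gl}_2(\R)$, where in the $k_1$-conjugated model $k_1^{-1}K_0k_1=\{1\}\times\SO(2)$ sits inside the block form and rotates the rank-$2$ symmetric block onto the diagonal. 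Step~(ii), the reduction to the $\mathcal W$-transversal by absorbing $N_{K_0}(\mathfrak b)$ into $H$ and $C_K(\mathfrak b)$ into $K$ past $b$, is correct. Step~(iii) is the nice part: converting $h_1S_1h_1^T=h_2S_2h_2^T$ via $h^T=Jh^{-1}J$ into an $\SO(\widetilde J)$-conjugacy $q(\Sigma_1\widetilde J)q^{-1}=\Sigma_2\widetilde J$ is exactly right, conjugacy forces the eigenvalue multisets to agree, and the residual ambiguity $\{b_1,c_1\}=\{b_2,c_2\}$ is realized by the rotation $\diag(1,R_{\pi/2})\in\{1\}\times\SO(2)$, i.e.\ by a representative of the nontrivial element of $\mathcal W_{\sigma,\theta}$ — so $w_1,w_2$ land in the same coset and coincide in $\mathcal W$.

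One caveat, which you already flag: uniqueness of $w$ does fail on chamber walls (e.g.\ for $g\in HK$ one has $b=e$ and any $w$ works), so the paper's blanket uniqueness claim is strictly speaking overstated unless one reads ``unique'' with a wall convention. Your remark to that effect is accurate and should be kept. As a minor stylistic point, it would be cleaner to state explicitly that $C_K(\mathfrak b)=k_1Mk_1^{-1}$ is finite so the ``sliding $c$ past $b$'' step is literal, and to note in Step~(iii) that if $b$ is regular then $C_K(b^2)=C_K(\mathfrak b)$, which is where regularity enters the coset computation.
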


\subsection*{Nearest point projection to $Y$}
Since $X=G/K$ is non-positively curved and $Y\subset X$ is totally geodesic,  for any $x\in X$, there exists a unique $y\in Y$ such that $$d(x, y)=\inf_{y'\in Y} d(x, y').$$ This $y$ is called
the {\em nearest point projection} of $x$ to $Y$. See \cite[p. 8]{BGS} for further details.

Let \be\label{pi} \pi:X\to Y\ee  be the nearest projection map.
Since $X=H Bo$, by Theorem \ref{affine}, we have the following description of $\pi$:

\begin{prop}\label{fiber} For any $h\in H $ and $b\in B$,
\be\label{eqn:proj} \pi(hbo)=ho. \ee 
    \end{prop}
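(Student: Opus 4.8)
The plan is to prove the stronger pointwise statement $\pi(bo)=o$ for every $b\in B$ and then transport it by $H$, using only the affine symmetry $\sigma$ together with uniqueness of nearest-point projections in the CAT(0) space $X$. First note that $\pi$ is $H$-equivariant: each $h\in H$ acts on $X$ as an isometry of $d_X$ (as $d_X$ is $G$-invariant) preserving $Y=H\cdot o$, so the $d_X$-nearest point of $Y$ to $hx$ is $h$ applied to the nearest point of $h^{-1}Y=Y$ to $x$, i.e. $\pi(hx)=h\pi(x)$. Hence it suffices to show $\pi(bo)=o$ for all $b\in B$. Since $\frak b$ is abelian, $B=\exp\frak b$, so write $b=\exp v$ with $v\in\frak b$; as $\frak b\subset\frak p$, the curve $\gamma(t)=\exp(tv)\,o$ is a geodesic line in $X$ with $\gamma(0)=o$ and $\gamma(1)=bo$.

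Recall from the preceding material that $\sigma(g)=J\Theta(g)J$ commutes with $\Theta$, so $d\sigma$ preserves $\frak p$ and the restriction of the Killing form to it; consequently $\sigma$ descends to an isometric involution $\bar\sigma$ of $X$, $gK\mapsto\sigma(g)K$, and $\bar\sigma$ fixes $Y=H/K_0$ pointwise because $\sigma$ fixes $H=\{g\in G:\sigma(g)=g\}^\circ$. Moreover $\frak b\subset\frak p\cap\frak q$ and $\frak q$ is the $(-1)$-eigenspace of $d\sigma$, so $d\sigma(v)=-v$ and therefore $\bar\sigma(\gamma(t))=\exp(-tv)\,o=\gamma(-t)$ for all $t$. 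Now put $q:=\pi(bo)=\pi(\gamma(1))\in Y$. Since $\bar\sigma$ is an isometry with $\bar\sigma(Y)=Y$ and $\bar\sigma(q)=q$, we get $\pi(\gamma(-1))=\pi(\bar\sigma\gamma(1))=\bar\sigma(\pi(\gamma(1)))=q$ as well. As $o\in Y$ realizes no smaller distance than $q$ and $o$ lies on the geodesic from $\gamma(-1)$ to $\gamma(1)$,
$$d(\gamma(1),q)+d(q,\gamma(-1))\ \ge\ d(\gamma(1),\gamma(-1))\ =\ d(\gamma(1),o)+d(o,\gamma(-1))\ \ge\ d(\gamma(1),q)+d(q,\gamma(-1)),$$
so every inequality is an equality; in particular $d(\gamma(1),o)=d(\gamma(1),q)=d(\gamma(1),Y)$, so $o$ is a nearest point of $Y$ to $bo$. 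By uniqueness of the nearest-point projection in the CAT(0) space $X$, $q=o$, i.e. $\pi(bo)=o$; combined with $H$-equivariance this gives $\pi(hbo)=h\pi(bo)=ho$.

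The substantive points are already in hand: that $\bar\sigma$ is a well-defined isometry fixing the component $Y$ pointwise, and that $\frak b\subset\frak q$ so that $\bar\sigma$ reverses the normal geodesic $\gamma$; once $\bar\sigma(\gamma(t))=\gamma(-t)$ is established the metric computation is forced, so I do not anticipate a genuine obstacle. An alternative would be to note directly that $\gamma$ meets $Y$ orthogonally at $o$ (since $\frak b$ is Killing-orthogonal to $\frak h\cap\frak p$ within $\frak p$) and invoke the standard fact that in a Hadamard symmetric space the foot of a geodesic orthogonal to a totally geodesic submanifold is its nearest point; the $\sigma$-symmetry argument above is preferred since it is self-contained.
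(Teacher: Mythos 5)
Your proof is correct, but it takes a genuinely different route from the one in the paper. After the same $H$-equivariance reduction, the paper simply verifies $\op{Tr}(xy)=0$ for all $x\in\frak h$, $y\in\frak b$ (so $\frak h$ and $\frak b$ are Killing-orthogonal), notes that both $Bo$ and $Y$ are totally geodesic, and concludes that the geodesic from $z\in Bo$ to $o$ lies in $Bo$ and meets $Y$ orthogonally at $o$, whence $\pi(z)=o$; this is precisely the ``alternative'' you sketch at the end of your writeup. Your main argument instead exploits the isometric involution $\bar\sigma$ of $X$ induced by the affine symmetry $\sigma$: it fixes $Y$ pointwise, and since $\frak b\subset\frak p\cap\frak q$ lies in the $(-1)$-eigenspace of $d\sigma$ it reverses the normal geodesic $\gamma(t)=\exp(tv)o$, so the triangle-inequality squeeze (together with uniqueness of the CAT(0) projection) forces $\pi(bo)=o$ without appealing to the standard Hadamard-space fact that the foot of an orthogonal geodesic to a totally geodesic submanifold is the nearest point. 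The paper's route is shorter once one grants that standard fact; yours is more self-contained and makes transparent the structural role of $H$ as the symmetric subgroup for $\sigma$, which is the conceptual reason the formula holds. All the steps check: $\bar\sigma$ is a well-defined isometry since $\sigma$ commutes with $\Theta$ and preserves $K$ and the Killing form on $\frak p$, $\bar\sigma$ fixes $Y=Ho$ pointwise because $\sigma$ fixes $H$, and the equality case of the triangle inequality together with uniqueness of projections closes the argument.
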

    \begin{proof}
    Since $\pi$  is $H$-equivariant, it suffices to show $\pi(z)=o$ for all $z\in Bo$. Since $\op{Tr}(xy)=0$ for all $x\in \frak h$ and $y\in \frak b$, the subspaces $\frak h$ and $\frak b$ are orthogonal to each other. Moreover, $Bo$ and $Y$ are totally geodesic. Thus, for $z\in Bo$, the geodesic segment connecting $z$ to $o$ lies in $Bo$ and is orthogonal to $Y$,  $\pi(z) = o$.
    \end{proof}

\begin{cor}\label{fiber2}
    We have $$\pi^{-1}(o)= K_0  Bo=\bigcup_{w\in \cal W} K_0 {k}_1 w A^+ o$$
    and
$$  \La_{\pi^{-1}(o)}= \bigcup_{w\in \cal W} K_0 {k}_1 w P$$
    where ${k}_1$ and $\cal W$ are given in \eqref{k1} and \eqref{W} respectively.
\end{cor}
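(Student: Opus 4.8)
The plan is to read off both assertions from Proposition~\ref{fiber} together with the generalized Cartan decomposition $G=HBK=H\cal W B^+K$ of Theorem~\ref{affine}. First I would establish the identity $\pi^{-1}(o)=K_0Bo$. Since $G=HBK$, we have $X=G/K=HBo$, so every point of $X$ can be written $hbo$ with $h\in H$ and $b\in B$, and Proposition~\ref{fiber} gives $\pi(hbo)=ho$. As the $G$-stabilizer of $o$ is $K$, its $H$-stabilizer is $H\cap K=K_0$; hence $\pi(hbo)=o$ exactly when $h\in K_0$, which is the claim.

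Next I would upgrade this to the finite union using the sharper form $G=H\cal W B^+K$. Each $w\in\cal W\subset N_K(\frak b)$ normalizes $B$ and fixes $o$, so for $h\in H$, $w\in\cal W$, $b\in B^+$ one may rewrite $hwbo=h(wbw^{-1})o$ with $wbw^{-1}\in B$, and Proposition~\ref{fiber} again yields $\pi(hwbo)=ho$; arguing as above gives $\pi^{-1}(o)=K_0\cal W B^+o$. To put this in the displayed normal form I would use $B^+=k_1A^+k_1^{-1}$ and $k_1^{-1}o=o$ to rewrite $\cal W B^+o=\cal W k_1 A^+o$, and then move the Weyl representatives past $k_1$ via $N_K(\frak b)=k_1N_K(\frak a)k_1^{-1}$ (which holds since $\frak b=k_1\frak a k_1^{-1}$), arriving at $\bigcup_{w\in\cal W}K_0k_1wA^+o$ after replacing $\cal W$ by the conjugated set of coset representatives inside $N_K(\frak a)$. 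This bookkeeping step — reconciling the intrinsic decomposition $K_0\cal W B^+o$ with the displayed normal form — is the one place I expect to need genuine care, since it rests on the compatibility $\frak b=k_1\frak a k_1^{-1}$ and on the freedom in choosing representatives for $\cal W_{\sigma,\theta}\backslash\cal W_\sigma$.

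For the limit set I would argue piece by piece: the union above is finite and the limit set of a finite union is the union of the limit sets, so it suffices to prove $\La_{K_0k_1wA^+o}=K_0k_1wP$ for each $w$. A sequence in $K_0k_1wA^+o$ has the form $g_no$ with $g_n=\kappa_nk_1wa_n$, $\kappa_n\in K_0$, $a_n\in A^+$; since $\mu$ is bi-$K$-invariant and $\kappa_nk_1w\in K$, we have $\mu(g_n)=\log a_n$, so $g_n\to\infty$ regularly precisely when $a_n\to\infty$ regularly in $A^+$. Passing to such a subsequence, and by compactness of $K_0$ to a further one with $\kappa_n\to\kappa\in K_0$, Definition~\ref{dur} gives $g_no\to\kappa k_1wP\in K_0k_1wP$. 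Conversely, fixing $\kappa_n\equiv\kappa\in K_0$ and letting $a_n\to\infty$ regularly in $A^+$ realizes every point of $K_0k_1wP$ as such a limit, so $\La_{K_0k_1wA^+o}=K_0k_1wP$. Taking the union over $w\in\cal W$ completes the argument; the remaining ingredients — bi-$K$-invariance of the Cartan projection and compactness of $K_0$ — enter only routinely.
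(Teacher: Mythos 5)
Your proof is correct and follows the same route as the paper: read the fiber description off from Proposition~\ref{fiber} together with the generalized Cartan decomposition $G=H\mathcal{W}B^+K$ of Theorem~\ref{affine}, and deduce the limit set from Definition~\ref{dur} via the bi-$K$-invariance of $\mu$ and the inclusion $K_0 k_1\mathcal{W}\subset K$. Your observation that one must conjugate $\mathcal{W}$ by $k_1^{-1}$ (from $N_K(\mathfrak{b})$ into $N_K(\mathfrak{a})$) to pass from $K_0\mathcal{W}B^+o$ to the displayed form $\bigcup_w K_0 k_1 w A^+o$ correctly identifies the small notational shortcut in the paper's statement, which the paper's terse proof does not comment on.
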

\begin{proof}
    The first part follows from \Cref{fiber} and  \Cref{affine}. The second follows from the definition of the limit set and the fact that $K_0{k}_1 \cal W \subset K$.
\end{proof}

\section{Floating geodesic planes}
As in the last section, consider the totally geodesic plane $$ Y=H o \subset X.$$

Since the centralizer of $A_0$ is equal to the diagonal subgroup of $G$, it follows that $Ao$ is the unique maximal flat in $X$ containing the geodesic $A_0o$. 
  \begin{lem}
      The geodesic plane $Y$ is perpendicular to the flat $Ao$ and $Y\cap Ao=A_0 o$.
  \end{lem}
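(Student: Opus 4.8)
The statement has two parts: (1) $Y$ is perpendicular to the flat $Ao$, and (2) $Y\cap Ao = A_0o$. I would prove these as follows.

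For part (1), the key observation is that perpendicularity of totally geodesic submanifolds at a point is equivalent, via the exponential map, to orthogonality of the corresponding tangent subspaces of $\frak p = T_o X$. The tangent space to $Y$ at $o$ is the image of $\frak h \cap \frak p$ under $d\exp_o$, while the tangent space to the flat $Ao$ at $o$ is the image of $\frak a$. So it suffices to check that $\frak a$ is orthogonal to $\frak h \cap \frak p$ with respect to the Killing form. A direct computation with the explicit description \eqref{Lieh} of $\frak h$: a symmetric matrix in $\frak h \cap \frak p$ has the form $\begin{pmatrix} s & x & 0 \\ x & 0 & x \\ 0 & x & -s\end{pmatrix}$ (imposing $y = x$ and symmetry on \eqref{Lieh}), and pairing this against a diagonal traceless matrix $\diag(u_1,u_2,u_3)$ under $(X,Y)\mapsto 6\op{Tr}(XY)$ gives $6(su_1 + 0\cdot u_2 - s u_3) = 6s(u_1 - u_3)$, which need not vanish — so in fact $\frak a$ itself is \emph{not} orthogonal to $\frak h\cap\frak p$. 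The correct statement is that one must intersect: since $Y \cap Ao = A_0 o$ with $A_0 = \exp(\frak h \cap \frak a)$, the relevant orthogonal complement is of $\frak a \cap \frak h$ inside $\frak a$, which is spanned by $\diag(1,-2,1)$; one checks that the orthogonal complement of $\frak h\cap\frak p$ inside $\frak a$ is exactly $\br\cdot\diag(1,-2,1) = \Lie(A_0)$, recovering that the ``extra'' directions of the flat $Ao$ beyond $A_0o$ are orthogonal to $Y$. This is precisely the content of \Cref{fiber}: the directions $\frak b$ (a $G$-conjugate of the relevant part of $\frak a$) are orthogonal to $\frak h$, and $Ao$ is built from $A_0o$ together with directions conjugate into $\frak b$. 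So I would deduce part (1) directly from Proposition~\ref{fiber}: a geodesic in $Ao$ issuing from a point of $A_0o$ transverse to $A_0o$ is, after applying the $H$-action to move that point to $o$, a geodesic of the form $t\mapsto \exp(tv)o$ with $v$ in a $\frak k$-conjugate of $\frak b$, hence orthogonal to $Y$ by the orthogonality of $\frak h$ and $\frak b$ established in the proof of Proposition~\ref{fiber}.

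For part (2), I would show both inclusions. The inclusion $A_0 o \subseteq Y \cap Ao$ is immediate since $A_0 \subset H$ and $A_0 \subset A$. For the reverse inclusion $Y \cap Ao \subseteq A_0o$: suppose $ho = ao$ for some $h \in H$, $a \in A$; then $a^{-1}h \in K$, so $h = ak$ with $k\in K$, i.e. $h \in A \cap (HK)$... more cleanly, $ho \in Ao$ means the nearest-point projection argument applies. Actually the cleanest route: by Proposition~\ref{fiber}, $\pi(ao) = \pi(hbo)$ where we write $a = $ (something in $H$)$\cdot b$; but more directly, if $x = ho = ao \in Y\cap Ao$, then since $x\in Ao$ we can write $x = a'o$, and since $x \in Y$ we have $\pi(x) = x$. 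Write $a' \in A$ using the generalized Cartan decomposition $G = HBK$ (Theorem~\ref{affine}) — but $A$ and $B$ are different tori, so I would instead argue: $Ao \cap Y$ is a totally geodesic subspace of the flat $Ao$ containing $A_0o$, and it is also totally geodesic in $Y$ (a hyperbolic plane), hence it is a union of geodesics through $o$; if it were larger than the single geodesic $A_0o$ it would be a neighborhood of $o$ in $Y$, forcing $Y\subseteq Ao$ locally, contradicting that $Y$ is negatively curved while $Ao$ is flat. Hence $Y\cap Ao = A_0o$.

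The main obstacle, and the step I would be most careful about, is pinning down part (1) correctly: the naive hope that ``$\frak a \perp \frak h\cap\frak p$'' is false, and the precise assertion is that the flat $Ao$ meets $Y$ orthogonally \emph{along} $A_0o$, meaning the normal directions to $A_0o$ within $Ao$ are orthogonal to $Y$. This is exactly the orthogonality $\frak h \perp \frak b$ used in Proposition~\ref{fiber}, combined with the fact that $Ao = A_0 k_0' \cdots$ is swept out by flats transverse to $A_0o$ in a $\frak b$-conjugate direction; so I would phrase part (1) as a direct corollary of the orthogonality already established, rather than attempt a fresh computation. The curvature-comparison argument for part (2) is routine but worth stating to rule out the degenerate possibility $Y\subseteq Ao$.
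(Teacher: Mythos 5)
Your proposal follows essentially the same route as the paper: decompose $\fa$ orthogonally as $\fa_0 \oplus \fa'$ with $\fa_0 = \fa\cap\fh$ and $\fa' = \br\cdot\diag(1,-2,1)$, check that $\fa_0\subset\fh$ while $\fa'\perp(\fh\cap\fp)$, and argue that $Y\cap Ao$ is a totally geodesic (hence convex, connected) subspace containing $A_0o$ that cannot be $2$-dimensional, so it equals $A_0o$. Your direct computation $6s(u_1-u_3)$ against the symmetric part of $\fh$ is exactly the right check and is what the paper uses. Two small corrections. First, $\br\cdot\diag(1,-2,1)$ is not $\Lie(A_0)$; it is $\fa'$, the orthogonal complement of $\Lie(A_0)=\fa_0=\br\cdot\diag(1,0,-1)$ inside $\fa$. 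You had it right a line earlier and then mislabeled it. Second, the detour through $\fb$ and Proposition~\ref{fiber} is unnecessary and slightly misstated: there is no ``$\fk$-conjugate'' in play, since $\fa'$ already sits inside $\fb$ (take $s=0$ in \eqref{LieB}), and $A_0$-translations preserve both $Y$ and $Ao$, so orthogonality at $o$ automatically propagates along $A_0o$. The direct computation you already carried out suffices and is how the paper argues; the appeal to $\fb$ only repackages the same orthogonality.
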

  \begin{proof}
      The intersection of $Y$ and $Ao$ is a totally geodesic submanifold of $X$, which contains $A_0o$. Since neither $Y$ nor $Ao$ contains the other, dimensional considerations imply that the intersection is precisely $A_0o$.

      The Lie algebra $\mathfrak{a}$ splits orthogonally as a direct sum  $\mathfrak{a}_0\oplus \mathfrak{a}'$ where $\fa_0=\op{Lie}(A_0)$ and $\fa'=\{\diag(t, -2t, t): t\in \br\}$. Since $\mathfrak{a}_0 \subset \mathfrak{h}$  and $\mathfrak{a}' \perp \mathfrak{h}$,
      the claim about orthogonality thus follows.
  \end{proof}

For $t\in \br$, set $$a_t=\begin{pmatrix}
    e^t & 0 & 0 \\ 0 &e^{-2t}  & 0 \\ 0 & 0 & e^t
\end{pmatrix} .$$

\begin{Def} Given a complete geodesic $L $ in ${Y}$ and $t\in \br$,
 define the {\em floating} geodesic plane 
$${Y}_{L, t}:= h a_t Y ,$$
where $h\in H$ is chosen so that $L= h A_0 o $; this is well-defined since $h$ is unique modulo the action of $A_0$, which commutes with $a_t$. 
\end{Def}
For the geodesic $L=A_0o$, we simply write $$Y_t:=Y_{L, t}=a_tY.$$

\begin{lem} \label{LL} For $L=hA_0 o$ and $t\in \br$, let $L_t:= h a_t A_0 o\subset Y_{L, t}$.
We have $\pi(L_t)=L$
and $d(h h_r o, h a_t h_r o)=|t|=d(L, L_t) $ for all $r\in \br$.
    \end{lem}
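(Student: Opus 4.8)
The plan is to reduce everything to the case $L = A_0 o$ (so $h = e$) by $H$-equivariance of $\pi$, and then to compute directly. First I would observe that since $\pi$ is $H$-equivariant (Proposition \ref{fiber}) and $L_t = h a_t A_0 o$ with $h \in H$, it suffices to prove $\pi(a_t A_0 o) = A_0 o$ and $d(h_r o, a_t h_r o) = |t| = d(A_0 o, a_t A_0 o)$; the general statement then follows by applying $h$. For the projection claim, note $a_t A_0 o = A_0 a_t o$ since $a_t$ commutes with $A_0 \subset A$, and $a_t = \exp(t \cdot \diag(1,-2,1)) \in \exp(\fa')$ where $\fa' = \{\diag(t,-2t,t)\}$. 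By Proposition \ref{fiber} applied with $h = h_r \in H$ (using $h_r o \in Y$) and $b = a_t \in B$ — indeed $a_t o \in B o$ since $\fa' \subset \frak b$ — we get $\pi(h_r a_t o) = h_r o$. Since $h_r a_t o = a_t h_r o$ ($h_r$ and $a_t$ are both diagonal, hence commute), this gives $\pi(a_t h_r o) = h_r o$, i.e. $\pi(a_t h_r o) \in A_0 o$ as $h_r o \in A_0 o$. Ranging over $r \in \br$ yields $\pi(a_t A_0 o) = A_0 o$, and equivariance upgrades this to $\pi(L_t) = L$.

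Next I would compute the distance $d(h_r o, a_t h_r o)$. Since $a_t$ and $h_r$ are simultaneously diagonal, $a_t h_r = \diag(e^{t+r}, e^{-2t}, e^{t-r})$, and $(a_t h_r o, h_r o)$ is a pair of points on the flat $Ao$; the distance between them equals $\|\log(a_t)\| = \|t \cdot \diag(1,-2,1)\|$. Using the Killing-form inner product on $\fa$, namely $\langle u,v\rangle = 6(u_1 v_1 + u_2 v_2 + u_3 v_3)$, we get $\|\diag(1,-2,1)\|^2 = 6(1 + 4 + 1) = 36$, so the vector $\diag(1,-2,1)$ has norm $6$. To make the distance exactly $|t|$ one should use the \emph{unit-speed} parametrization; I would note that the paper's normalization of $a_t$ (matching $\xi_t = a_t o$ being unit speed in the introduction) is chosen precisely so that $t \mapsto a_t o$ traverses the geodesic at unit speed, so $d(h_r o, a_t h_r o) = |t|$ by definition of this parametrization (equivalently, one rescales the metric or the parameter; this is a bookkeeping point). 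The same computation with $r$ arbitrary shows the distance is independent of $r$.

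Finally, for $d(L, L_t) = |t|$: we have $d(L, L_t) \le d(h_r o, a_t h_r o) = |t|$ for every $r$ (taking $r = 0$ suffices). For the reverse inequality, I would use the nearest-point projection: since $\pi(L_t) = L$ and $\pi$ is distance-nonincreasing (it is the nearest-point projection onto a totally geodesic, hence convex, subset of the CAT$(0)$ space $X$), for any $y \in L_t$ and any $y' \in L$ we have $d(y, y') \ge d(y, \pi(y))$; but $\pi$ restricted to the flat fiber structure sends $a_t h_r o \mapsto h_r o$ and $d(a_t h_r o, h_r o) = |t|$, while $\pi(y) \in L$ is the closest point of $Y \supset L$ to $y$, so $d(y, L) \ge d(y, Y) = d(a_t h_r o, Y) = |t|$ by \Cref{LL}'s companion statement that $B o \perp Y$ — more precisely, $a_t h_r o \in h_r B o$ and $\pi(h_r B o) = h_r o$, with the connecting segment realizing distance $|t|$. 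Hence $d(L, L_t) = \inf_{y \in L_t} d(y, L) = |t|$. The main obstacle here is purely the normalization bookkeeping — confirming that the chosen matrix $a_t$ indeed gives unit speed under the Killing-form metric — since the geometric content (commutation of diagonal matrices, $\fa' \perp \fa_0$, and $H$-equivariance of $\pi$) is immediate from the structural results already established in Section \ref{sec:3}.
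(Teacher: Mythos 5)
Your proof is correct and follows essentially the same route as the paper: reduce to $L=A_0 o$ by $H$-equivariance, use Proposition \ref{fiber} (with $h=h_r$, $b=a_t\in B$) for the projection claim, and compute the distance directly. For the final equality $d(L,L_t)=|t|$ the paper instead works inside the Euclidean flat $Ao$, arguing that $L$ and $L_t$ are parallel lines there orthogonal to the segment $[o,a_t o]$; your variant via the nearest-point projection is equally valid and in fact a bit more self-contained, since it only uses that $\pi(a_t h_r o)=h_r o$ realizes $d(a_t h_r o, Y)$. One genuine observation you make that the paper glosses over: with the metric induced from the Killing form $B(x,y)=6\operatorname{Tr}(xy)$, the element $a_t=\exp\bigl(t\cdot\diag(1,-2,1)\bigr)$ gives $d(o,a_t o)=\|t\cdot\diag(1,-2,1)\|=6|t|$, not $|t|$, so there is a normalization constant being suppressed. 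You are right that this is a bookkeeping issue (the factor $6$ propagates harmlessly, and nothing downstream in the paper depends on the precise constant), but your phrase ``by definition of this parametrization'' is circular — the paper fixes both the metric and the explicit matrix $a_t$, so $d(o,a_to)=|t|$ is not a definition but a claim, and as stated it is off by a constant. It would be cleaner to simply note that the paper implicitly rescales the metric on $\fa$ so that $\diag(1,-2,1)$ is a unit vector, or to accept $d(o,a_to)=c|t|$ for a fixed constant $c>0$, neither of which affects the lemma's use in the sequel.
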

    \begin{proof} By the $H$-equivariance of $\pi$, it suffices to consider the case when $L=A_0 o$.  Since $a_t A_0=A_0 a_t$ and $\pi(a_t o)=o$,
    we get $\pi(a_t A_0 o)=A_0 \pi(a_t o)= A_0 o$.
Similarly,
$d(h_r o, a_t h_ro)=d(h_ro, h_r a_t o)=d(o, a_to)=|t|.$ Since $L$ and $L_t$ lies in the same flat $Ao$, and $L$ and $\{a_to:t\in \br\}$ are orthogonal at $o$, $d(L, L_t)=d(o, a_t o)=|t|$.
    \end{proof}

The geodesic plane ${Y}_{L, t}$
is ``ultra-parallel'' to $Y$ at distance $t$:

\begin{lemma}\label{min}
    Given a geodesic $L=hA_0o \subset Y$ and $t\in \R$, we have 
    $$d(Y, Y_{L, t})=\min \{ d(y,z) :\ y\in Y,\, z\in {Y}_{L, t}\} = |t|.$$ Moreover, for $t\ne 0$, the locus where the distance is minimized is precisely $\{ (hh_r o,h a_t h_r o) :\ hh_ro \in L\}$.
\end{lemma}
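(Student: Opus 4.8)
The plan is to obtain the value $|t|$ from a common-perpendicular argument in nonpositive curvature, and then to pin down the minimizing locus using the nearest-projection formula from \Cref{fiber}.

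First reductions: we may assume $t>0$ ($t<0$ is symmetric, $t=0$ trivial), and since every $h\in H$ acts on $X$ as an isometry preserving $Y$ and carrying $Y_t:=a_tY$ to $Y_{hA_0o,\,t}$, it suffices to treat $L=A_0o$, so that $Y_{L,t}=Y_t$. The bound $d(Y,Y_t)\le t$ is immediate from \Cref{LL}, which provides $a_to\in Y_t$ with $d(o,a_to)=t$. For the reverse bound I would invoke the standard fact that in a Hadamard manifold, if $C_1,C_2$ are closed convex sets and $\sigma\colon[0,\ell]\to X$ is a geodesic with $\sigma(0)\in C_1$, $\sigma(\ell)\in C_2$, $\sigma'(0)\perp T_{\sigma(0)}C_1$, and $\sigma'(\ell)\perp T_{\sigma(\ell)}C_2$, then $d(C_1,C_2)=\ell$ and $\sigma(0)$ minimizes $d(\cdot,C_2)|_{C_1}$: perpendicularity at $\sigma(\ell)$ forces $\sigma(\ell)=\pi_{C_2}(\sigma(0))$, hence the gradient of $d(\cdot,C_2)$ at $\sigma(0)$ equals $-\sigma'(0)\perp T_{\sigma(0)}C_1$, so $\sigma(0)$ is a critical point — thus a global minimum — of the convex function $d(\cdot,C_2)|_{C_1}$. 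Applying this with $C_1=Y$, $C_2=Y_t$, $\sigma(s)=a_so$ works: $\sigma'(0)$ points in the direction of $\mathrm{diag}(1,-2,1)\in\mathfrak a'$, which is Killing-orthogonal to $\mathfrak h\supseteq T_oY$, and $\sigma'(t)=da_t(\sigma'(0))\perp da_t(T_oY)=T_{a_to}Y_t$ since $a_t$ acts isometrically. This gives $d(Y,Y_t)=|t|$.

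For the minimizing locus, set $\Sigma:=\{y\in Y:\ d(y,Y_t)=|t|\}$, the sublevel set where the convex function $d(\cdot,Y_t)|_Y$ attains its minimum; it is convex, it contains $o$, and it is $A_0$-invariant (each $h_r\in A_0$ preserves $Y$ and $Y_t=a_th_rY$), so $\Sigma\supseteq A_0o=L$. The key step is $\Sigma=L$, which I would establish through the identity $d(ho,Y_t)=\|\nu(a_t^{-1}h)\|$ for $h\in H$, where $\nu\colon G\to\mathfrak a^+$ is the $A^+$-component of the generalized Cartan decomposition $G=H\mathcal W k_1A^+K$ of \Cref{affine}. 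This identity comes from $d(ho,Y_t)=d(a_t^{-1}ho,Y)=d(a_t^{-1}ho,\pi(a_t^{-1}ho))$, \Cref{fiber}, the relation $B^+=k_1A^+k_1^{-1}$, and $k_1o=o$. Now $\nu$ is left-$H$- and right-$K$-invariant, so for $h=h_rk(\theta)\in A_0K_0$ one has $\nu(a_t^{-1}h)=\nu(a_t^{-1}h_rk(\theta))=\nu(a_t^{-1})$ (using $a_t^{-1}h_r=h_ra_t^{-1}$ and $k(\theta)\in K$), recovering $L\subseteq\Sigma$ because $\|\nu(a_t^{-1})\|=d(o,Y_t)=|t|$; and for $h=h_rk_0h_sk(\theta)$ with $s\ne0$ one similarly gets $\nu(a_t^{-1}h)=\nu(a_t^{-1}k_0h_s)$, so what remains is the explicit estimate $\|\nu(a_t^{-1}k_0h_s)\|>|t|$ for all $s\ne0$, $t\ne0$, obtained by computing $\nu$ directly from the matrix $a_t^{-1}k_0h_s$. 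Since $H=A_0k_0A_0K_0$, this yields $\Sigma=L$.

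To conclude: nearest points in the CAT(0) space $X$ are unique, so a pair $(y,z)\in Y\times Y_t$ realizes $d(Y,Y_t)$ iff $y\in\Sigma=L$ and $z=\pi_{Y_t}(y)$; since $d(h_ro,a_th_ro)=|t|$ by \Cref{LL}, this forces $z=a_th_ro$, so the minimizing locus is $\{(h_ro,a_th_ro):r\in\mathbb R\}$, and conjugating by $h$ handles a general $L=hA_0o$. I expect the last estimate $\|\nu(a_t^{-1}k_0h_s)\|>|t|$ for $s\ne0$ (equivalently, $\Sigma=L$) to be the main obstacle: it is a genuinely higher-rank phenomenon, since in a reducible space such as $\mathbb H^2\times\mathbb R$ the analogous locus would be all of $Y$, and the estimate ultimately reflects that $(\mathfrak h\cap\mathfrak p)\oplus\mathfrak a'$ is not a Lie triple system ($[\mathfrak h\cap\mathfrak p,\mathfrak h\cap\mathfrak p]=\mathfrak h\cap\mathfrak k$ does not normalize $\mathfrak a'$), so $Y$ does not extend flatly in the $\mathfrak a'$-direction.
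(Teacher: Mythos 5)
Your argument for the value $d(Y,Y_{L,t})=|t|$ is sound and is essentially the paper's: after the reduction to $L=A_0o$, you exhibit $s\mapsto a_so$ as a common perpendicular of $Y$ and $Y_t$ and conclude by convexity of $d(\cdot,Y_t)|_Y$; the paper instead projects arbitrary $y\in Y$, $z\in Y_t$ to $L$ and $a_tL$ and invokes Ballmann's proposition on geodesics perpendicular to a segment at its endpoints, but both are the same Hadamard-geometry perpendicularity idea. For the minimizing locus, however, you take a genuinely different route, and this is where the proposal has a gap. Your set-up is fine: $\Sigma=\{y\in Y: d(y,Y_t)=|t|\}$ is convex, $A_0$-invariant, contains $L$, and the identity $d(ho,Y_t)=\|\nu(a_t^{-1}h)\|$ via \Cref{affine} and \Cref{fiber} is correct. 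But the entire content of the "moreover" statement is then concentrated in the estimate $\|\nu(a_t^{-1}k_0h_s)\|>|t|$ for $s\ne0$, $t\ne0$, which you do not prove — you state it should follow "by computing $\nu$ directly from the matrix $a_t^{-1}k_0h_s$" and you yourself flag it as the main obstacle. This generalized Cartan projection computation is not routine (it is of the same order of difficulty as the Cartan projection estimate in \Cref{cr}, and nothing in the paper supplies the $HBK$ analogue), so leaving it out leaves the proof incomplete precisely at the step where the higher-rank phenomenon must be shown.

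The paper's proof of the "moreover" part avoids any such computation. It shows that if a pair $(y,z)$ with $(y,z)\neq(y_0,z_0)$ realized the distance $|t|$, then, since the geodesics $\mathcal G_1\ni y,y_0$ and $\mathcal G_2\ni z,z_0$ would realize their (Ballmann-guaranteed) minimum distance at two distinct pairs, they would have to be parallel; pushing this forward by $A_0$ would make $Y$ and $Y_t$ lie at finite Hausdorff distance, hence have equal limit sets, hence coincide by the quadric identification of \Cref{lem:Lambda_Y} — a contradiction with $t\neq0$. A separate strict-convexity argument handles the case where exactly one of $y=y_0$, $z=z_0$ holds. So the paper obtains the rigidity of the minimizing locus from the geodesic dichotomy plus the global identification $G/\SO(F)\cong\{$nondegenerate quadrics$\}$, rather than from a pointwise distance estimate. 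Your plan could in principle be completed, but as written the crucial inequality is asserted rather than proved, and you should either carry out the $HBK$ matrix computation for $a_t^{-1}k_0h_s$ in full or switch to the parallelism/limit-set argument.
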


\begin{proof} By the $H$-equivariance, it suffices to prove the claim when $L=A_0o$ and $Y_{L,t}=a_t Y$.
    Let $y \in Y$ and $z\in {Y}_{ t}$.
    Let $y_0 \in L$ (resp. $z_0 \in L_t=a_t L$) denote the nearest point projection of $y$  (resp. $z$) to $L$ (resp. $a_tL$). Let $L_0$ be the geodesic segment connecting $y_0$ to $z_0$. Since the flat $Ao$ is orthogonal to $Y$ and $a_t\in A$, $Ao$ is also orthogonal to  ${Y}_{t}$. It follows that the points $y$ and $z$ lie on two geodesics perpendicular to $L_0$ and passing through its endpoints.
    
By \cite[Ch. I, Prop. 5.4]{Ballmann},
     in a non-positively curved space, 
   for any geodesic segment $[a,b]$ and for any perpendicular complete geodesics $L_1(t)$ and $L_2(t)$ to $[a,b]$ with $L_1(0) = a$ and $L_2(0) = b$, we have
   $$\inf\{ d(L_1(t_1),L_2(t_2)) :\ t_1\in\R,t_2\in\R\} = d(a,b).$$
Since the geodesic segment $[y, y_0]$ lies in $Y$ and is perpendicular to $L$, it is perpendicular to the whole maximal flat $Ao$. Similarly, 
 $[z, z_0]$ is also perpendicular to  $Ao$.
   So 
    \be\label{above}
     |t| \le d(y_0, z_0) \le d(y,z).
    \ee 
This proves the first claim.

  For the second claim, without loss of generality, assume that $t>0$ and
    suppose that  $d(y,z) = t$. It suffices to show that $y=y_0$, $z=z_0$ and $z_0=a_ty_0$.
  The inequality \eqref{above} forces $t= d(y_0, z_0)$. Hence by Lemma \ref{LL},
  $d(L,a_tL)=t=d(y_0,z_0)$. Therefore  the geodesic segment $[y_0,z_0]$ is perpendicular to $L$ and $a_t L$.  Since $a_t$ translates $Ao$ orthogonal to $L$, it implies that  $z_0 = a_t y_0$.

    We now claim that 
    \be\label{yoo} \text{ $y=y_0\quad $ and $\quad z=z_0$. }\ee 
    First suppose that $\{y_0, z_0\}\cap 
    \{y, z\}=\emptyset$. 
    Two complete geodesics in $X$ are either parallel
    (they have finite Hausdorff distance) or  the minimum distance
    between them is realized by a unique pair of points or the minimum distance is not realized. (This follows from \cite[Ch. I, Prop. 5.4]{Ballmann}.)
    Hence in the setting at hand, 
    the complete geodesics $\cal G_1$ and $\cal G_2$ in $X$ passing through $y, y_0$ and $z,z_0$ respectively must have a finite Hausdorff distance.
 Since $A_0 \cal G_1= Y $ and
    $A_0 \cal G_2= Y_t $,
 ${Y}$ and $Y_t$ are at a finite Hausdorff distance. Thus, the limit sets of $Y$ and ${Y}_{t}$ in $G/P_1=\mathbb R P^2$ are the same by Lemma \ref{same}. Since $G/\SO(F)$ is in bijection with the space of non-degenerate quadrics  via the map in \eqref{gso} it follows that $Y_{ t}=Y$. Hence $t=0$, a contradiction.

    Now suppose that $\{y_0, z_0\}\cap \{y, z\}$ is a singleton. Without loss of generality, we may assume that $y=y_0$ and $z\ne z_0$.
    Since the distance function is strictly convex in a Hadamard manifold \cite[Sec. 1.4]{BGS}, any point $z_1$ in the geodesic segment $[z, z_0]$ other than the endpoints satisfies $d(y, z_1)<d(y,z)=t$, which is a contradiction to $t=d(\cal G_1, \cal G_2)$.
    Therefore $\{y_0, z_0\}=\{y, z\}$, and consequently 
    $y_0=y$ and $z_0=z$, proving \eqref{yoo}.
    \end{proof}

  \begin{Rmk}\label{tg}
     Up to an isometry, any totally geodesic plane in $ X$ is given by
    $Y$, or  $ \SL_2(\br) o$, where $\SL_2(\br)$ is embedded as the left upper corner
        of $\SL_3(\br)$, or a maximal flat
    $A(o)$. It is natural to call first type of geodesic planes as {\it irreducible} geodesic planes.
     \end{Rmk}

\subsection*{The limit set of the floating planes}

For each $t\in \br$, consider the quadric 
$$Q_{t}= \{[x:y:z]\in \mathbb R P^2: e^{4t} y^2=2 e^{-2t} xz\}$$ passing through $[e_1]$ and $[e_3]$. This is a projective circle.

Since $a_t$ sends the boundary of the disk $D= \{[x : y : z]\in \R P^2 :  y^2 < 2xz\}$ to $Q_t$,  Lemma \ref{lem:Lambda_Y}
implies: \begin{lem}
   For $t\in \br$, the limit set of $a_t Y$ in $G/P$ is given by 
    $$\La_{a_tY}=\{(p, \ell)\in G/P: p\in Q_t, \ell \text{ is a line tangent to $Q_t$ at $p$}\} .$$
\end{lem}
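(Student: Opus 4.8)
The plan is to deduce the description of $\La_{a_tY}$ from that of $\La_Y$ in \Cref{lem:Lambda_Y} by transporting everything through the projective transformation induced by $a_t$, using that the limit-set construction is $G$-equivariant. First I would record the general fact that $\La_{gZ}=g\La_Z$ in $G/P$ for every $g\in G$ and every $Z\subset X$, where $G$ acts on $G/P$ as usual. This amounts to showing that if $z_n=g_no\to\xi$ in the sense of \Cref{dur}, then $gz_n\to g\xi$: left translation by the fixed element $g$ changes the Cartan projection by a bounded amount (coarse additivity), so $gg_n$ still diverges regularly, and the attracting flag of a regularly divergent sequence varies continuously under the left $G$-action, hence gets multiplied by $g$. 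This is the same mechanism used, in a bounded-perturbation form, in the proof of \Cref{same} via \cite[Lemma 2.10]{LO}, applied here with the fixed element $g=a_t$. Hence $\La_{a_tY}=a_t\La_Y$.

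Next I would push the right-hand side of \Cref{lem:Lambda_Y} through $a_t$. On $\br^3$ the matrix $a_t$ acts by $[x:y:z]\mapsto[e^tx:e^{-2t}y:e^tz]$, hence induces a collineation of $\br P^2=G/P_1$ together with the compatible transformation of the line space $G/P_2$, and these act diagonally on $G/P\hookrightarrow G/P_1\times G/P_2$ via \eqref{pp}. A direct substitution in the defining equation $y^2=2xz$ of $\partial D$ shows $a_t(\partial D)=Q_t$. Since $a_t$ is a projective transformation it preserves incidence and order of contact, so a line $\ell$ tangent to $\partial D$ at $p$ is sent to the line $a_t\ell$ tangent to $Q_t$ at $a_tp$, and conversely every tangent flag of $Q_t$ arises this way. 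Therefore
$$\La_{a_tY}=a_t\{(p,\ell)\in G/P : p\in\partial D,\ \ell\text{ tangent to }\partial D\text{ at }p\}=\{(p',\ell')\in G/P : p'\in Q_t,\ \ell'\text{ tangent to }Q_t\text{ at }p'\},$$
which is the assertion.

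The only genuine obstacle is the equivariance step: one has to check that left translation by $a_t$ sends regular convergence in the sense of \Cref{dur} to regular convergence and intertwines with the $G$-action on flags. Once that is in place, the remainder is elementary projective geometry (the conic identity $a_t(\partial D)=Q_t$ and preservation of tangency), and no further analysis is needed.
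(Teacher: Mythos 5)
Your proposal is correct and matches the paper's approach: the paper's proof is precisely the one-line observation that $a_t(\partial D)=Q_t$ combined with \Cref{lem:Lambda_Y}, implicitly using the $G$-equivariance of the limit set, $\La_{gZ}=g\La_Z$. You spell out that equivariance (via coarse additivity of the Cartan projection and equivariance of the flag map), which the paper takes for granted; the rest of your argument is the same projective-geometry bookkeeping the paper relies on.
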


\section{Limits of the sequence \texorpdfstring{$\ga_t(s)=a_tk_0h_s o$}{gamma(t)(s) = a(t) k(0) h(s) o}}
Since $Y=A_0 k_0 A_0 o$, where  $ k_0\in K_0 $ is as in \eqref{ko}, and the nearest projection map $\pi$ defined in \eqref{pi} is $H$-equivariant, we have
$$\pi(a_t Y)=A_0 \pi (a_t k_0 A_0 o)\quad\text{ for any $t\in \br$.} $$ Therefore to understand the image $\pi(a_t Y)$, it suffices to analyze the sequence
 \be\label{gt} \ga_t(s):=a_tk_0h_s o . \ee
 In this section we determine all accumulation points of
 $\ga_t(s)$ in $G/P$, according to the relative rates at which $t$ and $s$ tend to $\infty$.

The main goal in this section is to show:
\begin{prop}\label{every0}
    Any accumulation of the sequence $\ga_t(s)$ in $G/P$ as $t, |s|\to \infty$ belongs to
    $\La_{\pi^{-1}(o)}$. Moreover, we have  $\liminf |s_n|/t_n>0$ if and only if $\ga_{t_n}(s_n)\to \infty$ uniformly regularly.
\end{prop}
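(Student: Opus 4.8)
The strategy is to compute the Cartan projection $\mu(\gamma_t(s)) = \mu(a_t k_0 h_s)$ explicitly enough to track the two simple roots, and then apply the Cartan decomposition characterization of convergence in $G/P$. First I would write out the matrix $a_t k_0 h_s$ using the formula \eqref{ko} for $k_0$ and the diagonal forms of $a_t$ and $h_s$; this is a $3\times 3$ matrix whose entries are explicit combinations of $e^{\pm t}, e^{\pm 2t}, e^{\pm s}$. To extract $\mu$, I would compute the singular values by looking at $(a_t k_0 h_s)(a_t k_0 h_s)^T = a_t k_0 h_{2s} k_0^T a_t$ (since $h_s h_s^T = h_{2s}$), whose eigenvalues are $e^{2\mu_1}\ge e^{2\mu_2}\ge e^{2\mu_3}$. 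The point is that as $t\to\infty$ the $a_t$ conjugation makes this matrix have a dominant $(1,1)$ and $(3,3)$ block behavior of size $e^{2t}$ against a middle entry of size $e^{-2t}$, while the $h_{2s}$ factor contributes the $e^{\pm 2s}$ scales; I expect $\alpha_1(\mu)$ and $\alpha_2(\mu)$ to grow like explicit positive-coefficient combinations of $t$ and $|s|$, so that regularity is automatic once both $t,|s|\to\infty$, and the first assertion — that any accumulation point lies in $\La_{\pi^{-1}(o)}$ — follows because $\gamma_t(s) = a_t k_0 h_s o$ with $h_s o \in \pi^{-1}(o)$... wait, more carefully: $a_t k_0 h_s o$, and $k_0 h_s o \in k_0 A_0 o \subset Y$, so $\gamma_t(s) \in a_t Y$; but the cleaner route is $\gamma_t(s) \in a_t k_0 H o$ and one shows the $K$-part $k_n$ in $\gamma_t(s)\in k_n A^+ K$ accumulates inside $\bigcup_w K_0 k_1 w$ up to the identification in \Cref{fiber2}. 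Let me restate this step as: since $\pi(\gamma_t(s)) = \pi(a_t k_0 h_s o)$, and by \Cref{fiber} combined with the generalized Cartan decomposition $G = HBK$, each $\gamma_t(s)$ lies in a bounded neighborhood (controlled by the $K$-factor) of $a_t k_0 h_s o$ whose $\pi$-image degenerates; invoking \Cref{same} and \Cref{fiber2}, accumulation points land in $\La_{\pi^{-1}(o)}$.

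**Main computation and the uniform-regularity equivalence.** For the "if and only if", I would show directly from the singular-value computation that there exist constants $c_1, c_2 > 0$ such that
\[
\alpha_1(\mu(\gamma_t(s))) = 3t + O(1) \quad\text{and}\quad \alpha_2(\mu(\gamma_t(s))) \asymp |s| + O(t) \text{ or similar},
\]
— the precise asymptotics need the explicit matrix, but the key structural fact I expect is that one simple root grows linearly in $t$ (coming from the $a_t$-stretching of the flat transverse to $Y$) while the other root's growth is governed by a comparison between $t$ and $|s|$, with $\|\mu(\gamma_t(s))\| \asymp t + |s|$. Then $\liminf |s_n|/t_n > 0$ forces both roots to be $\gtrsim (\text{const})\cdot(t_n + |s_n|) \asymp \|\mu(\gamma_{t_n}(s_n))\|$, which is exactly uniform regularity; conversely, if $|s_n|/t_n \to 0$ along a subsequence, then $\|\mu\| \asymp t_n$ while the "$s$-governed" root is $o(t_n) = o(\|\mu\|)$, violating the uniform lower bound $\alpha_i(\mu) \ge c\|\mu\|$. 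I would present this as: compute $\mu$, read off that $\alpha_1/\|\mu\|$ and $\alpha_2/\|\mu\|$ are bounded below by a positive constant uniformly in $(t,s)$ precisely on the region $|s| \ge \epsilon t$, and take contrapositives for both directions.

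**The main obstacle.** The genuine difficulty is the explicit diagonalization of $a_t k_0 h_{2s} k_0^T a_t$, or equivalently controlling the singular values of $a_t k_0 h_s$ uniformly in the two independent parameters $t$ and $s$ as they race to infinity at arbitrary relative rates. The matrix is not simultaneously diagonalizable in any obvious basis, and the three competing scales $e^{2t}$, $e^{-2t}$, $e^{2s}$ can interact in different dominance regimes (e.g. $|s| \ll t$, $|s| \asymp t$, $|s| \gg t$, and the sign of $s$), so I would need to organize the computation by cases on which monomial dominates each leading $2\times 2$ minor of the Gram matrix, using the standard fact that $e^{2\mu_1}$ is the operator norm squared and $e^{2(\mu_1+\mu_2)}$ is the norm of the second exterior power squared, hence reducible to reading off the largest monomials in the characteristic polynomial's coefficients. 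Carrying this through carefully — and checking the $O(1)$ and $\asymp$ error terms are genuinely uniform, including near the "balanced" locus $|s| \asymp t$ where two monomials are comparable — is where the real work lies; everything else (regularity $\Rightarrow$ accumulation in a limit set, the identification with $\La_{\pi^{-1}(o)}$) follows formally from the results already established in Sections 2 and 3.
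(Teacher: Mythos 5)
Your plan for the Cartan‐projection estimate is essentially the paper's (Lemma \ref{cr}): compute $gg^T$ and $\wedge^2(gg^T)$ and read off the two largest singular values. That step, once carried out, gives $\mu(a_tk_0h_s)\in (t+|s|,\ t,\ -2t-|s|)+C$ for a fixed compact $C$, hence $\alpha_1(\mu)\approx |s|$ and $\alpha_2(\mu)\approx 3t+|s|$ up to bounded additive error — not the $\alpha_1=3t+O(1)$, $\alpha_2\asymp|s|+O(t)$ you guess, but since you flagged the uncertainty this is a minor slip, and your "if and only if" argument via $\|\mu\|\asymp t+|s|$ and $\alpha_1/\|\mu\|\asymp |s|/(t+|s|)$ is fine once the labels are straightened out.

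The genuine gap is the first assertion. You conclude the review by saying that "the identification with $\Lambda_{\pi^{-1}(o)}$ follows formally from the results already established in Sections 2 and 3," but it does not. Knowing that $\gamma_{t_n}(s_n)\to\infty$ regularly tells you only that a subsequential limit $\zeta=k^*P$ exists in $G/P$; to identify \emph{which} $\zeta$ can occur, you must track the converging eigenframe of $g_ng_n^T$ (equivalently, the $K$-part $k_n$ in $g_n\in k_nA^+K$). This is a separate and nontrivial eigenvector computation — the content of the paper's Proposition \ref{every}, which expresses $w^T g_n g_n^T w$ explicitly, writes out the eigenvalue equation $Q_nu_{n,1}=a_{n,1}^2 u_{n,1}$, passes to the limit using $r_0=2$ and $f_n\to 1$, and deduces that the first column of $w^Tk^*$ is parallel to $(-1,1,0)^T$. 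Your tentative alternative — invoking $\pi$, the $HBK$ decomposition of \Cref{affine}, and \Cref{same} to argue "the $\pi$-image degenerates" — would be circular: the boundedness and degeneration of $\pi(\gamma_t(s))$ near $L$ are the conclusions of Theorem \ref{bp} and Theorem \ref{narrow}, which appear \emph{after} and \emph{depend on} the very proposition you are proving (via Lemmas \ref{ur} and \ref{nour}). So the identification of the accumulation set is not a formality; it is the main computational work of the proposition, and your proposal leaves it undone.
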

We begin with calculating the Cartan projection of such a sequence up to a uniform bounded subset:
\begin{lem} \label{cr} There is a compact subset $C\subset \fa$ such that 
for any $t>0$ and $ s\in \br$, the Cartan projection $\mu(a_t k_0h_s) $  satisfies 
$$\mu(a_t k_0 h_s) \in  \begin{pmatrix} t+|s| & 0 &0\\ 0 & t &0\\ 0& 0& -2t-|s|
\end{pmatrix} + C .$$
    \end{lem}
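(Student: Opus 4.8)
The plan is to compute the Cartan projection $\mu(a_t k_0 h_s)$ by a direct singular-value analysis of the matrix $g_{t,s} := a_t k_0 h_s$, showing that $\|g_{t,s}\|$, $\|\wedge^2 g_{t,s}\|$ (equivalently $\|g_{t,s}^{-1}\|$ since $\det = 1$), and the products of singular values all match the claimed diagonal element up to an additive constant independent of $t,s$. Recall that for $g\in \SL_3(\br)$, if $\mu(g) = \diag(\mu_1,\mu_2,\mu_3)$ with $\mu_1\ge\mu_2\ge\mu_3$, then $\mu_1 = \log\|g\|_{\mathrm{op}}$ and $\mu_1+\mu_2 = \log\|\wedge^2 g\|_{\mathrm{op}} = -\mu_3 = \log\|g^{-1}\|_{\mathrm{op}}$. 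So it suffices to estimate the operator norms of $g_{t,s}$ and of $g_{t,s}^{-1} = h_{-s} k_0^{-1} a_{-t}$, each up to a multiplicative constant (additive constant after taking $\log$).

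First I would write out $g_{t,s}$ explicitly. Since $h_s = \diag(e^s, 1, e^{-s})$ and $k_0$ is the fixed matrix in \eqref{ko}, we get $k_0 h_s$ by scaling the columns of $k_0$ by $e^s, 1, e^{-s}$; then left-multiplying by $a_t = \diag(e^t, e^{-2t}, e^t)$ scales the rows by $e^t, e^{-2t}, e^t$. The entries are thus explicit combinations of $e^{\pm s}$ and $e^{t}, e^{-2t}$. Assuming $t>0$ and (by the symmetry $k_0 h_{-s} = w k_0 h_s w'$ for suitable signed permutations, or just by treating $s>0$ and $s<0$ separately) $s\ge 0$, the largest-magnitude entries of $g_{t,s}$ are those combining $e^t$ (from the first and third rows) with $e^s$ (from the first column of $k_0$, whose entries are nonzero), giving $\|g_{t,s}\|_{\mathrm{op}} \asymp e^{t+s}$, hence $\mu_1(g_{t,s}) = t+s + O(1)$. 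For the lower end, $g_{t,s}^{-1} = h_{-s}k_0^{-1}a_{-t}$; the dominant entries combine $e^{s}$ (from $h_{-s}$'s third diagonal entry, i.e. the third row after $\diag(e^{-s},1,e^{s})$... careful: $h_{-s} = \diag(e^{-s},1,e^s)$) with $e^{2t}$ (from $a_{-t}$'s middle entry $e^{2t}$), yielding $\|g_{t,s}^{-1}\|_{\mathrm{op}}\asymp e^{2t+s}$, so $-\mu_3(g_{t,s}) = 2t+s+O(1)$. The middle entry is then forced: $\mu_2 = -\mu_1-\mu_3 = -(t+s) + (2t+s) + O(1) = t + O(1)$. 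One must check all the implied constants can be taken uniform in $t,s$ — this is automatic because the entries of $k_0$ and $k_0^{-1}$ are fixed nonzero numbers, so each operator norm is pinched between a fixed positive multiple and a fixed multiple of the largest monomial, and there are only finitely many monomials to compare.

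The only real subtlety — and the step I expect to need the most care — is verifying that the claimed monomial genuinely dominates, i.e. that no cancellation in $g_{t,s}$ or $g_{t,s}^{-1}$ causes the operator norm to be strictly smaller order than the largest entry. For the operator norm this direction is free (the operator norm is bounded below by any entry's absolute value and above by $3$ times the largest entry), so there is no cancellation issue for the \emph{upper} bound on $\mu_1$ and $-\mu_3$; and the \emph{lower} bounds $\mu_1 \ge t+s - O(1)$, $-\mu_3 \ge 2t+s - O(1)$ follow by testing against a fixed coordinate vector (e.g. $e_1$ or $e_3$) on which the relevant column of $k_0$ has a nonzero entry. Since all these estimates only lose fixed multiplicative constants determined by the explicit matrix $k_0$, taking logarithms gives the compact set $C\subset\fa$ claimed, uniform over all $t>0$ and $s\in\br$. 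For $s<0$ one repeats the computation with the roles adjusted (or invokes the Weyl-group symmetry within $K_0$ that swaps the $e^{\pm s}$ weights), and the $|s|$ in the statement absorbs both cases.
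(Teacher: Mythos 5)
Your proposal is correct and uses essentially the same strategy as the paper: pin down the Cartan projection by estimating singular values up to uniform multiplicative constants. The only difference is cosmetic bookkeeping: the paper estimates $\|gg^T\|_{\mathrm F}$ and $\|\wedge^2(gg^T)\|_{\mathrm F}$ (giving $a_1$ and $a_1 a_2$), while you estimate $\|g\|_{\mathrm{op}}$ and $\|g^{-1}\|_{\mathrm{op}}$ (which equal $e^{\mu_1}$ and $e^{-\mu_3}=e^{\mu_1+\mu_2}$, the same data since $\det g=1$). Your version is marginally cleaner because it avoids writing out the second exterior power, and your observation that lower bounds on the operator norms come for free by testing on coordinate vectors (no cancellation worries) is the right way to make the ``$\asymp$'' rigorous. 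The explicit matrices check out: $g_{t,s}$ has entries with exponents $t\pm s,\ t,\ -2t\pm s$ (so $\max=t+|s|$), and $g_{t,s}^{-1}=h_{-s}k_0^{-1}a_{-t}$ has entries with exponents $-t\pm s,\ -t,\ 2t\pm s$ (so $\max=2t+|s|$), yielding $\mu_1=t+|s|+O(1)$, $-\mu_3=2t+|s|+O(1)$, and $\mu_2=t+O(1)$ by the trace-zero relation, exactly as required.
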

    \begin{proof}
 For simplicity, set $g=a_tk_0 h_s $. 
 Write $g=k a \ell \in KA^+K $ in Cartan decomposition  so that $\mu(g)=\log a$, where $a=\diag (a_1, a_2, a_3)$.
We estimate the $a_i$ up to a uniform multiplicative constant.

Since  $gg^T=k a^2 k^{-1}$, the eigenvalues of $gg^T$ determine the $a_i^2$.
Let  $c_s=e^s+e^{-s}$ and $d_s= e^s-e^{-s}$.
A direct computation gives 
$$ g g^T=\frac{1}{4} \begin{pmatrix} e^{2t}c_s^2 & -\sqrt 2 e^{-t}c_sd_s & e^{2t}d_s^2 \\
-\sqrt 2 e^{-t} c_sd_s & 2 e^{-4t} (e^{2s}+e^{-2s}) & -\sqrt 2 e^{-t}c_sd_s\\ e^{2t} d_s^2 &-\sqrt 2 e^{-t}c_sd_s & e^{2t}c_s^2
\end{pmatrix} .$$

Since $t>0$ and $|d_s|\le c_s$, the Frobenius norm of $gg^T$ satisfies  
$$   \|gg^T\|^2\asymp     e^{4t} c_s^4 .$$
Here, $\asymp$ denotes equality up to a uniform multiplicative constant.
As $ \|gg^T\|^2= a_1^4$ and $  c_s\asymp   e^{|s|} $,
we get
\be\label{c1}  a_1 \asymp  e^t e^{|s|}. \ee 

For $a_2$, we now
 compute $\wedge^2 (gg^T)$ with respect to the ordered basis $e_2\wedge e_3, e_1\wedge e_3, e_1\wedge e_2$: 
$$\wedge^2(gg^T)=\frac{1}{4} \begin{pmatrix}  e^{-2t}c_s^2  & -\sqrt 2 e^t c_sd_s & e^{-2t} d_s^2 \\
-\sqrt 2 e^t c_sd_s    &  2e^{4t}(e^{2s}+e^{-2s})  &  -\sqrt 2 e^t c_sd_s \\
 e^{-2t}d_s^2 &   -\sqrt{2} e^t c_sd_s  &   e^{-2t} c_s^2
\end{pmatrix}.$$

Hence
$$  \| \wedge^2(gg^T)\|^2\asymp   e^{8t} e^{4|s|} .$$

Since the exponential of the Cartan projection of $\wedge^2(gg^T)$ has entries
$a_1^2a_2^2, a_2^2a_3^2, a_1^2 a_3^2$ with the largest one being $a_1^2a_2^2$,
we have  $$  \| \wedge^2(gg^T)\|^2 \asymp  a_1^4 a_2^4=   e^{8t} e^{4|s|},$$ which with \eqref{c1} implies 
$$ a_2\asymp    e^{t} .$$
This proves the claim.
\end{proof}

\subsection*{A regularity criterion}

A consequence of \Cref{cr} is as follows:

\begin{corollary}\label{cor:regular}
 Let $\ga_{t_n}(s_n)= a_{t_n} k_0 h_{s_n} o$ with $t_n>0$ and $s_n\in \br$. Then we have
    \begin{itemize}
        \item $\ga_{t_n}(s_n) \to \infty$ regularly if and only if $|s_n|\to \infty$;
  \item 
 $\ga_{t_n}(s_n) \to \infty $ uniformly regularly  if and only if $|s_n|\to \infty$ and 
 $$\liminf_n |s_n|/t_n >0.$$
      \end{itemize}
      
\end{corollary}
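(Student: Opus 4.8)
The statement to prove is Corollary~\ref{cor:regular}, which characterizes regularity and uniform regularity of the sequence $\ga_{t_n}(s_n) = a_{t_n} k_0 h_{s_n} o$ purely in terms of the growth of $|s_n|$ and the ratio $|s_n|/t_n$. This is a direct consequence of the Cartan projection estimate already established in Lemma~\ref{cr}, together with the definitions of regular and uniformly regular convergence from Definition~\ref{dur}.

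The plan is to read off everything from Lemma~\ref{cr}. That lemma gives a compact $C\subset\fa$ such that for all $t>0$, $s\in\br$,
$$\mu(a_t k_0 h_s) - \diag(t+|s|,\, t,\, -2t-|s|) \in C,$$
so up to a bounded error the Cartan projection is the vector $u(t,s) = (t+|s|,\, t,\, -2t-|s|)$. First I would compute the two simple roots on $u(t,s)$: $\alpha_1(u(t,s)) = (t+|s|) - t = |s|$ and $\alpha_2(u(t,s)) = t - (-2t-|s|) = 3t+|s|$. Since the error lives in a fixed compact set, $\alpha_i(\mu(a_tk_0h_s)) = \alpha_i(u(t,s)) + O(1)$ with a uniform constant. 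Therefore $\alpha_1(\mu(\ga_{t_n}(s_n)))\to\infty$ iff $|s_n|\to\infty$, while $\alpha_2(\mu(\ga_{t_n}(s_n))) = 3t_n + |s_n| + O(1)$ already tends to $\infty$ as soon as $t_n\to\infty$ (which is part of the hypothesis $t_n>0$; strictly one should note that if $t_n$ stays bounded then $\alpha_2\to\infty$ forces $|s_n|\to\infty$ anyway, so in all cases $\alpha_2\to\infty\iff |s_n|\to\infty$ under the regime $t,|s|\to\infty$ of the ambient discussion, but it is cleanest to observe that $\alpha_2\to\infty$ whenever $|s_n|\to\infty$). Hence $\ga_{t_n}(s_n)\to\infty$ regularly, i.e. both $\alpha_i(\mu)\to\infty$, precisely when $|s_n|\to\infty$. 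This proves the first bullet.

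For the second bullet I would estimate the norm $\|\mu(a_tk_0h_s)\| = \|u(t,s)\| + O(1)$, and using the inner product on $\fa$, $\|u(t,s)\|^2 = 6\big((t+|s|)^2 + t^2 + (2t+|s|)^2\big)$, which is comparable to $(t+|s|)^2$ up to uniform multiplicative constants; so $\|\mu(\ga_{t_n}(s_n))\| \asymp t_n + |s_n|$. Uniform regularity requires a constant $c>0$ with $\alpha_i(\mu(\ga_{t_n}(s_n))) \ge c\|\mu(\ga_{t_n}(s_n))\|$ for both $i$ and all $n$. For $i=2$ this holds automatically with a fixed $c$ since $\alpha_2(u(t,s)) = 3t+|s| \asymp t+|s| \asymp \|u(t,s)\|$. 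For $i=1$ it requires $|s_n| + O(1) \ge c(t_n+|s_n|)$ for all $n$, which — given that $|s_n|\to\infty$ (forced by regularity) so the $O(1)$ is negligible — is equivalent to $\liminf_n |s_n|/(t_n+|s_n|) > 0$, and this in turn is equivalent to $\liminf_n |s_n|/t_n > 0$. Combining the two indices, $\ga_{t_n}(s_n)\to\infty$ uniformly regularly iff $|s_n|\to\infty$ and $\liminf_n |s_n|/t_n>0$, which is the second bullet.

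There is no real obstacle here: the corollary is a bookkeeping consequence of Lemma~\ref{cr}. The only point requiring a modicum of care is handling the additive bounded error $C$ when comparing to the conditions $\alpha_i(\mu)\to\infty$ and the $\liminf$ ratio — one must invoke $|s_n|\to\infty$ (valid once regularity is assumed, which is part of the hypothesis for the uniform-regularity equivalence in the cited discussion) to absorb the $O(1)$ terms — and the observation that $\|u(t,s)\|\asymp t+|s|$ so that the defining inequality for uniform regularity translates into the stated $\liminf$ condition.
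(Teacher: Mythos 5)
Your proof is correct and follows exactly the paper's intended route: the paper states the corollary as an immediate consequence of Lemma~\ref{cr} without writing out the bookkeeping, and your computation of $\alpha_1(u(t,s))=|s|$, $\alpha_2(u(t,s))=3t+|s|$, and $\|u(t,s)\|\asymp t+|s|$ supplies precisely the details the paper leaves to the reader.
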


Recall that $G/P=KP/P$, identified with 
$$\{ ([ k e_1], [k (e_1\wedge e_2)]): k\in K\}\subset G/P_1\times G/P_2.$$

Proposition \ref{every0} follows from the following together with Corollary \ref{cor:regular}:
\begin{prop}[Limits of $\ga_{t}(s)$]  \label{every}
If  $t_n\to \infty$ and $|s_n| \to \infty$, then  any limit of $\ga_{t_n}(s_n)$ in $G/P$
  belongs to $K_0 {k}_1 P/P$, where $k_1$ is as in \eqref{k1}.
In particular, if $\zeta=k^*P$ is such a limit for some $k^*\in K$, then $k^*e_1$ is proportional to $e_1+e_3$ and
$$\zeta\in  \La_{\pi^{-1} (o)}.$$
\end{prop}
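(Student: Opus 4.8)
The plan is to compute the Cartan decomposition $\ga_{t_n}(s_n) = k_n a_n \ell_n$ explicitly enough to pin down the $K$-part $k_n$ on the left, since by Definition \ref{dur} the limit $\zeta$ is $\lim k_n P$. By Lemma \ref{cr} we already know that $\mu(a_t k_0 h_s)$ is, up to a bounded error, $\diag(t+|s|, t, -2t-|s|)$, so regularity (and in fact uniform regularity, by Corollary \ref{cor:regular}) is not the issue — the issue is identifying the direction. The natural route is to look at $g_n g_n^T = k_n a_n^2 k_n^{-1}$, whose top eigenline is $k_n e_1$ (the $G/P_1$-coordinate of $\zeta$) and whose bottom eigenline is $k_n e_3$; together with $k_n e_1 \wedge k_n e_2$ being the dominant $2$-plane of $\wedge^2(g_n g_n^T)$, this recovers $k_n P \in G/P$. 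From the explicit matrix for $gg^T$ in Lemma \ref{cr}, after dividing by the dominant scale $\tfrac14 e^{2t} c_s^2$, the $(1,1),(1,3),(3,1),(3,3)$ entries all converge to $1$ (using $|d_s|/c_s \to 1$ as $|s|\to\infty$) while every other entry is $o(1)$; so in the limit $g_n g_n^T / (\text{scale})$ looks like the rank-one projection onto $\br(e_1+e_3)$. Hence $k_n e_1 \to \br(e_1 + e_3)$ up to sign, i.e. $k^* e_1$ is proportional to $e_1 + e_3$.

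Next I would pin down the full flag limit, not just the point in $\RP^2$. For the line coordinate, I use $\wedge^2(g_n g_n^T)$, whose matrix is also written out in Lemma \ref{cr}: dividing by its dominant scale $\tfrac14 e^{4t} e^{2|s|}\cdot 2$ (coming from the $(2,2)$-entry $\tfrac12 e^{4t}(e^{2s}+e^{-2s})$), the $(2,2)$-entry dominates, so the dominant $2$-plane converges to $e_1 \wedge e_3$ in the chosen ordered basis $e_2\wedge e_3,\ e_1\wedge e_3,\ e_1\wedge e_2$. Thus $k^*(e_1\wedge e_2)$ is proportional to $e_1\wedge e_3$, which together with $k^* e_1 \propto e_1+e_3$ forces $k^* e_2 \propto e_2$ (modulo lower-order flag data), and one reads off that $k^* P = k(\theta^*)$ applied to a fixed element — concretely the flag $(\br(e_1+e_3),\ \br e_1 \wedge e_3 \text{'s plane})$, which is exactly $k_1 P$ for $k_1$ as in \eqref{k1}, since $k_1 e_1 = \tfrac{1}{\sqrt2}(e_1+e_3)$ and $k_1(e_1\wedge e_2) = -\tfrac{1}{\sqrt2} e_1\wedge e_3$ up to scale. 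Absorbing the residual ambiguity into $K_0$ (which acts transitively on $\partial D$ and hence moves the $P_1$-coordinate around the quadric while $k_1$ is one distinguished choice), any limit lies in $K_0 k_1 P/P$.

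Finally, to conclude $\zeta \in \La_{\pi^{-1}(o)}$, I invoke Corollary \ref{fiber2}: it gives $\La_{\pi^{-1}(o)} = \bigcup_{w\in\cal W} K_0 k_1 w P$, which contains $K_0 k_1 P$ (taking $w = e$, which may be assumed to lie in $\cal W$, or noting that the $w=e$ coset is always present). Since we have shown every limit of $\ga_{t_n}(s_n)$ lies in $K_0 k_1 P/P \subset \La_{\pi^{-1}(o)}$, the proof is complete.

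\emph{Main obstacle.} The delicate point is that the convergence of $g_n g_n^T$ (after rescaling) to a rank-one matrix only controls the \emph{top} eigenline robustly; to get the full flag $k_n P$ one must simultaneously control the $\wedge^2$ behaviour and rule out the bounded error terms from Lemma \ref{cr} spoiling the limit of the intermediate eigendirection. Concretely one needs that the gaps $\alpha_i(\mu(g_n)) \to \infty$ (guaranteed by $|s_n|\to\infty$ via Corollary \ref{cor:regular}) are large enough that the $O(1)$ perturbation in the Cartan projection does not move $k_n P$; this is a standard consequence of continuity of the map $g \mapsto (\text{sum of top eigenspaces})$ away from walls, but it must be stated carefully, e.g. by citing the estimate that a bounded perturbation of $a$ in $KaK$ moves the $K$-part by a bounded amount divided by the spectral gap. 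I would package this using \cite[Lemma 2.10]{LO} (already cited in Lemma \ref{same}) or the analogous continuity statement, rather than re-deriving it.
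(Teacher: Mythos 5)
Your approach is in essence the same as the paper's: both extract the $K$--part $k_n$ of the Cartan decomposition by diagonalizing $g_n g_n^T$, with the flag controlled by a second piece of information (you use $\wedge^2(g_n g_n^T)$; the paper merely uses orthogonality of $u_{n,2}$ to $u_{n,1}$ after conjugating by a fixed element $w$). The first half is fine: after dividing by the dominant scale, $g_n g_n^T$ does converge to a rank--one projection onto $\br(e_1+e_3)$, and with the spectral gaps $a_{n,1}/a_{n,2}, a_{n,2}/a_{n,3}\to\infty$ supplied by Lemma~\ref{cr}, this gives $k^*e_1 \propto e_1+e_3$ without needing any appeal to \cite[Lemma~2.10]{LO}. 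Your $\wedge^2$ computation is also correct: in the paper's basis $\{e_2\wedge e_3,\, e_1\wedge e_3,\, e_1\wedge e_2\}$ the rescaled $\wedge^2(g_n g_n^T)$ converges to the projection onto the second basis vector $e_1\wedge e_3$, and the relevant gaps $a_{n,1}^2 a_{n,2}^2 / a_{n,1}^2 a_{n,3}^2 = a_{n,2}^2/a_{n,3}^2 \to \infty$ etc.\ hold, so $k^*e_1 \wedge k^* e_2 \propto e_1 \wedge e_3$. This is in fact strictly more information than the paper extracts: the paper only deduces that the second Cartan column is of the form $(p,q,-p)^T$, i.e.\ that $\zeta$ lies somewhere in the circle $K_0 k_1 P/P$, whereas your $\wedge^2$ step pins $\zeta$ down to a single flag.

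The problem is in how you then identify that flag. You assert that $k_1(e_1\wedge e_2) = -\tfrac{1}{\sqrt2}\,e_1\wedge e_3$, but this is false: from \eqref{k1}, $k_1 e_1 = \tfrac{1}{\sqrt2}(e_1+e_3)$ and $k_1 e_2 = -e_2$, so
$$k_1(e_1\wedge e_2) = -\tfrac{1}{\sqrt2}(e_1+e_3)\wedge e_2 = -\tfrac{1}{\sqrt2}\bigl(e_1\wedge e_2 - e_2\wedge e_3\bigr),$$
which is not proportional to $e_1\wedge e_3$. (You have confused this with $k_1(e_1\wedge e_3) = -e_1\wedge e_3$.) Consequently your derived ``$k^* e_2 \propto e_2$'' is also wrong: from $k^*e_1\propto e_1+e_3$ and $\mathrm{span}(k^*e_1,k^*e_2)=\mathrm{span}(e_1,e_3)$ one gets $k^*e_2\propto e_1-e_3$. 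The flag with point $[e_1+e_3]$ and line $e_1\wedge e_3$ (i.e.\ $\{y=0\}$) is $k(\pm\pi/2)\,k_1 P$, not $k_1 P$: indeed $k(\theta)k_1(e_1\wedge e_2) = -\cos\theta(e_1\wedge e_2 - e_2\wedge e_3) - \sqrt2\sin\theta\,e_1\wedge e_3$, which is proportional to $e_1\wedge e_3$ precisely when $\cos\theta = 0$. The closing sentence about ``absorbing residual ambiguity into $K_0$'' is also off: once you use $\wedge^2$ there is no residual ambiguity, and $K_0$ does not move the $P_1$-coordinate around $\partial D$ --- the point $[e_1+e_3]$ is the $K_0$-fixed center of the disk $D$, not a point of the quadric $\partial D$, and $K_0$ only rotates the line coordinate. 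Once these identifications are repaired, your argument is sound and in fact a bit sharper than the paper's proof of Proposition~\ref{every}; as written, the step verifying $\zeta \in K_0 k_1 P/P$ rests on a false computation and does not establish the claim.
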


\begin{proof} 
   Write $g_n=a_{t_n} k_0h_{s_n} =k_n  a_n l_n\in KA^+K$ and $ a_n=\text{diag}(a_{n,1}, a_{n,2}, a_{n, 3}).$    By Lemma \ref{cr}, as $|s_n|\to \infty$, we have $ xg_n \to \infty$ regularly.
Suppose  $k_n \to k^*$. 
Using the notation $k(\theta)$ from \eqref{kt},
in order to prove that $k^* \in K_0 k_1 P=\{k(\theta) k_1: \theta\in \br\}P $, it suffices to show that the first two columns of $k^*$ are proportional to $(1,0,1)^T$ and $(-\sin \theta , \sqrt{2} \cos \theta, \sin \theta)^T $ for some $\theta\in \br$.

   Let $$w= \begin{pmatrix} 1&0 & 0\\ 0 & 0 & 1 \\ 0 &-1& 0
\end{pmatrix}  .$$

Then $w^T g_n g_n^T w= w^Tk_n  a_n^2 (w^Tk_n)^T$, so the columns of $w^Tk_n$ are eigenvectors of $w^Tg_n g_n^Tw$ in decreasing order of eigenvalue.
For each $i=1,2,3$, write the $i$-th column vector $$u_{n,i}=w^Tk_n e_i= x_{n, i}e_1+y_{n,i}e_2+z_{n,i}e_3 .$$

We show that the first column of $w^Tk^*$ is parallel to $(-1,1,0)^T$ and the second column  of $w^Tk^*$ is parallel to 
$(p,p, q)^T$ for some $p, q\in \br$. This implies the required structure of $k^*$.

Since $u_{n, i}$ are unit vectors, all $|x_{n,i}|, |y_{n,i}|, |z_{n,i}|$ are at most $1$.
Let  $c_n=e^{s_n}+e^{-s_n}$ and $d_n=e^{s_n}-e^{-s_n}$.
A direct computation gives 
$$Q_n:=w^T g_ng_n^T w =\frac{1}{4} \begin{pmatrix} e^{2t_n}c_n^2  & -e^{2t_n}d_n^2 & -\sqrt 2 e^{-t_n}c_sd_n \\
-e^{2t_n}d_n^2 & e^{2t_n}c_n^2 &  \sqrt 2 e^{-t_n}c_nd_n
\\ - \sqrt 2 e^{-t_n}c_nd_n &  \sqrt 2 e^{-t_n}c_nd_n
& 2 e^{-4t_n} (e^{2s_n}+e^{-2s_n}) 
\end{pmatrix}$$
Setting $f_n=d_n/c_n$, we compute
\begin{align*}  Q_n u_{n,1} &= (w^Tk_n)a_n^2 (w^Tk_n)^T u_{n,1}  = (w^Tk_n)a_n^2 e_1 \\ & =
a_{n,1}^2 u_{n,1}= a_{n,1}^2 (x_{n,1}e_1+y_{n,1}e_2+z_{n,1} e_3)
\end{align*}
On the other hand,
\begin{align*} & Q_n u_{n,1} = x_{n,1}Q_n e_1 +y_{n,1}Q_n e_2+z_{n,1} Q_n e_3 =\\
& \tfrac{e^{2t_n} c_n^2}{4} \left(x_{n,1}  \begin{pmatrix}1\\ - f_n^2\\ \tfrac{ -\sqrt 2f_n}{e^{3t_n}}
\end{pmatrix}  + y_{n,1}   \begin{pmatrix} -f_n^2 \\ 1\\ \tfrac{ \sqrt 2f_n}{e^{3t_n}}
\end{pmatrix}  +z_{n,1} \begin{pmatrix} \tfrac{ -\sqrt 2f_n}{e^{3t_n}} \\ \tfrac{ \sqrt 2f_n}{e^{3t_n}}  \\ \tfrac{2 ({e^{2s_n}+e^{-2s_n}})}{e^{6t_n} c_n^2}  
\end{pmatrix} \right) .\end{align*}

Hence
\begin{multline*}  \frac{4 a_{n,1}^2 }{e^{2t_n} c_n^2}\left( x_{n,1}e_1+y_{n,1}e_2+z_{n,1} e_3\right)=\\
x_{n,1}  \begin{pmatrix}1\\ - f_n^2\\ \tfrac{ -\sqrt 2f_n}{e^{3t_n}}
\end{pmatrix}  + y_{n,1}   \begin{pmatrix} -f_n^2 \\ 1\\ \tfrac{ \sqrt 2f_n}{e^{3t_n}}
\end{pmatrix}  +z_{n,1} \begin{pmatrix} \tfrac{ -\sqrt 2f_n}{e^{3t_n}} \\ \tfrac{ \sqrt 2f_n}{e^{3t_n}}  \\ \tfrac{2 ({e^{2s_n}+e^{-2s_n}})}{e^{6t_n} c_n^2}  
\end{pmatrix}
\end{multline*}
By Lemma \ref{cr},  we may assume 
$r_0:=\lim_{t,s\to +\infty} \frac{4 a_{n,1}^2}{ e^{2t_n}c_n^2} >0$ exists, after passing to a subsequence. Indeed,
$$r_0=2 $$
because $a_{n,1}^2$ is the largest eigenvalue of $w^Tg_ng_n^Tw$. 
Since $\lim_{n\to \infty} f_n= 1$,
taking the limit of the above equation yields
\begin{multline*}
    2 (\lim x_{n,1} e_1+ \lim y_{n,1}e_2+ \lim z_{n,1} e_3)  \\ =\lim x_{n,1}  \begin{pmatrix}1\\ -1\\ 0
\end{pmatrix}  + \lim y_{n,1}   \begin{pmatrix} -1 \\ 1\\ 0
\end{pmatrix}  +\lim z_{n,1} \begin{pmatrix}0\\0\\ 0
\end{pmatrix} 
.\end{multline*}
Comparing the  $e_3$-components gives 
$$\lim z_{n,1}=0.$$
Comparing the  $e_1$ and $e_2$-components gives 
$$\lim y_{n,1}=- \lim x_{n,1}.$$
Thus  $\lim u_{n,1}$ is parallel to $(1, -1, 0)^T$,
that is, the first column of $w^Tk^*$ is parallel to $(-1,1,0)^T$.
 Since each $u_{n,2}$ is orthogonal to $u_{n,1}$, the limit $\lim u_{n,2}$ must be orthogonal to $(-1,1,0)^T$ and hence of the form $(p,p, q)^T$. This proves the claim about the second column of $w^Tk^*$.
\end{proof}

\section{Nearest projection of floating planes to \texorpdfstring{$Y$}{Y}}

Let $\pi:X\to Y$ be the nearest projection map. The main result of this section is as follows:
Fix a complete geodesic $L\subset Y$ and $t\in \br $. Let $Y_{L, t}$ be the associated floating  plane. Then:
\begin{theorem}\label{hd}
The Hausdorff distance between
$\pi( Y_{L,t})$ and $L$ tends to $0$ as $|t|\to \infty$.
\end{theorem}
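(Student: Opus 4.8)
Since $\pi$ is $H$-equivariant and $Y_{L,t}=ha_tY$ with $L=hA_0o$, I would first conjugate by $h$ and assume $L=A_0o$, $Y_{L,t}=a_tY$. By Lemma~\ref{LL} we have $L=\pi(L_t)\subseteq\pi(a_tY)$, so one side of the Hausdorff distance is already $0$ and it remains to control $\sup_{z\in a_tY}d(\pi(z),L)$. Using $Y=A_0k_0A_0o$, the $A_0$-equivariance of $\pi$, and the $A_0$-invariance of $d(\cdot,A_0o)$, this reduces to the estimate
\be\label{eq:hdgoal}
\sup_{s\in\br}\,d\bigl(\pi(\gamma_t(s)),\,A_0o\bigr)\ \longrightarrow\ 0\quad\text{as }|t|\to\infty,\qquad \gamma_t(s)=a_tk_0h_so,
\ee
and it is enough to treat $t\to+\infty$ (the case $t\to-\infty$ is symmetric). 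Arguing by contradiction, suppose \eqref{eq:hdgoal} fails, so that there are $\e>0$, $t_n\to+\infty$ and $s_n\in\br$ with $d(\pi(z_n),A_0o)\ge\e$, where $z_n:=\gamma_{t_n}(s_n)$; write $y_n:=\pi(z_n)\in Y$. Since $z_n\to\infty$ in $X$, after passing to a subsequence $z_n\to\xi$ in the visual boundary $\partial_\infty X$. The goal is to prove $d(y_n,L)\to0$, contradicting $d(y_n,L)\ge\e$.

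\textbf{The ray to $\xi$ is perpendicular to $Y$ along $L$.} The key preliminary step is to locate $\xi$ and show that the geodesic ray $[o,\xi)$ is perpendicular to $Y$ at $o$ (and hence, translating by $A_0$ which commutes with $a_t$, perpendicular to $Y$ at every point of $L$). I would split into two regimes. If $(s_n)$ stays bounded, then $z_n=a_{t_n}\cdot(k_0h_{s_n}o)$ is a bounded perturbation of $a_{t_n}o$ (and bounded perturbations do not change the visual limit), so $\xi=\lim_n a_{t_n}o=\lim_{\tau\to\infty}a_\tau o$; this is an $\alpha_1$-singular direction lying in $\partial_\infty(Ao)$, and the ray $\{a_\tau o:\tau\ge0\}$ --- running in the direction of $\mathfrak a$ orthogonal to $\mathfrak h$ --- is perpendicular to $Y$ at $o$, hence lies in $\pi^{-1}(o)$. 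If instead $|s_n|\to\infty$, then by Corollary~\ref{cor:regular} the sequence $z_n$ is regular, so (after a further subsequence) $z_n\to\xi'\in G/P$; Propositions~\ref{every0}--\ref{every} give $\xi'\in K_0k_1P/P\subseteq\Lambda_{\pi^{-1}(o)}$. Since $\pi^{-1}(o)=K_0k_1Ao$ by Corollary~\ref{fiber2}, every geodesic ray from $o$ whose direction lies in the Weyl chamber at $o$ pointing toward $\xi'$ is contained in $\pi^{-1}(o)$, hence perpendicular to $Y$ at $o$; in particular so is $[o,\xi)$.

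\textbf{The Busemann function forces $y_n\to L$.} Because $[o,\xi)$ lies in $\pi^{-1}(o)$, the point $o$ realizes $d(\xi,Y)$, and consequently the Busemann function $b_\xi$ satisfies $b_\xi(y,o)\ge0$ for all $y\in Y$, with equality at $y=o$; moreover $b_\xi(\cdot,o)|_Y$ is convex, and in the singular regime it is identically $0$ along $L$ (since $L$ and $[o,\xi)$ lie in the common flat $Ao$, along which $b_\xi$ is affine). The crucial analytic input I would then establish is that --- because $Y$ has rank one --- the sublevel sets $\{\,b_\xi(\cdot,o)|_Y\le\delta\,\}$ shrink onto $L$ as $\delta\to0$, uniformly modulo translation along $L$ (in the regular regime $b_\xi(\cdot,o)|_Y$ should in fact have a strict minimum at $o$). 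Granting this: since $y_n=\pi(z_n)$ minimizes $y\mapsto d(y,z_n)$ over $Y$ and $d(y,z_n)-d(o,z_n)\to b_\xi(y,o)$ locally uniformly in $y$ as $z_n\to\xi$, the points $y_n$ are driven into ever smaller sublevel sets of $b_\xi(\cdot,o)|_Y$; every bounded subsequential limit of $y_n$ then lies in the zero set $L$, and $y_n$ cannot drift to infinity in $Y$ away from $\partial_\infty L$. Either way $d(y_n,L)\to0$, which is the desired contradiction and proves \eqref{eq:hdgoal}.

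\textbf{The main obstacle.} The hard part is exactly the sublevel-set analysis of $b_\xi(\cdot,o)|_Y$, and this is where the higher rank genuinely intervenes: the fiber $\pi^{-1}(o)=K_0Bo$ is a $3$-dimensional union of flats, not a totally geodesic hyperplane, so infinitely many geodesic rays perpendicular to $Y$ share a common endpoint and $b_\xi(\cdot,o)|_Y$ is honestly non-strictly convex (affine along $L$ for the singular $\xi$). Showing that its sublevel sets nevertheless collapse onto $L$ --- equivalently, that the tangency locus of $Y$ with the horoball at $\xi$ is exactly $L$, for every accumulation direction $\xi$ produced above --- and ruling out that $\pi(z_n)$ escapes to infinity inside $Y$, is the crux. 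A secondary technical point is upgrading the flag-variety convergence of Propositions~\ref{every0}--\ref{every} (and the description of $\Lambda_{\pi^{-1}(o)}$) to the finer visual-boundary convergence that the Busemann function requires.
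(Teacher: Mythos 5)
Your strategy is the right one at a high level — reduce to $L=A_0o$, study $\gamma_t(s)=a_tk_0h_so$, pass to a visual-boundary accumulation point $\xi$, and run a Busemann-function argument on $Y$ — and this does match the paper's plan. But the proposal has two genuine gaps, one conceptual and one that is exactly the hard content of the theorem, which you flag but do not resolve.

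First, the conceptual point: your regime split by whether $|s_n|\to\infty$ conflates $G/P$-regularity with visual-boundary regularity. Corollary~\ref{cor:regular} tells you $\gamma_{t_n}(s_n)\to\infty$ regularly iff $|s_n|\to\infty$, and Proposition~\ref{every} then places the $G/P$-limit in $K_0k_1P/P$. But the visual-boundary limit is governed by the normalized Cartan projection $\mu(g_n)/\|\mu(g_n)\|$, i.e.\ by \emph{uniform} regularity. When $|s_n|\to\infty$ with $|s_n|/t_n\to0$, the sequence is regular in the $G/P$ sense yet converges in $\partial_\infty X$ to the \emph{singular} ray $\{a_\tau o\}$ — exactly as in your bounded-$s_n$ case. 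So the dichotomy you need for the Busemann analysis is ``$\liminf|s_n|/t_n>0$'' versus ``$|s_n|/t_n\to0$'' (this is Corollary~\ref{cor:regular}'s uniform-regularity criterion and is what Lemmas~\ref{ur} and \ref{nour} in the paper separate), not ``$|s_n|$ bounded'' versus ``$|s_n|\to\infty$''. With your split, the subcase $|s_n|\to\infty,\ |s_n|/t_n\to0$ falls into your ``regular regime'' even though $b_\xi|_Y$ is constant along $L$ there, so the strict-minimum claim you make for that regime is false.

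Second, and more seriously, the step you yourself call ``the main obstacle'' — collapsing the sublevel sets onto $L$ and ruling out escape to infinity along $L$ — is the actual theorem, and the paper has a concrete mechanism for it that your proposal does not supply. In the uniformly regular case, the paper proves strict properness of $b_\xi|_Y$ via the Schubert-cell/relative-position analysis (Lemma~\ref{lem:busemann_proper}): the key fact is that any accumulation flag $\zeta\in\Lambda_{\pi^{-1}(o)}$ has a first component $p$ inside the disk $D$, hence has relative position $2$ or $3$ with every point of $\Lambda_Y$, which makes $b_\xi$ diverge along every ray in $Y$ and forces a unique minimum. In the non-uniformly-regular case, where $\xi$ is singular and $b_\xi|_Y$ is genuinely degenerate along $L$, the paper first shows $d(\pi(\gamma_{t_n}(s_n)),o)\le|s_n|$ and slides by an explicit $h_{s_n'}\in A_0$ with $|s_n'|\le 2|s_n|$, still of order $o(t_n)$, reducing to the Busemann function restricted to the \emph{transverse} geodesic $k_0L$; strict uniqueness of the minimum on $k_0L$ is then forced by the Flat Strip Theorem together with a computation showing $k_0h_{t_0}k_0^{-1}$ cannot centralize $\{a_t\}$. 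That $A_0$-slide is precisely what rules out drift to infinity along $L$, and the Flat Strip argument is precisely what kills the degeneracy transversally. Without those two devices, your outline does not close, and merely asserting ``the sublevel sets shrink onto $L$ uniformly modulo translation'' and ``one should rule out escape'' restates the theorem rather than proving it.
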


By the $H$-equivariance of $\pi$, we may assume without loss of generality that 
$$L=A_0(o)\quad\text{ and hence } {Y}_{L, t}=a_t Y.$$

For simplicity, we set $Y_t:=Y_{A_0o, t}$.

\subsection*{Busemann functions}
The visual boundary $\partial_\infty X$ consists of equivalence classes of asymptotic geodesic rays. (Recall that two geodesic rays in $X$ are {\em asymptotic} if they are within a finite  Hausdorff distance). We equip the visual boundary with the {\em cone topology}.

\begin{Def} For $\xi\in \partial_\infty X$,
 the {\em Busemann function} $b_{\xi}:X\to \R$ is 
\[
 b_{\xi} (x) = \lim_{t\to\infty} \left( d(x,\xi_t) - t\right)
\]
where $\{\xi_t: t\ge 0\}$ is the unit speed geodesic ray in the class $\xi$ such that $\xi_0=o$. Since $X$ is nonpositively curved, this is well-defined: there exists a unique unit speed geodesic ray from $o$ representing each class.

\end{Def}

The {horofunction compactification} of $X$ is obtained by attaching the visual boundary $\geo X$. More precisely, for $x\in X$, define $d_x: X \to \R$ be given by $$ d_x(y) = d(y,x) - d(o,x).$$
If $x_n$ is a sequence in $X$ converging to $\xi \in\geo X$ (with $\xi$ represented by a ray from $o$),
then
\[
 d_{x_n} \to b_\xi
\]
uniformly on compact sets of $X$ \cite[Chapter II.8]{BH}.

The next lemma reduces the proof of Theorem \ref{hd} to controlling the Busemann functions $b_\xi|_Y$ at every accumulation point $\xi\in \partial_\infty X$ of the sequence $\ga_t(s)$.

    \begin{lemma}\label{lem:proj_converges}
        Let $\xi\in \partial_\infty X$ and $x_n\in X$ be a sequence converging to $\xi$ in the visual topology. Suppose  $Z\subset X$ is a closed convex set and $b_\xi\vert_Z$ has a unique minimum at $z_0\in Z$. Then the nearest point projection map $\operatorname{pr}: X \to Z$ satisfies $\operatorname{pr}(x_n) \to z_0$ as $n\to \infty$.
    \end{lemma}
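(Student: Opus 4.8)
The plan is to establish that the nearest-point projection $\operatorname{pr}$ extends continuously to the accumulation point $\xi$, with value equal to the minimizer $z_0$ of the Busemann function $b_\xi|_Z$. The key mechanism is the uniform-on-compacts convergence $d_{x_n}\to b_\xi$ recorded in the excerpt (from \cite[Ch.~II.8]{BH}), combined with the fact that $\operatorname{pr}(x_n)$ is the unique minimizer of $d_{x_n}|_Z$ (since $\operatorname{pr}(x_n)$ is, by definition, the point of $Z$ closest to $x_n$, and $d_{x_n}$ differs from $y\mapsto d(y,x_n)$ only by the constant $d(o,x_n)$). So we are really proving: if convex functions $f_n$ on a closed convex set $Z$ in a CAT(0) space converge locally uniformly to a convex function $f$ with a unique minimum at $z_0$, and $f_n$ attains its minimum at $z_n\in Z$, then $z_n\to z_0$.

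**First I would** pin down that the sequence $z_n=\operatorname{pr}(x_n)$ is bounded. This is the crucial point and I expect it to be the main obstacle, because $Z$ need not be compact and a priori $z_n$ could escape to infinity. To handle it I would use strict convexity of the distance function in a Hadamard manifold (already invoked in the proof of Lemma~\ref{min}, via \cite[Sec.~1.4]{BGS}): since $b_\xi|_Z$ has a \emph{unique} minimum at $z_0$, for any $\rho>0$ there is $\eta=\eta(\rho)>0$ such that $b_\xi(z)\ge b_\xi(z_0)+\eta$ whenever $z\in Z$ with $d(z,z_0)\ge\rho$. Here I would need a mild coercivity input — either that $b_\xi|_Z$ is proper (tends to $+\infty$ along $Z$), which holds in the applications since $Z$ will be a flat or a geodesic on which the Busemann function of a regular boundary point grows linearly, or alternatively one argues directly: convexity of $b_\xi$ along the geodesic $[z_0,z]$ gives $b_\xi(z_\rho)\le \max(b_\xi(z_0),b_\xi(z))$ at the point $z_\rho$ at distance $\rho$ from $z_0$ on that geodesic, and strict convexity upgrades this to the strict separation $b_\xi(z_\rho)\ge b_\xi(z_0)+\eta(\rho)$ with $\eta$ depending only on $\rho$ because the slope of $b_\xi$ is controlled. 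Granting this, pick $\rho_0$ with $d(z_0,\cdot)$ large; by local uniform convergence on the ball $\overline{B}(z_0,\rho_0+1)\cap Z$ we have $f_n(z_0)<b_\xi(z_0)+\eta(\rho_0)/3$ and $f_n\ge b_\xi-\eta(\rho_0)/3$ on the sphere of radius $\rho_0$; convexity of $f_n$ then confines its minimizer $z_n$ to $B(z_0,\rho_0)$, so the sequence $(z_n)$ is bounded.

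**Then I would** extract, from any subsequence of $(z_n)$, a further subsequence converging (by boundedness and properness, $Z$ closed) to some $z_\infty\in Z$; passing to the limit in the inequality $f_n(z_n)\le f_n(z)$ for arbitrary fixed $z\in Z$, using $f_n(z_n)\to b_\xi(z_\infty)$ (local uniform convergence at $z_\infty$, legitimate since $z_n\to z_\infty$) and $f_n(z)\to b_\xi(z)$, yields $b_\xi(z_\infty)\le b_\xi(z)$ for all $z\in Z$. Hence $z_\infty$ is a minimizer of $b_\xi|_Z$, and by uniqueness $z_\infty=z_0$. Since every subsequence has a further subsequence tending to $z_0$, the whole sequence converges: $\operatorname{pr}(x_n)=z_n\to z_0$, which is exactly the claim.

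**The hard part will be** the coercivity/boundedness step; once $(z_n)$ is known to be bounded, the rest is a soft compactness-plus-uniqueness argument. In writing the paper I would either state the needed properness of $b_\xi|_Z$ as a hypothesis satisfied in all our uses (the relevant $Z$ are convex subsets of flats on which $\xi$ is a regular direction), or carry out the convexity estimate above; I would lean toward the latter so the lemma is self-contained, spelling out that the modulus $\eta(\rho)$ can be taken uniform because $b_\xi$ is $1$-Lipschitz and strictly convex along geodesics issuing from $z_0$.
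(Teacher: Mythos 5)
Your proof is correct and uses essentially the same argument as the paper. The core mechanism in both is the confinement step: since $d_{x_n}\to b_\xi$ uniformly on compacts, for large $n$ the function $d_{x_n}$ is small at $z_0$ and strictly larger on the compact set $\partial B_r(z_0)\cap Z$, so convexity of $d_{x_n}$ forces its minimizer $\operatorname{pr}(x_n)$ into $B_r(z_0)$. The paper applies this confinement at every radius $r>0$ and takes $r\to 0$ to conclude directly, whereas you run it once at a fixed $\rho_0$ to get boundedness and then finish with a subsequence-plus-uniqueness argument; the latter is sound but slightly longer.

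Your worry about coercivity or properness of $b_\xi|_Z$ is not actually needed: the confinement only uses uniform convergence on the compact sphere $\partial B_r(z_0)\cap Z$ (compact since closed balls in the Hadamard manifold $X$ are compact and $Z$ is closed), where $b_\xi > b_\xi(z_0)$ by uniqueness of the minimum and compactness. The convexity upgrade to all $z$ with $d(z,z_0)\ge\rho$ that you sketch is thus a detour — once the sphere inequality is in hand, convexity of $d_{x_n}$ alone confines the minimizer, with no growth hypothesis on $b_\xi|_Z$ at infinity. You in fact arrive at exactly this when you write out the confinement, so the earlier hedging about needing properness can simply be deleted.
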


    \begin{proof}
        Without loss of generality, assume that $b_\xi\vert_Z(z_0) = 0$.
        For $r>0$, let  $B_r(z_0)$  be the closed ball of radius $r$. 
        Let $$\delta\coloneqq \min\{b_\xi(z):\ z\in \partial B_r(z_0) \}>0.$$ Since $d_{x_n} \to b_\gamma$ uniformly on compacts, for all large enough $n$, we have
        \[
         d_{x_n} (z_0) 
         \le \delta/3, \quad 
         d_{x_n} (z)  \ge 2\delta/3  \text{ for all } z\in \partial B_r(z).
        \]
        Since $d_{x_n}$ is strictly convex along geodesics in $X$, for all $n\ge n_0$, $d_{x_n}$ must achieve its unique minima in $B_r(z_0)$. Taking $r\to 0$, we finish the proof.
    \end{proof}

\subsection*{Properness via relative position in $G/P$}
Any unit-speed geodesic ray in $X$ has the form $t\mapsto  g\exp (tv)o $ for some $g\in G$ and a unit vector $v\in \fa^+$.
The ray is called regular if $v\in \inte \fa^+$ and singular otherwise. A point $\xi\in \partial_\infty X$ is called regular and asymptotic to $f\in G/P$ if it is represented by a regular geodesic ray $g\exp (tv)o$, $t\in [0, \infty)$ with $f=gP$. 
Denote the set of all regular points in $\partial_\infty X$ by $\partial_\infty^{\rm reg} X$. There is
 a well-defined map
\begin{equation}\label{eqn:f}
    f:\partial_\infty^{\rm reg} X \to G/P,
\end{equation}
sending $\xi$ to $f_\xi \coloneqq gP$.

        The Weyl group $W = \{e,w_1,w_1',w_2,w_2',w_0 \}$ of $G$ is depicted in \Cref{fig:weyl_gp}.
        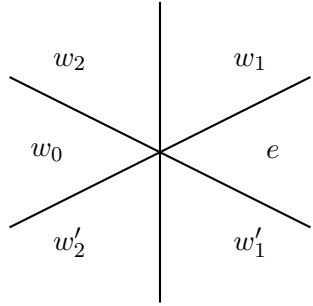
\begin{figure}[ht]
        \centering
        \begin{tikzpicture}
            \draw[thick] (-2,-1) -- (2,1);
            \draw[thick] (-2,1) -- (2,-1);
            \draw[thick] (0,-2) -- (0,2);
        
            \node at (-1.2, 1.2) {$w_2$};
            \node at (1.2, 1.2) {$w_1$};
            \node at (1.5, 0) {$e$};
            \node at (1.2, -1.2) {$w_1'$};
            \node at (-1.2, -1.2) {$w_2'$};
            \node at (-1.5, 0) {$w_0$};
        \end{tikzpicture}
        \caption{The Weyl group.}
        \label{fig:weyl_gp}
        \end{figure}
        Here $w_0$ denotes the longest Weyl element
        so that $w_0 P w_0^{-1}$ is the lower triangular subgroup.
       The Schubert cell decomposition of $G/P$:
        \[
         G/P = \bigsqcup_{w\in W} P w P,
        \]
       with the unique open $P$-orbit  $P w_0 P$ (a 3-cell); two 2-cells $P w_2 P$ and $P w'_2 P$; two 1-cells $P w_1 P$ and $P w'_1 P$; and the zero cell is $P$. Identifying $G/P$ with the full flag variety $\{(p, \ell): p\in \R P^2, \ell \subset \R P^2 \text{ a line }, p\in\ell\}$, the Schubert cells relative to a given  $(p,\ell)\in G/P$ can be characterized as follows:
        \begin{enumerate}
            \item  0-cell: $\{(p,\ell)\}$.
            \item 1-cells:  $\{((p',\ell') :\ p=p' \text{ and } \ell\ne \ell' \}$, $\{((p',\ell') :\ p\ne p' \text{ and }\ell=\ell' \}$.
            \item 2-cells: $\{(p',\ell') :\ p\in \ell'- p'\}$, $\{(p',\ell') :\ p'\in \ell - p\}$.
            \item 3-cell: $\{(p',
        \ell') :\ p\not\in \ell' \text{ and } p'\not\in \ell\}$.
        \end{enumerate}
\begin{Def}\label{rel} For $1\le k\le 3$,  two points $\xi=(p, \ell), \xi'=(p',\ell') \in G/P$ are in {\em relative position $k$} if $(p',\ell')$ lies in a $k$-cell in the Schubert cell decomposition of $G/P$ with respect to $(p,\ell)$, or equivalently, if there exists $g\in G$ such that $\xi=gP$ and $\xi=gwP$ for $w\in\{w_k, w_k'\}$ where we have put  $w_3=w_3'=w_0$.
\end{Def}

Since any geodesic ray in $Y$ is regular, we have
$\partial_\infty  Y\subset \partial_\infty^{\rm reg} X$, and $f(\partial_\infty Y)=\Lambda_Y= K_0 P/P$.

    \begin{lem}\label{lem:busemann_proper}
        Let $\xi\in  \partial_\infty^{\rm reg} X$.
        If $f_\xi$ has relative position 2 or 3 with respect to every point of $\Lambda_Y$, then $b_\xi|_Y$ is proper and bounded below, and 
        it attains a unique minimum in $Y$.
    \end{lem}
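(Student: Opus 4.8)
The plan is to work with the Busemann function $b_\xi$ at a regular boundary point $\xi$ with associated flag $f_\xi = (p,\ell) \in G/P$, and to exploit the explicit formula for Busemann functions along flats together with the description of $\Lambda_Y = K_0P/P$ from \Cref{lem:Lambda_Y}. First I would recall that for a regular ray $g\exp(tv)o$ representing $\xi$ with $v \in \inte\fa^+$ a unit vector, the restriction of $b_\xi$ to any flat meeting the ray is an affine function whose gradient is determined by $v$; more usefully, the restriction of $b_\xi$ to $Y$ can be analyzed by writing $Y = A_0 k_0 A_0 o$ and using $H$-equivariance to reduce to understanding $b_\xi$ along the geodesics $h_s o$ and their $K_0$-translates. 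The key quantitative input is a Busemann cocycle estimate: along a geodesic ray in $Y$ going out to a boundary point $\eta \in \Lambda_Y$, the function $b_\xi$ grows linearly with slope equal to a pairing between $v$ and the "direction" of $\eta$, and the relative-position hypothesis is exactly what guarantees this slope is strictly positive (bounded away from $0$) for every such $\eta$.

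The main steps, in order, are: (1) Express $b_\xi|_Y$ via the Iwasawa/Busemann cocycle — for $\xi$ regular with flag $f_\xi$, and for $y = ho \in Y$, one has $b_\xi(ho) - b_\xi(o) = \langle v, \beta_{f_\xi}(h) \rangle$ (up to sign conventions) where $\beta_{f_\xi}$ is the $\fa$-valued Busemann/Iwasawa cocycle, so properness and the existence of a minimum of $b_\xi|_Y$ reduce to a statement about the $\fa$-valued function $h \mapsto \beta_{f_\xi}(h)$ on $H$. (2) Identify the "escape directions": a sequence $h_n o \to \infty$ in $Y$ converges (after passing to a subsequence) to some $\eta \in \Lambda_Y = K_0P/P$, and along such a sequence $\langle v, \beta_{f_\xi}(h_n)\rangle \to +\infty$ provided the component of $v$ detecting the relevant root pairings against $f_\xi$-versus-$\eta$ is positive. (3) Show the relative position hypothesis forces positivity uniformly: when $f_\xi$ is in relative position $2$ or $3$ with every point of $\Lambda_Y$, the two flags are "transverse enough" that the Gromov product / Iwasawa cocycle contribution is $\le$ some constant, hence $b_\xi$ cannot stay bounded along any escaping direction; combined with lower semicontinuity and convexity of $b_\xi$ this yields properness and boundedness below. (4) Conclude via strict convexity of $b_\xi$ along geodesics of $X$ (hence of $Y$) — a proper, bounded-below, strictly convex function on the complete CAT(0) space $Y$ attains a unique minimum. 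Step (4) is essentially immediate from \cite[Sec. 1.4]{BGS}; the substantive work is in Steps (2)–(3).

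The main obstacle I anticipate is Step (3): translating the combinatorial "relative position $2$ or $3$" condition into a uniform coercivity estimate for $b_\xi$ along all boundary directions of $Y$ simultaneously. The difficulty is that $\Lambda_Y = K_0P/P$ is a whole circle of flags (the tangent-line conic of $\partial D$), and one must check that for a fixed $f_\xi$ of the given relative position, the worst-case slope $\inf_{\eta \in \Lambda_Y} (\text{Busemann slope of } b_\xi \text{ toward } \eta)$ is still strictly positive. Because $K_0$ is compact, this infimum is attained, so the only way it could vanish is if some $\eta_0 \in \Lambda_Y$ is in relative position $\le 1$ with $f_\xi$ — which is precisely excluded by hypothesis; making this rigorous requires a clean formula or estimate for the singular-value/Cartan-projection growth of $k_0 h_s$ and its conjugates (of the type already computed in \Cref{cr}), showing that the rate at which $b_\xi(h_s o)$ increases is governed by whichever simple root "sees" the transversality of $f_\xi$ and $\eta$. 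I would handle this by a direct computation of the Iwasawa cocycle $\beta_{f_\xi}(k(\theta)h_s)$ using the explicit matrices \eqref{kt}, reducing to elementary asymptotics in $s$ with $\theta$ ranging over a compact set, and reading off that the leading exponent is positive exactly under the stated relative-position condition.
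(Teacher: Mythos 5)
Your plan via the Iwasawa cocycle is a workable alternative to the paper's approach, which is purely geometric: for each ray $\rho$ in $Y$ from $o$, the paper uses the relative-position hypothesis to find $g\in G$ with $f_\rho = gP$ and $f_\xi = gwP$ for some $w\in\{w_2, w_2', w_0\}$, realizes $\xi$ and $\rho$ as rays into the non-adjacent Weyl chambers $gwA^+o$ and $gA^+o$ of the flat $F=gAo$, notes the Euclidean angle between them in $F$ is strictly greater than $\pi/2$, and concludes $b_\xi$ tends linearly to $+\infty$ along $\rho$. Your cocycle slope $-\langle wv, u_0\rangle$ (with $u_0=(1,0,-1)$ the unique escape direction of $Y$ in $\fa^+$) encodes the same information in $\fa$, and the pairing is indeed strictly negative for regular $v$ whenever $w$ has length $\ge 2$; but this is exactly the computation you must carry out, and your sentence that ``the only way it could vanish is if $\eta_0$ is in relative position $\le 1$'' asserts the conclusion rather than proving it. Note in particular that for $w=w_2$ or $w_2'$ the chamber $w\fa^+$ has a wall orthogonal to $u_0$, so the slope can be arbitrarily small; regularity of $\xi$ (i.e.\ $v\in\inte\fa^+$) is the only thing keeping it strictly positive, and that needs to be said.

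The genuine gap is in step (4): Busemann functions on $X$ are \emph{not} strictly convex. Along any geodesic lying in a flat asymptotic to $\xi$, the function $b_\xi$ is affine, and it is not obvious a priori that no geodesic of $Y$ can lie in such a flat. So ``strict convexity'' does not give uniqueness of the minimum, and the cited fact in \cite[Sec.~1.4]{BGS} concerns distance functions, not Busemann functions. The paper obtains uniqueness from ordinary convexity together with real analyticity: if $b_\xi|_Y$ had two minima, convexity would force it to be constant on the segment between them, and real analyticity of $b_\xi$ would then propagate that constancy to the entire geodesic through the segment, contradicting the properness already established. You should replace your step (4) with this argument (or an equivalent one via the flat-strip theorem).
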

\begin{proof}

 Busemann functions are convex, so to prove  properness and boundedness below, it suffices to show that $b_\xi\to \infty $ along every ray in $Y$ issuing from $o$. Let $\{\xi_t:t\ge 0\}$ be a ray from $o$ representing $\xi\in \partial_\infty X$, and
      let $\{\rho_t:t\ge 0\}$ be a ray in $Y$ from $o$. Let $\rho\in \partial_\infty Y$ represented by it. By the hypothesis,
    there exists $g\in G$ such that we have $f_\rho=g P$
    and $f_\xi= gwP$ for some $w\in \{w_2, w_2', w_0\}$.
     Consider the maximal flat $F:=gAo$ and its two Weyl chambers
     $W_-:=g w A^+o$ and  $W_+:=gA^+o$ asymptotic to $f_\xi$ and $f_\rho$, respectively. Set $x=go$. 
     Let $\xi'$ (resp. $\rho'$) be the central ray in $W_-$ (resp. $W_+$) from $x$, asymptotic to $\xi$ (resp. $\rho$). 
       Since $\xi'$ and $\xi$ are asymptotic, their Busemann functions $b_{\xi'}$ and $b_{\xi}$ differ only by an additive constant.
        Moreover, since Busemann functions are 1-Lipschitz, showing that $b_{\xi}$ goes to infinity along $\rho$ is equivalent to showing that $b_{\xi'}$ goes to infinity along $\rho'$.
        
       Note that the restriction $b_{\xi'}|_F$ is the Busemann function on the Euclidean plane $F$ corresponding to the ray $\xi'$,
        which must go to infinity along any ray which makes an angle strictly more that $\pi/2$ with $\xi'$. Since $w\in \{w_2, w_2', w_0\}$,  $W_-$ and $W_+$ are not adjacent and hence the angle between $\rho'$ and $\xi'$ is strictly bigger than $\pi/2$.
    Therefore $b_{\xi}$ goes to infinity along $\rho$.

     The second part of theclaim that $b_\xi|_Y$ has a unique minimum   follows from the first part.  Indeed, by the first part, $b_\xi|_Y$ has a minimum. If $b_\xi|_Y$ had two distinct minima $y_1, y_2 \in Y$, then convexity would imply that $b_\xi$ is constant along the geodesic segment in $X$ joining $y_1$ and $y_2$. Since Busemann functions are real analytic, it would then follow that $b_\xi|_Y$ is constant along the complete bi-infinite  geodesic extension of that segment, contradicting properness.
    \end{proof}

\subsection*{Uniform regularity and properness}

\begin{lem}\label{ur}
If $t_n\to\infty$, $\ga_{t_n}(s_n) = a_{t_n} k_0 h_{s_n}o \to \infty $  uniformly regularly as in Def. \eqref{dur}, then 
$$\pi(\ga_{t_n}(s_n))\to o\quad\text{as $n\to\infty$.}$$
    
\end{lem}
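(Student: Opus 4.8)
The goal is to show $\pi(\ga_{t_n}(s_n)) \to o$ under the hypothesis that $\ga_{t_n}(s_n)$ converges to infinity uniformly regularly. By \Cref{lem:proj_converges}, applied with $Z = Y$ (which is closed and convex), it suffices to prove that any accumulation point $\xi \in \partial_\infty X$ of the sequence $\ga_{t_n}(s_n)$ has the property that $b_\xi|_Y$ has a unique minimum at $o$. Passing to a subsequence, we may assume $\ga_{t_n}(s_n) \to \xi$ in the visual topology. Since the sequence goes to infinity uniformly regularly, $\xi$ is a regular point of $\partial_\infty X$, and by \Cref{cor:regular} (the uniform-regularity criterion) combined with \Cref{every}, the flag $f_\xi$ lies in $K_0 k_1 P/P$; in particular $f_\xi \in \La_{\pi^{-1}(o)}$ by \Cref{fiber2}.

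\textbf{Key steps.} First I would record that, since the sequence is uniformly regular, $|s_n| \to \infty$ and $\liminf |s_n|/t_n > 0$ by \Cref{cor:regular}, so the hypotheses of \Cref{every} are in force and we obtain $f_\xi = k^* P$ with $k^* e_1$ proportional to $e_1 + e_3$. Second, I would verify the relative-position hypothesis of \Cref{lem:busemann_proper}: for every $\eta = (p,\ell) \in \La_Y = K_0 P/P$, the point $p \in \partial D$ is a boundary point of the disk $D = \{y^2 < 2xz\}$ and $\ell$ is the tangent line to $\partial D$ at $p$ (\Cref{lem:Lambda_Y}). The flag $f_\xi$ has underlying point $[e_1 + e_3] \in \R P^2$; one checks $[e_1+e_3]$ lies strictly inside $D$ (since $y^2 = 0 < 2 = 2xz$ at $(1,0,1)$), so in particular $[e_1+e_3] \notin \partial D$ and $[e_1+e_3]$ lies on no tangent line to $\partial D$ (a tangent line to the conic meets $\cl D$ only at the point of tangency). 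Hence the underlying point and line of $f_\xi$ are both in ``general position'' relative to $(p,\ell)$, which forces relative position $3$ with every point of $\La_Y$. By \Cref{lem:busemann_proper}, $b_\xi|_Y$ is proper, bounded below, and attains a unique minimum $y_\xi \in Y$.

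\textbf{Locating the minimum at $o$.} It remains to show $y_\xi = o$. Here I would use the structure $f_\xi \in K_0 k_1 P / P$ together with the description $\pi^{-1}(o) = K_0 B o = \bigcup_{w \in \cal W} K_0 k_1 w A^+ o$ from \Cref{fiber2}: write $\xi$ as represented by a ray $g \exp(tv) o$ with $v \in \inte \fa^+$ and $gP = f_\xi \in K_0 k_1 \cal W P$. Choosing the representative so that $g \in K_0 k_1 \cal W A^+$ (absorbing the $A^+$-part harmlessly into the ray direction), the ray lies in $K_0 B o$ through $o$, i.e., it is an $H$-translate of a geodesic ray contained in the flat $Bo$ emanating from $o$ and orthogonal to $Y$. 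Since $Bo$ is totally geodesic and orthogonal to $Y$ at $o$ (the orthogonality $\frak b \perp \frak h$ proved in \Cref{fiber}), the nearest-point projection to $Y$ of every point on this ray is $o$; equivalently, $b_\xi|_Y$ is minimized at $o$, so by uniqueness $y_\xi = o$. Applying \Cref{lem:proj_converges} along every convergent subsequence yields $\pi(\ga_{t_n}(s_n)) \to o$.

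\textbf{Main obstacle.} The delicate point is the last step: \Cref{every} only pins down the \emph{flag} $f_\xi$, i.e., the first two columns of $k^*$, not the full Weyl-chamber direction $v$ of the regular ray representing $\xi$, nor which coset representative $w \in \cal W$ occurs. I expect the crux is to argue that, regardless of $v$ and $w$, having $f_\xi \in K_0 k_1 \cal W P$ already forces the minimizing ray to lie inside some $H$-translate $h \cdot (k_1 w A^+ o)$ with the \emph{basepoint} $h o = o$ — this is where one must rule out the possibility that the asymptotic ray, while having the correct flag at infinity, is anchored at a point of $Y$ other than $o$. This can be handled by noting that the minimum of $b_\xi|_Y$ is characterized by perpendicularity: $y_\xi$ is the minimum iff the ray from $y_\xi$ asymptotic to $\xi$ meets $Y$ orthogonally, and by \Cref{affine} (the $G = H \cal W B^+ K$ decomposition) every regular direction in $\fa$-position relative to $Y$ is, up to $H$, realized by a $B^+$-ray from $o$; since such rays are $\frak b$-directions orthogonal to $\frak h$, the orthogonal foot is forced to be $o$ after the $H$-translation, giving $y_\xi = o$.
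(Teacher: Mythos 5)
Your proof is correct and follows essentially the same route as the paper's own argument: identify the limit flag via \Cref{every} and \Cref{cor:regular}, verify the relative-position hypothesis of \Cref{lem:busemann_proper} using the fact that $[e_1+e_3]$ lies strictly inside the disk $D$, and locate the unique minimum of $b_\xi|_Y$ at $o$ by exhibiting a ray asymptotic to $\xi$ inside $\pi^{-1}(o)$. Two small points worth noting: your relative-position check only establishes $p'\notin\ell$ (which rules out positions $0$ and $1$), not also $p\notin\ell'$, so you really get position $2$ or $3$ rather than necessarily $3$ — but that is all \Cref{lem:busemann_proper} requires; and when $g\in K_0 k_1\cal W A^+$ carries a nontrivial $A^+$-factor, the ray $g\exp(tv)o$ does \emph{not} pass ``through $o$'' as you state, yet it does lie entirely in $\pi^{-1}(o)$, which is the property actually used (it gives $d(\rho_t,Y)=d(\rho_t,o)$ for all $t\ge 0$, forcing $b_\xi|_Y$ to be minimized at $o$); the paper sidesteps this entirely by taking the ray from $o$ inside the Weyl chamber $\Delta\subset\pi^{-1}(o)$ asymptotic to $\zeta$.
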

\begin{proof}
Since  $\ga_{t_n{(s_n)}}\to \infty$ regularly, $|s_n|\to\infty$ by Corollary \ref{cor:regular}.
After passing to a subsequence, we may assume that $\ga_{t_n{(s_n)}}$ converges to some $ \zeta\in G/P$ in the sense of Definition \ref{dur}.   By \Cref{every}, $\zeta=(p_0,\ell_0)$, where $p_0 =[ e_1+e_3]$ and $\zeta \in \Lambda_{\pi^{-1} (o)}.$

The quadric $y^2=2xz$ splits $\br  {P}^2$ into the disk 
$D = \{[ x:y: z] :  y^2<2xz \}$ and the M\"obius strip $\{[ x:y: z] :  y^2> 2xz \}$. 
Since by \Cref{lem:Lambda_Y}, $$\Lambda_Y =\{ (p,\ell) : p\in \partial D, \ell \text{ is tangent to } \partial D \text{ at } p\}$$ and $p\in D$,
it follows that $\zeta$ has relative position $2$ or $3$ with respect to any point in $\Lambda_Y$. Since  $\ga_{t_n}(s_n) \to \infty $ uniformly regularly, the  accumulation set of
$\ga_{t_n}{(s_n)}$ is a compact subset $C \subset \geo^{\rm reg} X$ with $f(C) = \zeta$ (see \eqref{eqn:f}).
By  \Cref{lem:busemann_proper}, for each $\xi\in C$, $b_\xi\vert_{Y}$ has a unique minimum in $Y$. 

We claim that this minimum is achieved at $o$, which would finish the proof by \Cref{lem:proj_converges}.
By Corollary \ref{fiber2},
 $$\pi^{-1}(o)=\bigcup_{k\in K_0, w\in \cal W} kk_1 w A^+o$$
is a union of Weyl chambers emanating from $o$.
Let $\Delta$ be the Weyl chamber emanating from $o$ and asymptotic to $\zeta$. 
Since $\zeta \in \Lambda_{\pi^{-1} (o)}$, we have $\Delta\subset \pi^{-1}(o)$. 
After extraction, $\g_{t_n}(s_n)$ is asymptotic to a ray $\gamma$ in the Weyl chamber $\Delta$ in $\pi^{-1}(o)$ emanating from $o$ (i.e., $\g_{t_n}(s_n)\in X$ converges to $[\xi]\in \geo X$ in the compactification $X\sqcup \geo X$).
This ray $\gamma$ must be perpendicular to $Y$ at $o$. Thus the Busemann function $b_\gamma$ attains its minimum at $o$. 
\end{proof}

\begin{lem}\label{nour}
If  $t_n\to \infty$ and $\ga_{t_n}(s_n)$ has no subsequence which tends to $\infty$ uniformly regularly, then $$d(L,\pi(\ga_{t_n}(s_n)))\to 0 \quad\text{ as $n\to\infty$}.$$
\end{lem}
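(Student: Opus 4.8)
\emph{Proof strategy.} The plan is to pass to the visual boundary and analyze a Busemann function, as in \Cref{ur}, but at a \emph{singular} boundary point. First I would reduce the hypothesis: by \Cref{cor:regular}, given $t_n\to\infty$, the assumption that $\ga_{t_n}(s_n)$ has no subsequence tending to $\infty$ uniformly regularly is equivalent to $|s_n|/t_n\to 0$ (if $\limsup_n|s_n|/t_n>0$ one extracts a subsequence along which $|s_n|\to\infty$ and $\liminf|s_n|/t_n>0$, which by \Cref{cor:regular} tends to $\infty$ uniformly regularly). So assume $t_n\to\infty$ and $|s_n|=o(t_n)$. Next I would show that $\ga_{t_n}(s_n)$ converges in the cone compactification $X\cup\geo X$ to the single point $\xi_0:=\lim_{t\to\infty}a_to$, the endpoint of the ray $\{a_to:t\ge 0\}$. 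Indeed $d(\ga_{t_n}(s_n),a_{t_n}o)=d(k_0h_{s_n}o,o)=\|\mu(h_{s_n})\|\asymp|s_n|$, while $d(o,a_{t_n}o)=\|\mu(a_{t_n})\|\asymp t_n$ and, by \Cref{cr}, $d(o,\ga_{t_n}(s_n))\asymp t_n$; since $|s_n|=o(t_n)$, the CAT$(0)$ comparison angle $\bar\angle_o(\ga_{t_n}(s_n),a_{t_n}o)\to 0$, hence $\angle_o(\ga_{t_n}(s_n),a_{t_n}o)\to 0$, so every subsequential limit of $\ga_{t_n}(s_n)$ in $\geo X$ agrees with $\lim_n a_{t_n}o=\xi_0$. (In particular no appeal to \Cref{every} is needed here.)

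The heart of the proof is then the claim that $b_{\xi_0}|_Y$ attains its minimum exactly along $L$. Since $\tfrac{d}{dt}\big|_{0}a_to=\diag(1,-2,1)$ is orthogonal to $\frak h\cap\frak p=T_oY$ (its trace against any matrix in $\frak h\cap\frak p$ vanishes), the ray $\{a_to\}$ meets $Y$ perpendicularly at $o$, so $b_{\xi_0}|_Y$ has a critical point, hence by convexity a minimum, at $o$. Moreover each $h_r\in A_0$ commutes with $a_t$, so $h_ra_to=a_th_ro$ stays within bounded distance of $a_to$; thus $h_r\xi_0=\xi_0$, and since $r\mapsto b_{\xi_0}(h_ro)$ is then additive with derivative $-\langle\diag(1,-2,1),\dot L(0)\rangle=0$ at $r=0$, it vanishes identically, i.e.\ $b_{\xi_0}$ is $A_0$-invariant and in particular constant on $L$. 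Finally, a direct Cartan-projection computation — writing $d(k_0h_so,a_{\tau}o)=\|\mu(h_{-s}k_0^{-1}a_{\tau})\|$, and noting that as $\tau\to\infty$ the product of the two largest singular values of $h_{-s}k_0^{-1}a_\tau$ is $\asymp e^{2\tau}\,\|h_{-s}k_0^{-1}e_1\wedge h_{-s}k_0^{-1}e_3\|=e^{2\tau}\sqrt{\cosh 2s}$, whence $\|\mu(h_{-s}k_0^{-1}a_\tau)\|=d(o,a_\tau o)+\tfrac32\log\cosh2s+o(1)$ — would give
$$b_{\xi_0}(h_rk_0h_so)=\tfrac32\log\cosh(2s),$$
a function of $s$ alone tending to $\infty$ as $|s|\to\infty$; since $Y=A_0k_0A_0o$ and $d(h_rk_0h_so,L)=d(k_0h_so,o)=2\sqrt3\,|s|$, this shows that $b_{\xi_0}|_Y$ is coercive transverse to $L$ with minimum set precisely $L$. (Alternatively one avoids the explicit formula: the $G/P_2$-flag of $\xi_0$ is the secant line $\{y=0\}$ of $\partial D$, which is transverse to every flag in $\Lambda_Y$, so a Weyl-chamber comparison as in \Cref{lem:busemann_proper} shows $b_{\xi_0}|_Y$ is non-constant and proper modulo its flat directions, and $A_0$-invariance again pins the minimum set to $L$.)

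With the claim in hand, the conclusion follows from the argument of \Cref{lem:proj_converges} with the unique minimiser $z_0$ replaced by the convex minimum set $L$: transverse coercivity yields, for each $\rho>0$, a gap $\delta_\rho=\min\{b_{\xi_0}(z):z\in Y,\ d(z,L)=\rho\}-\min_Yb_{\xi_0}>0$, and since $d_{\ga_{t_n}(s_n)}\to b_{\xi_0}$ uniformly on compacta while each $d_{\ga_{t_n}(s_n)}$ is strictly convex, the minimiser $\pi(\ga_{t_n}(s_n))$ of $d_{\ga_{t_n}(s_n)}|_Y$ eventually lies in the $\rho$-neighbourhood of $L$; letting $\rho\to 0$ gives $d(L,\pi(\ga_{t_n}(s_n)))\to 0$. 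I expect the main obstacle to be the middle paragraph, i.e.\ identifying the minimum set of the \emph{singular} Busemann function $b_{\xi_0}|_Y$: unlike in \Cref{ur}, here $\xi_0\notin\geo^{\rm reg}X$, so the flag map of \eqref{eqn:f} and \Cref{lem:busemann_proper} do not apply verbatim, and one must exploit directly that $b_{\xi_0}$ is flat exactly along the $\diag(1,-2,1)$-parallel set (which meets $Y$ in $L$) while remaining coercive in the transverse $Y$-directions because $\xi_0$'s $G/P_2$-flag is in generic (secant) position relative to $\Lambda_Y$.
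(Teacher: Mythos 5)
Your reduction to $|s_n|/t_n\to 0$ and the CAT$(0)$ comparison-angle argument showing $\ga_{t_n}(s_n)\to\xi_0:=\lim_{t\to\infty} a_t o$ in $\geo X$ both match the paper's proof. Your middle paragraph — identifying the minimum set of $b_{\xi_0}|_Y$ as exactly $L$, with $A_0$-invariance and transverse coercivity — is sound in outline (your formula $b_{\xi_0}(h_rk_0h_so)=\tfrac32\log\cosh 2s$, or the Weyl-chamber alternative, would do the job), and is the $2$-dimensional version of what the paper proves in dimension $1$: that $b_{\xi_0}|_{k_0L}$ has a \emph{unique} minimum at $o$, established via the Flat Strip Theorem (two minima on $k_0L$ would force $k_0L$ and the singular ray $\{a_to\}$ into a common $2$-flat, hence $k_0h_{t_0}k_0^{-1}$ into the centralizer of $\{a_t\}$, which is false). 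So you worry about the wrong paragraph.

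The genuine gap is in your \emph{last} paragraph. \Cref{lem:proj_converges} rests on uniform convergence $d_{x_n}\to b_\xi$ on a \emph{compact} sphere $\partial B_r(z_0)$. Your substitute set $\{z\in Y: d(z,L)=\rho\}$ is a pair of complete geodesics parallel to $L$, hence noncompact, and "$d_{\ga_{t_n}(s_n)}\to b_{\xi_0}$ uniformly on compacta" gives no control there: the only a priori bound on the minimiser is $d\bigl(\pi(\ga_{t_n}(s_n)),o\bigr)\le|s_n|$, and $|s_n|$ need not be bounded (only $|s_n|/t_n\to0$). Thus $\pi(\ga_{t_n}(s_n))$ could escape to infinity along $L$ while staying at transverse distance $\ge\rho$, and your argument does not rule this out; the $A_0$-invariance of $b_{\xi_0}$ does not transfer to $d_{\ga_{t_n}(s_n)}$, which is what would be needed to quotient out the escape direction. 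The paper's device is precisely designed around this: it writes $\pi(\ga_{t_n}(s_n))=h_{-s_n'}(k_0h_{s_n''}o)$, shows $|s_n'|\le 2|s_n|$ so that the renormalized sequence $h_{s_n'}\ga_{t_n}(s_n)$ still converges to $\xi_0$, and then applies \Cref{lem:proj_converges} to the fixed geodesic $k_0L$ at the fixed point $o$. To repair your proof you would need to add exactly this renormalization (pull back by $h_{-r_n}$ where $\pi(\ga_{t_n}(s_n))=h_{r_n}k_0h_{s_n''}o$, bound $|r_n|$ against $|s_n|$, verify the limit is unchanged) — at which point the $1$-dimensional Busemann statement on $k_0L$ suffices and you recover the paper's argument, making the explicit coercivity estimate on all of $Y$ unnecessary.
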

\begin{proof} 
By \Cref{cor:regular}, we have $|s_n|/t_n\to 0$ as $n\to \infty$.
In this case, $\g_{t_n}(s_n)$ converges to the ray $\xi:=\{a_{t}o:t\ge 0\}$ in the visual topology: to see this, we consider the right-angled triangle $\triangle (o,a_{t_n}o, \g_{t_n}(s_n))$. Since $X$ is a CAT(0)-space, this triangle is thinner than a euclidean triangle with the same side lengths,
\[
 \angle_{o}(a_{t_n}o, \g_{t_n}(s_n)) \le \tan^{-1}\frac{|s_n|}{t_n} \to 0\quad \text{as }n\to\infty.
\]

Therefore,  $\g_{t_n}(s_n)$ converges to the ray $\xi:=\{a_{t}o:t\ge 0\}$ in $\partial_\infty X$ and $d_{\ga_{t_n}(s_n)}\to b_\xi$.

Since $\pi(a_{t_n}o)=o$ and $\pi$ is $1$-Lipschitz,
we have $$d(\pi(\g_{t_n}(s_n)), o) \le d(a_{t_n} k_0 h_{s_n} o, a_{t_n}o) = |s_n|.$$

Since $Y=(H\cap A) k_0 L$, 
there exists $s_n'\in \br$ such that 
\be\label{sp} h_{s_n'} \pi(\ga_{t_n}(s_n)) \in  k_0 L .\ee 
Since $h_{-s_n'}k_0L$ is orthogonal to $L$ at $h_{-s_n'}o$,
$h_{-s_n'}o$ is the nearest projection of
 $\pi(\ga_{t_n}(s_n)) $ to $L$, and
 hence
 $$d(\pi(\ga_{t_n}(s_n)) , h_{-s_n'}(o))\le  d(\pi(\ga_{t_n}(s_n)) , o)\le |s_n| .$$
Hence by the triangle inequality,
$$|s_n'|=d(o, h_{-s_n'}(o))\le  d (o, \pi(\ga_{t_n}(s_n)))+ d(\pi(\ga_{t_n}(s_n)) , h_{-s_n'}(o))\le 2|s_n|.   $$
(In fact, since the geodesic triangle in $Y$ with vertices $h_{-s_n'}(o)$,
 $ \pi(\ga_{t_n}(s_n)), o$ has the right angle at $h_{-s_n'}(o)$, we even get
 $|s_n'|\le |s_n|$).

 Since $2|s_n|/t_n \to 0$ as $n\to\infty$, we conclude again that 
\[
 d_{h_{s_n'}\g_{t_n}(s_n)} \to b_{\xi},
\]
uniformly on compacts.

 Since the ray $\xi = \{a_t o:\ t\ge 0\}$ is perpendicular to $k_0L$ at $o$, the point $o$ is a minimum of $b_{\xi}$ on $k_0 L$.  
We claim that this minimum is unique:  
Suppose that  $y\in k_0 L$ is another point such that $b_\xi(o) = b_\xi (y)$.  
Let $\{\rho_t :\ t\ge 0\}$ be the ray in $X$ emanating from $y$ and asymptotic to $\xi$ and consider the Busemann function 
$$\tilde b_\rho: X\to\R:\; \tilde b_\rho (x)= \lim_{t\to \infty} d(\rho_t, x)- t.$$
Then  $\tilde b_\rho -b_\xi$ is a constant function.  Since $y$ is a minimum of $ b_\xi$ on $k_0 L$, it must be a minimum of $\tilde b_\rho$.
Thus $\rho$ must be orthogonal to $k_0 L$ at $y$.
Since $\xi$ and $\rho$ are perpendicular to the segment $[o,y]$ at its endpoints and are asymptotic, it follows from the  Flat Strip Theorem (see \cite[Ch. I, Cor. 5.8(i)]{Ballmann}) that $\xi$ and $\rho$ bounds a flat half strip; in particular, $\xi$ and $\rho$ (and hence the segment $oy$) must lie in a $2$-flat $F\subset X$.  
Thus $F$ contains the ray $\xi = \{a_t o:\ t\ge 0\}$ as well as the geodesic $k_0L = \{k_0 h_t o: t\in\R\}$ (since it contains the segment $oy\subset k_0L$).  
Hence for all $t_0\in\R$, $k_0 h_{t_0}k_0^{-1}$ lies in the centralizer of $\{a_t:\ t\in\R\}$, which can be verified to be false by a straightforward computation. So, we arrive at a contradiction.

Since $o$ is the unique minimum of $b_\xi\vert_{k_0L}$,  \Cref{lem:proj_converges} implies that the nearest projection of $h_{s_n'}\g_{t_n}(s_n)$ to $k_0L$ converges to $o$. By \eqref{sp},
we have
\[
d(\pi(h_{s_n'}\g_{t_n}(s_n)), o) \to 0 \quad \text{as }n\to\infty.
\] 
Since $\pi$ is $H$-invariant, we finally have
\[
 d (\pi(\gamma_{t_n}(s_n)), L) \to 0 \quad \text{as } n\to\infty
\]
as asserted.
\end{proof}

\subsection*{Projection is bounded}
\begin{theorem} \label{bp} For any $\e >0$,  the union $\bigcup_{|t|>\e } \pi(Y_t)$ lies within a bounded distance from $L$.
\end{theorem}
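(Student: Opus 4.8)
The plan is to reduce the assertion to a single uniform estimate along the one-parameter families $\ga_t(s)=a_tk_0h_so$ and then dispose of two ranges of $t$ separately. Since $\pi$ is $H$-equivariant, $Y=A_0k_0A_0o$, and $a_t$ commutes with $A_0$, for every $t$ we have $\pi(Y_t)=\pi(A_0a_tk_0A_0o)=A_0\,\pi(a_tk_0A_0o)=A_0\cdot\{\pi(\ga_t(s)):s\in\br\}$. As $A_0=\{h_r:r\in\br\}$ acts by isometries preserving $L=A_0o$ with $d(h_rx,L)=d(x,L)$ for all $x\in X$, we get $\sup\{d(z,L):z\in\pi(Y_t)\}=\sup_{s\in\br}d(\pi(\ga_t(s)),L)$; combined with $L=\pi(L_t)\subset\pi(Y_t)$ from \Cref{LL}, the theorem is equivalent to the bound $\sup_{|t|>\e,\ s\in\br}d(\pi(\ga_t(s)),L)<\infty$.

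Suppose this bound fails: there are $t_n$ with $|t_n|>\e$ and $s_n\in\br$ with $d(\pi(\ga_{t_n}(s_n)),L)\to\infty$. If $\limsup_n|t_n|=\infty$, I would pass to a subsequence with $|t_n|\to\infty$; then \Cref{hd} gives that the Hausdorff distance between $\pi(Y_{t_n})$ and $L$ tends to $0$, forcing $d(\pi(\ga_{t_n}(s_n)),L)\to0$, a contradiction. Hence $|t_n|$ is bounded, and after extraction $t_n\to t_*$ with $|t_*|\ge\e$, so $t_*\ne0$. If $(s_n)$ had a bounded subsequence, I could extract $s_n\to s_*$, and continuity of $\pi$ would give $\pi(\ga_{t_n}(s_n))\to\pi(\ga_{t_*}(s_*))$, again contradicting $d(\cdot,L)\to\infty$. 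So $|s_n|\to\infty$, and after a further extraction I may assume $s_n\to+\infty$ (the case $s_n\to-\infty$ being identical after conjugating $h_s$ by $k(\pi)\in K_0$, which merely replaces $k_0$ by $k(\pi/2)\in K_0$).

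For this remaining regime I would run a Busemann-function argument in the spirit of \Cref{ur}. Since $a_{t_n}k_0$ stays in a fixed compact set while $|s_n|\to\infty$, the Cartan projection $\mu(\ga_{t_n}(s_n))$ stays within bounded distance of $\mu(h_{s_n})=(s_n,0,-s_n)$, so $\ga_{t_n}(s_n)\to\infty$ uniformly regularly; after a further extraction it converges in the visual compactification to a point $\xi\in\geo^{\mathrm{reg}}X$ whose image $f_\xi\in G/P$ equals the $G/P$-limit of the sequence. Diagonalizing $\ga_{t_n}(s_n)\ga_{t_n}(s_n)^T$ and its second exterior power as $s_n\to+\infty$ and $t_n\to t_*$ shows $f_\xi=a_{t_*}k_0P$, whose $\R P^2$-coordinate is $[a_{t_*}k_0e_1]=[1:-\sqrt2\,e^{-3t_*}:1]$. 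Since $t_*\ne0$, this point lies on the conic $Q_{t_*}$ but not on $\partial D$, and it is distinct from $[e_1]$ and $[e_3]$. Using \Cref{lem:Lambda_Y} together with the facts that $\partial D$ and $Q_{t_*}$ meet only at $[e_1]$ and $[e_3]$ and share exactly the two tangent lines $\{z=0\}$ and $\{x=0\}$ (tangent to $Q_{t_*}$ at $[e_1]$ and $[e_3]$ respectively), one checks that $f_\xi$ is in relative position $2$ or $3$ with every point of $\Lambda_Y$. Then \Cref{lem:busemann_proper} shows $b_\xi|_Y$ is proper and bounded below with a unique minimum $y_0\in Y$, and \Cref{lem:proj_converges} yields $\pi(\ga_{t_n}(s_n))\to y_0$. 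In particular $\{\pi(\ga_{t_n}(s_n))\}$ is bounded, contradicting $d(\pi(\ga_{t_n}(s_n)),L)\to\infty$.

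The step I expect to be the main obstacle is precisely this last regime: pinning down the boundary limit $f_\xi=a_{t_*}k_0P$ of the uniformly regular sequence, and then verifying its relative position with all of $\Lambda_Y$. This is where the higher-rank and projective geometry genuinely enter, through the two bitangent conics $\partial D$ and $Q_{t_*}$; once that relative-position dichotomy is in place the Busemann lemmas already established do the rest, and the large-$|t|$ regime is immediate from \Cref{hd}.
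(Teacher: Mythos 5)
Your argument is correct and is essentially a reorganization of the paper's own proof. The reduction to $\sup_{|t|>\e,s}d(\pi(\ga_t(s)),L)<\infty$ via $A_0$-equivariance, the contradiction structure, and the Busemann-function treatment of the bounded-$t$/unbounded-$s$ regime are all the same ideas the paper uses (in Lemma \ref{claim2}, Cases 0--3). One point worth being explicit about: you invoke \Cref{hd} to dispose of the $|t_n|\to\infty$ regime, but in the paper \Cref{hd} is only \emph{proved} later (as \Cref{narrow}), after \Cref{bp}. This is not circular, since the proof of \Cref{narrow} rests only on Lemmas \ref{ur} and \ref{nour} (it references the proof of \Cref{bp} merely for the decomposition $x_n=h_{s_n'}a_{t_n}k_0h_{s_n}o$, not for its conclusion), but you should say so — the paper instead avoids the issue by invoking \ref{ur} and \ref{nour} directly in its Case 3.

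The one genuine variation from the paper is how you handle the sign of $t_*$ in the bounded-$t$, $|s_n|\to\infty$ regime. The paper reduces to $t>0$ via the Cartan involution $\Theta$ (so $p=a_tk_0[e_1]$ lies in the open disk $D$, and the relative-position claim follows immediately because any line through an interior point is a secant of $\partial D$). You instead treat both signs at once by observing that $p$ lies on $Q_{t_*}$ but off $\partial D$, away from $[e_1],[e_3]$, and that $\partial D$ and $Q_{t_*}$ are bitangent conics with exactly two common tangent lines, so the tangent $\ell$ to $Q_{t_*}$ at $p$ cannot also be tangent to $\partial D$. This is correct (over $\C$, dual B\'ezout gives $2+2=4$ for the two bitangent lines, exhausting all common tangents), but it is a slightly heavier projective-duality argument than the paper's ``$p\in\operatorname{int}D$'' observation. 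Both routes establish that $f_\xi$ has relative position $2$ or $3$ with every point of $\Lambda_Y$, after which \Cref{lem:busemann_proper} and \Cref{lem:proj_converges} finish it in identical fashion.
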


Since $\pi(Y_t) = A_0 \pi(a_t k_0 A_0 o)$, the result follows from the following result:

\begin{lemma}\label{claim} For $\e>0$,
 the set  $\{ \pi(\g_{t}(s)): |t|>\e, s\in \br\}$ lies within a bounded distance from $L$.
 Moreover, if either $t_n$ or $|s_n|$ is bounded, then
 $\pi(\ga_{t_n}(s_n))$ is bounded.
\end{lemma}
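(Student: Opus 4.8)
The two assertions are of compactness type, so the plan is to argue by contradiction: for the first, suppose $d(\pi(\ga_{t_n}(s_n)),L)\to\infty$ along some sequence with $t_n>\e$; for the ``Moreover'', suppose $\pi(\ga_{t_n}(s_n))$ is unbounded along a sequence with $t_n$ or $|s_n|$ bounded (the hypothesis $|t_n|>\e$ being understood throughout, as in the statement). First I would reduce to $t>\e$ via the symmetry $\Theta(\ga_t(s))=\ga_{-t}(-s)$, where $\Theta$ is the Cartan involution of \eqref{cartan-inv}: indeed $\Theta(a_t)=a_{-t}$, $\Theta(k_0)=k_0$, $\Theta(h_s)=h_{-s}$, and $\Theta$ is an isometry of $X$ fixing $o$, preserving $Y=Ho$ (as $\Theta(H)=JHJ=H$) and preserving the geodesic $L=A_0o$; hence $\Theta$ commutes with the nearest-point projection $\pi$ and preserves both distance to $L$ and boundedness. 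Next, the subcase $\sup_n|s_n|<\infty$ is immediate: since $a_{t_n}\in B$, \Cref{fiber} gives $\pi(a_{t_n}o)=o$, and as $\pi$ is $1$-Lipschitz, $d(\pi(\ga_{t_n}(s_n)),o)\le d(a_{t_n}k_0h_{s_n}o,a_{t_n}o)=d(k_0h_{s_n}o,o)$ stays bounded; this settles the ``Moreover'' for $|s_n|$ bounded and that subcase of the first assertion. So assume $|s_n|\to\infty$, whence $\ga_{t_n}(s_n)\to\infty$ regularly by \Cref{cor:regular}, and extract so that $s_n\to+\infty$ or $s_n\to-\infty$. If $t_n\to\infty$, I would invoke Lemmas \ref{ur} and \ref{nour}: either a subsequence of $\ga_{t_n}(s_n)$ tends to $\infty$ uniformly regularly, so $\pi(\ga_{t_n}(s_n))\to o$ along it by \Cref{ur}, or there is no such subsequence, so $d(L,\pi(\ga_{t_n}(s_n)))\to0$ by \Cref{nour}; either way $d(L,\pi(\ga_{t_n}(s_n)))$ is bounded, a contradiction.

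The remaining and main case is $t_n$ bounded with $|s_n|\to\infty$; after extraction $t_n\to t_*\ge\e>0$. Since $k_0h_so$ (taking $s\ge0$, or else $s\le0$) is a geodesic ray of $Y$ converging to a boundary point $\eta\in\partial_\infty Y$, and the $G$-action on the cone compactification $X\sqcup\partial_\infty X$ is continuous, $\ga_{t_n}(s_n)=a_{t_n}(k_0h_{s_n}o)\to\zeta:=a_{t_*}\eta$ in the visual topology. The point $\zeta$ is regular, being the image under $a_{t_*}\in G$ of a regular boundary point of $Y$, with $f_\zeta=a_{t_*}f_\eta$; and by \Cref{lem:Lambda_Y}, $f_\eta=(p,\ell)\in\Lambda_Y$ is the tangent flag to $\partial D$ at $p$, where $p=[k_0e_1]$ or $[k_0e_3]$.

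The crux is that, because $t_*\ne0$, $\zeta$ is in general position relative to $\Lambda_Y$. First, the projective conic $Q_{t_*}=a_{t_*}(\partial D)$ meets $\partial D$ only in $\{[e_1],[e_3]\}$: substituting $2xz=y^2$ into the defining equation $e^{4t_*}y^2=2e^{-2t_*}xz$ of $Q_{t_*}$ forces $y=0$, hence $xz=0$. A one-line computation from \eqref{ko} shows $[k_0e_1],[k_0e_3]\notin\{[e_1],[e_3]\}$, so $a_{t_*}p\notin\partial D$; moreover $a_{t_*}\overline D\subseteq\overline D$ for $t_*>0$ (since $a_tD=\{y^2<2e^{-6t}xz\}\subseteq D$), so $a_{t_*}p$ lies in the open disk $D$. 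Consequently the line $a_{t_*}\ell$ passes through the interior point $a_{t_*}p$ of the convex disk $D$, hence cannot be a supporting line of $D$, that is, cannot be tangent to $\partial D$. Therefore $f_\zeta=(a_{t_*}p,a_{t_*}\ell)$ has relative position $2$ or $3$ (\Cref{rel}) with respect to every $(q,m)\in\Lambda_Y$: its first coordinate is off $\partial D$ and so $\ne q$, and its second coordinate is not tangent to $\partial D$ and so $\ne m$. Now \Cref{lem:busemann_proper} applies and gives that $b_\zeta|_Y$ is proper, bounded below, and has a unique minimum $y_0\in Y$; combined with $\ga_{t_n}(s_n)\to\zeta$ in the visual topology, \Cref{lem:proj_converges} (taking $Z=Y$) yields $\pi(\ga_{t_n}(s_n))\to y_0$. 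Thus $\pi(\ga_{t_n}(s_n))$ is bounded, contradicting $d(L,\pi(\ga_{t_n}(s_n)))\to\infty$ and completing the ``Moreover'' in this case.

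I expect this general-position step to be the real obstacle, and it is exactly where higher rank enters: the displacement $a_{t_*}$ with $t_*\ne0$ is what pushes the boundary flag off $\Lambda_Y$ and thereby forces $b_\zeta|_Y$ to be proper, whereas for $t_*=0$ one has $a_0\eta=\eta\in\Lambda_Y$ and indeed $\pi(\ga_0(s))=k_0h_so$ escapes to infinity---so the hypothesis $|t|>\e$ is genuinely needed. Making everything rigorous will also require a little care in relating convergence in the visual topology to regular convergence in $G/P$, so that both \Cref{lem:busemann_proper} and \Cref{lem:proj_converges} apply to $\zeta$; but beyond the explicit quadrics $Q_t$ no new ingredient appears to be necessary.
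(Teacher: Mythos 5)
Your proposal is correct and follows essentially the same route as the paper. The paper proves the $t>\varepsilon$ case (its Lemma 6.5/\ref{claim2}) and then invokes the point reflection induced by the Cartan involution $\Theta$ to obtain $t<-\varepsilon$, exactly as you do; its proof of the $t>0$ case is a case analysis in $(t_n,s_n)$ with the same three ingredients: (i) if $|s_n|$ is bounded the projection is trivially bounded (the paper splits this into its Case~0 and Case~2, the latter via boundedness of $\ga_{t_n}(s_n)$ near the singular ray $\{a_t o\}$, while your $1$-Lipschitz argument handles both at once and is a bit cleaner); (ii) if $t_n\to\infty$ and $|s_n|\to\infty$, it invokes Lemmas \ref{ur} and \ref{nour}; (iii) if $t_n\to t_*>0$ is bounded and $|s_n|\to\infty$, it passes to the visual limit $\xi$, shows $f_\xi$ has relative position $2$ or $3$ with respect to $\Lambda_Y$, and applies Lemmas \ref{lem:busemann_proper} and \ref{lem:proj_converges}. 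Your main case (iii) is the same; the only real difference is that the paper verifies $a_{t_*}p\in D$ directly by computing $a_{t_*}k_0[e_1]=[e^{t_*}:-\sqrt2 e^{-2t_*}:e^{t_*}]$ and checking the inequality $y^2<2xz$, whereas you argue more indirectly via $Q_{t_*}\cap\partial D=\{[e_1],[e_3]\}$ and $[k_0e_1],[k_0e_3]\notin\{[e_1],[e_3]\}$. Both are valid; you are also slightly more careful in treating $s_n\to+\infty$ and $s_n\to-\infty$ separately and in noting that $a_{t_*}$ preserves regularity, points the paper passes over lightly.
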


For the proof of this lemma, we assume that the parameter $t$ is positive. More precisely, we show:

\begin{lemma}\label{claim2}
For all $\e>0$,
the set $\{ \pi(\g_{t}(s)) : t>\e,\, s\in \br \}$ lies within a bounded distance from $L$.
Moreover, if either $t_n$ or $|s_n|$ is bounded, then
$\pi(\ga_{t_n}(s_n))$ is bounded.
\end{lemma}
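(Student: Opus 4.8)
\emph{Proof plan.} The plan is to reduce the first assertion to the sequential statements already obtained in \Cref{ur,nour}, and to prove the ``moreover'' clause (on which that reduction relies) by a separate argument whose only new ingredient is the analysis of accumulation points of $\ga_{t_n}(s_n)$ when $t_n$ stays bounded. First I would argue by contradiction for the first assertion: if $\{\pi(\ga_t(s)):t>\e,\ s\in\br\}$ is not within bounded distance of $L$, choose $t_n>\e$ and $s_n\in\br$ with $d(\pi(\ga_{t_n}(s_n)),L)\to\infty$. If $(t_n)$ or $(|s_n|)$ has a bounded subsequence, then the ``moreover'' clause applied along it forces $\pi(\ga_{t_n}(s_n))$ to be bounded, hence within bounded distance of $L$ (as $o\in L$), a contradiction. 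So I may assume $t_n\to\infty$ and $|s_n|\to\infty$, and split according to $\liminf_n|s_n|/t_n$. If this is positive, a further extraction makes $\ga_{t_n}(s_n)$ diverge uniformly regularly by \Cref{cor:regular}, so \Cref{ur} gives $\pi(\ga_{t_n}(s_n))\to o$ and $d(\pi(\ga_{t_n}(s_n)),L)\to 0$. If it is zero, a further extraction makes $|s_n|/t_n\to 0$, so no subsequence diverges uniformly regularly (again by \Cref{cor:regular}) and \Cref{nour} gives $d(L,\pi(\ga_{t_n}(s_n)))\to 0$. Either way we contradict $d(\pi(\ga_{t_n}(s_n)),L)\to\infty$.

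Next I would prove the ``moreover'' clause. When $(|s_n|)$ is bounded, $k_0h_{s_n}$ stays in a compact subset of $G$, so, using that $\pi$ is $1$-Lipschitz and $\pi(a_{t_n}o)=o$,
$$d\bigl(\pi(\ga_{t_n}(s_n)),o\bigr)\le d\bigl(a_{t_n}k_0h_{s_n}o,\,a_{t_n}o\bigr)=d(k_0h_{s_n}o,o),$$
which is bounded. The substantive case is $t_n\in(\e,T]$ bounded with $|s_n|\to\infty$. Since boundedness is equivalent to boundedness along every subsequence, I pass to a subsequence with $t_n\to t_*\in[\e,T]$ and $s_n\to+\infty$ (the case $s_n\to-\infty$ being symmetric; if instead a subsequence of $(s_n)$ stays bounded, $\ga_{t_n}(s_n)$ converges in $X$ and there is nothing to prove). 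The set $\{h_s o:s\in\br\}$ is $L=A_0o$ reparametrized, so $h_{s_n}o$ converges in the visual compactification to the forward endpoint $\eta_+\in\partial_\infty Y$ of $L$, with $f_{\eta_+}=P$ (the ray $\{h_s o:s\ge 0\}$ is the regular Weyl-chamber ray from $o$ in direction $\diag(1,0,-1)$). Since $\ga_{t_n}(s_n)=(a_{t_n}k_0)(h_{s_n}o)$ with $a_{t_n}k_0\to a_{t_*}k_0$ in $G$ and the $G$-action on $X\cup\partial_\infty X$ is continuous (see \cite[Ch. II.8]{BH}), I get $\ga_{t_n}(s_n)\to\xi:=a_{t_*}k_0\eta_+$ in the visual topology.

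It then remains to verify that $\xi$ satisfies the hypotheses of \Cref{lem:busemann_proper,lem:proj_converges} with $Z=Y$. Regularity of $\xi$ is clear because $\partial_\infty Y\subset\partial_\infty^{\rm reg}X$ and regularity is $G$-invariant. For the relative position, $f_\xi=a_{t_*}k_0f_{\eta_+}=a_{t_*}k_0P$, whose first coordinate in $G/P_1=\br P^2$ is $p=[a_{t_*}k_0e_1]=[\,1:-\sqrt2\,e^{-3t_*}:1\,]$; since $t_*>0$ this lies in the \emph{open} disk $D=\{y^2<2xz\}$. As a tangent line to $\partial D$ meets $D$ nowhere, $p\neq q$ and $p\notin m$ for every $(q,m)\in\Lambda_Y$, and inspecting the Schubert stratification of $G/P$ (exactly as in the proof of \Cref{ur}) shows that $\xi$ is in relative position $2$ or $3$ with every point of $\Lambda_Y$. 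Hence $b_\xi|_Y$ is proper with a unique minimum by \Cref{lem:busemann_proper}, so $\pi(\ga_{t_n}(s_n))$ converges by \Cref{lem:proj_converges} and is in particular bounded along the subsequence; running over all subsequences gives the ``moreover'' clause, and with it the lemma.

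The step I expect to be the main obstacle is this last one. In rank one a totally geodesic plane admits no ultra-parallel copies and the fibres of the nearest-point projection to a plane are divergent geodesics, which makes the analogous boundedness automatic; here the fibre $\pi^{-1}(y)$ contains an entire flat, so one genuinely has to pin down the accumulation points of $\ga_{t_n}(s_n)$ in $\partial_\infty X$ for bounded $t_n$ and verify the relative-position condition that keeps $b_\xi|_Y$ proper. The key point making it work is that $p=[1:-\sqrt2\,e^{-3t_*}:1]$ lies strictly inside $D$ for every $t_*>0$, even though the quadric $Q_{t_*}=a_{t_*}\partial D$ is not entirely contained in $D$.
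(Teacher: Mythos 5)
Your proof is correct and follows essentially the same route as the paper's: reduce the first assertion to the "moreover" clause plus Lemmas \ref{ur} and \ref{nour}, and handle the "moreover" clause by (a) a direct $1$-Lipschitz estimate when $|s_n|$ is bounded and (b) when $t_n$ is bounded away from $0$ and $\infty$ with $|s_n|\to\infty$, identifying the visual limit $\xi=a_{t_*}k_0\eta_+$, computing $p=[1:-\sqrt2\,e^{-3t_*}:1]\in\inte D$, deducing relative position $\ge 2$ against $\Lambda_Y$, and applying Lemmas \ref{lem:busemann_proper} and \ref{lem:proj_converges}. The paper organizes the same content as a single contradiction argument with four cases, but the key computation and the invocation of the Busemann properness criterion are identical to yours; your explicit 1-Lipschitz inequality in the $|s_n|$-bounded case is a minor simplification over the paper's appeal to bounded neighborhoods of the singular ray $\{a_t o\}$.
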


This suffices, as can be seen as follows: the Cartan involution $\Theta : G\to G$ given by \eqref{cartan-inv} induces an isometry $\iota: X \to X$ defined by $\iota([gK]) = [(g^T)^{-1}K]$ (a {\em point reflection} about the basepoint $o$). This isometry preserves all totally geodesic subspaces passing through $o$, such as $Y$, $L$, and $Ao$. Note that $\Theta (a_t) = a_{-t}$. Since the nearest point projection $\pi: X\to Y$, viewed as a map from $X\to X$, commutes with $\iota$, \Cref{claim2} also implies that the set $\{ \pi(\g_{t}(s)) : -t < -\e,\, s\in \br \}$ lies within a bounded distance from $L$, thereby concluding the proof of \Cref{claim}.

\begin{proof}[Proof of \Cref{claim2}]
Suppose that the claim is not true. Then there exist $\epsilon>0$ and sequences $s_n$ and $t_n>\epsilon$ such that the sequence $\pi(\gamma_{t_n}(s_n))$ diverges away from $L$.
After extraction, we have the following cases.

\smallskip
\noindent{{\bf Case 0:} Both $s_n$ and $t_n$ are bounded.} Then  $\pi(\g_{t_n}(s_n))$ is bounded, leading to a contradiction.

\smallskip
\noindent{{\bf Case 1:}  $t_n$ is bounded and $|s_n|\to\infty$.} 
Passing to a subsequence, we can assume that $t_n\to t$ and $s_n\to\infty$.

Let $\xi\in\geo X$ denote the equivalence class of the ray $\{\g_t(s):\ s>0\}$. Then, $\gamma_{t_n}(s_n)\to \xi$ as $n\to\infty$.
Thus, we have 
    \[
     d_{\g_{t_n}(s_n)} \to b_{\xi}
    \]
    uniformly on compacts. 
    
We observe that $f_\xi$ (cf. \eqref{eqn:f}) has position either $2$ or $3$ relative to any point in $\Lambda_Y$: To see this, note that if we write $f_\xi = (p,\ell)$ (as described in the paragraph before \Cref{rel}), then $$p = a_t k_0 [e_1] = [e^t e_1 -\sqrt 2 e^{-2t} e_2 + e^t e_3].$$ Thus, for $t>0$, $p$ lies in the interior of the disk $D\subset \R \op{P}^2$  bounded by the conic $y^2 = 2xz$, which implies the assertion.

    Therefore, by
    \Cref{lem:busemann_proper},
      $b_{\xi}\vert _{Y}$ is proper and bounded below,
    and has a unique minimum in $Y$. By Lemma \ref{lem:proj_converges}, it follows that $(\pi(\g_{t_n}(s_n)))$ converges to this unique minimum and thus we again have a contradiction.

\smallskip

\noindent
{\bf Case 2:} If the sequence $s_n$ is bounded, then the sequence $\g_{t_n}(s_n)$ lies in a bounded neighborhood of the singular ray $\{a_t o : t>0\}$, and thus must have a bounded projection to $Y$. This is a contradiction.

\smallskip
\noindent{\bf Case 3:} Finally, if $t_n\to \infty$ and $|s_n|\to \infty$ as $n\to\infty$, then it follows from Lemma \ref{ur} and Lemma \ref{nour} that $\pi(\gamma_{t_n}(s_n))$ is bounded. Again, this is a contradiction.

\medskip
The ``moreover'' part follows from Cases 1 and 2 discussed above.
\end{proof}

\begin{figure} 
  \includegraphics [height=4cm]{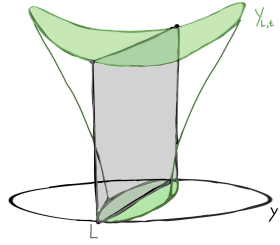}  
\caption{shadow of the floating plane}
\end{figure}

\subsection*{Projection is narrow}
\begin{theorem}\label{narrow}
The Hausdorff distance between
$\pi(Y_t)$ and $L$ goes to $0$ as $|t|\to\infty$.
\end{theorem}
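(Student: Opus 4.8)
The plan is to reduce \Cref{narrow} to a one-parameter statement about the curve $\ga_t(s)=a_tk_0h_so$ and then invoke \Cref{ur} and \Cref{nour}. Recall from the paragraph preceding \Cref{claim} that, since $Y=A_0k_0A_0o$ and $\pi$ is $H$-equivariant with $A_0=H\cap A$ commuting with $a_t$, one has $\pi(Y_t)=A_0\{\pi(\ga_t(s)):s\in\br\}$. Because $A_0$ acts by isometries of $X$ fixing $L=A_0o$ setwise, the function $x\mapsto d(x,L)$ is $A_0$-invariant, so $\sup_{y\in\pi(Y_t)}d(y,L)=\sup_{s\in\br}d(\pi(\ga_t(s)),L)$. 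On the other hand, \Cref{LL} gives $L=\pi(L_t)\subseteq\pi(Y_t)$, where $L_t=a_tA_0o$, so $\sup_{x\in L}d(x,\pi(Y_t))=0$ for every $t$. Therefore the Hausdorff distance between $\pi(Y_t)$ and $L$ equals $\sup_{s\in\br}d(\pi(\ga_t(s)),L)$, and it remains to show this tends to $0$ as $|t|\to\infty$. As in the reduction preceding the proof of \Cref{claim2}, the point reflection $\iota$ about $o$ (induced by $\Theta$) fixes $Y$ and $L$, commutes with $\pi$, and interchanges $Y_{-t}$ with $Y_t$; hence it suffices to treat $t\to+\infty$.

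I would then argue by contradiction. If $\sup_{s\in\br}d(\pi(\ga_t(s)),L)$ does not tend to $0$ as $t\to+\infty$, choose $\e_0>0$, $t_n\to+\infty$, and $s_n\in\br$ with $d(\pi(\ga_{t_n}(s_n)),L)\ge\e_0$ for all $n$. Now dichotomize the sequence $\ga_{t_n}(s_n)$: either it admits a subsequence tending to $\infty$ uniformly regularly, in which case \Cref{ur} forces $\pi(\ga_{t_n}(s_n))\to o\in L$ along that subsequence, contradicting $d(\cdot,L)\ge\e_0$; or it admits no such subsequence, in which case \Cref{nour} (applicable since $t_n\to\infty$) gives $d(L,\pi(\ga_{t_n}(s_n)))\to 0$, again a contradiction. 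These two alternatives are exhaustive; note in particular that if $|s_n|$ stays bounded along some subsequence then $\ga_{t_n}(s_n)$ does not diverge regularly there (by \Cref{cor:regular}), so that case falls under the second alternative. This proves \Cref{narrow}, and since \Cref{narrow} is precisely \Cref{hd} in the normalized case $L=A_0o$, it completes the proof of \Cref{hd} as well.

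The mathematical substance is carried entirely by \Cref{ur} and \Cref{nour}; the work here is organizational. The main points to get right are: that only one side of the Hausdorff distance is nontrivial, which uses the inclusion $L\subseteq\pi(Y_t)$ from \Cref{LL}; that the remaining side collapses to a supremum over the single model curve $\ga_t(s)$, which uses the $A_0$-invariance of $d(\cdot,L)$; and that the extraction of subsequences genuinely splits into the uniformly-regular and not-uniformly-regular cases, so that the two preceding lemmas cover every possibility. I do not anticipate a real obstacle beyond this bookkeeping.
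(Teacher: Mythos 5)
Your proof is correct and follows essentially the same route as the paper's: reduce by $A_0$-equivariance and the point reflection $\iota$ to the one-parameter curve $\ga_t(s)$ with $t>0$, then argue by contradiction and split into the uniformly-regular case (handled by \Cref{ur}) and the non-uniformly-regular case (handled by \Cref{nour}). The paper's proof is terser (it simply writes $x_n=h_{s_n'}a_{t_n}k_0 h_{s_n}o$ and says "use the $A_0$-equivariance to take $s_n'=0$"), but the substance is identical; your observations that $L\subseteq\pi(Y_t)$ makes one side of the Hausdorff distance vanish, and that bounded $|s_n|$ is absorbed into the second alternative, are correct and merely spell out what the paper leaves implicit.
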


\begin{proof}
Again, we make the assumption that $t > 0$, as the case when $t < 0$ is analogous.

Suppose the claim is not true. Since $L \subset \pi(Y_t)$, there exist $\delta > 0$, $t_n \to \infty$, and $x_n \in Y_{t_n}$ with $d(L, \pi(x_n)) \geq \delta$. As in the proof of Theorem \ref{bp}, write $x_n = h_{s_n'} a_{t_n} k_0 h_{s_n} o$ and use the $A_0$-equivariance to take $s_n' = 0$. Lemmas \ref{ur} and \ref{nour} then give a contradiction.
\end{proof}

\section{Closures of floating planes: The Fuchsian case}
In this section, we consider the closure of an $H$-orbit $[h]a_t H$ in $\Ga \ba G$
where $h\in H$ and $\Ga$ is a cocompact lattice in $H$. 
Let $N^-$ and $N^+$ be the strictly upper and lower triangular subgroups of $G$ and set 
$N_0^{\pm}=H\cap N^{\pm}$.

For any $h\in H$, the product map $hN_0^{+}\times N_0^{-}\times A_0\to H$ is a diffeomorphism onto its image which is Zariski open and dense. Any right $A_0$-invariant open neighborhood of $h\in H$ contains a subset of the form $\cal O=h U^+ U^- A_0$ for some open neighborhood $U^{\pm}\subset N_0^{\pm}$. We will call a subset of this type a basic open subset and
let $\pi_{\pm}=\pi_{\cal O, \pm}: \cal O
\to  N_0^{\pm}$ be the Bruhat projections 
\be\label{pipi} \pi_+(h n^+n^-a)= n^+\quad\text{ and }\quad \pi_-(hn^+n^-a)= n^- .\ee 

\begin{Def}\label{adm}
We say that an $A_0$-invariant subset $Z\subset H$ is admissible if the box dimension of $\pi_{\pm} (\overline{Z}\cap \cal O) $ is equal to its Hausdorff dimension for all sufficiently small basic open subsets $\cal O$. If $L=hA_0 o$ is a geodesic in $Y$, we say $L$ or $\ell=\Ga\ba L$ is admissible if ${\Gamma hA_0}$ is admissible.
\end{Def}
The following theorem says that when the floating height $t$ is non-zero, it can be as chaotic as the closure of its reference geodesic $\Ga\ba \Ga h A_0$ in $\Ga\ba H$.

\begin{theorem}\label{dim}\label{hd2}
    Let $\Ga<H$ be a discrete subgroup and $h\in H$. Let $t\ne 0$.

\begin{enumerate}
    \item We have
$$\op{dim} \overline{\Ga h a_t H }= 2+\op{dim} \overline{\Ga h A_0} .$$
\item Let $Y_{L, t}= ha_tHo$ for $L=hA_0o$. Suppose that $L$
is admissible. Then 
$$ \frac{1}{2}\left(  {3+  \dim \overline{\Gamma hA_0} }\right)  \le  \op{dim} \overline{\Ga Y_{L, t} }\le  1+ \op{dim} \overline{\Gamma h A_0}  .$$
\item In particular, if $1< \dim \overline{\Ga\ba \Ga h A_0}<2 $, then
$$  \op{dim} \overline{\Ga Y_{L, t} } \in (2,3).$$
\end{enumerate}
\end{theorem}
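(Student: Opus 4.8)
Statement (3) is immediate from (2): writing $d:=\dim\overline{\Gamma hA_0}$, the hypothesis $1<d<2$ gives $\tfrac12(3+d)>2$ and $1+d<3$, so (2) forces $\dim\overline{\Gamma Y_{L,t}}\in(2,3)$. I will deduce (2) from (1), and prove (1) through the structural identity
\begin{equation}\label{starid}
 \overline{\Gamma ha_tH}\ =\ \overline{\Gamma hA_0}\cdot a_tk_0A_0K_0\qquad\text{in }\ \Gamma\backslash G .
\end{equation}
Pushing \eqref{starid} forward by the orbit map $\Gamma\backslash G\to\Gamma\backslash X$ (which is proper, hence closed) and using $K_0o=o$ gives $\overline{\Gamma Y_{L,t}}=\overline{\Gamma hA_0}\cdot a_tk_0A_0o$; since $\overline{\Gamma hA_0}$ is right $A_0$-invariant and $A_0k_0A_0o=Y$, one may equivalently write this as $\overline{\Gamma hA_0}\cdot a_tY$, and the right-hand side of \eqref{starid} as $\overline{\Gamma hA_0}\cdot a_tH$.

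\textbf{Part (1).} Because $a_t$ centralizes $A_0$ and $H=A_0k_0A_0K_0$, we have $ha_tH=hA_0\cdot a_tk_0A_0K_0$, hence $\Gamma ha_tH=(\Gamma hA_0)\,a_tk_0A_0K_0$; the inclusion $\supseteq$ in \eqref{starid} follows from continuity of right translation. For $\subseteq$, let $\gamma_n hh_{u_n}\,a_tk_0h_{s_n}\,k(\theta_n)\to y$ in $\Gamma\backslash G$ and extract so that $k(\theta_n)\to k(\theta_\ast)$. Project to $\Gamma\backslash X$ and apply the nearest projection $\pi$; by $H$-equivariance $\pi(\gamma_n hh_{u_n}a_tk_0h_{s_n}o)=\gamma_n hh_{u_n}\cdot\pi(\gamma_t(s_n))$, and by the ``moreover'' clause of \Cref{claim} — valid since the height $t$ is fixed, hence bounded — the set $\{\pi(\gamma_t(s)):s\in\R\}$ is bounded in $Y$. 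This confines $\Gamma hh_{u_n}$ to a bounded part of $\Gamma\backslash H$, so after a further extraction $\Gamma hh_{u_n}\to z\in\overline{\Gamma hA_0}$. A Busemann-function/limit-set analysis of the type carried out in the preceding sections then rules out escape of the remaining factor $h_{s_n}$, giving $h_{s_n}\to h_{s_\ast}$ and $y=z\,a_tk_0h_{s_\ast}k(\theta_\ast)$, which proves \eqref{starid}. For the dimension, the Lipschitz map $\overline{\Gamma hA_0}\times H\to\Gamma\backslash G$, $(z,\eta)\mapsto z\,a_t\eta$, satisfies $zh_u\cdot a_t\eta=z\,a_t\cdot h_u\eta$ and so descends to the twisted product $\overline{\Gamma hA_0}\times_{A_0}H$, of dimension $d+3-1=d+2$; the induced map is locally bi-Lipschitz because its only infinitesimal obstruction is $\mathfrak h\cap\Ad(a_t)\mathfrak h$, which a direct computation from \eqref{Lieh} shows equals $\mathfrak a_0$ for every $t\neq0$ (equivalently $H\cap a_tHa_t^{-1}$ has identity component $A_0$). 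Hence $\dim\overline{\Gamma ha_tH}=d+2$.

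\textbf{Part (2).} By the Plan, $\overline{\Gamma Y_{L,t}}=\overline{\Gamma hA_0}\,a_tk_0A_0o$, which is a Lipschitz image of $\overline{\Gamma hA_0}\times\R$ under $(z,s)\mapsto z\,a_tk_0h_so$; this gives the \emph{upper} bound $\dim\overline{\Gamma Y_{L,t}}\le d+1$. The \emph{lower} bound is where admissibility is used. Fix a small basic open set $\mathcal O$ with $\dim(\overline{\Gamma hA_0}\cap\mathcal O)$ arbitrarily close to $d$ and set $F_\pm:=\pi_\pm(\overline{\Gamma hA_0}\cap\mathcal O)\subset N_0^{\pm}$; right $A_0$-invariance places $\overline{\Gamma hA_0}\cap\mathcal O$ inside a translate of $F_+\times F_-\times A_0$, so $d\le\overline{\dim}_{\mathrm{box}}F_+ +\overline{\dim}_{\mathrm{box}}F_- +1$, and admissibility turns this into $d\le\dim F_+ +\dim F_- +1$. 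Relabelling, $\dim F_+\ge\tfrac12(d-1)$, and one then embeds a bi-Lipschitz copy of $F_+$ times a $2$-dimensional disc in $Y$ into $\overline{\Gamma Y_{L,t}}=\overline{\Gamma hA_0}\,a_tY$, choosing the disc generically so as to avoid the directions along which the $K$-quotient $\Gamma\backslash G\to\Gamma\backslash X$ and the $a_tk_0A_0$-flow degenerate (the obstruction being absorption of unipotent directions into a conjugate of $K$). This yields $\dim\overline{\Gamma Y_{L,t}}\ge\dim F_+ +2\ge\tfrac12(d-1)+2=\tfrac12(3+d)$, completing (2); and (3) follows as noted.

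\textbf{Main obstacle.} The delicate point is the inclusion $\subseteq$ in \eqref{starid}. In rank one the analogous statement is immediate, since two distinct geodesics orthogonal to a geodesic always diverge; here the fibre of $\pi$ through a point of $Y_{L,t}$ contains an entire maximal flat, so one must track whole families of parallel geodesics and show that the non-compact ``$A_0$-factor'' of the decomposition $H=A_0k_0A_0K_0$ cannot run off to infinity while the orbit point stays bounded in $\Gamma\backslash G$. This is exactly the purpose of the limit-set and Busemann-function machinery developed earlier (notably \Cref{claim}, \Cref{bp} and \Cref{narrow}). The lower bound in (2) is the secondary technical core, and it is there that admissibility does its work, converting a box-dimension estimate into a Hausdorff-dimension one.
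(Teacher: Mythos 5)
Your approach to part (1) is correct and takes a genuinely different route from the paper's. Where the paper proves the key identity $\overline{\Gamma h a_t H}=\overline{\Gamma hA_0}\,a_tk_0A_0K_0$ by writing sequences in the generalized Cartan decomposition $G=HBK$ (Theorem~\ref{affine}) and invoking uniqueness of the $B$-component, you deduce boundedness of $\gamma_n h h_{u_n}$ directly from the ``moreover'' clause of Lemma~\ref{claim} plus properness of the $H$-action on $Y$. This is a valid shortcut; note though that once $\gamma_n h h_{u_n}$ is bounded the boundedness of $h_{s_n}$ already follows from elementary convergence of the group product, so the additional ``Busemann-function/limit-set analysis'' you allude to is superfluous. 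For the dimension count, your twisted-product formulation $\overline{\Gamma hA_0}\times_{A_0}H$ together with the check $\mathfrak h\cap\Ad(a_t)\mathfrak h=\mathfrak a_0$ is a clean substitute for the paper's Lemma~\ref{gd} (which works instead with an explicit slice $a_tk_0A_0K_0\subset a_tH$ and a longer matrix calculation); the two computations encode the same transversality statement $H\cap a_tHa_t^{-1}=A_0$ (up to finite index). Part (3) and the upper bound in part (2) are fine.

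The genuine gap is in the lower bound of part (2). You reduce, correctly, to showing $\dim\overline{\Gamma Y_{L,t}}\ge\dim\pi_+\bigl(\overline{\Gamma hA_0}\cap\mathcal O\bigr)+2$, but then assert that one can ``embed a bi-Lipschitz copy of $F_+$ times a $2$-dimensional disc … choosing the disc generically so as to avoid the directions along which … degenerate.'' There is no genericity available here: the $2$-dimensional piece is forced to sit inside the single translate $a_tY$, since $\overline{\Gamma Y_{L,t}}=\overline{\Gamma hA_0}\,a_tY$; it is a fixed submanifold, not one you are free to perturb. What is actually needed is a concrete transversality statement in $X=G/K$ asserting that the multiplication map $hN_0^{\pm}A_0\times a_tk_0A_0\,o\to X$ is a local diffeomorphism onto its image — this is the paper's Lemma~\ref{mul2}, and its proof is a separate Lie-algebra computation in $\mathfrak p$ (involving the projection $(\cdot)_{\mathfrak p}$), not a corollary of the $\mathfrak h\cap\Ad(a_t)\mathfrak h=\mathfrak a_0$ computation used for part (1), which takes place in $\mathfrak g$ before passing to $G/K$. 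Without an argument of this type, the claimed inequality $\dim\overline{\Gamma Y_{L,t}}\ge\dim F_+ +2$ is unjustified. There is also a secondary point to be careful about: $\overline{\Gamma hA_0}\cap\mathcal O$ is a union of $A_0$-orbits $h_0n_\alpha^+n_\alpha^-A_0$, so the set you are multiplying by the disc is not $F_+$ itself but its graph over $F_+$ inside $N_0^+N_0^-A_0$; one must use the Bruhat projection $\pi_+$ together with admissibility to convert box dimension to Hausdorff dimension, as the paper does in Lemma~\ref{larger} and in the final paragraph of its proof of Theorem~\ref{dim}(2).
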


The rest of this section is devoted to proving Theorem \ref{dim}.
We begin by showing that the closure of $[h]a_t H$ in $\Ga\ba G$ is governed by the closure of
$[h]A_0\subset \Ga\ba H$. Note that for $\Ga<H$, the quotient $\Ga\ba H$ is a closed subset of $\Ga\ba G$.
\begin{prop}\label{fuchsian} Let $\Ga<H$ be a discrete subgroup and $h\in H$. For any $t\in \br$, 
  we have
   $$\overline{\Ga h a_t H}   =\overline{ \Ga h A_0} a_tk_0A_0K_0 .$$
   In particular, for $L=hA_0o$,
   $$\overline{\Ga Y_{L, t}}   =\overline{ \Ga h A_0} a_tk_0A_0 o .$$
\end{prop}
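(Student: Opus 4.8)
The plan is to establish both inclusions after rewriting $\Ga ha_tH$ by means of the decomposition $H=A_0k_0A_0K_0$. For $g=h_\sigma k_0h_sk(\theta)\in H$ one has $ha_tg=hh_\sigma\,a_tk_0h_sk(\theta)$, since $a_t$ commutes with $A_0$; hence $\Ga ha_tH=(\Ga hA_0)\,(a_tk_0A_0K_0)$ inside $\Ga\ba G$, and the inclusion $\overline{\Ga hA_0}\,a_tk_0A_0K_0\subseteq\overline{\Ga ha_tH}$ is then immediate: if $\bar z=\lim_n\Ga hh_{\sigma_n}\in\overline{\Ga hA_0}$, then $\Ga hh_{\sigma_n}\cdot a_tk_0h_sk(\theta)=\Ga ha_t\,(h_{\sigma_n}k_0h_sk(\theta))\in\Ga ha_tH$ converges to $\bar z\,a_tk_0h_sk(\theta)$. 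When $t=0$ both sides equal the closed subset $\Ga\ba H\subset\Ga\ba G$ (using $A_0k_0A_0K_0=H$ and $h\in H$), so I may assume $t\ne0$ from now on.

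For the reverse inclusion, take $w\in\overline{\Ga ha_tH}$ and choose $\ga_n\in\Ga$ and $g_n=h_{\sigma_n}k_0h_{s_n}k_n\in H$ with $k_n\in K_0$ and $\ga_nha_tg_n\to\tilde w$ in $G$; after extraction $k_n\to k\in K_0$. I distinguish two cases according to whether $s_n$ stays bounded. If $s_n$ is bounded, extract so that $h_{s_n}\to h_{s_\infty}$; multiplying the convergent sequence $\ga_nha_tg_n=\ga_nhh_{\sigma_n}\,a_tk_0h_{s_n}k_n$ on the right in turn by $k_n^{-1}$, $h_{-s_n}$, $k_0^{-1}$, $a_t^{-1}$ (each convergent) shows that $\ga_nhh_{\sigma_n}$ converges to some $\tilde z$; then $\Ga\tilde z\in\overline{\Ga hA_0}$ and $\tilde w=\tilde z\,a_tk_0h_{s_\infty}k$, so $w\in\overline{\Ga hA_0}\,a_tk_0A_0K_0$.

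The crux is to rule out the remaining possibility $|s_n|\to\infty$. Here I invoke the projection bound for floating planes: for $t$ fixed, $\pi(\ga_t(s))=\pi(a_tk_0h_so)$ stays within a bounded distance of $A_0o$ as $s$ ranges over $\br$, which is the ``moreover'' clause of \Cref{claim}. Using the $H$-equivariance of $\pi$ together with $ha_tg_no=hh_{\sigma_n}a_tk_0h_{s_n}o$, we get $\pi(ha_tg_no)=hh_{\sigma_n}\pi(\ga_t(s_n))$, which therefore lies within a fixed $R_t$ of a point $hh_{\tau_n}o\in L$ with $\tau_n=\sigma_n+O(1)$. Since $\ga_nha_tg_no\to\tilde wo$ and $\pi$ is $\Ga$-equivariant, $\ga_n\pi(ha_tg_no)$ is bounded in $X$, hence so is $\ga_nhh_{\tau_n}o$; as $\ga_nhh_{\tau_n}\in H$ and the orbit map $H\to Y$ is proper, a further extraction gives $\ga_nhh_{\tau_n}\to\tilde q$ in $H$. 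Then
$$(\ga_nhh_{\tau_n})^{-1}(\ga_nha_tg_n)=h_{-\tau_n}a_tg_n=a_t\,h_{\sigma_n-\tau_n}k_0h_{s_n}k_n\longrightarrow\tilde q^{-1}\tilde w,$$
and since $\sigma_n-\tau_n$ and $k_n$ stay bounded while $\|\mu(h_{s_n})\|\to\infty$, the left-hand side has Cartan projection of norm $\ge\|\mu(h_{s_n})\|-O(1)\to\infty$, so it cannot converge — a contradiction. Hence $s_n$ is bounded and the previous case applies, giving $\overline{\Ga ha_tH}=\overline{\Ga hA_0}\,a_tk_0A_0K_0$. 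The ``in particular'' assertion follows by applying the (proper) orbit map $g\mapsto go$, for which $\overline{S}\,o=\overline{S o}$, together with $K_0o=o$: thus $\overline{\Ga Y_{L,t}}=\overline{\Ga ha_tH}\,o=\overline{\Ga hA_0}\,a_tk_0A_0K_0o=\overline{\Ga hA_0}\,a_tk_0A_0o$.

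The main obstacle is exactly the case $|s_n|\to\infty$: a priori such a sequence could converge in the noncompact quotient $\Ga\ba G$, and the only mechanism preventing this is the uniform bound on $\pi(\ga_t(s))$ — it forces the entire divergence to reside in the diagonal factor $h_{\sigma_n}$, which can be absorbed into $\ga_n$, rather than in the growing factor $k_0h_{s_n}$; once that growth is isolated, it is plainly incompatible with convergence. This is the single place where the delicate nearest-projection analysis of the preceding sections (\Cref{bp}, \Cref{narrow}) is indispensable; everything else in the argument is formal.
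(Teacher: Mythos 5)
Your proof is correct, and it takes a genuinely different route from the paper's at the crucial step. Both proofs reduce to showing that, in a convergent sequence $\ga_n h a_t g_n\to\tilde w$ with $g_n=h_{\sigma_n}k_0 h_{s_n}k_n\in A_0 k_0 A_0 K_0$, the middle factor $h_{s_n}$ must stay bounded (equivalently, $a_t k_0 h_{s_n}$ stays bounded), and both rely on the bounded-projection result (Lemma~\ref{claim}) as the essential analytic input. But the mechanisms differ. The paper invokes the generalized Cartan decomposition $G=HBK$ (Theorem~\ref{affine}): it writes $a_t k_0 h_{s_n}=h_n'b_n k_n'$, observes $h_n'o=\pi(a_tk_0h_{s_n}o)$ is bounded by Lemma~\ref{claim}, and then uses the continuity (modulo compact) of the $B$-component to bound $b_n$. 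You instead avoid $HBK$ entirely: you use $H$-equivariance of $\pi$ together with Lemma~\ref{claim} to produce a bounded sequence $\ga_n h h_{\tau_n}o$ in $Y$, extract a convergent subsequence in $H$ by properness, and then observe that the difference $h_{-\tau_n}a_tg_n=a_t h_{\sigma_n-\tau_n}k_0h_{s_n}k_n$ would have to converge — yet its $KA^+K$ Cartan projection has norm $\ge\|\mu(h_{s_n})\|-O(1)\to\infty$, a contradiction. Your argument is somewhat more elementary and self-contained, not needing the structure theory behind $G=H\mathcal W B^+K$ or any continuity claim about that decomposition; the paper's is more structural and shorter once Theorem~\ref{affine} is in hand. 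One minor note: you correctly separate out $t=0$, which is necessary, since for $t=0$ the ``moreover'' part of Lemma~\ref{claim} gives no control (indeed $\pi(\ga_0(s))=k_0h_so$ is unbounded), so your Case~2 contradiction would not arise.
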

\begin{proof}  Since $H=A_0 k_0
A_0K_0$, we have
$${\Ga h  a_t H}  = {\Ga h A_0 a_tk_0A_0K_0}.$$
Let $g\in \overline{\Ga h a_t H} $, i.e., 
$\ga_i h a_t h_i\to g$ for an infinite sequence $\ga_i\in \Ga, h_i\in H$. 
We write $h_i= c_i k_0 p_i k_i\in A_0 k_0A_0 K_0$, so $a_t h_i= c_i a_t k_0 p_i k_i$. 
It suffices to show that
$a_tk_0p_i$ is bounded, so that its limit lies in $a_t k_0A_0K_0$.
Write $$a_tk_0p_i = h_i'b_i k_i'\in HBK$$ using \Cref{affine}.
As $t$ is fixed, $$\pi (a_t k_0 p_io)=h_i' o$$ is bounded by \Cref{claim} and hence
 $h_i'\in H$ is bounded.
 Using \Cref{affine}, we can write $g=h^*b^*k^*\in H B K$.
Since $$\ga_i h a_th_i= \ga_i h c_i (a_t k_0 p_i)k_i= (\ga_i h c_i h_i') b_i (k_i'k_i) \in HBK $$ and the $B$-component in the $G=HBK$ decomposition is uniquely determined modulo a compact subset, we must have
$b_i\to b^*$ modulo a compact subset, and hence $b_i$ must be a bounded sequence.
Since both $h_i'$ and $b_i$ are bounded sequences and
$a_tk_0p_i = h_i'b_i k_i'\in HBK$, it follows that $a_tk_0p_i$ is bounded, as desired.
\end{proof}

To relate the Hausdorff dimension of $\overline{\Ga h A_0}$ with that of $\overline{\Ga h a_t H}$, we use the local product structure in $G$.
The following lemma shows that $H\times a_t k_0A_0K_0$ maps locally
diffeomorphically into $G$, allowing us to
invoke Proposition \ref{fuchsian} in the proof of Theorem \ref{dim}(1).
\begin{lem}\label{gd}
    Let $t\ne 0$.  The product map
    $$m: H\times a_tk_0  A_0K_0 \to G,\quad (h, s)\mapsto hs$$ 
     with $h\in  H$ and $s\in a_tk_0A_0K_0$
   is a local diffeomorphism onto its image at every point, i.e., for any
   $p=(h,s)$,  there exists an open neighborhood $U$ of $p$ such that
   $m(U)$ is a submanifold of $G$ and $m|_U:U\to m(U)$ is a diffeomorphism.
   \end{lem}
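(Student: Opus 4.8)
The plan is to show that $m$ is an immersion at every point. Since a smooth immersion is locally an embedding (the local immersion theorem), each point $p=(h,s)$ then automatically has a neighborhood $U$ on which $m(U)$ is an embedded submanifold of $G$ and $m|_U\colon U\to m(U)$ is a diffeomorphism, which is exactly the assertion. So it suffices to prove that the differential $dm_{(h,s)}$ is injective for every $(h,s)$. I would first use the left-$H$-equivariance $m(h_1h,s)=h_1\,m(h,s)$, which intertwines $dm_{(h,s)}$ with $dm_{(h_1h,s)}$ via left translations; hence it is enough to treat $(h,s)=(e,s)$ with $s=a_tk_0h_rk(\theta)\in a_tk_0A_0K_0$. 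Trivializing $T_sG$ by left translation and differentiating $(h,s')\mapsto hs'$ gives
$$dm_{(e,s)}(\xi,\eta)=\Ad(s^{-1})\xi+\eta,\qquad \xi\in\mathfrak h,\ \eta\in\mathfrak w_s,$$
where $\mathfrak w_s:=s^{-1}\,T_s(a_tk_0A_0K_0)=\Ad(k(\theta)^{-1})\mathfrak a_0+\mathfrak k_0$ is a $2$-plane inside $\mathfrak h$, with $\mathfrak a_0=\Lie A_0$ and $\mathfrak k_0=\Lie K_0$. Thus $dm_{(e,s)}$ is injective if and only if
$$\Ad(s^{-1})\mathfrak h\cap\mathfrak w_s=\{0\}.$$

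This transversality statement is the heart of the proof, and the only place $t\ne0$ is used. To establish it, note that $\mathfrak w_s\subset\mathfrak h$, so $\Ad(s^{-1})\mathfrak h\cap\mathfrak w_s\subset\Ad(s^{-1})\mathfrak h\cap\mathfrak h$. Writing $s^{-1}=h_0\,a_t^{-1}$ with $h_0=k(\theta)^{-1}h_r^{-1}k_0^{-1}\in H$ and using that $\Ad(H)$ preserves $\mathfrak h$, one gets $\Ad(s^{-1})\mathfrak h\cap\mathfrak h=\Ad(h_0)\big(\Ad(a_t^{-1})\mathfrak h\cap\mathfrak h\big)$, so it remains to control the $t$-dependent space $\Ad(a_t^{-1})\mathfrak h\cap\mathfrak h$. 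Using the explicit form \eqref{Lieh} of $\mathfrak h$ — spanned by $\diag(1,0,-1)$, $E_{12}+E_{23}$ and $E_{21}+E_{32}$, where $E_{ij}$ is the elementary matrix — together with the weights of $\Ad(a_t^{-1})$ on $E_{12},E_{23},E_{21},E_{32}$, a one-line computation shows that for $t\ne0$ the off-diagonal root directions are sheared apart, so that $\Ad(a_t^{-1})\mathfrak h\cap\mathfrak h=\mathfrak a_0=\mathbb R\,\diag(1,0,-1)$. Hence $\Ad(s^{-1})\mathfrak h\cap\mathfrak h$ is the single line $\Ad(h_0)\mathfrak a_0$, and it is enough to verify that $\Ad(h_0)\mathfrak a_0\not\subset\mathfrak w_s$.

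For this last step I would compute $\Ad(h_0)\mathfrak a_0$ explicitly. Because $k_0$ is the quarter turn, $\Ad(k_0^{-1})\mathfrak a_0$ is the line in $\mathfrak p_0=\mathfrak h\cap\mathfrak p$ orthogonal to $\mathfrak a_0$ — this is precisely the statement that $A_0o$ and $k_0A_0o$ are orthogonal geodesics in $Y$ — namely $\mathbb R(E_{12}+E_{21}+E_{23}+E_{32})$. Applying $\Ad(h_r^{-1})$ scales the two root components oppositely, so $\Ad(h_0)\mathfrak a_0=\Ad(k(\theta)^{-1})\,\mathbb R\big(e^{-r}(E_{12}+E_{23})+e^{r}(E_{21}+E_{32})\big)$. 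On the other hand $\Ad(k(\theta))\mathfrak w_s=\mathfrak a_0+\mathfrak k_0$, where $\mathfrak k_0=\mathbb R(E_{12}+E_{23}-E_{21}-E_{32})$, and the vector $e^{-r}(E_{12}+E_{23})+e^{r}(E_{21}+E_{32})$ has no diagonal component and is never proportional to $E_{12}+E_{23}-E_{21}-E_{32}$ (that would force $e^{-r}=-e^{r}$); hence it does not lie in $\mathfrak a_0+\mathfrak k_0$, and $\Ad(h_0)\mathfrak a_0\not\subset\mathfrak w_s$. This yields $\Ad(s^{-1})\mathfrak h\cap\mathfrak w_s=\{0\}$ and finishes the proof. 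The main obstacle is exactly this transversality: in rank one the analogous fact is automatic, since geodesics orthogonal to a fixed hyperplane diverge, whereas here both $\Ad(s^{-1})\mathfrak h$ and the "flat direction" $\mathfrak w_s$ vary with $s$, and one must exploit the shearing of $\mathfrak h$ by $\Ad(a_t)$ with $t\ne0$ to see that the residual line $\Ad(h_0)\mathfrak a_0$ genuinely escapes $\mathfrak w_s$ for every $r,\theta$.
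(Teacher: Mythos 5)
Your proof is correct, and it follows a genuinely different and more conceptual route than the paper. Both you and the paper reduce the lemma to showing injectivity of the differential, arriving at the condition $\Ad(s^{-1})\mathfrak h\cap\mathfrak w_s=\{0\}$ with $\mathfrak w_s=\Ad(k(\theta)^{-1})\mathfrak a_0+\mathfrak k_0$. The paper attacks this head-on: it conjugates by $g=a_tk_0$, writes out $\Ad_{g^{-1}}\mathfrak h$ in coordinates $X,Y,Z,W$, computes all entries of the matrix $I$, and runs a case analysis on $\cos\theta$, $\sin\theta$ being zero or not to force $I=0$. You instead exploit the containment $\mathfrak w_s\subset\mathfrak h$ to replace the problem by the a priori larger intersection $\Ad(s^{-1})\mathfrak h\cap\mathfrak h$, and then factor $s^{-1}=h_0\,a_t^{-1}$ with $h_0\in H$ so that this intersection becomes $\Ad(h_0)\bigl(\Ad(a_t^{-1})\mathfrak h\cap\mathfrak h\bigr)$. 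The hypothesis $t\ne0$ enters at exactly one point, the root-space calculation showing $\Ad(a_t^{-1})\mathfrak h\cap\mathfrak h=\mathfrak a_0$, since the weights of $\Ad(a_t^{-1})$ on $E_{12},E_{23}$ (resp. $E_{21},E_{32}$) are $e^{\mp3t}$ (resp. $e^{\pm3t}$) and only the diagonal direction survives. This reduces the whole lemma to checking that the single line $\Ad(h_r^{-1}k_0^{-1})\mathfrak a_0 = \mathbb R\bigl(e^{-r}(E_{12}+E_{23})+e^{r}(E_{21}+E_{32})\bigr)$ does not lie in the plane $\mathfrak a_0+\mathfrak k_0$, which is immediate (it would require $e^{-r}=-e^{r}$). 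This buys you a calculation that is essentially one line instead of nine matrix entries and three cases, and it makes transparent exactly where $t\ne0$ is used, namely in the shearing of the off-diagonal root spaces of $\mathfrak h$ under $\Ad(a_t)$, mirroring the geometric intuition the paper only alludes to. The paper's route is more mechanical but has the minor virtue of keeping everything in a single coordinate computation.
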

\begin{proof}   Write  $g=a_tk_0$ and set 
 $S_t= gA_0K_0 g^{-1}$. To prove the claim, it suffices to show
 that for the product map
    $m: H\times S_t\to G$ given by $(h, s)\mapsto hs$, $dm_{(h,s)}$ is
injective at every $(h,s)\in H\times S_t$.
Then by the constant rank theorem, the claim would follow.

    So, let $h\in H$ and $s=g (ak)g^{-1}\in S_t$ where $a\in  A_0$ and $k\in K_0$.
  For any $U \in \T_h H$ and $V\in \T_s S_t$, we get
  $$(L_{(hs)^{-1}})_* dm_{(h,s)} (U, V)= \Ad_{s^{-1}} (L_{h^{-1}})_* U + (L_{s^{-1}})_* V $$
 where $L_x:G\to G$ denotes the left translation by $x\in G$ and $(L_x)_*: \T_y G\to \T_{xy}G $ denotes the differential at $y\in G$.
Therefore
  $$\op{ker} dm_{(h,s)} \simeq \Ad_{s^{-1}} \frak h \cap (L_{s^{-1}})_* \T_s(S_t).$$
Now $$(L_{(ak)^{-1}})_* \T_{ak}(A_0K_0) =\Ad_{k^{-1}}\frak a_0 \oplus\frak k_0 $$
and conjugating by $g$ gives
 $$(L_{s^{-1}})_* \T_{s}S_t =\Ad_g \left( \Ad_{k^{-1}}\frak a_0 \oplus\frak k_0 \right).$$
Setting $\frak h'=\Ad_{g^{-1}}\frak h$, we can write  $$\Ad_{s^{-1}}\frak h=\Ad_g (\Ad_{a^{-1}k^{-1}} \frak h').$$ 

It remains to show that  
 $$ \Ad_{a^{-1}k^{-1}} \frak h'\cap   \left( \Ad_{k^{-1}}\frak a_0 \oplus\frak k_0\right)=\{0\}.$$

Since $k=k(\theta)$ as in \eqref{kt}, we can compute that any matrix in $\Ad_{k^{-1}}\frak a_0 \oplus\frak k_0$ is of the form
\be\label{int} \begin{pmatrix}
    b & p & 0 \\ q & 0 & p \\ 0 & q & -b
\end{pmatrix}\ee 
for some $b, p, q\in \br$. Moreover, if $k=e$,  then $p=q$.

Now, an element in $\frak h'$ is of the form 
\be \label{hp} g^{-1} \begin{pmatrix}
    s & x & 0 \\ y & 0 & x \\ 0 & y & -s
\end{pmatrix}g= \frac{1}{2}\begin{pmatrix}
    X & Y & Z \\ W & 0 & Y \\ Z & W & -X
\end{pmatrix}\ee 
for some $x,y, s\in \br$, where \begin{itemize}
    \item  $X= -\sqrt 2 (x+y)(e^{3t}+e^{-3t})$, 
\item $Y=2\sqrt 2 s +2e^{3t}(x-y) $, 
\item $Z= \sqrt 2 e^{-3t}(x+y)$,
\item 
$W= 2\sqrt 2 s- 2 e^{-3t}(x-y)$.
\end{itemize}

 Let $k=k_\theta$, $\mathsf c=\cos \theta$, $\mathsf s=\sin \theta$, and $a=h_r$. 
 Then a matrix  $I \in \Ad_{a^{-1}k^{-1}}\frak h'$ is, up to a uniform constant multiple, of the form 
 $$ I_{11}=2\mc  X +\sqrt 2 \ms Y+\sqrt 2 \ms W, \;\;  I_{22}=-\sqrt 2 \mc \ms W, \; \; I_{33}=2\mc Z+\sqrt \ms (1-\mc)W$$
 $$ I_{12}=(-\sqrt 2 \ms X +(\mc +1) Y +\sqrt 2\ms Z - \ms^2 W)e^{-3r},$$
 $$I_{23}=  (-\sqrt 2 \ms X +(\mc +1) Y-\sqrt 2 \ms Z +\mc (1-\mc)W )e^{3r},
  $$
$$ I_{21}= (-\sqrt 2 \ms X +(\mc -1) Y -\sqrt 2\ms Z +(\mc +1) W)e^{3r},$$
$$ I_{32}=  ( -\sqrt 2 \ms X+(\mc -1) Y +\sqrt 2 \ms Z)e^{-3r} , $$ $$I_{13}= 2 \mc Z +\sqrt 2^{-1} (1-\mc) \ms W , \;\; I_{31}= -2\mc Z.$$

Suppose that $I\in \Ad_{a^{-1}k^{-1}} \frak h'\cap   \left( \Ad_{k^{-1}}\frak a_0 \oplus\frak k_0\right)$. 
To show that $I=0$, we consider the following three cases separately.

\medskip
\noindent{\bf Case I: $\mc \, \ms \ne 0$.}
Since $I_{13}=I_{22}=I_{31}=0$, we must have  $Z=W=0=I_{33}$. 
Since $Z=0$,  $x=-y$. Together with $W=0$, this gives $\sqrt 2 s= 2 e^{-3t} x$.
Since $I_{33}=0$, $I_{11}=0$, i.e, $\sqrt 2 \mc X+\ms Y=0$, and this means that $\sqrt 2 s=-2 e^{3t}x$.
Since $t\ne 0$, we get $x=0$, which also means that $s=y=0$. Therefore $I=0$.

\medskip
\noindent{\bf Case II: $\mc=0$.} Then from $I_{13}=0$, we get $W=0$, which implies $I_{33}=0$.
From $I_{11}=-I_{33}$, we get $Y=0$. This implies that $x=y=s=0$. Hence $I=0$.

\medskip
\noindent{\bf Case III: $\ms=0$.} In this case, $k=e$. So $I_{12}=I_{21}$.
Then from $I_{13}=0$, we get $x+y=0$. Since $I_{32}=0$, we get $I_{21}=0$, which gives us $W=0$, and hence $Y=0$.
This implies $x=y=s=0$; so $I=0$.

\medskip
This finishes the proof.
 \end{proof}

\noindent{\bf Proof of Theorem \ref{dim}(1)}
By Lemma \ref{gd}, the product map $f: H \times a_t k_0 A_0K_0 \to G$ is locally
bi-Lipschitz on a countable cover.  Since Hausdorff dimension is countably stable, it follows that
for any subset $\Sigma$ of  $H \times a_t k_0 A_0K_0$, the Hausdorff dimension of $\Sigma$ is equal to that of its image under $f$.

Lemma \ref{fuchsian} gives
     $$\overline{\Ga h a_t H}   =\overline{ \Ga h A_0} a_tk_0A_0K_0 ,$$
and $\overline{ \Ga h A_0}\subset H$ as $\Gamma\subset H$,
Thus the Hausdorff dimension of $\overline{\Ga a_t H} $  is equal to that of the product
$\overline{ \Ga h A_0} \times a_tk_0A_0K_0$.
Since  $ a_tk_0A_0K_0$ is a $2$-dimensional smooth submanifold, the claim follows.

\subsection*{Floating geodesic planes} 
For Part (2) of Theorem \ref{dim}(2), we need an analogue of \Cref{gd} at the level of the symmetric space $X$. Unfortunately, the product map $H\times a_tK_0A_0 o \to X$ is in general not locally injective, and this is precisely why we cannot conclude 
$\dim \overline{\Ga Y_{L, t}}= 1+\dim \overline{\Ga h A_0}$ in Theorem \ref{dim}(2). The next lemma shows that replacing $H$ by $hN_0^{\pm}A_0$ restores local injectivity in $X$, and this will allow us to prove the dimension estimates in Theorem \ref{dim}(2).
\begin{lem}\label{mul2}
    Let $h\in H$. The multiplication map  
    $$ h N_0^{\pm}A_0 \times a_tk_0  A_0o \to X,\quad (h h', so)\mapsto hh's o$$
    with $h'\in  N_0^{\pm}A_0$ and $s\in a_tk_0A_0$
   is a local diffeomorphism onto its image everywhere.
\end{lem}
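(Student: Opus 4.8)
The plan is to follow the strategy of Lemma~\ref{gd}: prove that the differential of the multiplication map is injective at every point, so that the map is an immersion and hence, locally, a diffeomorphism onto its (locally embedded) image. First I would use $H$--equivariance to reduce to $h=e$: writing $L_h\colon X\to X$ for the isometry $x\mapsto hx$, the map $(hh',so)\mapsto hh'\cdot so$ equals $L_h$ composed with $(h',so)\mapsto h'\cdot so$ and the identification $hN_0^{\pm}A_0\to N_0^{\pm}A_0$, $hh'\mapsto h'$; since $L_h$ is a diffeomorphism of $X$, it suffices to treat $h=e$.

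So let $m\colon N_0^{\pm}A_0\times a_tk_0A_0o\to X$ be $m(h',so)=h'\cdot so$. Fix $h_0'\in N_0^{\pm}A_0$ and $r\in\br$, put $s:=a_tk_0h_r$, $p_0:=so$ and $g_0:=h_0's$, so that $q_0:=m(h_0',p_0)=g_0o$. Differentiating $u\mapsto h_0'\exp(u\xi)\cdot so$ in the direction $\xi$ of $\mathfrak{b}^{\pm}:=\mathfrak{n}_0^{\pm}\oplus\mathfrak{a}_0=\Lie(N_0^{\pm}A_0)$, and $u\mapsto h_0'\cdot a_tk_0h_{r+u}o$ along the curve, and then left--translating the results to $T_oX=\mathfrak{g}/\mathfrak{k}$ exactly as in the proof of Lemma~\ref{gd}, one finds
\[
 \ker dm_{(h_0',p_0)}\;\cong\;\bigl\{(\xi,c)\in\mathfrak{b}^{\pm}\times\br:\ \Ad_{s^{-1}}\xi+cE\in\mathfrak{k}\bigr\},
\]
where $E=\diag(1,0,-1)$ is the generator of $\mathfrak{a}_0$.

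It remains to show that this kernel is trivial, which is where the actual work lies. If $\Ad_{s^{-1}}\xi+cE\in\mathfrak{k}$, then $\Ad_{s^{-1}}\xi\in\mathfrak{k}\oplus\br E$, a $4$--dimensional subspace since $E$ is symmetric and nonzero; hence it suffices to establish
\[
 \Ad_{s^{-1}}\mathfrak{b}^{\pm}\ \cap\ (\mathfrak{k}\oplus\br E)=\{0\},
\]
as this forces $\xi=0$, and then $cE\in\mathfrak{k}$ forces $c=0$. Applying $\Ad_s$ and using $\Ad_{h_r}E=E$ (because $E\in\mathfrak{a}_0$), this is equivalent to
\[
 \mathfrak{b}^{\pm}\ \cap\ \bigl(\Ad_{a_tk_0h_r}\mathfrak{k}\ \oplus\ \br\,\Ad_{a_tk_0}E\bigr)=\{0\},
\]
a concrete statement about explicit $3\times3$ matrices, which I would verify by a direct computation in the spirit of the three--case analysis in the proof of Lemma~\ref{gd}. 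The hypothesis $t\neq0$ is used decisively here: when $t=0$ one has $a_tk_0A_0o\subset Y=N_0^{\pm}A_0o$, so the image is only $2$--dimensional and the statement genuinely fails. The two signs $\pm$ can be treated simultaneously via the Cartan involution $\Theta$ of \eqref{cartan-inv}, which interchanges $N_0^{+}A_0$ and $N_0^{-}A_0$, fixes $k_0$ and $\mathfrak{k}$, sends $a_t$ to $a_{-t}$, and induces the point reflection $\iota$ of $X$ about $o$; thus the $N_0^{+}$--statement at height $t$ transports to the $N_0^{-}$--statement at height $-t$.

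The main obstacle is this last matrix verification. In contrast with the rank--one picture, here $a_t\notin H$, so conjugation by $s=a_tk_0h_r$ mixes $\mathfrak{h}$ with its $\sigma$--complement $\mathfrak{q}$; one must confirm that the moving $2$--plane $\Ad_{s^{-1}}\mathfrak{b}^{\pm}$ stays disjoint from the fixed $4$--plane $\mathfrak{k}\oplus\br E$ uniformly in $r$. This is elementary but, as in Lemma~\ref{gd}, needs to be carried out with care.
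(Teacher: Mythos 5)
Your approach mirrors the paper's: reduce to an injectivity check on the differential at the Lie algebra level, then verify by an explicit matrix computation. Your identification of the kernel as $\{(\xi,c)\colon \Ad_{s^{-1}}\xi+cE\in\mathfrak k\}$ and the reformulation $\Ad_{s^{-1}}\mathfrak b^{\pm}\cap(\mathfrak k\oplus\br E)=\{0\}$ are correct, and in fact slightly cleaner than the paper's formulation $(\Ad_{s^{-1}}\mathfrak h^{\pm})_{\mathfrak p}\cap\mathfrak a_0=\{0\}$, since your version transparently encodes both that the $\mathfrak p$-projection of $\Ad_{s^{-1}}\xi$ avoids $\mathfrak a_0$ and that it is nonzero when $\xi\neq0$. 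Your observation that $t=0$ must fail (the image then lies in the two-dimensional $Y$) is a valuable sanity check, and the use of the Cartan involution $\Theta$ to deduce the $N_0^{-}$ case from the $N_0^{+}$ case is a nice shortcut the paper does not use (it simply says the $\mathfrak h^{-}$ computation is analogous).

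The one thing you have not done is the decisive matrix verification itself, which is where the entire content of the lemma lives. The paper carries this out: it conjugates a general element $\begin{pmatrix} s & 0 & 0\\ y & 0 & 0\\ 0 & y & -s\end{pmatrix}\in\mathfrak h^{+}$ by $(a_tk_0h_r)^{-1}$, takes the symmetric part, and reads off directly that the two off-diagonal constraints force $y=s=0$ — a short single computation, \emph{not} the three-case $(\mc,\ms)$ analysis of Lemma \ref{gd} that you anticipate; that longer analysis was needed there because the second factor involved the full $A_0K_0$, whereas here it is only $A_0o$. Without the explicit check your argument remains a plan rather than a proof, so you should carry it out (in either your conjugated form $\mathfrak b^{\pm}\cap(\Ad_{a_tk_0h_r}\mathfrak k\oplus\br\,\Ad_{a_tk_0}E)=\{0\}$ or the paper's equivalent $\mathfrak p$-projection form) and confirm precisely where $t\neq0$ enters.
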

\begin{proof} Recall the Cartan decomposition $\frak g=\frak k \oplus \frak p$.  Set $H^{\pm}=H\cap N^{\pm}A$ and $\frak h^{\pm}:=\Lie (H^{\pm})$. Let $g=a_tk_0$.
It suffices to show that  the multiplication map
$ \Phi: g^{-1}H^{\pm}g \times  A_0o \to X$ is a local diffeomorphism at $(e, h_ro)$ for any $r\in \br$, since the left-translation by an element of
$g^{-1}H^{\pm} g $ is an isometry.  The image of $d\Phi_{(e, h_r o)}$ is given by
$ dL_{h_r} ( \Ad_{h_r^{-1}g^{-1}}
(\frak h^{\pm})_{\frak p} +\fa_0 )$. Thus it is enough to show
\be\label{int2} (h_r^{-1}g^{-1}\frak h^{\pm} gh_r)_{\frak p} \cap \fa_0=\{0\}\ee 
where $(\cdot )_{\frak p}$ denotes the projection to $\frak p$.

Since the projection $\frak g\to \frak p$ is given by $u\mapsto (u+u^T)/2$,
an element of $(g^{-1}\frak h^{+} g)_{\frak p} $ is of the form as given in \eqref{hp} with 
\be \label{hp2}h_r^{-1} g^{-1} \begin{pmatrix}
    s & 0 & 0 \\ y & 0 & 0 \\ 0 & y & -s
\end{pmatrix}gh_r = \frac{1}{4}\begin{pmatrix}
    2X & Y +W& 2e^{-2r} Z \\ e^r(Y+W) & 0 & e^{-r}(Y+W) \\ 2e^{2r} Z & Y+W & -2X
\end{pmatrix}\ee 
 where \begin{itemize}
    \item  $X= -\sqrt 2 y(e^{3t}+e^{-3t})$, 
\item $Y=2\sqrt 2 s -2e^{3t}y $, 
\item $Z= \sqrt 2 e^{-3t}y$,
\item 
$W= 2\sqrt 2 s+ 2 e^{-3t}y$.
\end{itemize}
Hence $2e^{-2r}Z=0$ and $Y+W=0$ implies that $y=0=s$, proving \eqref{int2} for $\frak h^+$.
The computation for $\frak h^-$ is analogous.
\end{proof}

\begin{lem}\label{larger}
  For $\cal G=\Gamma hA_0$ with $h\in H$, suppose that $\overline{\cal G}$ is admissible. Then
    $$ \max \{ \dim \pi_{+}\left( \overline{\cal G}\cap \cal O \right), \dim \pi_{-}\left( \overline{\cal G} \cap \cal O\right) \}\ge \frac{ \dim (\overline{\cal G})-1} 2$$
    where the supremum is taken over all basic open subsets $\cal O$  of $H$.
\end{lem}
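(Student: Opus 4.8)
The plan is to localize the problem to a single Bruhat chart, convert the $A_0$-invariance of $\overline{\cal G}$ into a product structure in that chart, and then apply the elementary planar estimate $\overline{\dim}_B(E)\le\overline{\dim}_B(\pi_1 E)+\overline{\dim}_B(\pi_2 E)$, using the admissibility hypothesis to replace the box dimensions of the two projections by their Hausdorff dimensions. Throughout, recall that $\dim H=3$ with $N_0^{\pm}$ and $A_0$ each one-dimensional, so the Bruhat projections take values in one-dimensional manifolds and the intermediate set $E$ below is a subset of a surface.

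First I would fix a sufficiently small basic open subset $\cal O=h'U^+U^-A_0$ of $H$, with $U^{\pm}\subset N_0^{\pm}$ precompact, chosen small enough that the admissibility hypothesis for $\overline{\cal G}$ applies to it. Since the product map $h'N_0^+\times N_0^-\times A_0\to H$ is a diffeomorphism onto its image, restriction yields a diffeomorphism $\Phi:U^+\times U^-\times A_0\to\cal O$; being smooth it preserves Hausdorff dimension, and being locally bi-Lipschitz it preserves the upper box dimension of precompact subsets. Both $\cal O$ and $\overline{\cal G}$ are right $A_0$-invariant, and in the coordinates of $\Phi$ the right $A_0$-action is translation in the third factor; hence $\Phi^{-1}(\overline{\cal G}\cap\cal O)=E\times A_0$ for some bounded $E\subset U^+\times U^-$, and $\Phi$ carries the coordinate projections $\pi_1(E)\subset U^+$ and $\pi_2(E)\subset U^-$ onto the Bruhat projections $\pi_+(\overline{\cal G}\cap\cal O)$ and $\pi_-(\overline{\cal G}\cap\cal O)$ respectively.

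Next I compute the relevant dimensions. Since $A_0$ is a smooth one-manifold, decomposing it into countably many bounded intervals $I_n$, using countable stability of Hausdorff dimension, and combining the product bounds $\dim_H(E)+\dim_H(I_n)\le\dim_H(E\times I_n)\le\dim_H(E)+\overline{\dim}_B(I_n)$, one obtains
$$\dim(\overline{\cal G}\cap\cal O)=\dim_H(E\times A_0)=\dim_H(E)+1.$$
On the other hand, for the bounded planar set $E$ one has $\dim_H(E)\le\overline{\dim}_B(E)$, and covering $\pi_1(E)$ and $\pi_2(E)$ by $\delta$-intervals and taking products of these covers gives the standard inequality $\overline{\dim}_B(E)\le\overline{\dim}_B(\pi_1 E)+\overline{\dim}_B(\pi_2 E)$. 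This is the step where admissibility of $\overline{\cal G}$ enters: it identifies $\overline{\dim}_B(\pi_i E)$ with $\dim(\pi_{\pm}(\overline{\cal G}\cap\cal O))$, so that
$$\dim(\overline{\cal G}\cap\cal O)-1\ \le\ \dim\pi_+(\overline{\cal G}\cap\cal O)+\dim\pi_-(\overline{\cal G}\cap\cal O)\ \le\ 2\sup\{\dim\pi_+(\overline{\cal G}\cap\cal O),\,\dim\pi_-(\overline{\cal G}\cap\cal O)\}.$$

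Finally, since $H$ is covered by countably many sufficiently small basic open subsets, countable stability of Hausdorff dimension gives $\dim(\overline{\cal G})=\sup_{\cal O}\dim(\overline{\cal G}\cap\cal O)$; taking the supremum of the displayed inequality over all such $\cal O$ then yields $\sup_{\cal O}\{\dim\pi_+(\overline{\cal G}\cap\cal O),\dim\pi_-(\overline{\cal G}\cap\cal O)\}\ge\tfrac12(\dim\overline{\cal G}-1)$, as claimed. The argument is soft; the only genuinely delicate point — and the reason admissibility is imposed — is this last identification, since the planar inequality bounds $\dim_H(E)$ by the \emph{box} dimensions of the projections, whereas the conclusion concerns their \emph{Hausdorff} dimensions, and in general the former can be strictly larger.
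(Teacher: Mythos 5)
Your proof is correct and follows essentially the same strategy as the paper's: reduce via countable stability to a single sufficiently small basic open set, exploit the right-$A_0$-invariance to write $\overline{\cal G}\cap\cal O$ as a product $E\times A_0$ in Bruhat coordinates, bound $\dim E$ by the sum of the box dimensions of its two coordinate projections, and invoke admissibility to replace those box dimensions by Hausdorff dimensions. The only difference is cosmetic — you spell out the product-dimension inequalities more carefully (in particular isolating exactly which step genuinely requires admissibility), while the paper presents the same chain of inequalities more tersely.
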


\begin{proof}  Since $H$ can be covered by countably many basic open sets,
$$\dim (\overline{\cal G})=\sup_{\cal O} \dim (\overline{\cal G}\cap \cal O) .$$
Hence if $\dim (\overline{\cal G}) >c_0 $,
then for some basic open subset $\cal O$, we must have  $\dim (\overline{\cal G}\cap \cal O) >c_0$. Write $\cal O=hU_0^+U_o^-A_0$ and
$\overline{\cal G}\cap \cal O= h \bigcup_\alpha  (n_\alpha^+n_\alpha^-) A_0$
with $n_\alpha^{\pm}\in U_0^\pm$. Hence $\pi_{\pm}\left( \overline{\cal G}\cap \cal O \right) =\cup_\alpha n_\alpha^{\pm}$.
   Since $\overline{\cal G}$ is admissible, we have
    $$c_0< \dim (\overline{\cal G}\cap \cal O)\le \dim \left(\bigcup (n_\alpha^+n_\alpha^-) \right)+1
    \le \dim \bigcup n_\alpha^+ +\dim \bigcup n_\alpha^- +1.$$ This inequality forces at least one of  
    $\dim \bigcup n_\alpha^+ $ or $\dim \bigcup n_\alpha^-$ to be bigger than $(c_0-1)/2$, giving the claim.
\end{proof}

\noindent{\bf Proof of Theorem \ref{dim}(2)} 
   By Lemma \ref{fuchsian}, 
   $$\overline{\Ga h a_t H o}   =\overline{ \Ga h A_0} a_tk_0A_0o .$$
   Hence the upper bound is immediate. For the lower bound, by \Cref{larger},
   it suffices to show that for any basic open subset $\cal O\subset H$, we have 
$$\max \{ \dim \pi_{+}\left( \overline{\cal G}\cap \cal O \right), \dim \pi_{-}\left( \overline{\cal G} \cap \cal O\right) \} 
+2 \le  \op{dim} \overline{\Ga Y_{L, t} }. $$
write $\cal O=h_0 U^+ U^- A_0$ for some open neighborhood $U^{\pm}\subset N_0^{\pm}$ and $h_0\in H$. Write $$\overline{\Ga h A_0} \cap \cal O =\bigcup_{\alpha} h_0 n^+_\alpha n^-_\alpha A_0 $$
which is a disjoint union of $A_0$-orbits with $n^{\pm}_\alpha\in U^{\pm}$.
So 
 \begin{multline*}
     \dim \overline{\Ga h a_t Ho }\ge \dim \left(\overline{\Ga h A_0} \cap \cal O\right)a_t k_0A_0 o  \ge 
     \dim \left( (\bigcup h_0 n^+_\alpha A_0 ) a_tk_0 A_0o \right)
 \end{multline*}
 Since $(\bigcup n^+_\alpha A_0 ) \subset N_0^+A_0$, by Lemma \ref{mul2}, we have
 $$\dim \left( h_0(\bigcup n^+_\alpha A_0 ) a_tk_0 A_0o \right)=
 \dim \left( h_0\bigcup n^+_\alpha A_0\right) +1 $$
 which is equal to
 $$ \dim \left(  \bigcup_\alpha n^+_\alpha \right) +2= 
 \dim \pi_+  \left( \overline{\Ga h A_0}\cap \cal O\right)  +2  .$$ 
 Hence $$  \op{dim} \overline{\Ga h a_t Ho }\ge  \dim \pi_{+}\left( \overline{\cal G}\cap \cal O \right)+2 .$$
 The statement $  \op{dim} \overline{\Ga h a_t Ho }\ge  \dim \pi_{-}\left( \overline{\cal G}\cap \cal O \right)+2 $ can be proved similarly.

\section{Bulging deformations and floating planes}
Let $\Gamma<H$ be a torsion-free cocompact lattice and let
$$S=\Gamma\ba Y$$ be the closed orientable hyperbolic surface.
Let $\rho_0 : \Gamma \to H$ denote the inclusion map. Fix a diagonalizable element $\delta \in \Gamma$ representing the homotopy class of an essential simple closed curve $\beta \subset S$. We describe the notion of bulging deformations of $\Gamma$ in $G$, introduced by Goldman \cite{Go-bulging}.

Geometrically, a bulging deformation along $\beta$ alters the convex $\R{P}^2$-structure on $S$ by inserting a projective ``bulge'' along $\beta$. This is achieved by deforming the holonomy representation using a one-parameter subgroup of projective transformations that fix the endpoints of the holonomy of $\beta$ while ``stretching'' transversely to it.

We give a more precise description of the holonomy representation. Suppose first that 
$\beta$ is separating. In this case, the complement of $\beta$ in $S$ consists of two 
connected subsurfaces whose closures we denote by $S_1$ and $S_2$, with $\beta$ as their 
common boundary. The inclusion maps $\beta \hookrightarrow S_i$ ($i=1,2$) induce a 
decomposition of $\Gamma$ as an amalgamated free product
\[
\Gamma = \Delta_1 *_{\langle \delta \rangle} \Delta_2,
\]
where $\Delta_i = \pi_1(S_i)$ for $i=1,2$, and $\langle \delta\rangle$ is the image of $\pi_1(\beta)$ under the inclusion maps, viewed as a common subgroup of $\Delta_1$ and $\Delta_2$.

Let $B$ denote identity component of the centralizer of $\delta$ in $\SL_3(\br)$, which is a maximal real split torus. For any $\mathbf b\in B$, we have a unique homomorphism $\rho_{\mathbf b} : \Gamma \to G$ extending \be  \rho_{\b}(\gamma) = \begin{cases} \gamma & \quad\text{ for $\gamma \in \Delta_1$ } \\
\b \gamma \b^{-1} & \quad\text{ for $\gamma \in \Delta_2$.}\end{cases}\ee 

\begin{figure}[ht] 
  \includegraphics [height=2.5cm]{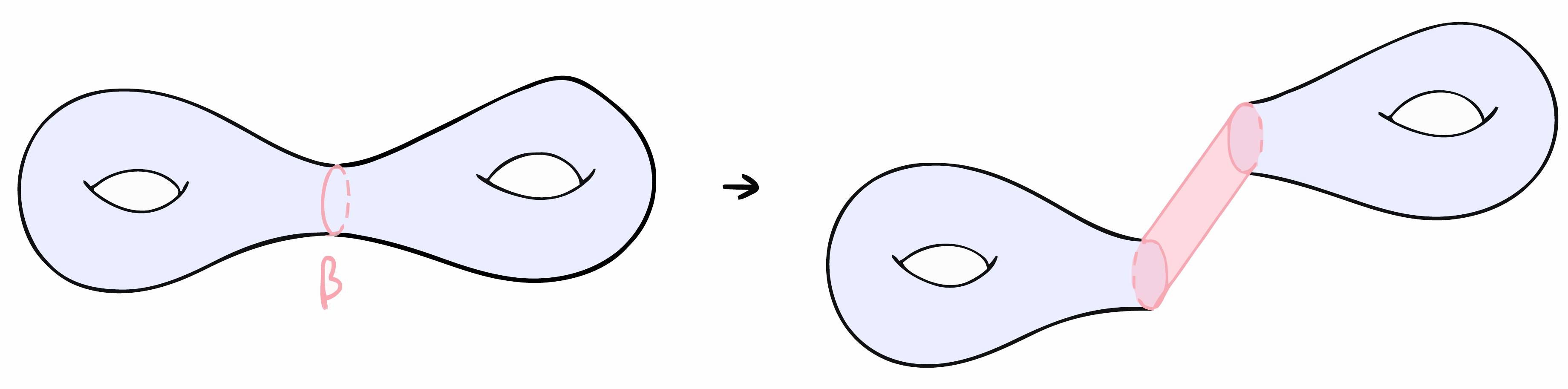}  
\caption{Bulging deformation}
\end{figure}

Now we discuss the case when $\beta \subset S$ is non-separating. Cutting along $\beta$ gives a surface $S_1$ 
with boundary components $\beta_1,\beta_2$, and re-gluing by an orientation-reversing 
homeomorphism $f:\beta_1 \to \beta_2$ recovers $S$. Setting $\Delta = \pi_1(S_1)$, the group 
$\Gamma=\pi_1(S)$ is an HNN extension
\be\label{eqn:HNN}
\Gamma \cong \Delta *_{\psi} \coloneqq 
\langle \Delta, t \mid t b t^{-1} = \psi(b), \;\delta \in \iota_1(\pi_1(\beta_1)) \rangle,
\ee
where $\psi = \iota_2 \circ f_* \circ \iota_1^{-1}$, with 
$f_*: \pi_1(\beta_1) \to \pi_1(\beta_2)$ and 
$\iota_i: \pi_1(\beta_i) \hookrightarrow \Delta$ the induced maps.

The group $\Delta$ naturally embeds in $\Gamma$, so we can view $\Delta$ as a subgroup of $\Gamma$.
Let $\delta$ be a generator for the image of $\pi_1(\beta_1)$ in $\Delta\subset \Gamma$ and let $B=C_G(\delta)$  as above. For any $\mathbf{b}\in B$, we have a unique homomorphism $\rho_{\b} : \Gamma \to G$ extending 
\be
\rho_{\b}(\gamma) = \begin{cases} \gamma & \quad\text{ for $\gamma \in \Delta$ } \\
\b t & \quad\text{ for $\gamma= t$.}\end{cases}
\ee

\medskip
If  $h\in H$ is such that $h\delta h^{-1}\in A_0$, then  $hBh^{-1}=A$, and hence
$$ h \mathsf b h^{-1} = a_{c_0} h_{d_0} \quad\text{ for some $c_0, d_0\in \br$}. $$
 We will call $|c_0|$ the {\em width} of $\b$, which will be denoted by \be\label{bs} \wdt (\b). \ee

We set
\be\label{hit}
\Gamma_{\b} = \rho_{{\mathsf b}}(\Gamma)<G.
\ee

\subsection*{Hitchin property and Zariski density} 
If $\b=\exp u$ for $u\in \frak g$, then for $s\in \br$, set
\begin{equation}\label{eqn:bs}
\b_s=\exp su,
\end{equation}
and consider
the one-parameter family of deformations
$$ \quad  \rho_s:=\rho_{\b_s} .$$

Noting that $h\b_sh^{-1}=a_{c_0s} h_{d_0s} $ for all $s\in \br$,  the width of $\b_s$ is $\wdt(\b_s) = |s|\wdt(\b) = |c_0s|$. 

Clearly, $\{\rho_{s}:s\in \br\}$ lies in the Hitchin  component of $\Hom(\Ga, \SL_3(\br))$, which is the connected component containing $\rho_0$.
Therefore, according to Choi-Goldman \cite{CG}, $\rho_{\b}$ is discrete and faithful. Later, Labourie \cite{La} initiated the theory of Anosov representations and showed each representations in the Hitchin component is Anosov.

\begin{theorem}
    For all $\b\in C_G(\delta)^\circ$, $\Ga_{\b}$ is an Anosov (in particular, discrete) subgroup of $G$. Moreover, if $\wdt({\b}) \ne 0$,
    then $\Ga_\b$ is Zariski dense in $G$.
\end{theorem}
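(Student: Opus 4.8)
The plan is to treat the two assertions separately. The Anosov (hence discrete and faithful) property for every $\mathbf b\in C_G(\delta)$ is immediate from the work of Choi--Goldman and Labourie quoted above: the family $\{\rho_{\mathbf b_s}\}_{s\in\br}$ is a path in $\Hom(\Gamma,\SL_3(\br))$ starting at the Fuchsian inclusion $\rho_0$, and since $\rho_0$ lies in the Hitchin component, which is a connected component of the character variety, the whole path — and hence $\rho_{\mathbf b}$ — stays in the Hitchin component. By Labourie \cite{La} every Hitchin representation is (projective) Anosov, and Anosov subgroups are discrete with finite kernel; since $\Gamma$ is torsion-free and $\rho_{\mathbf b}$ is a limit of faithful representations along the Hitchin path, it is faithful, so $\Gamma_{\mathbf b}=\rho_{\mathbf b}(\Gamma)$ is a discrete Anosov subgroup isomorphic to $\Gamma$.

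For Zariski density when $\wdt(\mathbf b)\neq 0$, let $\mathsf H$ denote the Zariski closure of $\Gamma_{\mathbf b}$ in $G=\SL_3(\br)$; it is a reductive group (since $\Gamma_{\mathbf b}$, being Anosov, is not contained in any parabolic, and more concretely its limit set is not a single flag). The first step is to list the connected reductive subgroups of $\SL_3(\br)$ whose Zariski closure could contain a surface group: up to conjugacy the proper ones with no normal unipotent are, essentially, $\SO(2,1)^\circ$ (the irreducible $\SL_2$), the reducible $\SL_2$ sitting in a block, a maximal torus, or subgroups thereof; the only one large enough to contain a Zariski-dense image of a cocompact surface group acting irreducibly on $\br^3$ is a conjugate of $H=\SO(2,1)^\circ$ itself. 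So it suffices to rule out that $\Gamma_{\mathbf b}$ is conjugate into $H$ (equivalently, into any conjugate of $H$), i.e. that $\rho_{\mathbf b}$ is conjugate to a Fuchsian representation.

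The key step, therefore, is: if $\wdt(\mathbf b)\neq 0$ then $\rho_{\mathbf b}$ is not conjugate into $H$. I would prove this by a trace/eigenvalue obstruction. Choose $h\in H$ with $h\delta h^{-1}\in A_0$, so that $h\mathbf b h^{-1}=a_{c_0}h_{d_0}$ with $c_0=\wdt(\mathbf b)\neq 0$, and pick an element $\gamma\in\Delta_2$ (separating case) or $\gamma = t\gamma_0$ with $\gamma_0\in\Delta$ (non-separating case) so that $\rho_{\mathbf b}(\gamma)=\mathbf b\,\rho_0(\gamma)\,\mathbf b^{-1}$ in the first coordinate but, crucially, products such as $\rho_{\mathbf b}(\gamma\delta)=\mathbf b\,\rho_0(\gamma)\,\mathbf b^{-1}\delta$ mix a genuine hyperbolic element of $H$ with the conjugating torus element. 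One computes that the Jordan projection (the vector of logarithms of absolute values of eigenvalues) of such a word in $\rho_{\mathbf b}(\Gamma)$ fails to lie in $\mathfrak a_0=\Lie(A_0)\subset\mathfrak a$ — precisely the $c_0$-component along $\mathfrak a'=\{\diag(t,-2t,t)\}$ is nonzero — whereas every element of any conjugate of $H$ has Jordan projection in the ($W$-translate of the) line $\mathfrak a_0$. Concretely: the Benoist limit cone of a Zariski-dense subgroup of $H$ is the single ray $\fa_0^+$, so if $\Gamma_{\mathbf b}$ were conjugate into $H$ its limit cone would be one-dimensional; exhibiting two elements of $\Gamma_{\mathbf b}$ whose Jordan projections span a two-dimensional cone (which the bulging with $c_0\neq 0$ provides, by an explicit $2\times 2$ computation with the matrices $h_s$ and $a_{c_0}h_{d_0}$) contradicts this. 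Alternatively one can phrase the obstruction via the bulging-deformed limit map: the image flags of the $\delta$-fixed points move off the conic $\partial D$ (cf. the conic picture in \Cref{lem:Lambda_Y}), so $\Lambda_{\Gamma_{\mathbf b}}$ is not contained in any single $G$-translate of $\Lambda_Y$, and since $\Lambda_Y$ is Zariski-dense in its own $\mathsf H$-orbit this again forces $\mathsf H=G$.

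The main obstacle I expect is the last computation: verifying cleanly that the bulging with nonzero width genuinely produces an element of $\Gamma_{\mathbf b}$ whose Jordan projection leaves the line $\mathfrak a_0$. This requires choosing the test word $\gamma$ with care — one needs $\rho_0(\gamma)$ loxodromic in $H$ with axis not equal to the axis of $\delta$, so that the conjugation by $a_{c_0}h_{d_0}$ does not merely re-conjugate within $H$ — and then either an explicit diagonalization or a ping-pong/northsouth-dynamics argument (using that $a_{c_0}$ pushes the relevant attracting flag off $\partial D$) to see that the resulting eigenvalue vector acquires an $\fa'$-component. Everything else (the classification of reductive subgroups, the reduction to "not conjugate into $H$", and the Anosov part) is routine given the tools already assembled; the genuine work is this non-degeneracy of the deformation.
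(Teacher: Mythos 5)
The Anosov/discreteness part of your proposal is essentially what the paper does (cite Choi--Goldman for discreteness and faithfulness of Hitchin representations, Labourie for the Anosov property), so that half is fine, modulo the unnecessary ``limit of faithful representations'' remark, since faithfulness is already part of the Choi--Goldman statement.

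For Zariski density your route diverges sharply from the paper's and, as written, has a genuine gap. The paper's argument is short and structural: $\Gamma_{\mathsf b}$ contains $\Delta_1$ and $\mathsf b\,\Delta_2\,\mathsf b^{-1}$ (amalgam case; HNN case similar), each $\Delta_i$ is a non-elementary Fuchsian group and hence Zariski dense in $H$, so the Zariski closure of $\Gamma_{\mathsf b}$ contains both $H$ and $\mathsf b H \mathsf b^{-1}$; a small Lie-algebra computation shows $\operatorname{Ad}_{a_{c_0}}(\mathfrak h)\ne\mathfrak h$ when $c_0\ne 0$, so $\mathsf b\notin N_G(H)$ and $\mathsf b H\mathsf b^{-1}\ne H$; since $H$ is a maximal connected subgroup of $G=\mathrm{SL}_3(\mathbb R)$, the closure must be all of $G$. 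This uses only the amalgamation data and no eigenvalue estimates.

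Your approach instead reduces to showing $\rho_{\mathsf b}$ is not conjugate into $H$, via the Benoist limit cone or Jordan projections. That reduction is sound in spirit (irreducibility of the Hitchin representation already cuts the list of possible reductive Zariski closures to $\mathrm{SO}(3)$, $\mathrm{SO}(2,1)^\circ$, or $G$, and compactness is excluded), but the decisive step --- exhibiting an explicit word in $\Gamma_{\mathsf b}$ whose Jordan projection leaves the Weyl orbit of $\mathfrak a_0$ --- is never carried out. You yourself flag this as ``the main obstacle.'' Producing such a word requires controlling eigenvalues of products like $\gamma_1\,\mathsf b\,\gamma_2\,\mathsf b^{-1}$ with $\gamma_1\in\Delta_1$, $\gamma_2\in\Delta_2$; this is doable (e.g.\ with a proximality/ping-pong argument) but not routine, and in the HNN case the bookkeeping with the stable letter is worse. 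So the proposal is a plausible outline for an alternative proof, but not a proof: the paper's observation that two non-normalizing conjugates of $H$ already generate $G$ makes the whole eigenvalue computation unnecessary and is the ingredient your argument is missing.
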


 We justify the ``Zariski dense'' part in the above result only in the case when $\beta\subset S$ is separating, the non-separating case is similar: Note that $\Gamma_\b$ contains $\Delta_1$ and $\b \Delta_2 \b^{-1}$. Since each $\Delta_i$ is Zariski dense in $H$,
 the Zariski closures of $\Ga_\b$ contains both $H$ and $\b H \b^{-1}$. If $\wdt(\b) \ne 0$, then $\b \not\in N_G(H)$. Since $H$ is a maximal connected Lie subgroup of $G$, it follows that $\Ga_\b$ is Zariski dense in $G$.
 We refer to \cite[Sec. 1.2]{DK-amalgam} for a direct proof of the Anosov property of $\Ga_\b$.

\subsection*{Proper embedding away from the bulging locus}
Fix a complete geodesic $\tilde \beta$ in $Y$ which projects to $\beta$.
Every representation $\rho_\b :\Gamma \to G$ admits a $\rho_\b$-equivariant locally isometric map 
\[
 \phi_{\b} : (Y - \Gamma \cdot \tilde \beta) \to X,
\]
constructed as follows. 
Consider the dual graph $T$ (actually a tree) to the lamination $\Gamma \cdot \tilde\beta\subset Y\cong \H^2$, whose each vertex uniquely correspond to a connected component of $Y - \Gamma \cdot \tilde\beta$, and there is an edge between two vertices if the corresponding connected components of $Y - \Gamma \cdot \tilde\beta$ are adjacent. 

If $\beta$ is separating, then there are two $\Gamma$-orbits of the connected components of $Y - \Gamma \cdot \tilde\beta$. So, the vertices of $T$ is bicolored. Note that $T$ is precisely the {\em Bass-Serre tree} associated with the amalgamated  free product decomposition $$\Gamma=\Delta_1 *_{\ga_0} \Delta_2.$$ We fix an edge $e = [v_1,v_2]$ in $T$, i.e. a fundamental domain for the action $\Gamma \curvearrowright T$ and let $Y_1$ and $Y_2$ be the connected component of $Y - \Gamma \cdot \tilde \beta$ corresponding to $v_1$ and $v_2$, respectively. We may assume that $\Delta_1$, $\Delta_2$, and $\langle \g_0\rangle$ are the stabilizers in $\Gamma$ of $v_1$, $v_2$, and $e$, respectively.

If $\beta$ is non-separating, the dual graph $T$ to $\Gamma\cdot \tilde\beta \subset Y$ is again the 
Bass–Serre tree of the HNN extension in \eqref{eqn:HNN}. The connected components of 
$Y-\Gamma\cdot \tilde\beta$ (correspond to vertices of $T$) lie in a single 
$\Gamma$-orbit; we choose the component $Y_1 \subset Y-\Gamma\cdot \tilde\beta$ stabilized by 
$\Delta$. We fix the notation $v_1$ and $v_2$ to denote the (adjacent) vertices in $T$ corresponding to $Y_1$ and $Y_2\coloneqq tY_1$, respectively.

Define $\phi_{\b}: (Y - \Gamma \cdot \tilde \beta) \to X$ by
$$\phi_{\b}|_{Y_1} =i_{Y_1} ,\quad \phi_{\b}\vert _{Y_2} = \mathsf b \circ i_{Y_2}$$
where  $i_{Z} : Z \hookrightarrow X$ denotes the inclusion map.

Using the $H$-equivariant nearest-point projection map $\pi : X \to Y$,
we extend $\phi_{\b}$ to a $\rho_{\b}$-equivariant local isometry
$$F_{\b}: (X - \pi^{-1}(\Gamma \cdot \tilde\beta))\to X$$
by setting $F_{\b}$ equal to the inclusion map on $\pi^{-1} (Y_1)$ and to
 $ \b  \circ i_{\pi^{-1}(Y_2)}$ on $\pi^{-1}(Y_2)$, and extending equivariantly.
 In particular, it satisfies
\be\label{id} F_{\b}(\Gamma x)=\Ga_{\b} x,   \quad\text{for all } x\in X - \pi^{-1}(\Gamma \cdot \tilde\beta ).\ee 

For $c>0$, let $Y_{c}$ be the complement of the open $c$-neighborhood $\cal N_{c_0}(\Ga \cdot \tilde \beta)$ of $\Ga \cdot \tilde \beta$ in $Y$.
Set 
\begin{equation}
\label{def:Xc}X_{c}:= \pi^{-1}(Y_{c}).
\end{equation}
Both $Y_{c}$ and  $X_{c}$ are $\Gamma$-invariant.
Thus the restriction of $F_{\b}$ to $X_{c}$ descends to local isometry
$$
 f_{\b,c} : \Gamma\backslash X_{c} \to \Gamma_{\b}\backslash X. $$

In this section, we prove:

\begin{theorem}\label{thm:pi}  For all $c> \wdt(\b)$, the map
$$f_{\b,c}:\Gamma\backslash X_{c} \to \Gamma_{\b}\backslash X$$ 
is a proper locally isometric embedding, which is given by
$$f_{\b,c} ( [\Gamma x])= [\Ga_{\b} x]\quad\text{for all } x\in X_{c}.$$
\end{theorem}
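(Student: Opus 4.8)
The goal is to show that $f_{\b,c}\colon \Gamma\backslash X_{c}\to \Gamma_{\b}\backslash X$ is a proper locally isometric embedding. Local isometry is already built into the construction of $F_{\b}$ (it is $\rho_{\b}$-equivariant and agrees with isometries of $X$ on each piece $\pi^{-1}(Y_i)$), and the formula $f_{\b,c}([\Gamma x])=[\Gamma_{\b}x]$ is exactly \eqref{id}; so the two genuine points are \emph{injectivity} of $f_{\b,c}$ and \emph{properness}. I would organize the argument around the Bass--Serre tree $T$ and a combinatorial ``developing'' description of $F_{\b}$ on $X_{c}$: for a base component, say $\pi^{-1}(Y_1)$, one has $F_{\b}=\op{id}$; crossing an edge of $T$ corresponding to a wall $g\tilde\beta$ multiplies the local chart by a conjugate of $\b$. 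Concretely, if $x\in X_c$ lies in the component of $X-\pi^{-1}(\Gamma\cdot\tilde\beta)$ indexed by a vertex $v\in T$, write the geodesic in $T$ from $v_1$ to $v$ as crossing walls $g_1\tilde\beta,\dots,g_k\tilde\beta$; then $F_{\b}(x)= \b_{g_1}\cdots\b_{g_k}\, x$ where each $\b_{g_j}$ is the conjugate $g_j \b^{\pm1} g_j^{-1}$ (for the separating case; the HNN case is the same with the stable letter). The key geometric input that makes this work on $X_c$ rather than just on $Y_c$ is \Cref{fiber}: the fiber $\pi^{-1}(Y_i)$ is $H_i$-invariant where $H_i=\op{Stab}_G(\pi(\cdot))$-type subgroups, and more importantly each wall $\pi^{-1}(g\tilde\beta)$ separates $X$ into two pieces, so the tree structure lifts verbatim from $Y$ to $X$.

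\textbf{Injectivity.} Suppose $f_{\b,c}([\Gamma x])=f_{\b,c}([\Gamma y])$, i.e. $[\Gamma_{\b}x]=[\Gamma_{\b}y]$, so $\rho_{\b}(\gamma) F_{\b}(x_0)=F_{\b}(y_0)$ for suitable lifts $x_0,y_0\in X_c$ and some $\gamma\in\Gamma$. Using the developing description, $\rho_{\b}(\gamma)(\b_{g_1}\cdots\b_{g_k}x_0) = \b_{h_1}\cdots\b_{h_m}y_0$ in $X$. The plan is to compare combinatorial positions: $\rho_{\b}(\gamma)$ acts on $X$ by an isometry, and the word $\b_{g_1}\cdots\b_{g_k}$ precisely records the $T$-distance from the base vertex. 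Because $\Gamma$ acts on $X$ through $\rho_0=\op{id}$ on the domain side, $\gamma x_0$ lies in the component of $X-\pi^{-1}(\Gamma\cdot\tilde\beta)$ indexed by $\gamma v$, and unwinding the equivariance shows that $F_{\b}(\gamma x_0)=\rho_{\b}(\gamma)F_{\b}(x_0)$; hence $F_{\b}(\gamma x_0)=F_{\b}(y_0)$. It therefore suffices to prove: $F_{\b}$ is \emph{injective} on $X_c$ (not merely on each fiber). Two points in the same component are separated by $F_{\b}$ since it is an isometry there. Two points in distinct components $v\neq w$: their images lie in chambers of the ``bulged'' configuration, and one must check these chambers are disjoint. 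This is exactly where the hypothesis $c>\wdt(\b)$ enters. In rank one this is automatic because geodesics orthogonal to a hyperplane diverge; here one must use \Cref{claim}/\Cref{narrow} — the projection $\pi(Y_{L,t})$ stays within bounded (indeed shrinking) distance of $L$ — combined with the explicit form $h\b h^{-1}=a_{c_0}h_{d_0}$ with $|c_0|=\wdt(\b)$ to see that conjugating a half-space $\pi^{-1}(\{d(\pi(\cdot),\tilde\beta)\ge c\})$ by $\b$ displaces its boundary wall by at most $\wdt(\b)<c$, so the two half-spaces on opposite sides of the wall $\pi^{-1}(\tilde\beta)$ remain on opposite sides after bulging. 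Iterating along the edge-path in $T$ and using that walls in $X_c$ are pairwise at distance $\ge 2c-2\wdt(\b)>0$ gives global injectivity.

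\textbf{Properness.} For properness I would argue: let $K\subset \Gamma_{\b}\backslash X$ be compact; lift to a compact $\tilde K\subset X$ with $K=\Gamma_{\b}\tilde K$; we must show $f_{\b,c}^{-1}(K)$ is compact in $\Gamma\backslash X_c$, equivalently that $\{x\in X_c : F_{\b}(x)\in \Gamma_{\b}\tilde K\}$ is contained in finitely many $\Gamma$-translates of a compact set. Again via the combinatorial description: if $F_{\b}(x)\in\Gamma_{\b}\tilde K$ then $x$ lies in a component indexed by some vertex $v\in T$ at bounded combinatorial distance from the base, because crossing each wall contributes a definite displacement (at least $2c-2\wdt(\b)$) to $d_X$, so a bounded target forces a bounded number of wall-crossings; finitely many $\Gamma$-orbits of vertices lie at bounded distance, and on each the map is an isometry onto its image, so preimages of a compact set are compact. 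Assembling, $f_{\b,c}$ is a proper injective local isometry, hence a proper locally isometric embedding.

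\textbf{Main obstacle.} The crux is the disjointness-of-chambers step underpinning injectivity: showing that the half-spaces $\pi^{-1}(Y_c\text{-sides})$ on the two sides of a wall stay disjoint after bulging by $\b$. In rank one this is a one-line divergence statement, but here each fiber of $\pi$ is a full maximal flat, so bulging can shear a whole family of parallel geodesics, and one must rule out that some sheared geodesic in one flat re-enters the opposite half-space. The resolution is quantitative: \Cref{narrow} controls the shadow $\pi(Y_{L,t})$ uniformly, and the normal form $h\b h^{-1}=a_{c_0}h_{d_0}$ shows the only ``dangerous'' motion is in the $A_0$-direction $h_{d_0}$ (which slides along the wall and is harmless) plus the $a_{c_0}$-direction whose magnitude is exactly $\wdt(\b)$; choosing $c>\wdt(\b)$ gives the needed margin. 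This step is where the genuinely higher-rank analysis of \S\S5--6 is used, and it is the part I expect to require the most care.
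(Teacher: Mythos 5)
Your overall strategy shares the paper's key geometric insight — the margin $c>\wdt(\b)$ must dominate the displacement of the fibers $\pi^{-1}(y)$ under conjugation by $\b$, and the combinatorics of the Bass--Serre tree organizes the argument — but both the injectivity iteration and the properness argument diverge from the paper, and the latter has a gap.

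For injectivity, the base case (one wall crossing) is exactly the paper's Corollary~\ref{hdxy}(2). Its proof, however, is not the direct fiber-displacement computation you suggest: the paper uses the homotopy $s\mapsto \b_s$ and Lemma~\ref{lem:Hdist} (the Hausdorff distance between $\pi^{-1}(o)$ and $a_c\pi^{-1}(o)$ is at most $|c|$, a statement about \emph{fibers}, not about the floating-plane \emph{shadow} controlled in Theorem~\ref{narrow}, which is what you invoke and is not the right tool here) to keep $\b_s\pi^{-1}(y)$ uniformly away from $\pi^{-1}(\tilde\beta)$ for all $s\in[0,1]$, whence $\b\pi^{-1}(y)$ stays on the correct side of $\partial X_\pm$ by connectedness. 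You then say ``iterating along the edge-path in $T$ \ldots gives global injectivity''; this iteration is precisely where the paper has to work. After $k$ crossings the images live in chambers of a nonlinearly distorted configuration, and one cannot simply sum a one-crossing estimate. The paper runs a genuine induction on tree distance: for the inductive step it chooses a geodesic $L_2$ in an intermediate component, observes by the induction hypothesis that the hypersurface $F_{\b_s}(\pi^{-1}(L_2))$ avoids the two far chambers for every $s\in[0,1]$, and then uses continuity in $s$ and the fact that $F_{\b_0}=\op{id}$ separates to conclude that $F_{\b_s}(\pi^{-1}(L_2))$ \emph{always} separates them. This separating-hypersurface homotopy is the essential missing step in your sketch.

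For properness, you take a genuinely different route, but the key estimate is not justified and in fact fails: the walls $\{\pi^{-1}(g\tilde\beta)\}_{g\tilde\beta\subset\Gamma\tilde\beta}$ are \emph{not} pairwise at distance $\ge 2c-2\wdt(\b)$. Since $\pi$ is $1$-Lipschitz onto a convex set, the distance between fibers $\pi^{-1}(g\tilde\beta)$ and $\pi^{-1}(g'\tilde\beta)$ equals $d(g\tilde\beta,g'\tilde\beta)$, and distinct lifts of a simple closed geodesic in $\bH^2$ come within some fixed $\e_0>0$ of one another independently of $c$; there is no reason for $\e_0\ge 2c$. Moreover, after $k$ bulges the walls are translates by products of conjugates of $\b$, whose spacing you have not controlled. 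So the claim ``each wall-crossing contributes a definite displacement $\ge 2c-2\wdt(\b)$'' is unsubstantiated. The paper sidesteps all of this: once injectivity is established for every $c'\in(\wdt(\b),c]$, properness follows abstractly — a divergent sequence with convergent image would have its limit $z$ in the interior of $\op{Im} f_{\b,c'}$ for some slightly smaller $c'$, and the continuous local inverse of the injective local isometry $f_{\b,c'}$ forces the sequence to converge, a contradiction. I would recommend adopting this soft argument for properness rather than trying to make the quantitative wall-counting work.
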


The above discussion has an analogue when $X$ is replaced by $G$, as we now discuss: Consider the fibration $p: G \to X$ given by $g \mapsto go$, $\in G$, whose fibers are isomorphic to $K$. Define a $\rho_{\b}$-equivariant map
\[
 \bar F_{\b}: G - (\pi \circ p)^{-1}(\Gamma \cdot \tilde\beta) \;\to\; G
\]
as follows. In the separating case, set $\bar F_{\b}$ to be the identity on 
$(\pi \circ p)^{-1}(Y_1)$, the composition of the inclusion map with $\b$ on 
$(\pi \circ p)^{-1}(Y_2)$, and then extend uniquely by requiring equivariance. 
The construction is analogous in the non-separating case.
In particular,
\be\label{id2} \bar F_{\b}(\Gamma g)=\Ga_{\b} g\quad \text{for all $g\in G - (\pi\circ p)^{-1}(\Gamma \cdot \tilde\beta)$.}\ee 

For $c>0$, let $$G_c \coloneqq p^{-1}(X_{c}),$$ where $X_c$ is defined by \eqref{def:Xc}. 
The map $\bar F_\b$ descends to a local isometry
\[
 \bar f_{\b,c} : \Gamma \backslash G_c \to \Gamma_{\b} \backslash G.
\]
In this setting, \Cref{thm:pi} implies the following:

\begin{corollary}\label{piG}
For all $c>\wdt(\b)$, the map
     \[
     \bar f_{\b,c}: \Gamma \backslash G_c \to \Gamma_{\b} \backslash G
     \]
     is a proper locally isometric embedding, which is given by
$$\bar f_{\b,c} ( [\Gamma g])= [\Ga_{\b} g]\quad \text{ for all $g\in G_c$.}$$
\end{corollary}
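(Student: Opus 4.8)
The plan is to deduce \Cref{piG} from \Cref{thm:pi} by transporting the statement along the fibre bundle $p:G\to X$, $g\mapsto go$, whose fibres are the right $K$-cosets; the formula for $\bar f_{\b,c}$ and the fact that it is a local isometry are already recorded (\eqref{id2} and the preceding discussion), so only injectivity and properness of $\bar f_{\b,c}$ remain. The first step is to observe that $\bar F_{\b}$ is right $K$-equivariant, $\bar F_{\b}(gk)=\bar F_{\b}(g)k$: on each connected piece of its domain --- the preimage under $\pi\circ p$ of a component of $Y-\Gamma\tilde\beta$ --- the map $\bar F_{\b}$ is the left translation by a fixed element of $G$ (built from $\rho_{\b}(\gamma)$ and $\b$), left translations commute with the right $K$-action, and those pieces are themselves right $K$-invariant because $\pi\circ p$ is. Consequently $p\circ\bar F_{\b}=F_{\b}\circ p$, and since $G_c=p^{-1}(X_c)$ and the left $\Gamma$-action and right $K$-action on $G$ commute, $p$ descends to the $K$-bundle projections $\bar p:\Gamma\backslash G\to\Gamma\backslash X$ and $\bar p':\Gamma_{\b}\backslash G\to\Gamma_{\b}\backslash X$ satisfying $\bar p'\circ\bar f_{\b,c}=f_{\b,c}\circ\bar p$ on $\Gamma\backslash G_c$; moreover $\bar f_{\b,c}$, being the descent of the right $K$-equivariant map $\bar F_{\b}$, is itself equivariant for the right $K$-actions.

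For injectivity, suppose $\bar f_{\b,c}([\Gamma g_1])=\bar f_{\b,c}([\Gamma g_2])$ with $g_1,g_2\in G_c$. Applying $\bar p'$ and the identity $\bar p'\circ\bar f_{\b,c}=f_{\b,c}\circ\bar p$ gives $f_{\b,c}([\Gamma g_1 o])=f_{\b,c}([\Gamma g_2 o])$, so the injectivity of $f_{\b,c}$ in \Cref{thm:pi} yields $\Gamma g_1 o=\Gamma g_2 o$ in $\Gamma\backslash X$. Hence $g_2=\gamma g_1 k$ for some $\gamma\in\Gamma$ and $k\in K$, so $[\Gamma g_2]=[\Gamma g_1]\cdot k$, and by $K$-equivariance $\bar f_{\b,c}([\Gamma g_1])\cdot k=\bar f_{\b,c}([\Gamma g_1])$. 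Since $\rho_{\b}$ is faithful and $\Gamma$ is torsion-free, $\Gamma_{\b}=\rho_{\b}(\Gamma)$ is discrete and torsion-free, so $\Gamma_{\b}\cap gKg^{-1}=\{e\}$ for every $g\in G$; hence the right $K$-action on $\Gamma_{\b}\backslash G$ is free, which forces $k=e$ and $[\Gamma g_1]=[\Gamma g_2]$.

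For properness, let $C\subset\Gamma_{\b}\backslash G$ be compact; then $\bar p'(C)$ is compact. Applying $\bar p$ to $\bar f_{\b,c}^{-1}(C)$ and using the square gives $\bar p\bigl(\bar f_{\b,c}^{-1}(C)\bigr)\subset f_{\b,c}^{-1}\bigl(\bar p'(C)\bigr)$, and the right-hand side is compact by the properness of $f_{\b,c}$ in \Cref{thm:pi}. Therefore $\bar f_{\b,c}^{-1}(C)$ is a closed subset of $\bar p^{-1}\bigl(f_{\b,c}^{-1}(\bar p'(C))\bigr)$, which is compact since $\bar p$ is a proper map (its fibre $K$ being compact); hence $\bar f_{\b,c}^{-1}(C)$ is compact. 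Finally, a proper injective continuous map into a Hausdorff space is a closed topological embedding, so $\bar f_{\b,c}$ is a proper locally isometric embedding. The step I expect to be the main --- though modest --- obstacle is exactly the exclusion of ``vertical'' collisions: two points of $\Gamma\backslash G_c$ lying over the same point of $\Gamma\backslash X_c$ could a priori be identified by $\bar f_{\b,c}$, and ruling this out is precisely where torsion-freeness of $\Gamma_{\b}$ --- equivalently, freeness of the right $K$-action on $\Gamma_{\b}\backslash G$ --- is needed; everything else follows formally from \Cref{thm:pi} and compactness of the fibre.
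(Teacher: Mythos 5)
Your proof is correct and follows essentially the same route as the paper's: both deduce the result from the commutative square $\bar p'\circ\bar f_{\b,c} = f_{\b,c}\circ\bar p$ over the $K$-bundle projection, obtaining properness from compactness of the fibers together with properness of $f_{\b,c}$, and obtaining injectivity by separating the case of distinct base points (injectivity of $f_{\b,c}$) from the case of two points in the same fiber. For the same-fiber case you spell out explicitly, via right $K$-equivariance of $\bar F_\b$ and freeness of the right $K$-action on $\Gamma_\b\backslash G$ (coming from torsion-freeness of $\Gamma_\b$), what the paper records tersely as the assertion that $\bar F_\b$ maps fibers isomorphically onto fibers of $p$ — the same underlying facts, just made more verbose.
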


\begin{proof}
    Consider the following commutative diagram:
    \[
    \begin{tikzcd}
    \Gamma \backslash G_{c} \arrow[r, "\bar f_{\b,c}"] \arrow[d, "p"'] & \Gamma_{\b} \backslash G \arrow[d, "p"] \\
    \Gamma \backslash X_{c} \arrow[r, "f_{\b,c}"] & \Gamma_{\b} \backslash X
    \end{tikzcd}
    \]
    The properness of $\bar f_{\b,c}$ follows from that of $f_{\b,c}$ (by \Cref{thm:pi}) since the fibers of the vertical maps in the above diagram are compact. 

    We show that $\bar f_{\b,c}$ is injective. Let $g_1, g_2 \in \Gamma \backslash G_{c}$ be distinct points. Suppose first that $p(g_1) \ne p(g_2)$. Since $f_{\b,c}$ is injective (by \Cref{thm:pi}), we have $f_{\b,c}(p(g_1)) \ne f_{\b,c}(p(g_2))$. The commutativity of the diagram then implies that $\bar f_{\b,c}(g_1) \ne \bar f_{\b,c}(g_2)$. If instead $p(g_1) = p(g_2)$, then the conclusion $\bar f_{\b,c}(g_1) \ne \bar f_{\b,c}(g_2)$ follows from the fact that $\bar F(s)$, and hence $\bar f_{\b,c}$, maps fibers isomorphically onto fibers of $p$.
\end{proof}

The rest of this section is devoted to the proof \Cref{thm:pi}.
Without loss of generality, we may assume that $\b\in A $ by conjugating $\Ga$.

\subsection*{Nearest point projection revisited}
The proof of Theorem \ref{thm:pi} is based on the study of fibers of the 
 the nearest point projection map $\pi : X \to Y$. Recall that 
  $\pi^{-1} (h o)=  h K_0  Bo$ for any $h\in H$.

Also, recall the notation $k(\theta)\in K_0 $ from \eqref{kt} and  $k_1\in K$ from $\eqref{k1}$.

\begin{lem}\label{q}
For all  $\theta,c\in \br$, there exists 
 $\theta'\in\R$ such that
$$ \,k_1^{-1} k(\theta')\,a_{c}\,k(\theta)\,k_{1} \in A .$$
\end{lem}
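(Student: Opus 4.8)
The plan is to strip off the conjugation by $k_1$ and reduce the statement to an elementary fact inside $\SO(2)\subset\SO(3)$. First, observe that $a_c=\diag(e^c,e^{-2c},e^c)$ is a scalar on each of the two $k_1$-invariant subspaces $\br e_1+\br e_3$ and $\br e_2$; a one-line matrix check then gives $k_1a_c=a_ck_1$, so $k_1^{-1}a_ck_1=a_c$. Writing $R(\phi):=k_1^{-1}k(\phi)k_1$, we therefore have
$$k_1^{-1}\,k(\theta')\,a_c\,k(\theta)\,k_1=R(\theta')\,a_c\,R(\theta),$$
so it suffices to choose $\theta'$ making the right-hand side diagonal. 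The next step is to identify $R$ concretely: differentiating the formula \eqref{kt} at $\phi=0$ produces a generator of $\mathfrak{k}_0$, and conjugating it by $k_1$ (recall $k_1^{-1}=k_1$) shows that $\{R(\phi):\phi\in\br\}$ is exactly the circle subgroup of $\SO(3)$ that fixes $e_1$ and rotates the coordinate plane $\br e_2+\br e_3$ — in other words $k_1$ carries the $K_0$-fixed axis onto $\br e_1$.

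With $R$ in this form, the three factors $R(\theta')$, $a_c$, $R(\theta)$ all preserve the orthogonal splitting $\br^3=\br e_1\oplus(\br e_2+\br e_3)$ and act on $\br e_1$ by $1$, $e^c$, $1$ respectively; hence $R(\theta')a_cR(\theta)$ automatically has first row and column $(e^c,0,0)$, and it lies in $A$ precisely when its lower $2\times 2$ block $\rho(\theta')\,D\,\rho(\theta)$ is diagonal, where $\rho(\cdot)$ denotes the planar rotation by the indicated angle and $D$ is the $2\times2$ block of $a_c$. The remaining task is purely two-dimensional: for a fixed diagonal $D$ and a fixed angle $\theta$, find $\theta'$ with $\rho(\theta')D\rho(\theta)$ diagonal, which one does by writing out the two off-diagonal entries as explicit trigonometric functions and solving the resulting equation(s) for $\theta'$.

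I expect this last two-dimensional computation to be the crux, and the step that must be checked most carefully: one has to exhibit the correct $\theta'$ as an explicit function of $\theta$ and $c$ and confirm that this single parameter really is enough to force both off-diagonal entries of the $2\times2$ block to vanish at once. This is the only place where the particular arithmetic of $a_c$ — ultimately the special position of the maximal flat transverse to $Y$ that produced $k_1$ and $a_c$ in the first place — genuinely enters; the preceding reductions (the commutation with $k_1$, the identification of $R$, and the block decomposition) are formal linear algebra once the coordinates are fixed.
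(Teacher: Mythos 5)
Your reduction is correct and cleaner than the paper's own proof, which multiplies out all five matrices at once. Checking that $a_c$ commutes with $k_1$ (both preserve the splitting $\mathbb{R}e_2\oplus(\mathbb{R}e_1+\mathbb{R}e_3)$, on each piece of which $a_c$ acts by a scalar), and that
$$R(\theta)=k_1^{-1}k(\theta)k_1=\begin{pmatrix}1&0&0\\0&\cos\theta&-\sin\theta\\0&\sin\theta&\cos\theta\end{pmatrix}$$
is the rotation fixing $e_1$, are both right and reduce everything to the planar question you pose at the end.

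That question, however, is precisely where the argument breaks, and carrying out the computation you defer exposes it. Writing $D=\mathrm{diag}(e^{-2c},e^c)$, the $(1,2)$- and $(2,1)$-entries of $\rho(\theta')D\rho(\theta)$ are
$$-e^{-2c}\sin\theta\cos\theta'-e^{c}\cos\theta\sin\theta'\qquad\text{and}\qquad e^{-2c}\cos\theta\sin\theta'+e^{c}\sin\theta\cos\theta',$$
whose vanishing forces $\tan\theta'=-e^{-3c}\tan\theta$ and $\tan\theta'=-e^{3c}\tan\theta$ respectively. For $c\ne 0$ and $\tan\theta\notin\{0,\infty\}$ these are incompatible, so no single $\theta'$ diagonalizes the $2\times 2$ block: the one free parameter kills only one off-diagonal entry. (Structural cross-check: a diagonal $Q$ forces $QQ^{T}=Q^{T}Q$, hence $k(\theta+\theta')$ centralizes $a_c^{2}$, hence $\theta'\equiv-\theta\pmod\pi$; but with that choice both off-diagonal entries equal $\sin\theta\cos\theta\,(e^{c}-e^{-2c})\ne 0$.) This is not merely a gap in your write-up: the matrix displayed in the paper's proof has $Q_{2,3}=-Q_{3,2}$, so one trigonometric equation would suffice, but that antisymmetry is a computational slip --- at $\theta=\pi/2$, $\theta'=0$ a direct check gives $Q_{2,3}=-e^{-2c}$ and $Q_{3,2}=e^{c}$, which are not negatives. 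What the paper's choice $\cot\theta'=-e^{-3c}\cot\theta$ actually achieves is $Q_{3,2}=0$, i.e.\ $Q\in AN$ rather than $Q\in A$. So rather than trying to complete the proof as stated, you should verify the weaker upper-triangular conclusion $Q\in AN$ and then revisit whether Lemma~\ref{lem:Hdist} genuinely needs $Q\in A$ or can be salvaged from the weaker statement.
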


\proof
Let
$$
Q(\theta',\theta,c):=k_1^{-1}k(\theta')a_{c}k(\theta)k_{1}. $$
A direct multiplication yields $Q(\theta',\theta,c)$ is given by
$$
\tfrac12
\begin{pmatrix}
e^{c} & 0 & 0\\
0 &
  \tfrac{1}{e^{2c}}\bigl(\cos\theta\cos\theta'-e^{3c}\sin\theta\sin\theta'\bigr) &
   \tfrac{1}{e^{2c}} \bigl(\cos\theta\sin\theta'+e^{3c}\sin\theta\cos\theta'\bigr)\\
0 &
  \tfrac{-1}{e^{2c}}
  \bigl(\cos\theta\sin\theta'+e^{3c}\sin\theta\cos\theta'\bigr)  &
   \tfrac{1}{e^{2c}}\bigl(e^{3c}\cos\theta\cos\theta'-\sin\theta\sin\theta'\bigr)
\end{pmatrix}.
$$

If $\sin\theta=0$  then
set $\theta'=0$.
Else, if $\sin\theta\neq0$, choose $\theta'$ by
\[
\cot\theta'=-\,e^{-3c}\cot\theta.
\tag{3}
\]
Then  $Q(\theta',\theta,c)$ is diagonal. If the diagonal entry has a negative sign, we can replace $\theta'$ by $\theta' +\pi$ to make all diagonal entries of $Q(\theta', \theta,c)$ positive.
\qed

\begin{lem}\label{lem:Hdist}
  For $c\in\R$, the Hausdorff distance between $\pi^{-1} (o)$ and $a_c \pi^{-1} (o)$ is at most $|c|$. 
\end{lem}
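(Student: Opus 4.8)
The plan is to prove the stronger fact that $a_c\,\pi^{-1}(o)=\pi^{-1}(o)$, which immediately yields the claimed bound (indeed with Hausdorff distance $0\le |c|$). The only input beyond bookkeeping with coset descriptions is \Cref{q}, which has already been verified by the explicit computation of $Q(\theta',\theta,c)$.

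\textbf{Step 1: Flat decomposition of the fiber.} By \Cref{fiber2} we have $\pi^{-1}(o)=K_0 Bo$, and since $K_0=\{k(\theta):\theta\in\R\}$ this is a union of maximal flats through $o$:
\[
\pi^{-1}(o)=K_0 Bo=\bigcup_{\theta\in\R} k(\theta)\,Bo .
\]

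\textbf{Step 2: $a_c$ permutes these flats.} I would rewrite \Cref{q} group-theoretically. Using $B=k_1 A k_1^{-1}$, the conclusion $k_1^{-1}k(\theta')a_c k(\theta)k_1\in A$ is equivalent to $k(\theta')\,a_c\,k(\theta)\in k_1 A k_1^{-1}=B$, hence to $a_c k(\theta)\in k(\theta')^{-1}B\subseteq K_0 B$, because $K_0$ is a subgroup of $H$. As this holds for every $\theta$, we get $a_c K_0\subseteq K_0 B$, and therefore, since $B$ is a group,
\[
a_c\,\pi^{-1}(o)=a_c K_0 Bo\subseteq K_0 B\,Bo=K_0 Bo=\pi^{-1}(o).
\]
Geometrically this says $a_c$ carries the flat $k(\theta)Bo\subset\pi^{-1}(o)$ onto the flat $k(\theta')^{-1}Bo\subset\pi^{-1}(o)$.

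\textbf{Step 3: Equality and conclusion.} Applying the inclusion of Step 2 with $-c$ in place of $c$ gives $a_{-c}\pi^{-1}(o)\subseteq\pi^{-1}(o)$, and translating by $a_c$ yields $\pi^{-1}(o)\subseteq a_c\,\pi^{-1}(o)$. Hence $a_c\,\pi^{-1}(o)=\pi^{-1}(o)$, so the Hausdorff distance between $\pi^{-1}(o)$ and $a_c\,\pi^{-1}(o)$ is $0$, which is at most $|c|$.

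\textbf{On the difficulty.} There is no real obstacle here once \Cref{q} is in hand: the argument is purely formal manipulation of the coset description $\pi^{-1}(o)=K_0Bo$. The one point requiring mild care is that one must invoke \Cref{q} for both signs of $c$ to obtain equality (equivalently, observe that $\theta\mapsto\theta'$, determined by $\cot\theta'=-e^{-3c}\cot\theta$, is a bijection of $\R/\pi\Z$); for the weaker bound $|c|$ actually asserted in the lemma, even the single inclusion $a_c\,\pi^{-1}(o)\subseteq\pi^{-1}(o)$ together with the symmetry $c\leftrightarrow -c$ suffices.
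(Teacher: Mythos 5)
Your proposal derives a conclusion that is actually false, which signals a genuine gap. If $a_c\,\pi^{-1}(o)=\pi^{-1}(o)$, then for each $\theta$ the flat $a_c k(\theta)Bo$ would coincide with some $k(\phi)Bo\subset\pi^{-1}(o)$. But $a_c k(\theta)Bo$ contains $a_co$ (take $b=e$, using $k(\theta)o=o$) and, by your own identification, also contains $o$; since a maximal flat is convex, it would then contain the whole geodesic $\{a_to\}$. This forces $v:=\operatorname{diag}(1,-2,1)\in\operatorname{Ad}(k(\phi))\mathfrak b$. A direct computation gives
$\bigl(\operatorname{Ad}(k(\psi))v\bigr)_{12}=\tfrac{3}{\sqrt2}\sin\psi\cos\psi$,
so $\operatorname{Ad}(k(\psi))v\in\mathfrak b$ only for $\psi\in\tfrac{\pi}{2}\mathbb Z$. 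Since the paper's choice of $\theta'$ varies continuously with $\theta$, the identification $a_ck(\theta)Bo=k(-\theta')Bo$ cannot hold for generic $\theta$ when $c\ne 0$. So your stronger claim, and with it your proof, breaks down.

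The deeper issue is that the input you treat as already verified — Lemma~\ref{q} — does not hold as stated. Recomputing $Q=k_1^{-1}k(\theta')a_ck(\theta)k_1$ symbolically (writing $u=\cos\theta$, $v=\sin\theta$, $u'=\cos\theta'$, $v'=\sin\theta'$) one finds
\[
Q_{23}=-\bigl(e^c u v'+e^{-2c}u'v\bigr),\qquad
Q_{32}=e^c u'v+e^{-2c}uv',
\]
(these satisfy $\det Q=1$, unlike the paper's displayed matrix, which has $Q_{32}=-Q_{23}$ and a spurious overall factor of $\tfrac12$). Vanishing of $Q_{23}$ requires $\tan\theta'=-e^{-3c}\tan\theta$, while vanishing of $Q_{32}$ requires $\cot\theta'=-e^{-3c}\cot\theta$; multiplying gives $1=e^{-6c}$, so both can hold simultaneously only when $c=0$ or $\sin\theta\cos\theta=0$. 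Thus for $c\ne 0$ and $\theta\notin\tfrac{\pi}{2}\mathbb Z$ there is \emph{no} $\theta'$ making $Q\in A$, and the conclusion $a_cK_0\subseteq K_0B$ on which your entire Step~2 rests is false. Your Step~3 is clean formal logic, and the observation that one only needs a one-sided inclusion together with the $c\leftrightarrow -c$ symmetry for the weaker Hausdorff bound is a nice remark — but none of it can be salvaged once Step~2 falls. For what it is worth, the paper's own proof also invokes Lemma~\ref{q} (to get asymptoticity of the Weyl chambers $a_ckk_1wA^+o$ and $k'k_1wA^+o$), so the same error propagates there; you are not wrong to lean on a lemma the paper supplies, but the fact that your tighter reading of it yields a manifestly false conclusion (exact $a_c$-invariance of the fiber) should have been a warning sign.
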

\begin{proof} By \Cref{fiber2},
$$\pi^{-1}(o)= K_0  k_1 \cal W A^+ o .$$ 
Let $w\in \cal W$. By Lemma \ref{q}, for any $k\in K_0$, there exists $k'\in K_0$ such that $a_c k k_1\in k'k_1 A$. Since $A\subset wPw^{-1}$, we get  
$$ a_c k k_1 w P =k'k_1 wP  .$$
In other words, the Weyl chambers $a_c k k_1 wA^+o$ and $k' k_1 w A^+ o$ in $X$ are asymptotic.

Therefore by \cite[1.6.6(4)]{Eb},
$$d_{\rm Haus} (  a_c k k_1 wA^+o, k' k_1 w A^+ o )=d (a_c k k_1 w o, k'k_1 wo)= d( a_co, o)=|c|,$$
where $d_{\rm Haus}$ denotes the Hausdorff distance.
It follows that the Hausdorff distance between $\pi^{-1}(o)=K_0 k_1 \cal W A^+o$ and 
$a_c\pi^{-1}(o)=a_c K_0 k_1 \cal WA^+o$ is  at most $|c|$. 
\end{proof}

\begin{corollary}\label{hdxy} Let $c\in\R$.
 \begin{enumerate}
     \item The Hausdorff distance between $\pi^{-1} (\tilde \beta)$ and $\b \pi^{-1} (\tilde \beta) = a_c \pi^{-1} (\tilde \beta)$ is at most $|c|$.
     \item Suppose that $c\ge \wdt(\b)$. If $y$ and $y'$ are points in $Y - \overline{\cal N_{c}(\tilde \beta)}$  lying
     in distinct connected components, then the fibers $\pi^{-1}(y)$ and $\b \pi^{-1}(y')$ are disjoint. 
 \end{enumerate} 
\end{corollary}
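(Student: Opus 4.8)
The plan is to prove the two parts in order, with (2) resting on (1) together with a short continuity argument. Throughout I conjugate $\Ga$ so that $\b\in A$, hence $\delta\in A_0$, $\tilde\beta=A_0o$, and $\b=a_{c_0}h_{d_0}$ for suitable $c_0,d_0\in\R$ with $|c_0|=\wdt(\b)$; I write $W:=\pi^{-1}(\tilde\beta)$ and let $Y^+,Y^-$ be the two components of $Y-\tilde\beta$, so that a point of $Y-\overline{\cal N_c(\tilde\beta)}$ lies in $Y^+$ or $Y^-$ at distance $>c$ from $\tilde\beta$.

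\emph{Part (1).} By $H$-equivariance of $\pi$ together with \Cref{fiber2}, $W=\pi^{-1}(A_0o)=\bigcup_{s\in\R}h_s\pi^{-1}(o)=A_0\pi^{-1}(o)$; since $h_{d_0}\in A_0$ and $a_{c_0}$ commutes with $A_0$, $\b\pi^{-1}(\tilde\beta)=a_{c_0}h_{d_0}A_0\pi^{-1}(o)=A_0a_{c_0}\pi^{-1}(o)=a_{c_0}\pi^{-1}(\tilde\beta)$. Applying \Cref{lem:Hdist} to each isometry $h_s$ gives $d_{\rm Haus}\!\big(h_s\pi^{-1}(o),h_sa_{c_0}\pi^{-1}(o)\big)\le|c_0|$ for all $s$, and taking the union over $s$ (the Hausdorff distance of two unions is at most the supremum of those of the pieces) yields $d_{\rm Haus}(W,a_{c_0}W)\le|c_0|$. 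The same computation shows $d_{\rm Haus}(W,a_tW)\le|t|$ for every $t\in\R$, which is what part (2) will use.

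\emph{Part (2).} Let $\sigma:Y\to\R$ be the signed distance to $\tilde\beta$ (positive on $Y^+$) and put $\Sigma:=\sigma\circ\pi:X\to\R$; then $X^\pm:=\pi^{-1}(Y^\pm)=\{\pm\Sigma>0\}$ are open and $W=\{\Sigma=0\}$ separates $X$ into $X^+\sqcup W\sqcup X^-$, and likewise $a_{c_0}W$ separates $X$ into the two open pieces $a_{c_0}X^\pm$. Since $\pi$ is $H$-equivariant and $h_{d_0}\in A_0$ preserves $\tilde\beta$ and each $Y^\pm$, we have $\b\pi^{-1}(y')=a_{c_0}\pi^{-1}(h_{d_0}y')$ with $h_{d_0}y'\in Y^-$ still at distance $>c$ from $\tilde\beta$; so it suffices to show $\pi^{-1}(y)\cap a_{c_0}\pi^{-1}(p')=\emptyset$ for $y\in Y^+$, $p'\in Y^-$ with $d(y,\tilde\beta),d(p',\tilde\beta)>c\ge|c_0|$. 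The key point is the \emph{thin-slab estimate}
\[
X^+\cap a_{c_0}X^-\ \subseteq\ \cal N_{|c_0|}(W),
\]
which I would prove by a continuity argument invoking part (1) a second time: given $x\in X^+$ with $d(x,W)>|c_0|$, the continuous function $t\mapsto\Sigma(a_{-t}x)$ is positive at $t=0$; if $x$ also lay in $a_{c_0}X^-$ it would be negative at $t=|c_0|$, hence vanish at some $t_*\in(0,|c_0|)$, giving $a_{-t_*}x\in W$, i.e.\ $x\in a_{t_*}W\subseteq\cal N_{|t_*|}(W)\subseteq\cal N_{|c_0|}(W)$ by part (1) — contradicting $d(x,W)>|c_0|$. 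Granting the estimate, recall $\pi^{-1}(y)=hK_0Bo$ (with $y=ho$, by \Cref{fiber2}), so it is connected and lies in $X^+$; and $d(\pi^{-1}(y),W)=d(y,\tilde\beta)>c\ge|c_0|$ by the fiber-distance identity $d(\pi^{-1}(y),W)=d(y,\tilde\beta)$ ($\ge$ is $1$-Lipschitzness of $\pi$ since $\pi(W)=\tilde\beta$; $\le$ holds because $y\in\pi^{-1}(y)$ and $\tilde\beta\subseteq W$). Thus $\pi^{-1}(y)$ is disjoint from $\cal N_{|c_0|}(W)\supseteq a_{c_0}W$ and therefore lies in a single component of $X-a_{c_0}W$. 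If some $z$ lay in $\pi^{-1}(y)\cap a_{c_0}\pi^{-1}(p')$, then $z\in a_{c_0}\pi^{-1}(p')=a_{c_0}\big(\pi^{-1}(p')\big)\subseteq a_{c_0}X^-$, which forces the whole connected set $\pi^{-1}(y)$ into $a_{c_0}X^-$; hence $\pi^{-1}(y)\subseteq X^+\cap a_{c_0}X^-\subseteq\cal N_{|c_0|}(W)$, contradicting $\pi^{-1}(y)\cap\cal N_{|c_0|}(W)=\emptyset$.

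The step I expect to be the main obstacle is the thin-slab estimate: in rank one the analogue is immediate because geodesics orthogonal to a hyperplane diverge, whereas here each fiber of $\pi$ is a full $3$-dimensional ``cone'' of Weyl chambers and one must rule out that a fiber lying far from $W$ is nevertheless carried by $a_{c_0}$ across $a_{c_0}W$. The resolution above is that the path $t\mapsto a_{-t}x$ cannot change sides of $W$ without meeting $W$ at some $t_*<\wdt(\b)$, and part (1) then pins $x$ within distance $t_*$ of $W$; apart from this, everything reduces to $H$-equivariance of $\pi$, the description of $\pi^{-1}(o)$ in \Cref{fiber2}, and soft CAT(0) facts.
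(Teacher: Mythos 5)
Your proof is correct and lands very close to the paper's, but the framing of part (2) is slightly different. The paper works with the bulging family $\b_s=a_{sc_0}h_{sd_0}$, $s\in[0,1]$, and shows directly that the moving fiber $\b_s\pi^{-1}(y)$ stays $\varepsilon$-away from $W=\pi^{-1}(\tilde\beta)$ for every $s$ (via the triangle inequality and part (1)), hence never leaves $X_-$; this pins down where $\b\pi^{-1}(y)$ lands. You instead extract a self-contained geometric fact — the ``thin-slab estimate'' $X^+\cap a_{c_0}X^-\subseteq \mathcal N_{|c_0|}(W)$ — and prove it by running the single $a$-flow $t\mapsto a_{-t}x$ and applying the intermediate value theorem to the signed distance $\Sigma=\sigma\circ\pi$, with part (1) converting ``$x\in a_{t_*}W$'' into a distance bound. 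These are dual presentations of the same core mechanism (part (1) + a continuity/connectedness argument using the one-parameter $a$-motion hidden inside $\b_s$), but your version isolates a clean reusable statement about the geometry of $W$, and as a bonus once the thin-slab estimate is in hand you never actually need to invoke connectedness of $\pi^{-1}(y)$: a single intersection point $z\in\pi^{-1}(y)\cap a_{c_0}\pi^{-1}(p')$ already lies in $X^+\cap a_{c_0}X^-\subseteq \mathcal N_{|c_0|}(W)$ while also satisfying $d(z,W)\ge d(\pi^{-1}(y),W)>|c_0|$ — contradiction. So your final paragraph can be shortened.

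Two small points you should clean up. First, your IVT step tacitly assumes $c_0>0$ (you evaluate at $t=|c_0|$ but need $a_{-c_0}x\in X^-$); the case $c_0<0$ is symmetric, but say so. Second, you only treat the configuration $y\in Y^+$, $y'\in Y^-$; since $y$ and $y'$ enter the statement asymmetrically (only $y'$ is bulged), the other configuration $y\in Y^-$, $y'\in Y^+$ is a genuinely separate case, which the paper handles by noting ``the same conclusion holds for $y\in Y_+$.'' Your thin-slab argument gives the companion inclusion $X^-\cap a_{c_0}X^+\subseteq \mathcal N_{|c_0|}(W)$ by the same IVT applied with signs reversed, so a one-line remark suffices, but it should appear.
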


\begin{proof}
(1). Using the $A_0$-equivariance of $\pi$, this follows from \Cref{lem:Hdist}.

(2). Since the nearest point projection from $X$ to a convex subset is $1$-Lipschitz, the minimal distance between any two fibers $\pi^{-1}(y)$ and $\pi^{-1}(y')$ is precisely $d(y,y')$. 

Write $Y$ as the union of two closed half-planes $Y_-$ and $Y_+$ sharing the common boundary $\tilde \beta$. Using the nearest point projection $\pi$, the symmetric space $X$ can be written as the union of the following connected smooth submanifolds with boundaries:
\[
 X_\pm \coloneqq \pi^{-1} (Y_\pm).
\]
Note that $\partial X_\pm = \pi^{-1}(\tilde \beta)$.
The interiors of $X_+$ and $X_-$ are  disjoint.

Let $y\in (Y_- -\overline{\cal N_{c}(\tilde \beta)})$ be an arbitrary point, where $c\ge \wdt(\b)$.
So, $d(y, \tilde \beta)=c+\e$ for some $\e>0$.  Then
\begin{align*}
 d(\b_s\pi^{-1}(y),\partial (\b_s X_+)) 
 &= d(\pi^{-1}(y),\pi^{-1}(\tilde\beta))\ge c +\e ,
 \quad s\in \R,
\end{align*}
where
$
 \b_s
$
is defined in \eqref{eqn:bs}.
By part (1), the Hausdorff distance between $\partial X_- = \pi^{-1}(\tilde \beta)$ and $\partial (\b_s X_+)=\b_s\pi^{-1}(\tilde \beta)$ for  $s\in[0,1]$ is at most $\wdt(\b)$.
Since $c\ge \wdt(\b)$, by the triangle inequality, it follows that 
 \[
  d(\b_s\pi^{-1}(y),\pi^{-1}(\tilde \beta) )
  \ge \e,
  \quad s\in[0,1].
 \]
 Thus  $\{\b_s\pi^{-1}(y):\ s\in[0,1]\}$ lies in the same connected component of $X- \pi^{-1}(\tilde \beta)$ (since the family is disjoint from $\pi^{-1}(\tilde \beta)$), which must be $X_-$ as $\b_0\pi^{-1}(y')=\pi^{-1}(y)\subset X_-$. Thus, $\b_1 \pi^{-1}(y)= \b \pi^{-1}(y)$ must be disjoint from any fiber $\pi^{-1}(y')$ contained in the interior of $X_+$. 
 
 The same conclusion holds for $y\in  (Y_+ -\overline{\cal N_{c}(\tilde \beta)})$.
\end{proof}

We are now in a position to finish the proof of \Cref{thm:pi}.

\begin{figure}
  \includegraphics [height=6cm]{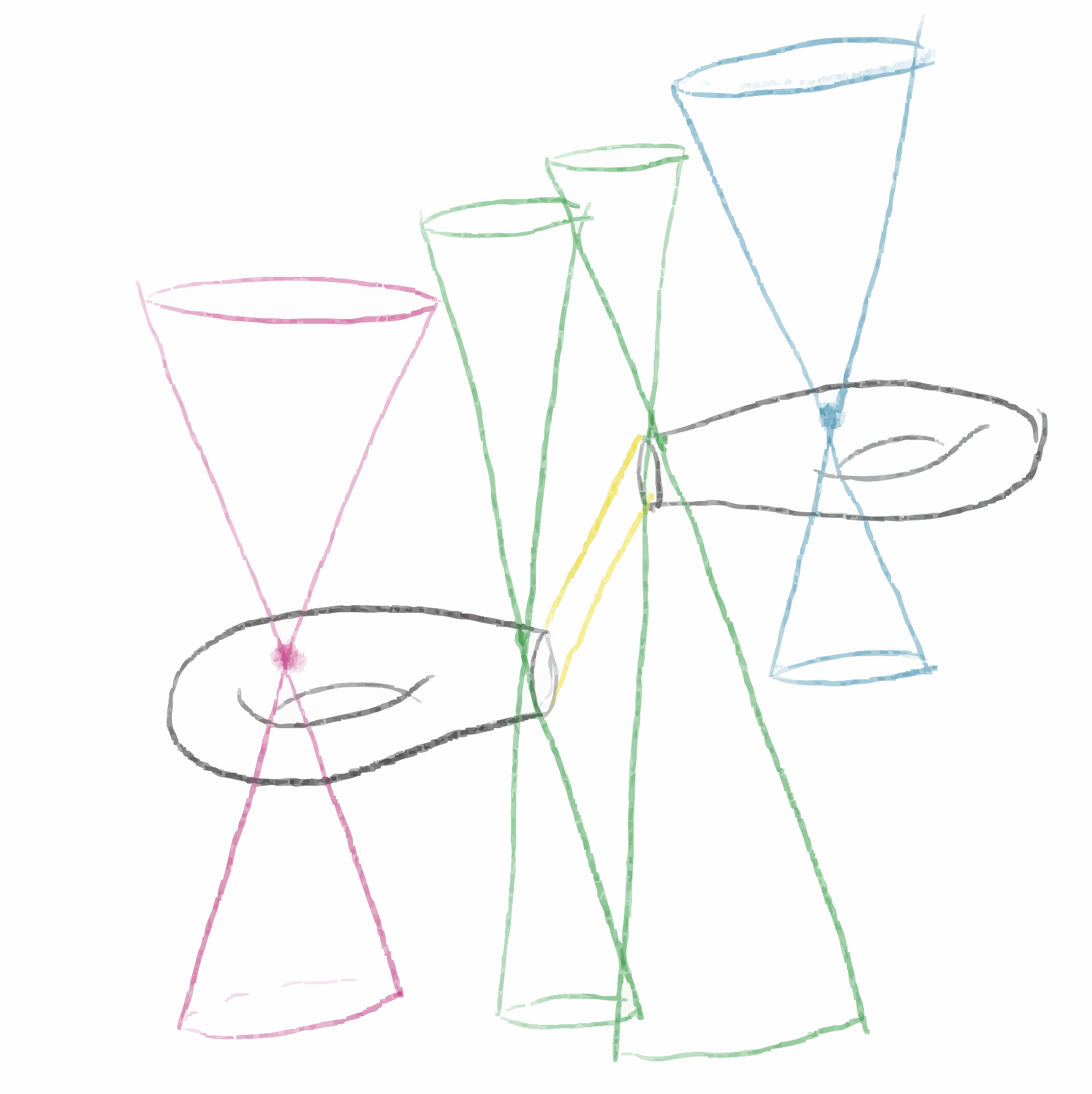}  
\caption{Fibers of the nearest projection map}
\end{figure}
\begin{proof}[Proof of \Cref{thm:pi}]
That $f_{\b,c}$ is a local isometry follows from the definition of the map. Thus, to show that $f_{\b,c}$ is injective, it is sufficient to show that $F_{\b}\vert_{X_{c}}$ is injective for all $c>\wdt(\b)$. 

Properness of $f_{\b,c}$ then follows: suppose, for contradiction, that there exists a divergent sequence $x_n$ in $\Gamma \backslash X_{c}$ such that the sequence $f_{\b,c}(x_n)$ converges to some point $z \in \Gamma_{\b} \backslash X$.
Being a local isometry, $f_{\b,c}$ is an open map in the interior of $\Ga\ba X_{\b}$ and so, by replacing $c$ by some $c'\in ( \wdt(\b),c)$, we have that $z$ belongs to the interior of $\op{Im} f_{\b,c'}$.
 As $f_{\b,c'}$ is injective, $f_{\b,c'}^{-1}$ is a continuous map in a sufficiently small neighborhood of $z$ contained in $\op{Im} f_{\b,c'}$. This implies that $x_n$ must converges to
 $f_{\b,c'}^{-1}(z)$, contradicting the assumption that $(x_n)$ is divergent.

Now we return to showing that $F_{\b}\vert_{X_{c}}$ is injective.
For this, we only need to show that for any two distinct connected components 
$
 Y',Y'' \subset Y -N_{c_0}(\Gamma \cdot\tilde\beta),
$
\begin{equation}\label{eqn:pi}
F_{\b_s}(\pi^{-1} (Y')) \cap F_{\b_s}(\pi^{-1} (Y'')) = \emptyset
\quad\text{for all }s\in[0,1].
\end{equation}
Since $\b_1 = \b$, the injectivity of $F_{\b}\vert_{X_{\b}}$ follows from this.

We will prove \eqref{eqn:pi} by induction on the Bass-Serre tree distance $d_T(v_{Y'}, v_{Y''})$ between the vertices corresponding to $Y'$ and $Y''$.

The base case when $d_T(v_{Y'},v_{Y''})=1$ follows  from \Cref{hdxy}(2).

For the inductive step, suppose that the assertion \eqref{eqn:pi} holds whenever $d_T(v_{Y'},v_{Y''})\le n_0-1$, for some $n_0\ge 2$. We show that the assertion still holds if $d_T(v_{Y'},v_{Y''})= n_0$. By the $\rho_{\b_s}$-equivariance, we may assume that $v_{Y'}$ is either $v_1$ or $v_2$, say $v_1$. Then, there is a geodesic sequence in $T$
\[
 \tilde v_1 = v_1, \tilde v_2,\dots, \tilde v_{n_0+1} = v_{Y''}
\]
connecting $v_1$ to $v_{Y''}$.
Let $L_2 \subset Y_{\tilde v_2}$ be a complete geodesic separating (in $Y$) $Y_1 = Y_{v_1}$ from $Y_{\tilde v_3}$, where $Y_{v}$ denotes the connected component of $ Y -N_{c_0}(\Gamma \cdot\tilde\beta)$ corresponding to the vertex $v\in T$. Clearly, $L_2$ also separates $Y_{1}$ from $Y_{\tilde v_4}\dots Y_{\tilde v_{n_0+1}}$. By our induction hypothesis, for all $s\in[0,1]$, the hypersurface $F_{\b_s}(\pi^{-1}(L_2))$ in $X$ does not intersect 
\[
 F_{\b_s}(\pi^{-1} (Y_{v_1})), \quad F_{\b_s}(\pi^{-1}  ( Y_{\tilde v_{n_0+1}})).
\] 
For $s=0$, i.e., for $F_{\b_0}$, the identity map,
the hypersurface $F_{\b_0}(\pi^{-1}(L_2))$ in $X$ separates $F_0(\pi^{-1} (Y_1)) $ from $F_0(\pi^{-1}  (Y_{\tilde v_{n_0+1}})) $. Therefore, by continuity, we conclude that in $X$, for all $s\in[0,1]$, $F_{\b_s}(\pi^{-1}(L_2))$ separates $F_{\b_s}(\pi^{-1} (Y_{v_1}))$ from $ F_{\b_s}(\pi^{-1}  (Y_{\tilde v_{n_0+1}}))$.
This proves \eqref{eqn:pi}, thereby concluding the proof of the result.
\end{proof}

\subsection*{Chaotic floating planes}

\begin{theorem}\label{ff} Let $ L=hA_0 o \in Y$ be a geodesic so that
$\Gamma\ba \Gamma L$ is disjoint from the $2r$-neighborhood of $\beta$ for some $r>0$.
There exists $t_0>0$, depending only on $L$ and $r$  such that for all $t>t_0$ and any $\b\in B$ with width $\wdt(\b) < r$, we have  
\begin{enumerate}
    \item 
$\dim \overline{ \Ga_{\b} \ba \Ga_{\b} h a_t H }=\dim \overline{\Gamma\ba \Gamma h A_0} +2 ;$
\item If ${\Gamma hA_0}$ is admissible,
then $$\frac{1}{2}\left( \dim \overline{\Gamma\ba \Gamma h A_0} +3 \right) \le  \overline{ \Ga_\b \ba \Ga_\b Y_{L,t}} \le \dim \overline{\Gamma\ba \Gamma h A_0} +1 .$$
\item  Let $L$ be an admissible geodesic with $1<\dim (\overline{\Gamma\ba \Gamma L}) <2$. Then
    $$\frac{1}{2}\left( \dim \overline{\Gamma\ba \Gamma L} +3 \right) \le  \overline{ \Ga_\b \ba \Ga_\b Y_{L,t}} \le \dim \overline{\Gamma\ba \Gamma L} +1 .$$ 
   In particular, $$2< \dim \overline{\Ga_\b Y_{L,t}} <3.$$     
\end{enumerate}

\end{theorem}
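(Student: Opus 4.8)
\textbf{The plan} is to reduce the statement to the Fuchsian computation of \Cref{dim} by transporting the closures of $Y_{L,t}$ and of the $H$-orbit $h a_tH$ through the isometric embeddings of \Cref{thm:pi} and \Cref{piG}. The only point requiring genuine work is to check that, once $t$ is large, the entire $\Gamma$-orbit of the floating plane $Y_{L,t}=ha_tY$ lies in the region $X_c$ (resp.\ $G_c$) on which the bulging map is a proper local isometric embedding; this is where the narrowness estimate \Cref{hd} enters. Concretely: by hypothesis $d(\Gamma L,\Gamma\cdot\tilde\beta)\ge 2r$ in $Y$, and by \Cref{hd} the Hausdorff distance between $\pi(Y_{L,t})$ and $L$ tends to $0$ as $t\to\infty$, at a rate depending only on $L$. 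So I would fix $t_0=t_0(L,r)$ with this Hausdorff distance $<r/2$ for all $t>t_0$; since $\pi$ is $H$-equivariant and $\Gamma<H$, one has $\pi(\Gamma Y_{L,t})=\Gamma\,\pi(Y_{L,t})$, whence $d(\pi(\Gamma Y_{L,t}),\Gamma\cdot\tilde\beta)>3r/2>r$. Taking $c:=r$, which satisfies $c>\wdt(\b)$ for every admissible $\b$, this gives $\Gamma Y_{L,t}\subset X_c=\pi^{-1}(Y_c)$ (see \eqref{def:Xc}) and hence $\Gamma h a_tH\subset G_c=p^{-1}(X_c)$, with $t_0$ independent of $\b$ as required.

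\textbf{Transporting the closures.} Next I would use that $X_c$ and $G_c$ are closed and $\Gamma$-invariant, so $\Gamma\backslash X_c$ and $\Gamma\backslash G_c$ are closed; hence $\overline{\Gamma\backslash\Gamma Y_{L,t}}\subset\Gamma\backslash X_c$ and $\overline{\Gamma\backslash\Gamma h a_tH}\subset\Gamma\backslash G_c$. By \Cref{thm:pi} and \Cref{piG}, the maps $f_{\b,c}:\Gamma\backslash X_c\to\Gamma_{\b}\backslash X$ and $\bar f_{\b,c}:\Gamma\backslash G_c\to\Gamma_{\b}\backslash G$ are proper locally isometric embeddings, acting by $[\Gamma x]\mapsto[\Gamma_{\b}x]$ and $[\Gamma g]\mapsto[\Gamma_{\b}g]$. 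A proper embedding is a homeomorphism onto a closed subset of the target, hence a closed map commuting with closure, so
$$\overline{\Gamma_{\b}\backslash\Gamma_{\b}Y_{L,t}}=f_{\b,c}\bigl(\overline{\Gamma\backslash\Gamma Y_{L,t}}\bigr),\qquad\overline{\Gamma_{\b}\backslash\Gamma_{\b}h a_tH}=\bar f_{\b,c}\bigl(\overline{\Gamma\backslash\Gamma h a_tH}\bigr).$$
Since a local isometry is locally bi-Lipschitz and Hausdorff dimension is bi-Lipschitz invariant and countably stable, these identities yield $\dim\overline{\Gamma_{\b}\backslash\Gamma_{\b}Y_{L,t}}=\dim\overline{\Gamma\backslash\Gamma Y_{L,t}}$ and $\dim\overline{\Gamma_{\b}\backslash\Gamma_{\b}h a_tH}=\dim\overline{\Gamma\backslash\Gamma h a_tH}$.

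\textbf{Applying the Fuchsian case.} Since $\Gamma<H$ is discrete and $t\neq0$ (as $t>t_0\ge0$), \Cref{dim}(1) gives $\dim\overline{\Gamma\backslash\Gamma h a_tH}=2+\dim\overline{\Gamma h A_0}$, which with the previous paragraph proves (1). If moreover $\overline{\Gamma h A_0}$ is admissible, \Cref{dim}(2) gives $\tfrac12\bigl(3+\dim\overline{\Gamma h A_0}\bigr)\le\dim\overline{\Gamma\backslash\Gamma Y_{L,t}}\le 1+\dim\overline{\Gamma h A_0}$, and transporting through $f_{\b,c}$ proves (2). Finally, (3) is the special case of (2) in which $L$ is admissible with $d:=\dim\overline{\Gamma\backslash\Gamma L}\in(1,2)$ (equivalently, $\overline{\Gamma h A_0}$ admissible of dimension $d$, by Def.~\ref{adm}): the bounds of (2) then read $\tfrac12(3+d)\le\dim\overline{\Gamma_{\b}\backslash\Gamma_{\b}Y_{L,t}}\le 1+d$, and since $d\in(1,2)$ this forces $\dim\overline{\Gamma_{\b}\backslash\Gamma_{\b}Y_{L,t}}\in(2,3)$.

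\textbf{Main obstacle.} The crux is the reduction in the first two paragraphs, and within it the real content is the narrowness theorem \Cref{hd}: in higher rank it is not at all automatic that the shadow $\pi(Y_{L,t})$ of a floating plane collapses onto the base geodesic $L$ as $t\to\infty$ --- this rests on controlling the Busemann functions $b_\xi|_Y$ at every boundary accumulation point of the sequence $\ga_t(s)=a_tk_0h_so$ --- and it is exactly this collapse that pushes the whole $\Gamma$-orbit of $Y_{L,t}$ into the zone where $F_{\b}$ is a proper embedding (\Cref{thm:pi}). A further point demanding care is that the \emph{closures}, not merely the orbits, remain inside the closed sets $\Gamma\backslash X_c$ and $\Gamma\backslash G_c$, so that $f_{\b,c}$ and $\bar f_{\b,c}$ genuinely identify $\overline{\Gamma\backslash\Gamma Y_{L,t}}$ and $\overline{\Gamma\backslash\Gamma h a_tH}$ with their $\Gamma_{\b}$-counterparts. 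Granting these, the remaining dimension bookkeeping and the passage between $G$- and $X$-orbits are routine.
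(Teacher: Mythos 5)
Your proposal follows the paper's proof essentially step for step: narrowness (\Cref{hd}) forces the entire $\Gamma$-orbit closure of $Y_{L,t}$ (resp.\ of $ha_tH$) into $\Gamma\ba X_r$ (resp.\ $\Gamma\ba G_r$), the proper local isometric embeddings $f_{\b,r}$ and $\bar f_{\b,r}$ of \Cref{thm:pi} and \Cref{piG} then transport both the closures and their Hausdorff dimensions faithfully, and \Cref{dim} supplies the Fuchsian baseline.

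The one genuine omission is in part (3). You deduce (3) from (2) by asserting that ``$L$ admissible with $\dim\overline{\Gamma\ba\Gamma L}=d$ is equivalent to $\overline{\Gamma hA_0}$ admissible of dimension $d$, by Def.~\ref{adm}.'' But Def.~\ref{adm} only identifies the two notions of \emph{admissibility}; it says nothing about dimensions. The set $\overline{\Gamma\ba\Gamma L}$ lives in the surface $S$, while $\overline{\Gamma\ba\Gamma hA_0}$ lives in its unit tangent bundle, and equating their Hausdorff dimensions when the latter is $<2$ is a nontrivial projection statement for geodesic-flow-invariant sets: it is exactly the Ledrappier--Lindenstrauss theorem (\Cref{ll}), which the paper invokes at precisely this step. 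Without it, the bounds in (3), stated in terms of $\dim\overline{\Gamma\ba\Gamma L}$, do not formally follow from (2), which is stated in terms of $\dim\overline{\Gamma\ba\Gamma hA_0}$.
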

\begin{proof}
    Let $L:=hA_0o$. Since $\beta\subset S=\Gamma\ba Y$ is closed,  $\{ x\in S:\  d(x, \beta)\le r\}$ is closed and hence $\overline{\Gamma L}$ is disjoint from $N_{r_0}(\tilde \beta)$ for some $r_0$ strictly bigger than $\wdt(\b)$. Now consider the floating plane $Y_{L, t}=
    h a_t H o$. By Theorem \ref{narrow}, there exists $t_0>0$ depending on $L$ and $r$ such that for all $t\ge t_0$,
   the nearest projection $\pi (Y_{L, t})$ is contained in the $r/2$-neighborhood of $L$. By the $H$-equivariance, the nearest projection $\pi (\ga Y_{L, t})$ is contained in the $r/2$-neighborhood of $\ga L$ for all $\ga\in \Gamma$. Therefore
   $\pi(\overline{\Gamma ha_tHo})$ is disjoint from $r$-neighborhood of $\tilde \beta$.
   Hence,   $$\overline{\Gamma\ba \Gamma ha_tH} \subset \Gamma\ba G_{r} .$$

    Since $\wdt(\b) < r$,  Corollary \ref{piG} implies that
     \[
     \bar f_{\b,r}: \Gamma \backslash G_{r} \to \Gamma_{\b} \backslash G
     \]
     is a proper locally isometric embedding, where $\bar f_{\b,r}([\Gamma g])=[\Ga_\b g]$ for all
     $g\in G_{\b}$. The local isometric property everywhere implies that the Hausdorff dimension of 
     $\overline{\Gamma\ba \Gamma ha_tH}$ is equal to that of its image in $\Ga_{\b}\ba G$. Since  $ \bar f_{\b,r} (\Gamma\ba \Gamma g) =\Gamma_{\b}\ba \Gamma_{\b} g$ and $\bar{f}_{\b,r}$ is a proper map, the image of  $\overline{\Gamma \ba \Gamma ha_t H}$ is closed and hence 
     $$ \bar f_{\b,r}( \overline{\Gamma \ba \Gamma ha_t H})= \overline{\Gamma_{\b} \ba \Gamma_{\b} ha_t H} .$$
     
     Therefore the claim (1) follows from \Cref{hd2}. Similarly, \Cref{pi,dim} imply (2).  If $1<\dim (\overline{\Gamma\ba \Gamma L}) <2$, then $\dim (\overline{\Gamma\ba \Gamma L})=
     \dim (\overline{\Gamma\ba \Gamma hA_0})$; see \Cref{ll} below. Hence (3) follows.
\end{proof}

\begin{Rmk}
 Much of the discussion in this paper also applies when $\Gamma < H$ is a torsion-free nonuniform lattice. In this situation, the quotient $\Gamma \backslash Y$ is a noncompact, finite-area hyperbolic surface with finitely many cusps. Choosing a nonperipheral simple closed curve $\beta \subset S$, one may again decompose $\Gamma$ over the cyclic subgroup generated by $\delta = [\beta] \in \Gamma$, as before. For each $\mathbf{b} \in B = C_G(\delta)^\circ$, we obtain a homomorphism $\rho_{\mathbf{b}} : \Gamma \to G$ defined as above.

For each $\mathbf b\in B$, $\rho_{\mathbf{b}}$ is discrete and faithful (\cite{FG}). 
In particular, \Cref{ma} remains valid in this setting.
\end{Rmk}

\begin{Rmk} For a cocompact lattice $\Ga<H$, Pavez showed that any Hausdorff dimension between $1$ and $3$ can occur as the dimension of the closure of some orbit $\Ga \ba \Ga h A_0\subset \Ga\ba H $ \cite{Pa};
any number between 3 and $5$ can arise as the Hausdorff dimension of the closure of a floating plane $\Ga\ba \Ga h a_t H$.

It is a natural question, for a given simple closed geodesic $\beta$, which Hausdorff dimensions can be realized by geodesic flow closures $\overline{\Ga \ba \Ga h A_0}\subset \Ga\ba H$ whose projection to the surface $\Ga\ba Y$ is disjoint from $\beta$. The maximum possible value in this setting is $1+2\delta_0$ where $\delta_0$ denotes the maximum of the critical exponents of the components of $\Ga\ba Y-\{\beta\}$. Thus the precise question is whether any number between $1$ and $1+2\delta_0$ can indeed be achieved.

In the next section, we show that we can find
 geodesic-flow closures that come arbitrarily close to dimension $1$, yet avoid a prescribed closed simple geodesic.
\end{Rmk}

\section{Geodesic closures away from a given simple closed geodesic}
 
 The goal of this subsection is to prove Theorem \ref{pair} and finish the proofs of
 all theorems in the introduction.

In the whole section, let $S=\Gamma\ba \bH^2$ be a closed hyperbolic surface for a torsion-free cocompact lattice $\Gamma<\PSL_2(\br)$. 
The following theorem combined with  with Theorem \ref{ff} implies Theorem \ref{ma} and \ref{ma3}.
 \begin{theorem}\label{thm:chaotic} \label{pair} Let $\beta_1, \dots, \beta_m$ be pairwise disjoint simple closed geodesics in $S$ and let $S'\subset S$ be a connected component of $S-\bigcup_i \beta_i$.
     There exists a sequence of immersed complete admissible geodesics $\ell_n \subset S$, contained in $S'$, such that the following hold:
     \begin{enumerate}
         \item $\inf\{ d(\ell_n , \bigcup_{k=1}^m \beta_k) :\ n\in\N\} >0$.
         \item For all $n\in\N$, $\dim \overline{\ell_n} >1$.
         \item $\dim \overline{\ell_n} \to 1$ as $n\to\infty$.
     \end{enumerate}
 \end{theorem}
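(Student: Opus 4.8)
\textbf{The plan} is to realize each $\ell_n$ as a Bowen--Margulis--Sullivan generic geodesic of a convex cocompact cover of $S$ of small critical exponent, lying over a fixed compact part of $S'$, and then to transfer it to $S$.

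\emph{Setting up the covers.} Since the $\beta_i$ are disjoint essential simple closed geodesics, the closure $\overline{S'}$ is a compact hyperbolic surface with nonempty geodesic boundary; it is neither a disk nor an annulus, so $\pi_1(S')<\Gamma$ is a free group of rank $\ge 2$. Fix two non-commuting elements $g_1,g_2\in\pi_1(S')$ whose axes project to closed geodesics lying in $\inte S'$ at positive distance from $\bigcup_i\beta_i$. Choose pairwise disjoint closed disks $D_1^\pm,D_2^\pm\subset\bH^2\cup\partial_\infty\bH^2$ about the four fixed points $g_1^\pm,g_2^\pm$, and $n_0$, so that $g_1^{\pm n_0},g_2^{\pm n_0}$ play ping-pong; since $g_i^{\,k}(\bH^2\setminus D_i^-)\subset\bH^2\setminus D_i^-$ for $k\ge0$, the same disks witness ping-pong for every $n\ge n_0$. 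Hence for $n\ge n_0$ the group $\Delta_n:=\langle g_1^{\,n},g_2^{\,n}\rangle<\Gamma$ is convex cocompact and free of rank $2$, with limit set $\Lambda_n$ contained in the fixed set $D:=D_1^+\cup D_1^-\cup D_2^+\cup D_2^-$, so $\hull(\Lambda_n)\subset\hull(\Lambda(\pi_1 S'))\cap\hull(D\cap\partial_\infty\bH^2)$. Writing $N_n:=\Delta_n\backslash\bH^2$ and $p_n\colon N_n\to S$ for the covering, a routine geometric estimate (the ping-pong data being independent of $n$, and $\hull(\Lambda(\pi_1 S'))/\pi_1(S')$ embedding in $S$ as $\overline{S'}$) shows that $p_n(\core N_n)$ lies in a fixed compact set $K'\Subset\inte S'$; put $r_0:=d(K',\bigcup_i\beta_i)>0$. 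Finally, the classical growth estimate for Schottky groups generated by high powers gives $\delta_n:=\delta(\Delta_n)\to0$ as $n\to\infty$.

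\emph{Producing and estimating $\ell_n$.} By Sullivan's ergodicity theorem \cite{Su}, for the convex cocompact surface $N_n$ the geodesic flow on the non-wandering set $\Omega_n\subset T^1N_n$ is ergodic for $m^{\BMS}_{N_n}$, which is fully supported on $\Omega_n$; hence there is $v_n\in\Omega_n$ with dense (indeed $m^{\BMS}_{N_n}$-equidistributed) orbit. Let $\ell_n\subset S$ be the $p_n$-image of the geodesic of $v_n$. Since $\ell_n\subset p_n(\core N_n)\subset K'$, property (1) holds with $\inf_n d(\ell_n,\bigcup_i\beta_i)\ge r_0>0$. As $\Omega_n$ is compact and $p_n$ is a local isometry, $\overline{\ell_n}=p_n(\pi_{N_n}(\Omega_n))$ where $\pi_{N_n}\colon T^1N_n\to N_n$ is the footpoint projection, so $\dim\overline{\ell_n}=\dim\pi_{N_n}(\Omega_n)$. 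For the upper bound, $\pi_{N_n}$ is $1$-Lipschitz, and via Hopf coordinates $\Omega_n\cong(\Lambda_n^{(2)}\times\R)/\Delta_n$ together with Ahlfors $\delta_n$-regularity of $\Lambda_n$ one gets $\dim\Omega_n=1+2\delta_n$; hence $\dim\overline{\ell_n}\le1+2\delta_n$, giving (3) and $\dim\overline{\ell_n}<2$ for large $n$. For the lower bound of (2), $\Omega_n$ is a hyperbolic basic set with local product structure, so near the footpoint of a periodic vector $\overline{\ell_n}$ contains a bi-Lipschitz copy of $[0,1]\times\bigl(\Omega_n\cap W^{su}_{\mathrm{loc}}\bigr)$, a product of an interval with a set of Hausdorff dimension $\delta_n>0$; hence $\dim\overline{\ell_n}\ge1+\delta_n>1$. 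Finally, for admissibility (Def.~\ref{adm}): lifting $\Omega_n$ to $\Gamma\backslash H=T^1S$, the set $\overline{\Gamma h_nA_0}\cap\mathcal O$ is, up to a bi-Lipschitz change of coordinates, the intersection of this basic set with a flow box, so each Bruhat projection $\pi_\pm(\overline{\Gamma h_nA_0}\cap\mathcal O)$ is a local stable (resp.\ unstable) slice of $\Omega_n$, a self-conformal Ahlfors-regular Cantor set, for which box dimension equals Hausdorff dimension; thus each $\ell_n$ is admissible. Together with Theorem~\ref{ff} this completes the proofs of Theorems~\ref{ma}, \ref{ma3} and \ref{m1}.

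The step I expect to be the main obstacle is the last paragraph: pinning down how the non-wandering set $\Omega_n$ sits over $N_n$ under the footpoint map --- that its image is genuinely more than $1$-dimensional, and that the transversals entering Definition~\ref{adm} inherit the self-conformal/Ahlfors-regular structure forcing box dimension $=$ Hausdorff dimension --- which requires a careful analysis of the local product structure of $\Omega_n$, in contrast to the softer arguments used elsewhere.
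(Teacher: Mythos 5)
Your overall strategy — high powers of a Schottky pair inside $\pi_1(S')$, critical exponent $\to 0$, a BMS-generic geodesic, pushed down to $S$ — is the same as the paper's (Propositions \ref{pant}, \ref{bms}, \ref{ahl}, Lemma \ref{zero}). But there is a genuine gap in the step you flag as ``a routine geometric estimate,'' and it is in fact the crux.

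The inclusion $\hull(\Lambda_n)\subset\hull(\Lambda(\pi_1 S'))\cap\hull(D\cap\partial_\infty\bH^2)$ does not give $p_n(\core N_n)\Subset\inte S'$. The set $\hull(D\cap\partial_\infty\bH^2)$ is neither $\pi_1(S')$-invariant nor $\Delta_n$-invariant, and the fixed points of $\pi_1(S')$-conjugates of the boundary-curve elements are dense in $\Lambda(\pi_1 S')$, hence accumulate inside the fixed disks $D_i^\pm$. Concretely: $p_n(\core N_n)$ equals the image of $\hull(\Lambda_n)$ in $\overline{S'}\cong\pi_1(S')\backslash\hull(\Lambda(\pi_1 S'))$, and $\pi_1(S')\cdot\hull(D)$ intersects $\hull(\Lambda(\pi_1 S'))$ arbitrarily close to its boundary leaves (e.g.\ translate by a large power of a boundary element). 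So bounding the limit set in $D$ does not keep the convex core away from $\partial S'$. What one actually needs is that \emph{no element of $\langle g_1,g_2\rangle$ is $\Gamma$-conjugate to a nontrivial power of a boundary curve of $S'$}; your hypothesis that the \emph{axes of $g_1$ and $g_2$} stay in $\inte S'$ is a condition on two individual elements only, and does not rule out some word in them (and hence a boundary component of $\core N_n$) being a boundary conjugate, whence $p_n(\core N_n)$ could meet $\partial S'$.

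The paper supplies exactly this missing input in Proposition \ref{pant}: it pinches the boundary curves of $S'$ to cusps, so that boundary conjugates become parabolic, and picks $g_1,g_2$ generating a convex cocompact Schottky subgroup, which cannot contain parabolics — hence $\langle g_1,g_2\rangle$ has no boundary conjugates. With that in hand, a boundary-component argument shows $p_1(\core N_1)$ misses $\partial S'$; compactness then gives a fixed $K'\Subset\inte S'$, and $\Delta_n\subset\Delta_1$ forces $p_n(\core N_n)\subset K'$ uniformly. You should replace the appeal to ping-pong with this (or an equivalent) argument. The rest of your write-up — $\delta_n\to 0$, ergodicity of the BMS measure producing a dense orbit, the Lipschitz upper bound $\dim\overline{\ell_n}\le 1+2\delta_n$ (the paper instead uses the Ledrappier--Lindenstrauss Theorem \ref{ll} to get equality once $\delta_n<1/2$), the local-product lower bound $\dim\overline{\ell_n}\ge 1+\delta_n$, and admissibility via Ahlfors regularity of $\Lambda_n$ — is sound and matches the paper's treatment.
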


Let $A_0$ be the diagonal subgroup of $\PSL_2(\br)$, $K_0=\SO(2)$ and $o=[K_0]=\PSL_2(\br)/K_0\simeq \bH^2$.
The unit tangent bundle of $\bH^2$ is $\PSL_2(\br)$ and an orbit of the geodesic flow is of the form $hA_0\subset \PSL_2(\br)$.  Let $p:\Ga\ba \PSL_2(\br)\to \Ga\ba \bH^2$ be the basepoint projection $x\mapsto xo$. 

We recall the following theorem of
Ledrappier and Lindenstrauss: for a Borel measure $\sigma$ on a metric space, we denote by $\ldim\sigma$ the lower information dimension $$\ldim\sigma=\text{\rm ess-inf}_x \left({\liminf}_{\e\to 0}\frac{\log \sigma(B(x, \e))}{\log \e}\right).$$
\begin{theorem}[{\cite[Theorem ~1.1]{LL}}] \label{ll0} Let $\mu$ be an $A$-invariant probability measure on $\Ga\ba \PSL_2(\br)$.
If $\ldim\mu \le 2$, then
  $$ \ldim\mu =\ldim p_*\mu .$$
\end{theorem}

In order to deduce the comparison between the Hausdorff dimension of an $A_0$-orbit and its projection to $S$ from Theorem \ref{ll0},
we first note a general principle: for any finite Borel measure $\sigma$ on a metric space, 
its lower information dimension satisfies 
\be\label{low} \ldim\sigma \le  \dim (\text{supp }\sigma).\ee 
Indeed, if $\alpha< \ldim \sigma$, then for almost all $x$, there exists $r_x>0$ such that
${\sigma(B(x, r))}\le {r^\alpha}$ for all $0<r<r_x$. By the mass distribution principle, this implies that the $\alpha$-Hausdorff measure of $\supp \sigma$ is positive, and hence $\dim (\supp \sigma)\ge \alpha$. Letting $\alpha\to \ldim\sigma$ yields \eqref{low}.

\begin{cor} \label{ll} Let
$\overline{xA_0}\subset \Ga\ba \PSL_2(\br)$ be the support of an $A_0$-invariant probability measure $\mu$. Suppose that $\ldim\mu=\dim \overline{xA_0}$ and $ \dim \overline{xA_0}\le 2 $. Then
  $$  \dim \overline{xA_0 o} = \dim \overline{xA_0 }. $$
\end{cor}
\begin{proof}
Applying \eqref{low} to the measure $\sigma=p_*\mu$ gives
$\ldim p_*\mu  \le  \dim (\overline{xA_0o})$.
Since
the projection $p$ is $1$-Lipschitz, Hausdorff dimension can only decrease under $p$. Hence 
$$ \dim \overline{xA_0 o}\le \dim \overline{xA_0} =\ldim\mu .$$
By Theorem \ref{ll0}, we have $\ldim p_*\mu=\ldim \mu$ and the two inequalities above therefore give
$\dim \overline{xA_0 o}= \dim \overline{xA_0}$.
\end{proof}

 For a non-elementary convex cocompact subgroup $\Ga_0<\PSL_2(\br)$,  
let $\Omega_{\Ga_0}\subset \Ga_0\ba \PSL_2(\br)=\op{T}^1(\Ga_0\ba \bH^2)$ denote the non-wandering set of the geodesic flow, which is the union of all $v\in \op{T}^1(\Ga_0\ba \bH^2)$ whose forward and backward end points of the geodesic determined by $v$ belong to the limit set of $\Ga_0$.  Let $\m=\m_{\Ga_0}$ denote
  the Bowen-Margulis-Sullivan measure on $\Omega_{\Ga_0}$, which is the $A_0$-invariant probability measure of maximal entropy \cite{Su}.  
  
 \begin{prop}\label{ahl}
     Let $\Ga_0<\PSL_2(\br)$ be a non-elementary convex cocompact subgroup.
     Then  $$\ldim \, \m_{\Ga_0}=\dim \Omega_{\Ga_0}= 1+2\delta_{\Ga_0}$$
     where $\delta_{\Ga_0}$ is the critical exponent of $\Ga_0$. Moreover, $\Omega_{\Ga_0}$ is admissible in the sense of Definition \ref{adm}. \end{prop}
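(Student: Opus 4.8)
The plan is to reduce both assertions to the Ahlfors regularity of the limit set $\Lambda_{\Ga_0}\subset\partial\bH^2$ together with the Hopf parametrization of $\op T^1\bH^2$. Write $\widetilde\Omega_{\Ga_0}\subset\PSL_2(\br)$ for the full preimage of $\Omega_{\Ga_0}$ under $\PSL_2(\br)\to\Ga_0\ba\PSL_2(\br)$; it is the set of all $g$ whose two geodesic endpoints $g^{\pm}\in\partial\bH^2$ both lie in $\Lambda_{\Ga_0}$, and it is closed and invariant under $A_0$ and $\Ga_0$, with $\Omega_{\Ga_0}=\Ga_0\ba\widetilde\Omega_{\Ga_0}$. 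Since the covering is a local isometry and $\Omega_{\Ga_0}$ is compact (as $\Ga_0$ is convex cocompact), it suffices to compute Hausdorff and box dimensions of $\widetilde\Omega_{\Ga_0}$ in bounded charts and to verify Definition \ref{adm} for $\widetilde\Omega_{\Ga_0}$. I will use the classical fact that for a non-elementary convex cocompact $\Ga_0$ the Patterson--Sullivan measure makes $\Lambda_{\Ga_0}$ an Ahlfors $\delta_{\Ga_0}$-regular metric space; in particular $\dim_H\Lambda_{\Ga_0}=\overline{\dim}_B\Lambda_{\Ga_0}=\delta_{\Ga_0}$, and the same equality holds for every nonempty relatively open subset of $\Lambda_{\Ga_0}$.

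For the dimension formula, use the Hopf parametrization
$$\PSL_2(\br)=\op T^1\bH^2\ \cong\ \{(\xi,\eta)\in\partial\bH^2\times\partial\bH^2:\xi\ne\eta\}\times\br,\qquad g\longmapsto(g^-,g^+,\tau(g)),$$
which is a real-analytic diffeomorphism and hence bi-Lipschitz on every bounded region (for a left-invariant metric on $\PSL_2(\br)$ and a visual metric on the boundary factors, with respect to which $\dim_H\Lambda_{\Ga_0}=\delta_{\Ga_0}$). Under it, $\widetilde\Omega_{\Ga_0}$ corresponds to $\{(\xi,\eta,\tau):\xi,\eta\in\Lambda_{\Ga_0},\ \xi\ne\eta\}$, which locally is bi-Lipschitz to $\Lambda_{\Ga_0}\times\Lambda_{\Ga_0}\times(\text{interval})$. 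Since $\Lambda_{\Ga_0}$ has equal Hausdorff and upper box dimensions $\delta_{\Ga_0}$, the standard product inequalities for Hausdorff dimension give $\dim_H(\Lambda_{\Ga_0}\times\Lambda_{\Ga_0}\times[0,1])=2\delta_{\Ga_0}+1$. Covering $\Omega_{\Ga_0}$ by finitely many such charts and using that the local isometry $\widetilde\Omega_{\Ga_0}\to\Omega_{\Ga_0}$ preserves dimension, we conclude $\dim_H\Omega_{\Ga_0}=1+2\delta_{\Ga_0}$.

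For admissibility, fix a small basic open set $\cal O=hU^+U^-A_0$ meeting $\widetilde\Omega_{\Ga_0}$ (otherwise $\pi_{\pm}(\overline{\widetilde\Omega_{\Ga_0}}\cap\cal O)=\emptyset$), and note $\overline{\widetilde\Omega_{\Ga_0}}=\widetilde\Omega_{\Ga_0}$. In the decomposition $g=hn^+n^-a$, one of the two endpoints of $g$ — denote it $g^{\sharp}$, the common endpoint of the horocyclic leaf through $hn^+$ on which $g$ lies — is a bi-Lipschitz function of $n^+$ alone (and of $h$), while the other endpoint $g^{\flat}$ depends on both $n^+$ and $n^-$. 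Consequently $\pi_+(\widetilde\Omega_{\Ga_0}\cap\cal O)$ equals $\{n^+\in U^+:g^{\sharp}\in\Lambda_{\Ga_0}\}$ intersected with an open set (the requirement that $g^{\flat}$ can be pushed into $\Lambda_{\Ga_0}$ by varying $n^-$ removes only an open set of $n^+$), hence it is a bi-Lipschitz copy of a nonempty relatively open piece of $\Lambda_{\Ga_0}$, so its box and Hausdorff dimensions coincide, both equal to $\delta_{\Ga_0}$. The projection to $N_0^-$ is more delicate: an explicit computation in $\PSL_2(\br)$ identifies $\pi_-(\widetilde\Omega_{\Ga_0}\cap\cal O)$, up to a bi-Lipschitz map, with a difference set $K_--K_+$, where $K_\pm$ are bi-Lipschitz images of two relatively open pieces of $\Lambda_{\Ga_0}$ supported in disjoint compact intervals of $\br$ (the Bruhat coordinates differ from the Hopf coordinates by a nonlinear shear mixing $g^{\sharp}$ into the $N_0^-$-coordinate). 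To finish I would prove that $K_--K_+$ again satisfies $\overline{\dim}_B=\dim_H$; one route is to exploit the conformal Markov structure of $\Lambda_{\Ga_0}$: cutting $K_\pm$ into cylinders of a bounded-distortion conformal iterated function system exhibits $K_--K_+$ as a bounded-distortion, dynamically defined subset of $\br$, for which a mass-distribution and covering argument — with a matching Frostman lower bound coming from the pushforward of the product Patterson--Sullivan measure under subtraction — gives coincidence of box and Hausdorff dimension. This yields admissibility of $\widetilde\Omega_{\Ga_0}$, hence of $\Omega_{\Ga_0}$.

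The step I expect to be the main obstacle is this last one — controlling the difference set $K_--K_+$ of two conformal Cantor sets — since for generic self-affine-type sets box and Hausdorff dimensions genuinely differ; what saves the argument is the bounded distortion inherent to convex cocompactness, which keeps the dynamically defined structure of $K_--K_+$ "conformal enough". The remaining ingredients — Ahlfors regularity of $\Lambda_{\Ga_0}$, the product formula for Hausdorff dimension, and the bi-Lipschitz nature of the Hopf and horocyclic parametrizations on bounded sets — are standard.
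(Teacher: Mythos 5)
Your argument for the dimension formula $\dim\Omega_{\Ga_0}=1+2\delta_{\Ga_0}$ coincides with the paper's: both invoke Sullivan's theorem that the Patterson--Sullivan measure makes $\La_{\Ga_0}$ Ahlfors $\delta_{\Ga_0}$-regular (hence $\overline{\dim}_B=\dim_H=\delta_{\Ga_0}$ on every open piece), then push this through the locally bi-Lipschitz Hopf parametrization $(\partial\bH^2\times\partial\bH^2\setminus\Delta)\times\R\cong \T^1\bH^2$ and the product formula for Hausdorff dimension. This part is correct and is essentially the paper's proof. You are also right, and it is a genuine subtlety, that the two Bruhat coordinates play asymmetric roles: only one of the two geodesic endpoints of $g=hn^+n^-a$ is a bi-Lipschitz function of $n^+$ alone, so $\pi_+(\widetilde\Omega_{\Ga_0}\cap\cal O)$ is transparently a bi-Lipschitz copy of an open piece of $\La_{\Ga_0}$, whereas the other endpoint mixes $n^+$ and $n^-$.

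The treatment of $\pi_-$ is where your proposal diverges from the paper and where it is incomplete. First, the intermediate description is not accurate: the change of coordinates sending $n^-$ to a difference of endpoints involves a Möbius map with a singularity at the basepoint, and since the $n^-$-coordinate of $\widetilde\Omega_{\Ga_0}\cap\cal O$ accumulates at the identity, that map is \emph{not} bi-Lipschitz on the relevant domain. So $\pi_-(\widetilde\Omega_{\Ga_0}\cap\cal O)$ is not bi-Lipschitz to a compactly supported difference set $K_--K_+$; what one actually gets is a union $\bigcup_{t\in T}\psi_t(\La_{\Ga_0}\cap V)$ of near-identity Möbius images of one fixed Cantor piece, indexed by a second Cantor piece $T$, where the shear $\psi_t-\mathrm{id}$ degenerates quadratically at the basepoint. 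Second, you explicitly leave the crux of your plan — proving $\overline{\dim}_B=\dim_H$ for the difference set via conformal Markov partitions and a Frostman lower bound — unproved and flag it as the main obstacle, whereas the paper disposes of admissibility in one sentence by identifying both projections with open pieces of $\La_{\Ga_0}$. There is in fact a much more elementary route, closer in spirit to the paper's: a direct dyadic box-counting that uses the Ahlfors regularity of \emph{both} Cantor pieces (the one parametrized by $n^-$ and the indexing set $T$) together with the quadratic decay of the shear shows that the covering number of $\pi_-(\widetilde\Omega_{\Ga_0}\cap\cal O)$ at scale $\e$ is $\asymp\e^{-\delta_{\Ga_0}}$, which gives $\overline{\dim}_B=\dim_H=\delta_{\Ga_0}$ without invoking any Markov/Frostman machinery. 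As submitted, your proof of admissibility is therefore both based on an inexact reduction and unfinished.
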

\begin{proof} Let $\La_0$ denote the limit set of $\Ga_0$ and set $\delta_0=\delta_{\Ga_0}$.
By Sullivan \cite{Su}, the Patterson-Sullivan measure on $\La_0$
is proportional to the Hausdorff measure $\cal H^{\delta_0}|_{\La_0}$  and $\La_0$ is Ahlfors $\delta_0$-regular:  there exists $c>1$ such that for any $\xi\in \La_0$ and $0<r\le  \text{diam}(\La_0)$, 
 $$c^{-1} \; r^{\delta_0}\le \cal H^{\delta_0} (B(\xi, r)\cap \La_0)\le c \; r^{\delta_0} .$$
 In particular, any nonempty open subset of $\La_0$ has both Hausdorff and box dimension equal to  $\delta_0$, so (see, e.g., \cite{Fa}),
 $$\dim (\La_0\times \La_0) = 2\dim \La_0=2\delta_{0}$$

   The Hopf parametrization $$\Phi: (\partial \bH^2 \times \partial \bH^2 -  \text{diag})\times \br \to \T^1(\bH^2)$$ is locally bi-Lipschitz and
 $\Omega_{\Ga_0}= \Ga_0\ba \Phi ((\La_0\times \La_0 -  \text{diag})  \times \br)$.
Hence
     $$\dim \Omega_{\Ga_0}= 1+\dim (\La_0\times \La_0 -  \text{diag} ) =1+2\delta_0 .$$
   Since products and locally bi-Lipschitz images of Ahlfors regular sets are Ahlfors regular, it follows from the $\delta_0$-Ahlfors regularity of $\La_0$ that $\Omega_{\Ga_0}$ is $1+2\delta_0$-Ahlfors regular.
 Moreover, in Hopf coordinates, the BMS measure $\m_{\Ga_0}$ is locally equivalent to $\cal H^{\delta_0}|_{\La_0}\times \cal H^{\delta_0}|_{\La_0} \times \op{Leb}$,  so the push-forward to $\Omega_{\Ga_0}$ is locally equivalent to $\cal H^{1+2\delta_0}|_{\Omega_{\Ga_0}}$. Since   $\Omega_{\Ga_0}$ is $1+2\delta_0$-Ahlfors regular, it follows that
 $$\ldim \, \m_{\Ga_0}=1+2\delta_0.$$
Finally,  admissibility follows since,  for all sufficiently small basic open subsets $\cal O$,   $\pi_{\pm}(\Omega_{\Ga_0} \cap \cal O)
     $ are  open subsets of $\La_0$
\end{proof}

 Before discussing the proof of Theorem \ref{pair}, we describe a procedure for constructing immersed geodesics $\ell$ in a compact hyperbolic surface $\Sigma'$ with nonempty geodesic boundary $\partial \Sigma'$ such that $\dim \overline \ell >1$.

 Suppose that $ \Sigma'$ is a convex compact surface of $S$ whose boundary is the disjoint union of simple closed geodesics $\alpha_1, \dots, \alpha_k$.
Let $\Sigma$ denote the double of $\Sigma'$ along $\partial \Sigma'$; this is a closed hyperbolic surface.
 Consider the universal covering map $\bH^2\to \Sigma$, and identify $\pi_1(\Sigma)$ with a cocompact lattice in $\PSL(2,\R)$ via the holonomy representation.
For each $i$, let $\tilde \alpha_i\subset \bH^2$ be a lift of $\alpha_i$ to $\bH^2$. Let $U$ be a connected component of $\bH^2-\bigcup_{i=1}^k \pi_1(\Sigma) \tilde \alpha_i$ so that the projection of $U$ to $\Sigma$ is precisely $\Sigma'$. Let $\Ga_0$ be the stabilizer of $U$ in $\pi_1(\Sigma)$. 
 Then $$\Sigma_0:=\Ga_0\ba \bH^2$$ is a convex cocompact hyperbolic surface whose compact convex core is isometric to $\Gamma_0\ba \overline{U} = \Sigma'$. The identity map of $\bH^2$ induces a covering map
 $$f:\Sigma_0\to \Sigma $$
and its restriction to the convex core of $\Sigma_0$ is an isometry onto $\Sigma'$. We refer to the critical exponent of $\Ga_0$ as the critical exponent of $\Sigma_0$, or of $\Sigma'$ by abuse of terminology.

\begin{prop}\label{bms} Let $\Sigma'$ be a convex compact subsurface of $S$ whose boundary 
$\partial \Sigma'$ is a disjoint union of simple closed geodesics.
If the critical exponent $\Sigma'$, denoted by $\delta'$, is strictly smaller than $1/2$,
   then there exists a complete admissible geodesic $\ell \subset \Sigma'$ 
  whose closure $\overline \ell\subset \Sigma'$ has Hausdorff dimension $1+2\delta'$.
\end{prop}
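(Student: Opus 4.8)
The plan is to obtain $\ell$ as the image in $S$ of a Bowen--Margulis--Sullivan-generic geodesic on the convex cocompact surface $\Sigma_0$, and then to read off the Hausdorff dimension of its closure from Ledrappier--Lindenstrauss. Recall from the construction preceding the statement that $\Sigma'$ is (isometric to) the convex core $\mathscr C$ of a convex cocompact hyperbolic surface $\Sigma_0=\Gamma_0\backslash\bH^2$ with critical exponent $\delta_{\Gamma_0}=\delta'$; since $\Sigma'$ is an embedded convex compact subsurface of $S$, we may and do regard $\Gamma_0$ as a subgroup of $\Gamma:=\pi_1(S)$ in such a way that the covering $\Sigma_0\to S$ restricts to the inclusion $\mathscr C=\Sigma'\hookrightarrow S$. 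By Proposition~\ref{ahl}, $\Omega_{\Gamma_0}\subset\op{T}^1\Sigma_0$ has Hausdorff dimension $1+2\delta'$ and is admissible in the sense of Definition~\ref{adm}.

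First I would invoke Sullivan's ergodicity theorem \cite{Su}: the Bowen--Margulis--Sullivan measure $m^{\mathrm{BMS}}$ on $\op{T}^1\Sigma_0=\Gamma_0\backslash\PSL_2(\br)$ is finite, and the geodesic flow $(a_t)$ is ergodic with respect to it, with $\op{supp}m^{\mathrm{BMS}}=\Omega_{\Gamma_0}$. Hence for $m^{\mathrm{BMS}}$-a.e.\ $v$ the orbit $\{a_tv:t\in\br\}$ is dense in $\Omega_{\Gamma_0}$, and it stays inside $\Omega_{\Gamma_0}$ since that set is closed and $(a_t)$-invariant. Fix such a $v$ and let $\tilde\ell\subset\Sigma_0$ be the complete geodesic it determines; as both endpoints of $\tilde\ell$ lie in $\Lambda_{\Gamma_0}$, $\tilde\ell$ is contained in the convex core $\mathscr C$, and its image under $\mathscr C\hookrightarrow S$ is a complete geodesic $\ell\subset\Sigma'\subset S$. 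Choosing a lift $h\in\PSL_2(\br)$ of $v$, we have $\ell=\Gamma\backslash\Gamma hA_0o$.

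Next I would identify the relevant orbit closure. Because $\mathscr C$ is compact and embeds in $S$, the covering $p:\op{T}^1\Sigma_0\to\op{T}^1S$ restricts to an injective local isometry on $\op{T}^1\mathscr C$; since $\Omega_{\Gamma_0}\subset\op{T}^1\mathscr C$ and the $A_0$-orbit of $v$ is dense in $\Omega_{\Gamma_0}$, it follows that $\overline{\Gamma\backslash\Gamma hA_0}=p(\Omega_{\Gamma_0})$, which is bi-Lipschitz to $\Omega_{\Gamma_0}$. In particular $\dim\overline{\Gamma\backslash\Gamma hA_0}=1+2\delta'$, and since admissibility is a condition on box versus Hausdorff dimensions of local Bruhat projections and is therefore preserved under local isometries of $H$, $\overline{\Gamma hA_0}$ inherits admissibility from $\Omega_{\Gamma_0}$; thus $\ell$ is an admissible geodesic. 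Finally, $\delta'<\tfrac12$ gives $1+2\delta'<2$, so Ledrappier--Lindenstrauss (Theorem~\ref{ll}) yields
$$\dim\overline\ell\;=\;\dim\overline{\Gamma\backslash\Gamma hA_0o}\;=\;\min\{2,\ \dim\overline{\Gamma\backslash\Gamma hA_0}\}\;=\;1+2\delta',$$
and $\overline\ell\subset\Sigma'$ since $\ell\subset\Sigma'$ with $\Sigma'$ compact, completing the argument.

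The main obstacle is the identification $\overline{\Gamma\backslash\Gamma hA_0}\cong\Omega_{\Gamma_0}$ together with the transfer of admissibility: these require that not just $\tilde\ell$ but the entire accumulation set of its orbit stay inside the convex core $\mathscr C$, so that the covering $p$ is injective there --- which is precisely why one must take $v\in\Omega_{\Gamma_0}$ rather than an arbitrary vector, and why $\Sigma'$ is assumed embedded and convex. The hypothesis $\delta'<\tfrac12$ enters only in the last line, but it is indispensable: without it the Ledrappier--Lindenstrauss dimension would saturate at $2$ and the closure would fail to be a proper fractal.
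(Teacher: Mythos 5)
Your argument is correct and follows essentially the same route as the paper's own proof: both use Sullivan's ergodicity of the Bowen--Margulis--Sullivan measure on $\op{T}^1\Sigma_0$ to find a flow line dense in $\Omega_{\Gamma_0}$, invoke the Ahlfors regularity and admissibility of $\Omega_{\Gamma_0}$ from Proposition~\ref{ahl}, transport the closure through the covering $\op{T}^1\Sigma_0\to \op{T}^1$(closed surface) via injectivity over the convex core, and then pass to the base with Ledrappier--Lindenstrauss (Theorem~\ref{ll}), where $\delta'<1/2$ ensures the dimension does not saturate at~$2$.

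The one genuine difference is a cosmetic one in the ambient surface. The paper, in its preamble, forms the \emph{double} $\Sigma$ of $\Sigma'$ and realizes $\Gamma_0$ as the stabilizer of a complementary component inside $\pi_1(\Sigma)$, so Theorem~\ref{ll} is applied with $\Gamma=\pi_1(\Sigma)$. You instead realize $\Gamma_0$ directly inside $\Gamma=\pi_1(S)$ and apply Theorem~\ref{ll} there. Both choices are legitimate, since $\Sigma'$ embeds isometrically and convexly in both $\Sigma$ and $S$, so the covering $\Sigma_0\to\Sigma$ (resp.\ $\Sigma_0\to S$) restricted to $\operatorname{core}(\Sigma_0)$ is an isometric embedding, and $\Omega_{\Gamma_0}$ (whose basepoints lie in the core) maps injectively. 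Your version is marginally more economical, avoiding the auxiliary double; the paper's version is more self-contained in that the statement of Proposition~\ref{bms} does not actually require $\Sigma'$ to sit inside a specific closed surface beyond the convex core structure. Two small points worth noting for completeness, both of which you gesture at but which deserve a sentence: the identification $\overline{\Gamma\backslash\Gamma hA_0}=p(\Omega_{\Gamma_0})$ uses compactness of $\Omega_{\Gamma_0}$ (so its image is already closed), and the transfer of admissibility uses that the Bruhat product structure $N_0^+\times N_0^-\times A_0$ on $H$ is preserved under left translation and hence under covering maps $\Gamma_0\backslash H\to\Gamma\backslash H$, so the local projections $\pi_\pm$ are unchanged on sufficiently small basic opens.
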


\begin{proof}
Let $\Sigma_0$ be as above, and
  consider its unit tangent bundle $\T^1(\Sigma_0)=\Ga_0\ba \PSL_2(\br)$.   By \Cref{ahl}, the non-wandering set $\Omega_0$ is admissible and
  \be\label{d1} \dim \Omega_0= 1+2\delta '=\ldim  \mathsf m_{\Ga_0}\ee 
where $\mathsf m_{\Ga_0}$ denotes the Bowen-Margulis-Sullivan measure on $\Ga_0\ba \PSL_2(\br)$.
  Since $\m_{\Ga_0}$ is $A_0$-ergodic,  $\mathsf m_{\Ga_0}$-almost all geodesic flow lines are dense in $\Omega_0$. In particular, there exists a geodesic flow line $\cal G\subset \T^1(\Sigma_0)$ such that 
  \be\label{d2} \overline{\cal G}=\Omega_0 =\supp \mathsf m_{\Ga_0}.\ee 
The basepoint projection $p:\T^1(\Sigma_0)\to \Sigma_0$ maps
$\Omega_0$
into the convex core of $\Sigma_0$ which embeds isometrically into $\Sigma'$. Hence  $p(\cal G) \subset {\Sigma'}$, and we may regard $\cal G$ as a geodesic in $\T^1(\Sigma)$ with basepoints in $\Sigma'$. Since $\delta'<1/2$, we have $\dim \overline{\cal G} <2$.
Combining \eqref{d1} and \eqref{d2}, we may apply \Cref{ll}  to $\cal G\subset \T^1(\Sigma)$ and deduce
$$\dim 
\overline{p(\cal G)} =\dim \overline{\cal G} .$$  Since $\overline{\cal G}=\Omega_0$, ${\cal G}$ is admissible. Setting $\ell=p(\cal G)$, we obtain $\dim \overline \ell=1+2\delta'$, completing the proof.
\end{proof}

To construct immersed geodesics with closure of dimension just above $1$,  we will produce convex cocompact subsurfaces with arbitrarily small critical exponent. The following elementary observation provides a convenient source of such examples.
\begin{lem}\label{zero}
 Let $g_1, g_2$ be hyperbolic elements of $\PSL_2(\br)$ which generate a Schottky subgroup and set $\Ga_n:=\langle g_1^n, g_2^n \rangle$ for $n\in \N$. Then
 the critical exponent $\delta_{\Ga_n}$ tends to $ 0$ as $n\to \infty$.
\end{lem}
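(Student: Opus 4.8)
The plan is to deduce the statement from the classical identity $\delta_{\Ga_n}=\dim\Lambda_n$, valid for non-elementary convex cocompact Fuchsian groups (\cite{Su}; this is exactly the kind of input appearing in the proof of \Cref{ahl}), where $\Lambda_n\subset\partial_\infty\bH^2$ denotes the limit set of $\Ga_n$. Thus it suffices to prove that $\dim\Lambda_n\to 0$ as $n\to\infty$.

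First I would fix Schottky data for $\langle g_1,g_2\rangle$: pairwise disjoint closed disks $D_1^{\pm},D_2^{\pm}$ in the closed disk $\overline{\bH^2}$, each bounded by a geodesic and with pairwise disjoint closures, such that $g_i(\overline{\bH^2}\setminus\inte D_i^-)\subseteq D_i^+$ and $g_i^{-1}(\overline{\bH^2}\setminus\inte D_i^+)\subseteq D_i^-$, together with a basepoint $o\in\bH^2$ lying outside all four disks. Since $D_i^+$ and $D_i^-$ are disjoint, a one-line induction shows $g_i^{\pm n}(\overline{\bH^2}\setminus\inte D_i^{\mp})\subseteq D_i^{\pm}$ for every $n\ge 1$; hence $\Ga_n=\langle g_1^n,g_2^n\rangle$ is again a Schottky group with the same four disks, in particular a non-elementary convex cocompact free group of rank two, with limit set contained in $\bigcup_{i,\varepsilon}I_i^{\varepsilon}$, where $I_i^{\varepsilon}:=D_i^{\varepsilon}\cap\partial_\infty\bH^2$. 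As usual, for each reduced word $w=s_1\cdots s_k$ of syllable length $k$ in the alphabet $\{g_1^{\pm n},g_2^{\pm n}\}$ — say $s_i=g_{c_i}^{n\varepsilon_i}$ with $c_i\in\{1,2\}$, $\varepsilon_i\in\{\pm 1\}$ — the arc $J_w:=g_{c_1}^{n\varepsilon_1}\cdots g_{c_{k-1}}^{n\varepsilon_{k-1}}(I_{c_k}^{\varepsilon_k})$ is well defined (by reducedness, at each stage the generator is applied to a set disjoint from its own repelling disk, so the ping-pong inclusions apply), and $\Lambda_n=\bigcup_{|w|=k}J_w$ is a union of $4\cdot 3^{k-1}$ arcs.

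The quantitative heart of the argument is a contraction estimate that is uniform in $n$. Since the four arcs $I_i^{\varepsilon}$ have pairwise disjoint closures in $\partial_\infty\bH^2\cong S^1$, there is $\eta>0$, depending only on $g_1,g_2$ and the fixed disks, with $d(I_i^{\varepsilon},I_j^{\varepsilon'})\ge\eta$ for any two distinct arcs among them (distance in the round metric on $S^1$). Now, for a loxodromic $g$, the power $g^{\pm n}$, together with its derivative in the round metric, converges uniformly to the constant map onto its attracting fixed point on every compact subset of $S^1$ that avoids an $\eta$-neighborhood of its repelling fixed point. Hence
\[
\rho_n:=\max_{c\in\{1,2\},\ \varepsilon\in\{\pm 1\}}\ \sup\{\,\|(g_c^{n\varepsilon})'(\xi)\|:\ \xi\in S^1,\ d(\xi,\ \text{repelling fixed point of }g_c^{n\varepsilon})\ge\eta\,\}\ \longrightarrow\ 0.
\]
In the nested construction above, $g_{c_i}^{n\varepsilon_i}$ is applied to a set contained in the arc $I_{c_{i+1}}^{\varepsilon_{i+1}}\ne I_{c_i}^{-\varepsilon_i}$, hence at distance $\ge\eta$ from the repelling fixed point of $g_{c_i}^{n\varepsilon_i}$ (which lies in $I_{c_i}^{-\varepsilon_i}$); so the mean value theorem gives $\diam J_w\le C_1\,\rho_n^{\,k-1}$, where $C_1:=\max_{i,\varepsilon}\diam I_i^{\varepsilon}$. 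Therefore, for every $s>0$,
\[
\sum_{|w|=k}(\diam J_w)^s\ \le\ 4\,C_1^{\,s}\,(3\rho_n^{\,s})^{\,k-1},
\]
which tends to $0$ as $k\to\infty$ as soon as $3\rho_n^{\,s}<1$. Consequently, for all $n$ large enough that $\rho_n<1$,
\[
\dim\Lambda_n\ \le\ \frac{\log 3}{\log(1/\rho_n)}\ \longrightarrow\ 0\qquad(n\to\infty),
\]
and hence $\delta_{\Ga_n}=\dim\Lambda_n\to 0$, as desired.

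I expect the only delicate points to be the structural ones in the second paragraph: verifying that the powers $g_i^n$ genuinely inherit the original ping-pong disks — so that $\Ga_n$ is still a (convex cocompact) Schottky group and the nested covering of $\Lambda_n$ is literally valid — and, most importantly, that the separation $\eta$, and with it the contraction rate $\rho_n$, depends only on $g_1,g_2$ and the fixed disks rather than on $n$, so that indeed $\rho_n\to 0$. Once these are in place, the covering/dimension count and the appeal to $\delta_{\Ga_n}=\dim\Lambda_n$ are standard.
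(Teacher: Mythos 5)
Your proof is correct, but it takes a genuinely different route from the paper's. The paper argues purely on the orbit-counting side: using the quasi-isometry between the word metric on $\langle g_1, g_2\rangle$ and the orbit metric $d(o, \cdot o)$, it shows that any reduced word of syllable length $k$ in $g_1^{\pm n}, g_2^{\pm n}$ moves $o$ by at least $c^{-1}nk - c$, and since there are only $4\cdot 3^{k-1}$ such words, the ball of radius $T$ contains at most $c' e^{c'T/n}$ elements, whence $\delta_{\Ga_n}\le c'/n$ directly from the definition of the critical exponent. Your proof instead routes through the boundary: you invoke Sullivan's identity $\delta_{\Ga_n}=\dim\Lambda_{\Ga_n}$ (which the paper does use elsewhere, in Prop.~\ref{ahl}, so it is not foreign machinery here), then produce an explicit nested covering of the Schottky limit set by $4\cdot 3^{k-1}$ arcs of diameter $O(\rho_n^{k-1})$, where the uniform contraction constant $\rho_n\to 0$ because the derivative of $g_c^{\pm n}$ on $S^1$ tends to zero away from the repelling fixed point. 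This gives $\dim\Lambda_{\Ga_n}\le \log 3/\log(1/\rho_n)\to 0$. Both arguments give an $O(1/n)$ decay rate in disguise (since $\rho_n$ decays exponentially in $n$), and both correctly exploit the key fact that the ping-pong combinatorics are independent of $n$; the paper's is slightly more elementary (no Sullivan, no derivative estimates on the circle), while yours gives a more concrete geometric picture of why $\delta_{\Ga_n}$ shrinks. One small slip: you write $\Lambda_n=\bigcup_{|w|=k}J_w$, which should be $\Lambda_n\subset\bigcup_{|w|=k}J_w$ (with equality only after intersecting over all $k$); but containment is exactly what the covering argument needs, so nothing breaks.
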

\begin{proof} Since the word metric on $\Ga_1$ and the restriction of the hyperbolic metric on the orbit $\Ga_1 o$ are quasi-isometric,
there exists a constant $c>1$ (independent of $n$) such that  any reduced word $w$ in $g_1^{\pm n}$ and $g_2^{\pm n}$ of length $k$ satisfies $d(o, wo)\ge c^{-1} nk -c$. Since the number of reduced words of length $k$ in $\Z*\Z$
is $4\cdot 3^{k-1}$, there exists $c'>0$ such that the number of $w\in \Ga_n$ with $d(o, wo)<T$
is at most $c'\cdot e^{ c' T/n}$ for all $n\ge 1$. Since
$$\delta_{\Ga_n}=\limsup_{T\to \infty}\frac {1}{T}\log \#\{w\in \Ga_n: d(o, wo)<T\}\le c'/n,$$  the claim follows. \end{proof}

In particular, by replacing a Schottky subgroup with large powers of its generators, we obtain convex cocompact surfaces whose limit sets have dimension approaching zero. These will serve as building blocks for the admissible geodesics we construct inside a pair of pants.

We now explain how Lemma \ref{zero} may be combined with Proposition \ref{bms} to produce admissible geodesics inside a fixed pair of pants $S'\subset S$ whose closures have Hausdorff dimension just slightly larger than $1$ but uniformly bounded away from the boundary of $S'$.
\begin{proposition}\label{pant}
    Let $S'\subset S$ be a pair of pant with geodesic boundary. Then there exists a sequence of admissible geodesics $\ell_n$ such that $\overline{\ell_n}\subset \op{int} S'$, $\inf_{n\in \N} d(\partial S',\overline{\ell_n})>0$, $\dim \overline{\ell_n}>1$ for all $n\in\N$, and
    $$\dim \overline{\ell_n} \to 1\quad\text{ as $n\to \infty$.}$$
\end{proposition}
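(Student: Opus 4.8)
The plan is to realize the geodesics $\ell_n$ inside $S'$ as projections of dense geodesic--flow orbits for a sequence of Schottky subgroups $\Gamma_n<\pi_1(S')<\Gamma$ whose critical exponents shrink to $0$, running the argument of \Cref{bms} but keeping all of the dynamics inside $S'$.

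\emph{Choice of subgroups.} Fix a marking $\pi_1(S')=\langle c_1,c_2\rangle$ in which the boundary classes are represented (up to conjugacy and powers) by $c_1,c_2,c_1c_2$, and pick two non--peripheral primitive hyperbolic elements with distinct axes, e.g. $g_1=c_1^2c_2$ and $g_2=c_1c_2^2$, viewed as elements of $\pi_1(S')\subset\Gamma$. Since $g_1,g_2$ are non--peripheral, their axes $c_1^{\mathrm{geo}},c_2^{\mathrm{geo}}$ are closed geodesics contained in $\op{int}S'$ --- a complete geodesic in $S'$ distinct from the components of $\partial S'$ never meets $\partial S'$ --- hence at positive distance from $\partial S'$. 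By the ping--pong lemma there is $N_0\ge1$ with $\langle g_1^{N_0},g_2^{N_0}\rangle$ Schottky; set
\[
\Gamma_n:=\langle\, g_1^{\,nN_0},\ g_2^{\,nN_0}\,\rangle\ <\ \pi_1(S')\ <\ \Gamma,\qquad n\in\N .
\]
Each $\Gamma_n$ is a non--elementary convex cocompact free group, and by \Cref{zero} its critical exponent $\delta_n:=\delta_{\Gamma_n}$ satisfies $\delta_n\to0$; in particular $0<\delta_n<\tfrac12$ for all large $n$.

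\emph{Staying uniformly away from $\partial S'$.} Let $\Sigma_n=\Gamma_n\backslash\bH^2$, with convex core $\Gamma_n\backslash\op{hull}(\Lambda_{\Gamma_n})$. Since $\Lambda_{\Gamma_n}\subset\Lambda_{\pi_1(S')}$ we have $\op{hull}(\Lambda_{\Gamma_n})\subset\op{hull}(\Lambda_{\pi_1(S')})=\widetilde{S'}$, the distinguished lift of $S'$ with $\pi_1(S')\backslash\widetilde{S'}=S'$, so the covering $\Sigma_n\to S$ carries the convex core of $\Sigma_n$ into $S'\subset S$. The crucial quantitative point I need is that this image stays \emph{uniformly} away from $\partial S'$, i.e. that $d_{\bH^2}\!\bigl(\op{hull}(\Lambda_{\Gamma_n}),\ \Gamma\cdot\partial S'\bigr)\ge\epsilon_0>0$ for all large $n$ and a fixed $\epsilon_0$. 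I would prove this in two steps. First, the ping--pong arcs for $\Gamma_n$ may be chosen to shrink to the four fixed points $F=\{g_1^{+},g_1^{-},g_2^{+},g_2^{-}\}$ as $n\to\infty$, so $\Lambda_{\Gamma_n}$ lies in a union $\mathcal I_n$ of four arcs collapsing onto the points of $F$; the four ``bridge'' geodesics bounding $\op{hull}(\mathcal I_n)$ have endpoints converging to the \emph{distinct} ideal points which are the endpoints of the four sides of the ideal quadrilateral $\op{hull}(F)$, hence converge to those sides in the \emph{hyperbolic} Hausdorff distance, which forces $\op{hull}(\Lambda_{\Gamma_n})\subset\op{hull}(\mathcal I_n)\subset\mathcal N_{\epsilon_0}\!\bigl(\op{hull}(F)\bigr)$ for $n$ large. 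Second, each side of $\op{hull}(F)$ projects in $S$ to a complete geodesic both of whose ends converge to $c_1^{\mathrm{geo}}$ or $c_2^{\mathrm{geo}}$, and therefore --- being asymptotic only to compact geodesics in $\op{int}S'$, never spiralling toward $\partial S'$ --- has compact closure contained in $\op{int}S'$; hence $d_{\bH^2}\!\bigl(\op{hull}(F),\ \Gamma\cdot\partial S'\bigr)=2\epsilon_0>0$ for a suitable $\epsilon_0$. The triangle inequality then gives the claim. I expect this uniform separation from the boundary --- which is automatic in the embedded--subsurface situation underlying \Cref{bms} but not here --- to be the main technical point; everything else is a routine adaptation of \Cref{bms}.

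\emph{Dynamics and dimension.} By \Cref{ahl} the geodesic--flow non--wandering set $\Omega_n:=\Omega_{\Gamma_n}\subset\T^1(\Sigma_n)$ is compact (as $\Gamma_n$ is convex cocompact), admissible in the sense of \Cref{adm}, with $\dim\Omega_n=1+2\delta_n$, and the Bowen--Margulis--Sullivan measure on $\Omega_n$ is ergodic; choose a geodesic--flow orbit $\mathcal G_n\subset\T^1(\Sigma_n)$ dense in $\Omega_n$, say $\mathcal G_n=\Gamma_n\backslash\Gamma_n h_nA_0$ with $h_n\in\PSL_2(\br)$. The covering $\T^1(\Sigma_n)\to\T^1(S)$ is a local isometry, hence locally bi--Lipschitz, and $\Omega_n$ is compact, so the closure in $\T^1(S)$ of the geodesic $\Gamma\backslash\Gamma h_nA_0$ equals the (compact) image of $\Omega_n$, is admissible, and has Hausdorff dimension $1+2\delta_n<2$ for $n$ large. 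Putting $\ell_n:=\Gamma\backslash\Gamma h_nA_0o\subset S$, an immersed complete geodesic, \Cref{ll} yields
\[
\dim\overline{\ell_n}\ =\ \min\{2,\ 1+2\delta_n\}\ =\ 1+2\delta_n ,
\]
so $\ell_n$ is admissible, $\dim\overline{\ell_n}>1$ for all large $n$, and $\dim\overline{\ell_n}\to1$. Finally $\overline{\ell_n}$ is the image in $S$ of $p(\Omega_n)\subset\Gamma_n\backslash\op{hull}(\Lambda_{\Gamma_n})$, so by the previous step $\overline{\ell_n}\subset\op{int}S'$ with $d(\partial S',\overline{\ell_n})\ge\epsilon_0$, whence $\inf_n d(\partial S',\overline{\ell_n})>0$. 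Discarding the finitely many small $n$ and reindexing produces the asserted sequence $(\ell_n)$.
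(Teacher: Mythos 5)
Your overall strategy matches the paper's: take a two–generator Schottky subgroup of $\pi_1(S')$, pass to high powers $g_1^n,g_2^n$ so that the critical exponent $\delta_n\to 0$ (Lemma~\ref{zero}), and produce the geodesic via a dense BMS orbit (Prop.~\ref{ahl}, Prop.~\ref{bms}, Thm.~\ref{ll}). However, the step you yourself flag as the main technical point — the uniform separation of the convex cores from $\partial S'$ — contains a real error. You claim that because the ping-pong arcs shrink to the four fixed points $F$, the bridge geodesics ``converge to the sides of $\op{hull}(F)$ in the hyperbolic Hausdorff distance'' and hence $\op{hull}(\mathcal I_n)\subset\mathcal N_{\epsilon_0}(\op{hull}(F))$. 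This is false: for each fixed $n$ a bridge geodesic has ideal endpoints distinct from both endpoints of the corresponding side of $\op{hull}(F)$, so the two geodesics are asymptotic at neither end and diverge to infinite Hausdorff distance. More concretely, $\Lambda_{\Gamma_n}$ is an uncountable Cantor set, and a geodesic joining two points of $\Lambda_{\Gamma_n}\setminus F$ exits every bounded neighborhood of the ideal quadrilateral $\op{hull}(F)$ as it approaches the ideal boundary, so $\op{hull}(\Lambda_{\Gamma_n})$ is \emph{never} contained in any $\mathcal N_{\epsilon_0}(\op{hull}(F))$. The final inequality you need is therefore not established by this route.

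The paper's proof avoids this issue with a cleaner observation: since $\Gamma_n<\Gamma_1$ for all $n$, each $\op{core}(\Sigma_n)$ maps into $\op{core}(\Sigma_1)$ under the covering $\Sigma_n\to\Sigma_1$, so the whole problem reduces to showing that the \emph{single} compact set $p_1(\op{core}(\Sigma_1))$ misses $\partial S'$ (compactness then gives a uniform gap). That claim is proved by contradiction: a point of $\op{core}(\Sigma_1)$ hitting $\partial S'$ must lie on $\partial\op{core}(\Sigma_1)$, whose component is a closed geodesic projecting onto a boundary curve of $S'$, which would force $\Gamma_1$ to contain a power of a conjugate of a peripheral class. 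This is where the hypothesis on $g_1,g_2$ is used — the paper arranges it by noting that in a hyperbolic structure on $S'$ with cusps, the peripheral classes are parabolic while a convex-cocompact Schottky group has no parabolics. Your explicit choice $g_1=c_1^2c_2$, $g_2=c_1c_2^2$ does satisfy this property (for the same reason: $g_1,g_2$ are non-peripheral, hence hyperbolic in the cusped structure, so $\langle g_1^{N_0},g_2^{N_0}\rangle$ is purely hyperbolic there and contains no peripheral powers), but you would need to make this argument and then switch to the nesting argument; the $\op{hull}(F)$ detour should be discarded.
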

\begin{proof}
     Let $\Gamma_0<\Ga $ be a convex cocompact subgroup such that the core of the surface
\[
  \Sigma_0 \coloneqq \Gamma_0 \backslash \mathbb{H}^2
\]
is isometric to $S'$. We will identify the core of $\Sigma_0$ with $S'$.
Choose generators $\gamma_1,\gamma_2 \in \Gamma_0$ such that the boundary curves of $S'$
are represented by $\gamma_1, \, \gamma_2, \, \gamma_1\gamma_2 \in \Gamma_0$.

Our goal is to find, inside $S'$, a family of convex cocompact coverings $\Sigma_n$ whose critical exponents $\delta_{\Ga_n}$ tend to zero. Proposition \ref{bms} will then guarantee the existence of admissible geodesics on each $\Sigma_n$ with closure of dimension $1+2\delta_{\Ga_n}$.

Pick a pair of hyperbolic elements $g_1,g_2 \in \Gamma_0$ which generates a Schottky subgroup that
contains no nontrivial powers of conjugates of the elements $\gamma_1,\,\gamma_2,\,\gamma_1\gamma_2$.
To see such $g_1,g_2 \in \Gamma_0$ exist, put a hyperbolic structure on $\Sigma_0$ such that all its ends
are cusps. In this case the only parabolic elements of $\Gamma_0$ are the nontrivial powers of conjugates
of $\gamma_1,\,\gamma_2,\,\gamma_1\gamma_2$. But since $\Gamma_0$ is non-elementary, it contains a convex cocompact Schottky group generated by $g_1$ and $g_2$ which then cannot contain any such elements.

    For each $n\in\N$, set
    \[
     \Gamma_n \coloneqq \langle g_1^n,g_2^n\rangle ,\quad \Sigma_n \coloneqq \Gamma_n\backslash \bH^2.
    \]
 By Lemma \ref{zero},
    \[
     \delta_{\Gamma_n} \to 0 \quad \text{as }n\to\infty.
    \]
Consider the covering map 
    \[
     p_n : \Sigma_n \to S
    \]
    which clearly factors through $p_1: \Sigma_1\to S$. 
    
 We claim that the image $p_1(\operatorname{core}(\Sigma_1))$ misses the boundary of $S'$.
To see this, suppose for contradiction that there exists $x \in \operatorname{core}(\Sigma_1)$ such that $p_1(x) \in c'$, where $c' \subset \partial S'$ is a connected component. Since $p_1$ is a local isometry and $p_1(\operatorname{core}(\Sigma_1)) \subset S'$, it follows that $x \in \partial\operatorname{core}(\Sigma_1)$.
Let $c \subset \partial\operatorname{core}(\Sigma_1)$ be the connected component containing $x$. Then $p_1(c)$ is a closed geodesic contained in $S'$ containing $p_1(x)$. On the other hand, since $p_1(x)\in \partial S'$, 
$c'$ is the only closed geodesic passing through
$p_1(x)$ contained in $S'$. Therefore 
 $p_1(c) = c'$.
Since $c' \subset S'$ is peripheral in $S'$, the conjugacy class in $\Gamma_1$ representing $c$ contains a conjugate of a nontrivial power of $\gamma_1$, $\gamma_2$, or $\gamma_1 \gamma_2$, contradicting the choice of $g_1, g_2$ made above.

 Since $\delta_{\Gamma_n}\to0$ as $n\to\infty$, \Cref{bms} guarantees that for all large enough $n\in\N$, there exists a geodesic  $\ell'_n$ lying in the interior of $\op{core}(\Sigma_n)$ such that $\overline{\ell_n'}$ has Hausdorff dimension $1+2 \delta_{\Gamma_n}$. 
    Let
    \[
     \ell_n = p_n(\ell'_n) \subset p(\core(\Sigma_1)).
    \]
    Since $\core(\Sigma_n)$ is compact and hence the restriction of $ p_n$ to $\core(\Sigma_n)$ is a proper immersion, it follows that 
    \[
     \overline{\ell_n} = p_n(\overline{\ell'_n}) 
     \quad\text{and}\quad
     \dim {\overline{\ell_n}} = \dim \overline{\ell'_n} = 1+2 \delta_{\Gamma_n}
    \]
    for all large $n\in\N$. This finishes the proof.
\end{proof}
Now we are ready for the proof of \Cref{thm:chaotic}.

\begin{proof}[Proof of \Cref{thm:chaotic}]
    We can extend the set of simple closed geodesics $\{\beta_1, \dots, \beta_m \} \subset S$ to pants decomposition of $S$ whose boundaries are geodesics. Let $S'\subset S$ be a pair of geodesic pant in this decomposition and $\ell_n$ be the sequence of admissible geodesics given by \Cref{pant}. Since $\overline{\ell_n}$ lies in the interior of $S'$, they do not intersect any of $\beta_i$, as desired.
\end{proof}

\subsection*{Proof of Theorem \ref{max}} It follows from  Theorem \ref{ma} except for the integrability claim.

By \cite{LT}, 
there exists a closed hyperbolic surface $S=\Ga\ba \bH^2$, where $\Ga<H$ is a cocompact lattice, such that the inclusion homomorphism $\varphi: \Gamma \to G$, 
\[
 \Gamma  \to  H \hookrightarrow G
\]
is {\em integral}, i.e., $\varphi(\Gamma) <\SL_3(\Z)$. Let $\beta$ be an oriented simple closed geodesic in $S$.
Choosing a basepoint $x_0$ on $\beta$, we identify $\pi_1(S,x_0)$ with $\Gamma$. Let $\gamma = [\beta] \in \Gamma$ be the element representing $\beta$. We can further assume that $\beta\in A_0 = A\cap H$. Suppose that there also exists an element $\mathsf a\in (A- A_0)$ such that the bulged representation
$\rho_{\mathsf a}$ is also integral. See \cite{LT} for such examples.

Pick an auxiliary oriented non-separating simple closed geodesic $\sigma \subset S$ disjoint from $\beta$ and extend $\sigma\cup \beta$ it to a geodesic pants decomposition of $S$ so that there exists a pair of pant $S_0\subset S$ disjoint from $\sigma$.
By \Cref{thm:chaotic}, there exists an immersed complete
geodesic $\ell = \Gamma \backslash \Gamma L \subset S$ contained in $S_0$ such that 
\[
 d(\overline{\ell}, \beta \cup \sigma) > 0 
 \quad \text{and} \quad 
 1 < \dim \overline{\ell} < 2.
\]

For any $r>0$, there exists a finite Riemannian cover
\begin{equation}\label{eqn:cover}
 p: S' \to S
\end{equation}
such that the following holds: there are connected components $\beta'$ of $p^{-1}(\beta)$ and a lift $\ell'$ of $\ell$ in $S'$ such that
\[
 p\vert_{\beta'} \quad\text{and}\quad  p\vert_{\overline{\ell'}}
\]
are both homeomorphisms onto $\beta$ and $\bar\ell$, respectively, and 
\begin{equation}\label{eqn:r}
 d(\overline{\ell'},\beta') > r.
\end{equation}

One can construct such a cover \eqref{eqn:cover} as follows. Let $\mathsf g$ be the genus of $S$.
Since $\sigma$ is non-separating, the abelianization map
\[
 \Gamma \to \Gamma^{\rm ab} \cong \Z^{2\mathsf g}
\]
maps the homotopy class $[\sigma]$ to a nontrivial primitive element $z\in \Gamma^{\rm ab}$. Fix a homomorphism $\Gamma^{\rm ab} \to \langle z\rangle \cong \Z$, extending the identity map $\langle z\rangle\to \langle z\rangle$, and consider its composition with $\Gamma \to \Gamma^{\rm ab}$; its mod $n$ reduction gives a surjection
\[
 f_n : \Gamma \to \Z/n\Z.
\]
 Consider the degree $n$ regular Riemannian covering
\[
 p_n : S_n=\ker f_n\ba \bH^2 \to S.
\]
Each $p_n^{-1}(\overline{\ell})$ and $p_n^{-1}(\beta)$  has $n$ connected components, each homeomorphic to $\overline{\ell}$ and $\beta$, respectively, under the covering map $p_n$. Picking $n$ large enough, one may choose appropriate connected components $\bar\ell'$ and $\beta'$ of $p_n^{-1}(\overline{\ell})$ and $p_n^{-1}(\beta)$ so that \eqref{eqn:r} holds. Compare with the figure below:

\begin{figure}[ht]
\centering
\begin{overpic}[width=0.7\textwidth,tics=5]{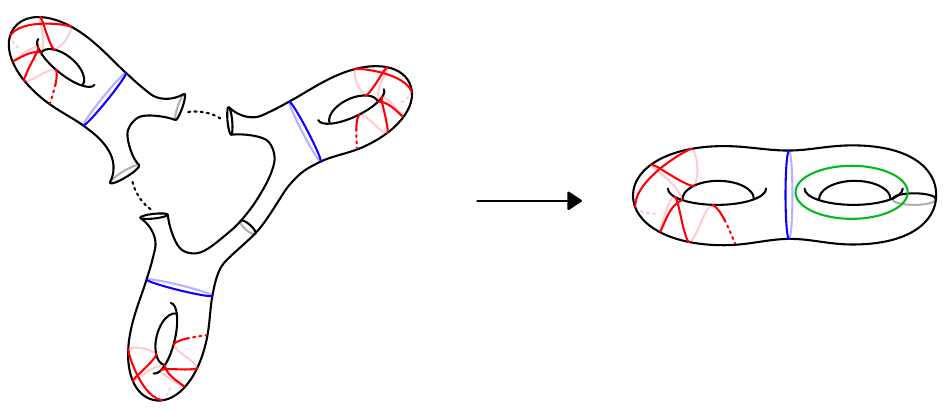} 
\put(18,14) {\color{blue}\tiny $\beta'$}
\put(80.5,21) {\color{blue}\tiny $\beta$}
\put(4,33.5) {\color{red}\tiny  $\ell'$}
\put(68,22) {\color{red}\tiny  $\ell$}
\put(89.5,18) {\color{Green}\tiny $\sigma$}
\put(7,19) {\small $S_n$}
\put(82,13) {\small $S$}
\put(55,24) {\scriptsize $p_n$}
\end{overpic}
\caption{}
\end{figure}

Since $p$ is a proper immersion, one has $$\dim {\overline \ell} = \dim{\overline{\ell'}}.$$
Note that $\mathsf a \in a_{s_0} A_0$, where $s_0 = \wdt(\mathsf a)$. Choose an appropriate covering $p$ in \eqref{eqn:cover} such that $r$ given by \eqref{eqn:r} is strictly greater than $|s_0|$. Equip $S'$ with the basepoint $x_0' = p\vert_{\beta'}^{-1}(x_0)$. Then, the monomorphism
\[
 p_* : \pi_1(S',x_0')\longrightarrow \pi_1(S,x_0) = \Gamma
\]
maps the homotopy class of $\beta'$ to $\gamma$. Let $\Gamma' \coloneqq \rho(\pi_1(S',x_0'))$. Clearly, the representation
\[
 \rho'_{\mathsf a} \coloneqq (\rho_{\mathsf a})|_{\Gamma'}
\]
is integral and $\rho_{\mathsf a}(\Ga') <\SL_3(\Z)$ satisfies the hypothesis of \Cref{ma}.

\appendix
\section{Orthogonal planes with fractal closures}
In this appendix, we describe the closures of geodesic planes which are orthogonal to $Y$ along a geodesic $L$. Using the bulging deformation,
we show that  closures of an orthogonal plane can be as chaotic as the closure of $L
$ (Theorem \ref{ort}).
By an irreducible geodesic plane in $X$, we mean a totally geodesic plane of the form $gY=g H o\subset X$ for some $g\in G$.
\begin{lemma}
   For every complete geodesic line $L\subset Y$, there is a unique irreducible geodesic plane $Z_L$ in $X$ such that $Y \cap Z_L = L$. Moreover, $Y$ and $Z_L$ are orthogonal to each other.
\end{lemma}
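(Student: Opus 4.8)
The plan is to put $L$ in a normal form, exhibit $Z_L$ explicitly, and then classify all candidate planes infinitesimally, which gives uniqueness. Since $H$ acts transitively on the complete geodesics of $Y\cong\bH^2$ and every $h\in H$ fixes $Y$ while carrying irreducible planes to irreducible planes, I may assume $L=A_0 o$. Any irreducible plane $Z$ with $Y\cap Z=L$ contains $o\in L$, hence passes through $o$; and irreducible planes through $o$ are exactly the sets $\exp(\frak s)\cdot o$ for those $2$-dimensional Lie triple systems $\frak s\subset\frak p$ whose associated $\frak{sl}_2=\frak s\oplus[\frak s,\frak s]$ acts irreducibly on $\br^3$. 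Such a plane contains $L$ if and only if $\frak a_0=\Lie(A_0)=\br H_0\subseteq\frak s$, where $H_0:=\diag(1,0,-1)$. For existence I would simply take $Z_L:=m_0 Y$ with $m_0:=\diag(1,-1,-1)\in M\subset K$: since $m_0$ fixes $o$ and commutes with $A_0$, we have $m_0 L=L\subset m_0 Y$, so $Z_L$ is an irreducible plane through $L$, and $\Ad_{m_0}\frak p_0\neq\frak p_0$ shows $Z_L\neq Y$.

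For the intersection statement, note that $Y$ and $Z_L$ are closed convex totally geodesic subsets of the Hadamard manifold $X$, so $Y\cap Z_L$ is closed, convex, connected, and totally geodesic, and it contains $L$; it cannot be $2$-dimensional, since that would force $Y=Z_L$, so $Y\cap Z_L=L$. The same reasoning shows uniqueness reduces to classifying irreducible planes through $L$: an irreducible plane not containing $L$ cannot meet $Y$ in $L$, and an irreducible plane containing $L$ which is not $Y$ automatically meets $Y$ in exactly $L$ by the convexity argument. So it suffices to show that, besides $Y$, there is exactly one irreducible plane through $L$.

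The classification of $2$-dimensional Lie triple systems $\frak s=\br H_0\oplus\br u$ (with $u\in\frak p$, $u\perp H_0$) is the heart of the argument. The element $H_0$ is regular semisimple, with $\ad(H_0)$-eigenvalues $0,\pm1,\pm2$ on $\frak g$. The triple-system condition forces first $\ad(H_0)^2u=-[[H_0,u],H_0]\in\frak s$, whence $u$ is an $\ad(H_0)^2$-eigenvector in $\frak p\ominus\br H_0$. Writing $E_{ij}$ for the elementary matrices: eigenvalue $0$ gives $\frak s=\frak a$ (the flat $Ao$); eigenvalue $4$ gives the $\frak{sl}_2$ spanned by $H_0$ and $E_{13}+E_{31}$, a reducible $\SL_2$-plane since it fixes $e_2$; and for eigenvalue $1$ one writes $u=b(E_{12}+E_{21})+c(E_{23}+E_{32})$ and computes $[[H_0,u],u]=2b^2(E_{11}-E_{22})+2c^2(E_{22}-E_{33})$, membership of which in $\frak s$ forces $c=\pm b$, yielding $\frak p_0$ (that is, $Y$) when $c=b$ and $\frak s_0:=\br H_0\oplus\br\bigl((E_{12}+E_{21})-(E_{23}+E_{32})\bigr)$ when $c=-b$. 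Only $\frak p_0$ and $\frak s_0$ give irreducible planes, and $\frak p_0\neq\frak s_0$ with $\Ad_{m_0}\frak p_0=\frak s_0$; hence $Z_L=\exp(\frak s_0)\cdot o$ is the unique irreducible plane other than $Y$ through $L$, and therefore the unique irreducible plane with $Y\cap Z_L=L$.

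For orthogonality, at $o$ the orthogonal complement of $T_oL=\br H_0$ inside $T_oY=\frak p_0$ is $\br v$ with $v=(E_{12}+E_{21})+(E_{23}+E_{32})$, and inside $T_oZ_L=\frak s_0$ it is $\br v'$ with $v'=(E_{12}+E_{21})-(E_{23}+E_{32})$; since $\langle v,v'\rangle$ is proportional to $\op{Tr}(vv')=\op{Tr}((E_{12}+E_{21})^2)-\op{Tr}((E_{23}+E_{32})^2)=0$, the planes meet orthogonally at $o$. As $Y$ and $Z_L$ are totally geodesic and both contain $L$, parallel transport along $L$ preserves $TL$, $TY$, and $TZ_L$, so the angle between the normal lines $\nu(L\subset Y)$ and $\nu(L\subset Z_L)$ is constant along $L$; being $\pi/2$ at $o$, it is $\pi/2$ everywhere, i.e. $Y\perp Z_L$. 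The main obstacle is the classification step: although each bracket computation is elementary, one must organize the $\ad(H_0)$-eigenspace decomposition, carry out $[[H_0,u],u]$ carefully in the eigenvalue-$1$ case, and recognize which resulting $\frak{sl}_2$'s act irreducibly on $\br^3$; everything else (existence via $m_0$, the convexity argument for $Y\cap Z_L=L$, and the parallel-transport propagation of orthogonality) is routine.
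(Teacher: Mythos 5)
Your proof is correct, but it takes a genuinely different route from the paper's. The paper works at the group level: after reducing to $L=A_0 o$ and $g=k\in K$, it observes that $\Ad_{k^{-1}}\fa_0\subset\fp\cap\fh$ must be $K_0$-conjugate to $\fa_0$, reduces to $k\in N_K(\fa_0)$, computes $N_K(\fa_0)=M\langle w_0\rangle$ explicitly, and identifies which cosets lie in $N_G(H)=\SO(F)$; the unique nontrivial coset, represented by $g_3=\diag(1,-1,-1)$, yields $Z_L$. Your argument instead descends to the Lie algebra and classifies all $2$-dimensional Lie triple systems $\frak s\subset\frak p$ containing $\fa_0$ via the eigenspace decomposition of $\ad(H_0)^2$, recovering the flat ($\fa$), the reducible corner $\frak{sl}_2$, and exactly two irreducible planes $\frak p_0=T_oY$ and $\frak s_0=T_oZ_L$. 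Both classifications are exhaustive; yours has the advantage of giving all four types of totally geodesic planes through $L$ in a single computation and of making the orthogonality at $o$ explicit by a trace formula (the paper leaves the Killing-form check to the reader). The propagation of orthogonality along $L$ by parallel transport is a standard and clean addition.

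One small gap worth filling: you assert that $\ad(H_0)^2u\in\frak s$ forces $u$ to be an $\ad(H_0)^2$-eigenvector, but a priori $\ad(H_0)^2u$ could have a component along $H_0$. This needs the observation that $\ad(H_0)^2$ is self-adjoint for the Killing form with $H_0$ in its kernel, so $u\perp H_0$ implies $\ad(H_0)^2u\perp H_0$, and hence $\ad(H_0)^2u\in\br u$. With that sentence added, the classification step is airtight.
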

\begin{proof}
  Without loss of generality, we may assume that $L =A_0 o= \{ h_t o :\ t\in\R\}$. Let $Z= gY$, $g\in G$, be an irreducible geodesic plane containing $L$. Since $Z$ contains $o$, we may assume $g=k\in K$, replacing $g$ by $gh$ for some suitable $h\in H$. Now $Z=k H k^{-1}(o)\supset A_0 o $. By considering the tangent subspaces, it implies that
  $\fa_0\subset k \frak h k^{-1}$.
Therefore
  $k^{-1}\fa_0 k$ is a one-dimensional symmetric subspace of $\fh$ and hence must be of the form $k_0^{-1}\fa_0 k_0$ for some $k_0\in K_0$. Hence by replacing $k$ by $kk_0^{-1}$,
  we may assume that $ k\in N_K(\fa_0)$.
  By a direct computation, we can show that $N_K(\fa_0)$
  is the subgroup generated by $A\cap K=\{g_i: 1\le i\le 4\}$ and 
  $w_0 = \begin{pmatrix}
            0 & 0 & 1 \\ 0 & -1  & 0 \\ 1 & 0& 0
        \end{pmatrix}$ where
        $g_1=e$, $g_2=\diag(-1,1,-1)$, $g_3=\diag(1,-1,-1)$ and $g_4=(-1,-1,1)$.
    Since $g_2$ and $w_0$ normalizes $H$, and $g_3=g_4 g_2$,
    we may assume that $g=g_3$ up to the normalizer of $H$.
    Therefore $k Y=g_3 Y $ is the only irreducible geodesic plane containing $L$, which is different from $Y$.
We set $Z_L:=g_3Y$. Moreover, $Y$ and $Z_L$
are orthogonal to each other as can be checked using the Killing form. This proves the claim.
\end{proof}

The following result is an analog of \Cref{dim} in the case of orthogonal planes:

\begin{lemma}
    Let $\Gamma <H $ be a discrete subgroup and let $L\subset Y$ be a complete geodesic with $\dim \overline{\Gamma L} < 2$. Then
    \begin{equation} \label{eq:dim-ineq}
 \dim \bigl(\overline{\Gamma Z_L}\bigr) = \dim \bigl(\overline{\Gamma L}\bigr) + 1.
\end{equation}
\end{lemma}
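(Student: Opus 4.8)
The plan is to follow the scheme of \Cref{dim}(1): describe $\overline{\Gamma Z_L}$ explicitly as a ``product'' and then read off its Hausdorff dimension.

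First I would reduce, by conjugating $\Gamma$ by an element of $H$, to the case $L=A_0o$; then $Z_L=g_3Y$ with $g_3=\diag(1,-1,-1)$ as in the proof of the preceding lemma. Since $g_3\in K$ commutes with $A_0$ and fixes $o$, the decomposition $Y=A_0k_0A_0o$ yields $Z_L=A_0k_0'A_0o$ with $k_0':=g_3k_0g_3^{-1}\in K$, where $k_0'A_0o$ is the geodesic in $Z_L$ meeting $L$ orthogonally at $o$. The crucial preliminary remark is that, because $Z_L\perp Y$ along $L$, the nearest-point projection $\pi:X\to Y$ of \eqref{pi} satisfies $\pi(Z_L)=L$, and more precisely $\pi(k_0'A_0o)=\{o\}$: for $z\in Z_L$ the geodesic realizing $d(z,L)$ lies in $Z_L$ and meets $L$ orthogonally inside $Z_L$, hence orthogonally in $X$ by orthogonality of the two planes, so by convexity it also realizes $d(z,Y)$; for $z$ on the orthogonal geodesic $k_0'A_0o$ this forces $\pi(z)=o$. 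In particular $k_0'h_so\notin Y$ whenever $s\neq0$.

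Next I would prove the analogue of \Cref{fuchsian}, namely
$$\overline{\Gamma Z_L}=\{\,uk_0'h_so:\ u\in\overline{\Gamma A_0},\ s\in\R\,\}.$$
The inclusion ``$\supseteq$'' is clear by continuity. For ``$\subseteq$'', let $z=\lim_i\gamma_ia_ik_0'h_{s_i}o$ with $\gamma_i\in\Gamma$, $a_i,h_{s_i}\in A_0$, and put $u_i:=\gamma_ia_i\in\Gamma A_0$. Applying the $H$-equivariant map $\pi$ and using $\pi(k_0'h_{s_i}o)=o$ gives $\pi(\gamma_ia_ik_0'h_{s_i}o)=u_io$; since $\pi$ is continuous this sequence converges, so $u_io$ is bounded in $Y$, and properness of the map $H\to Y$ gives $u_i\to u\in\overline{\Gamma A_0}$ along a subsequence. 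Then $d(u_ik_0'h_{s_i}o,o)\ge d(h_{s_i}o,o)-d(u_io,o)\to\infty$ if $|s_i|\to\infty$, contradicting convergence; hence $s_i$ is bounded, $s_i\to s$ along a further subsequence, and $z=uk_0'h_so$.

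Finally, for the dimension: the smooth map $\Psi:\overline{\Gamma A_0}\times\R\to X$, $(u,s)\mapsto uk_0'h_so$, has image $\overline{\Gamma Z_L}$, so being locally Lipschitz it gives $\dim\overline{\Gamma Z_L}\le\dim\overline{\Gamma A_0}+1$; and since $\dim\overline{\Gamma L}<2$, Ledrappier--Lindenstrauss (\Cref{ll}, applicable as $H\simeq\SO(2,1)^\circ\simeq\PSL_2(\R)$) gives $\dim\overline{\Gamma A_0}=\dim\overline{\Gamma L}$, whence the upper bound $\dim\overline{\Gamma Z_L}\le\dim\overline{\Gamma L}+1$. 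For the matching lower bound I would show that the multiplication map $M:H\times(k_0'A_0o)\to X$, $(h,q)\mapsto hq$, is a local diffeomorphism onto its image at every $(h,q)$ with $q\notin Y$; granting this, $\Psi$ restricted to $\overline{\Gamma A_0}\times[1,2]$ is locally bi-Lipschitz (as $k_0'h_so\notin Y$ for $s\in[1,2]$), so $\dim\overline{\Gamma Z_L}\ge\dim(\overline{\Gamma A_0}\times[1,2])=\dim\overline{\Gamma A_0}+1=\dim\overline{\Gamma L}+1$, and equality follows. The hard part is exactly this last transversality statement --- the higher-rank subtlety: unlike the floating-plane case (see \Cref{mul2} and \Cref{gd}), where the relevant plane is disjoint from $Y$ for all $t\neq0$ and the computation is clean, here $Z_L$ meets $Y$ and $k_0'A_0o$ lies inside a fibre of $\pi$, so one must check by an explicit matrix computation with $\frak h$ (see \eqref{Lieh}), $k(\theta)$ (see \eqref{kt}) and $g_3$, as in \Cref{mul2}, that $\Ad_{(k_0'h_s)^{-1}}\frak h\cap(\frak a_0\oplus\frak k)$ projects trivially to $\frak a_0$ for $s\neq0$ --- equivalently, that the orthogonal geodesic $k_0'A_0o$ is nowhere tangent to an $H$-orbit outside $Y$.
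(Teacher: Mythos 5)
Your route is genuinely different from the paper's. You adapt the floating-plane machinery of Sections 6--7: first an orbit-closure description $\overline{\Gamma Z_L}=\overline{\Gamma A_0}\,k_0'A_0 o$ analogous to Proposition \ref{fuchsian} (your argument via $\pi(k_0'A_0o)=\{o\}$, properness of $H\to Y$, and boundedness of $s_i$ is correct), followed by a local-bi-Lipschitz multiplication-map argument in the style of Lemmas \ref{gd} and \ref{mul2}. The paper instead exploits that $Z_L$ is normal to $Y$: it transports the problem to the normal bundle via the (global) diffeomorphism $\exp:N_YX\to X$, gets the lower bound by observing that the bundle projection $\mathbb{P}(N_YX)\to Y$ is a submersion hence cannot increase dimension, and gets the upper bound from the local product structure of $\overline{\Gamma A_0}$ on basic open sets. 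The normal-bundle route is softer and avoids the transversality computation entirely; your route would make the orthogonal-plane lemma look structurally parallel to Theorem \ref{dim}, which is aesthetically appealing but comes at a cost, as follows.

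The gap in your proposal is precisely at the step you flag as ``the hard part'': you assert but do not prove that $M:H\times(k_0'A_0o)\to X$, $(h,q)\mapsto hq$, is a local diffeomorphism onto its image away from $Y$, and without it the lower bound is not established. Moreover, the infinitesimal condition you sketch is not quite the full immersion condition. Writing the differential at $(e,k_0'h_so)$ as $(v,u)\mapsto(\Ad_{(k_0'h_s)^{-1}}v)_{\frak p}+uH_0$ with $H_0=\diag(1,0,-1)$, injectivity requires two things: (a) $H_0\notin(\Ad_{(k_0'h_s)^{-1}}\frak h)_{\frak p}$, which is what your condition on the projection to $\frak a_0$ encodes; and (b) $\Ad_{(k_0'h_s)^{-1}}\frak h\cap\frak k=\{0\}$, i.e.\ the stabilizer of $k_0'h_so$ in $H$ is discrete. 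Your stated condition handles (a) but is silent about (b), and a matrix computation would need to verify both for all $s\neq 0$. Since this computation is both omitted and (as stated) incomplete, the proposal as written does not prove the lemma; the paper's $\exp:N_YX\to X$ argument is exactly the device that sidesteps this issue.
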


\begin{proof}
Write
$ \overline{\Gamma Z_L} = \bigcup_l Z_{l}$
    where the union is taken over all complete geodesics $l\subset \overline{\Gamma L}$.
Consider the normal bundle $N_YX$ of $Y \subset X$.  
The exponential map
\[
\exp : N_YX \longrightarrow X
\]
is a diffeomorphism.
Now, $\overline{\Gamma Z_L}$ can be written as the union of a family $\mathcal{L}$ of geodesic lines in $X$ orthogonal to $Y$, where any two such lines are allowed to intersect only inside $Y$. Correspondingly, there is a family of lines $\mathcal{L}' \subset N_YX$ whose image under the exponential map is precisely $\mathcal{L}$. Since the exponential map is a diffeomorphism, the Hausdorff dimension of $\overline{\Gamma Z_L}$ is precisely $\dim \bigl(\bigcup_{l \in \mathcal{L}'} l \bigr)$.
Denote by $\mathbb{P}(N_YX)$ the projectivization of $N_YX$. Then the family $\mathcal{L}'$ determines a closed subset  $R \subset \mathbb{P}(N_YX).$
Since the natural projection
\[
p : \mathbb{P}(N_YX) \longrightarrow Y
\]
is a submersion and satisfies $p(R) = \overline{\Gamma L}$, we obtain
$\dim R \ge \dim \bigl(\overline{\Gamma L}\bigr).$
On the other hand, the union of the lines satisfies
$$
 \dim \overline{\Ga Z_L}=\dim \Bigl(\bigcup_{l \in \mathcal{L}'} l \Bigr) = \dim R + 1.$$
Therefore 
$ \dim \overline{\Ga Z_L} \ge  \dim \bigl(\overline{\Gamma L}\bigr) + 1.  $
To prove the reverse inequality, note that
on each basic open subset $\cal O=h_0U^+U^-A_0$ of $G$ where $U^{\pm}\subset N_0^{\pm}$ and $h_0\in H$,
we have 
$\overline{\Gamma A_0} \cap \cal O = C_{\mathcal{O}} \times A_0,$
where $C_{\mathcal{O}} \subset h_0U^{+}U^-$ is a closed subset.  
It follows that
$$
 \dim C_{\cal O} + 1 = \dim \bigl(\overline{\Gamma A_0} \cap \cal O\bigr) \le \dim \overline{\Gamma A_0}. $$
Since
\begin{equation*}
 \overline{\Gamma L} = \overline{\Ga A_0}o =\bigcup_{\mathcal{O}}( C_\mathcal{O} A_0 o) =\bigcup_{\mathcal{O}} C_\mathcal{O} L,
\end{equation*} 
we have $\overline{\Gamma Z_L} = \bigcup_{\mathcal{O}} C_\mathcal{O} Z_L.$
Hence
\begin{equation} \label{eq:dim-union}
 \dim \overline{\Gamma Z_L} \le 2 + \sup \dim C_{\cal O} = 1+ \dim \overline{\Gamma A_0} = 1+ \dim \overline{\Gamma L},
\end{equation}
by  \Cref{ll}. This finishes the proof.
\end{proof}

Therefore, we deduce the following from Theorems \ref{thm:pi} and \ref{pair}:
\begin{theorem}\label{ort}
     Let $ L=hA_0 o \subset Y$ be a geodesic so that
$\ell=\Gamma\ba \Gamma L$ is disjoint from the $2r$-neighborhood of $\beta$ for some $r>0$. Suppose that
$1<\dim \overline{\ell} <2$. Then for all $\b\in B$ with width $\wdt(\b)$ smaller than $r$, we have 
$$\dim \overline{ \Gamma_{\beta, \b} \ba \Gamma_{\beta, \b} Z_L }=\dim \overline{\ell} +1. $$ 

Moreover, there exists a sequence of geodesics $L_i \subset Y$ such that 
$$ \dim \left( \overline{ \Ga_{\beta,\b } \ba \Ga_{\beta, \b } Z_{L_i}} \right) \to 2 \quad\text{as $i\to \infty$} .$$
\end{theorem}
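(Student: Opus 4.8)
The plan is to run the argument of Theorem~\ref{ff} almost verbatim, with the orthogonal plane $Z_L$ in place of the floating plane $Y_{L,t}$; the role of Theorem~\ref{narrow} (that $\pi(Y_{L,t})$ lies close to $L$) is now played by the sharper identity $\pi(Z_L)=L$. To establish this, I would assume after an isometry that $L=A_0o$ and fix $z\in Z_L$; let $p\in L$ be the foot of the perpendicular dropped from $z$ to $L$ inside the totally geodesic plane $Z_L$. The segment $[z,p]$ meets $L$ orthogonally within $Z_L$, and since $Y$ and $Z_L$ are orthogonal along $L$, it follows that $[z,p]$ is orthogonal to all of $Y$ at $p$; hence $p$ is the nearest point of $Y$ to $z$, i.e.\ $\pi(z)=p\in L$. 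Thus $\pi(Z_L)=L$, and by continuity of $\pi$ we get $\pi(\overline{\Gamma Z_L})\subseteq\overline{\Gamma L}$.

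Since $\ell=\Gamma\backslash\Gamma L$ is disjoint from the $2r$-neighborhood of $\beta$, its closure $\overline{\ell}$ is disjoint from the $r$-neighborhood of $\beta$, so $\overline{\Gamma L}$ is disjoint from $\mathcal N_r(\Gamma\cdot\tilde\beta)$; combined with the previous paragraph this gives $\overline{\Gamma Z_L}\subseteq\pi^{-1}(Y_r)=X_r$. As $\wdt(\b)<r$, Theorem~\ref{thm:pi} (applied with $c=r$) shows that $f_{\b,r}:\Gamma\backslash X_r\to\Gamma_{\beta,\b}\backslash X$, $[\Gamma x]\mapsto[\Gamma_{\beta,\b}x]$, is a proper locally isometric embedding. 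Being proper it carries closed sets to closed sets, and being a local isometry it preserves Hausdorff dimension; since it maps the orbit $\Gamma\backslash\Gamma Z_L$ onto $\Gamma_{\beta,\b}\backslash\Gamma_{\beta,\b}Z_L$, we obtain $\overline{\Gamma_{\beta,\b}\backslash\Gamma_{\beta,\b}Z_L}=f_{\b,r}(\overline{\Gamma\backslash\Gamma Z_L})$ and therefore $\dim\overline{\Gamma_{\beta,\b}\backslash\Gamma_{\beta,\b}Z_L}=\dim\overline{\Gamma Z_L}$ (the covering $X\to\Gamma\backslash X$ is a local isometry). Finally $\dim\overline{\Gamma L}=\dim\overline{\ell}<2$, so the preceding lemma yields $\dim\overline{\Gamma Z_L}=\dim\overline{\Gamma L}+1=\dim\overline{\ell}+1$, which is the first assertion.

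For the last assertion, I would apply Theorem~\ref{pair} to the single simple closed geodesic $\beta$, obtaining a sequence of admissible geodesics $L_i\subset Y$ with $\ell_i=\Gamma\backslash\Gamma L_i$ contained in one component of $S-\beta$, satisfying $\inf_i d(\ell_i,\beta)>0$, $1<\dim\overline{\ell_i}<2$, and $\dim\overline{\ell_i}\to1$. Taking the bulging slope of $\b$ below half of $\inf_i d(\ell_i,\beta)$---the same reduction that passes from Theorem~\ref{ff} to Theorem~\ref{ma}---the first assertion applies to every $L_i$ and gives $\dim\overline{\Gamma_{\beta,\b}\backslash\Gamma_{\beta,\b}Z_{L_i}}=\dim\overline{\ell_i}+1\to2$. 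I expect the only step with genuine geometric content to be the identity $\pi(Z_L)=L$---it is short, but it is precisely where the orthogonality of $Y$ and $Z_L$ along $L$ is used in an essential way---while the rest is exactly the formal machinery (Theorem~\ref{thm:pi}, the dimension lemma, and Theorem~\ref{pair}) already developed for the floating planes.
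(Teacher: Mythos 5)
Your proposal is correct and follows essentially the same route the paper takes: establish $\pi(Z_L)=L$ (the orthogonality argument you give is exactly what justifies the description of $\overline{\Gamma Z_L}$ as a union of geodesics orthogonal to $Y$ in the appendix lemma's proof), use this to place $\overline{\Gamma Z_L}$ inside $X_r$, invoke Theorem~\ref{thm:pi} for the proper locally isometric embedding, apply the appendix lemma for the dimension formula, and finish with Theorem~\ref{pair} for the accumulating sequence. The paper gives no separate argument beyond citing Theorems~\ref{thm:pi} and~\ref{pair} together with the appendix lemma, so your write-up simply supplies the intended details.
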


\end{document}